\documentclass[11pt, oneside]{article}   	% use "amsart" instead of "article" for AMSLaTeX format
\usepackage{geometry}                		% See geometry.pdf to learn the layout options. There are lots.
\geometry{a4paper}                   		% ... or a4paper or a5paper or ... 
\usepackage[dvipdfmx]{graphicx}
\usepackage[usenames]{xcolor}		
\usepackage{amsmath,amsthm,amssymb}
\usepackage{amscd}

\theoremstyle{definition}
\newtheorem{dfn}{Definition}[section]
\newtheorem{rem}[dfn]{Remark}
\newtheorem{exple}[dfn]{Example}

\theoremstyle{plain}
\newtheorem{thm}[dfn]{Theorem}
\newtheorem{prop}[dfn]{Proposition}
\newtheorem{lem}[dfn]{Lemma}
\newtheorem{cor}[dfn]{Corollary}

\newcommand{\K}{\mathbb{K}}
\newcommand{\Z}{\mathbb{Z}}

\newcommand{\der}{\mathrm{Der}}

\newcommand{\tder}{\mathrm{tDer}}
\newcommand{\sder}{\mathrm{sDer}}
\newcommand{\krv}{\mathrm{krv}}
\newcommand{\D}{\mathrm{D}}
\newcommand{\tD}{\mathrm{tD}}

\newcommand{\taut}{\mathrm{TAut}}
\newcommand{\saut}{\mathrm{SAut}}
\newcommand{\KRV}{\mathrm{KRV}}

\newcommand{\mult}{\mathrm{mult}}
\newcommand{\add}{\mathrm{add}}
\newcommand{\KKS}{\mathrm{KKS}}

\renewcommand{\div}{\mathrm{div}}
\newcommand{\ddiv}{\mathrm{Div}}
\newcommand{\tdiv}{\mathrm{tDiv}}
\newcommand{\tJ}{\mathrm{tJ}}

\newcommand{\pa}{\partial}
\newcommand{\tde}{\widetilde{\Delta}}
\newcommand{\bch}{\mathrm{bch}}
\newcommand{\alg}{\mathrm{alg}}
\newcommand{\op}{\mathrm{op}}
\newcommand{\ad}{\mathrm{ad}}

\newcommand{\alt}{\mathrm{Alt}}
\newcommand{\rot}{\mathrm{rot}}
\newcommand{\even}{\mathrm{even}}
\newcommand{\odd}{\mathrm{odd}}

\title{The Goldman-Turaev Lie bialgebra in genus zero and the Kashiwara-Vergne problem}
\author{Anton Alekseev\thanks{Department of Mathematics, University of Geneva, 2-4 rue du Lievre, 1211 Geneva, Switzerland \texttt{e-mail:anton.alekseev@unige.ch}} \and Nariya Kawazumi\thanks{Department of Mathematical Sciences, University of Tokyo, 3-8-1 Komaba, Meguro-ku, Tokyo 153-8914, Japan \texttt{e-mail:kawazumi@ms.u-tokyo.ac.jp}}
\and Yusuke Kuno\thanks{Department of Mathematics, Tsuda College, 2-1-1 Tsuda-machi, Kodaira-shi, Tokyo 187-8577, Japan \texttt{e-mail:kunotti@tsuda.ac.jp}} \and Florian Naef\thanks{Department of Mathematics, University of Geneva, 2-4 rue du Lievre, 1211 Geneva, Switzerland \texttt{e-mail:florian.naef@unige.ch}}}
%\author{Anton Alekseev, Nariya Kawazumi, Yusuke Kuno and Florian Naef}
\date{}							% Activate to display a given date or no date

\begin{document}

\maketitle

\begin{abstract}
In this paper, we describe a surprising  link between the theory of the Goldman-Turaev Lie bialgebra on surfaces of genus zero and the Kashiwara-Vergne (KV) problem in Lie theory. Let $\Sigma$ be an oriented 2-dimensional manifold with non-empty  boundary and $\mathbb{K}$ a field of characteristic zero. The Goldman-Turaev Lie bialgebra is defined by the Goldman bracket $\{ -,- \}$ and Turaev cobracket $\delta$  on the $\mathbb{K}$-span of homotopy classes of free loops on $\Sigma$. 

Applying an expansion $\theta: \mathbb{K}\pi \to \mathbb{K}\langle x_1, \dots, x_n \rangle$ yields an algebraic description of the operations $\{ -,- \}$ and $\delta$ in terms of non-commutative variables $x_1, \dots, x_n$. If $\Sigma$ is a surface of genus $g=0$ the lowest degree parts $\{ -,- \}_{-1}$ and $\delta_{-1}$ are canonically defined (and independent of $\theta$). They define a Lie bialgebra structure on the space of cyclic words which was introduced and studied by Schedler \cite{Schedler}. It was conjectured by the second and the third authors that one can define an expansion $\theta$ such that $\{ -,- \}=\{ -,- \}_{-1}$ and $\delta=\delta_{-1}$. The main result of this paper states that for surfaces of genus zero constructing such an expansion is essentially equivalent to the KV problem. In \cite{Mas15}, Massuyeau constructed such expansions using the Kontsevich integral.

In order to prove this result, we show that the Turaev cobracket $\delta$ can be constructed in terms of the double bracket (upgrading the Goldman bracket) and the non-commutative divergence cocycle which plays the central role in the KV theory. Among other things, this observation gives a new topological interpretation of the KV problem and allows to extend it to surfaces with arbitrary number of boundary components (and of arbitrary genus, see \cite{short}).
\end{abstract}
{\bf keywords:} Goldman bracket, Turaev cobracket, Kashiwara-Vergne conjecture

\section{Introduction}

Let $\Sigma$ be a 2-dimensional oriented manifold of genus $g$ with non empty boundary and let $\mathbb{K}$ be a field of characteristic zero. In \cite{Go86}, Goldman introduced a Lie bracket on the $\mathbb{K}$-span of homotopy classes of free loops on $\Sigma$. One can also think of this space as a space spanned (over $\mathbb{K}$) by conjugacy classes in the fundamental group $\pi:=\pi_1(\Sigma)$. The notation that we use for this space is
$$
|\mathbb{K}\pi| = \mathbb{K}\pi/[\mathbb{K}\pi, \mathbb{K}\pi].
$$
One of the {\em raisons d'\^etre} of the Goldman bracket is the finite dimensional description of the Atiyah-Bott symplectic structure on moduli spaces of flat connections on $\Sigma$ \cite{Atiyah-Bott}.
In \cite{Tu91}, Turaev constructed a Lie cobracket on the quotient space $|\mathbb{K}\pi|/\mathbb{K}\, {\bf 1}$, where ${\bf 1}$ stands for the homotopy class of a trivial (contractible) loop. Given a framing (that is, a trivialization of the tangent bundle) of $\Sigma$, the cobracket $\delta$ can be lifted to a cobracket $\delta^+$ on $|\mathbb{K}\pi|$
which together with the Goldman bracket defines a Lie bialgebra structure. In more detail, this means that $\delta^+$ verifies the co-Jacobi identity, and that it is a 1-cocycle with respect to the Goldman bracket. This structure was studied in detail in the works of Chas and Sullivan, and it was one of the motivations for introducing the string topology program \cite{CS99}.

The Goldman bracket admits an upgrade to the double bracket in the sense of van den Bergh, $\kappa: \mathbb{K}\pi \otimes \mathbb{K}\pi \to \mathbb{K}\pi \otimes \mathbb{K}\pi$, and 
%A surface $\Sigma$ with non trivial boundary has a trivial tangent bundle. Its trivialization is called a framing of $\Sigma$. 
the  cobracket $\delta^+$ (for a given framing) can be upgraded to a map $\mu: \mathbb{K}\pi \to |\mathbb{K}\pi| \otimes  \mathbb{K}\pi$. All the operations described above: the Goldman bracket, the Turaev cobracket, the cobracket $\delta^+$, $\kappa$ and $\mu$ are defined in terms of intersections and self-intersections of curves on $\Sigma$.

Since $\partial \Sigma \neq \emptyset$, the fundamental group $\pi$ is a free group and one can consider expansions (that is, Hopf algebra homomorphisms) $\theta: \mathbb{K}\pi \to A$, where $A$ is a degree completed free Hopf algebra with generators $x_1, \dots, x_n$. An example of such an expansion is the exponential expansion $\theta^{\rm exp}(\gamma_i)=e^{x_i}$ which depends on the choice of the basis $\gamma_1, \dots, \gamma_n$ in $\pi$. An expansion allows to transfer the topologically defined operations $\kappa$ and $\mu$ (as well as the Goldman bracket and Turaev cobracket) to the free Hopf algebra $A$ and to the space of cyclic words in letters $x_1, \dots x_n$
$$
|A| = A/[A, A].
$$
We use the notation $a \mapsto |a|$ for the natural projection $A \to |A|$.
We will denote the operations transferred to $A$ and $|A|$ by $\kappa_\theta, \mu_\theta, \{ -, - \}_\theta, \delta^+_\theta$ and $\delta_\theta$, respectively. 

Assume that $\Sigma$ is a surface of genus $g=0$.
Consider the grading on $A$ defined by assignment ${\rm deg}(x_i)=1$ and the induced grading on $|A|$. The Lie bracket $\{ -, -\}_\theta: |A| \otimes  |A| \to |A|$ contains contributions of different degrees starting with degree $(-1)$. This lowest degree part is canonically defined and does not depend on the choice of $\theta$. The Goldman bracket being related to the Atiyah-Bott symplectic structures on moduli of flat connections, its lowest degree part comes from the Kirillov-Kostant-Souriau (KKS) Poisson structure on coadjoint orbits. Hence, we use the notation $\{ -, -\}_{\rm KKS}$ for this bracket.  For all expansions $\theta$ we have 
$$
\{ -, -\}_\theta= \{-, -\}_{\rm KKS} + \, \text{higher order terms}.
$$
It turns out that there is a class of expansions, called {\em special expansions}, for which all higher order terms vanish and the equality 
$$
\{ -, -\}_\theta= \{-, - \}_{\rm KKS}
$$
holds without extra corrections. Originally, the problem of finding such expansions was solved in the case of $\Sigma$ of genus $g$ with one boundary component in \cite{KK14}. This result was extended to the case of surfaces with arbitrary number of boundary components in \cite{KK16}, \cite{MTpre}.

In the case of $\Sigma$ of genus $g=0$ with 3 boundary components, the fundamental group $\pi$ is a free group with 2 generators and $A$ is the degree completion of the free Hopf algebra with two generators $x_1$ and $x_2$. In this case, special expansions are of the form $\theta_F = F^{-1} \circ \theta^{\exp}$, where $F$ is an automorphism of a free Lie algebra with two generators (naturally extended to $A$) with the property
\begin{equation} \label{eq:intro_KV0}
F(x_i)=e^{-a_i}x_ie^{a_i}
\end{equation}
for $i=1,2$ and
\begin{equation}  \label{eq:intro_KV1}
F(x_1 + x_2) =\log(e^{x_1}e^{x_2}).
\end{equation}

Similarly to the Goldman bracket, the transferred cobracket $\delta^+_\theta$ admits a decomposition
$$
\delta^+_\theta= \delta^{\rm alg} + \, \text{higher order terms}.
$$
For a surface of genus zero, $\delta^{\rm alg}$ is again an operator of degree $(-1)$. On the space of cyclic words, there is a Lie bilagebra structure defined by  the KKS bracket and the cobracket $\delta^{\rm alg}$. This Lie bialgebra structure was introduced and studied by Schedler \cite{Schedler}.

It is natural to ask a question of whether there exists a special  expansion $\theta$ such that 
$$
\delta^+_\theta= \delta^{\rm alg}.
$$
In this paper, we answer this question for surfaces of genus zero. The key observation is that equation \eqref{eq:intro_KV1} plays an important role in the Kashiwara-Vergne (KV) problem in Lie theory \cite{kashiwara-vergne}. In more detail, the reformulation of the KV problem in \cite{AT12} requires to find an automorphism $F$ of a free Lie algebra in two generators which satisfies \eqref{eq:intro_KV0}, \eqref{eq:intro_KV1} and the mysterious equation
\begin{equation} \label{eq:intro_KV2}
j(F^{-1})=|h(x_1)+h(x_2)-h(x_1+x_2)|.
\end{equation}
Here $h(z) \in \mathbb{K}[[z]]$ is a formal power series in one variable (also called {\em the Duflo function}). The map  $j$ is the group 1-cocycle integrating the non-commutative divergence 1-cocycle ${\rm div}: \tder(A) \to |A|$, and $\tder(A) \subset \der(A)$ is the Lie algebra of tangential derivations on $A$ which plays an important role in the Kashiwara-Vergne theory \cite{AT12}. The divergence cocycle admits an upgrade $\tdiv: \tder(A) \to |A| \otimes |A|$.

The following theorem establishes a surprising link between the Kashiwara-Vergne theory and the properties of the Goldman-Turaev Lie bialgebra:

\begin{thm}   \label{thm:intro_KV}
Let $F$ be a solution of equations \eqref{eq:intro_KV0} and \eqref{eq:intro_KV1}. Then, $\delta^+_\theta= \delta^{\rm alg}$ for the expansion $\theta = F^{-1} \circ \theta^{\rm exp}$ 
%on a surface of genus zero with three boundary components 
if and only if $F$ verifies equation  \eqref{eq:intro_KV2} up to linear terms.
\end{thm}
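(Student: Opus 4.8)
The plan is to deduce the statement from the structural result of this paper --- the reconstruction of the framed Turaev cobracket from the double bracket and the non-commutative divergence cocycle. Concretely, that formula expresses $\delta^+_\theta$ as the sum of a \emph{geometric part}, built only from the transferred double bracket $\kappa_\theta$ and the (upgraded) divergence $\tdiv$ applied to the tangential Hamiltonian derivations it produces, and a \emph{framing part}, governed by the Duflo function $h$ attached to the chosen framing. As a preliminary, I would record how $\delta^+$ transforms under a change of expansion: all topological operations are transported by the filtered algebra isomorphism underlying the expansion and $F$ is the extension of an automorphism of the free Lie algebra, so $\theta = F^{-1}\circ\theta^{\exp}$ gives
\[
\delta^+_\theta \;=\; (|F^{-1}|\otimes|F^{-1}|)\circ\delta^+_{\theta^{\exp}}\circ|F| ,
\]
and the claim becomes a comparison between $\delta^+_{\theta^{\exp}}$, conjugated by $F$, and the degree $(-1)$ operator $\delta^{\alg}$.

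The second step uses that, since $F$ solves \eqref{eq:intro_KV0} and \eqref{eq:intro_KV1}, the expansion $\theta=F^{-1}\circ\theta^{\exp}$ is special, so $\kappa_\theta = \kappa_{\KKS}$ and $\{-,-\}_\theta = \{-,-\}_{\KKS}$. Substituting $\kappa_\theta=\kappa_{\KKS}$ in the structural formula, the tangential Hamiltonian derivations become the standard KKS ones --- in particular $F$-independent --- and the geometric part of $\delta^+_\theta$ is exactly the Schedler cobracket $\delta^{\alg}$; this is essentially the characterisation of $\delta^{\alg}$ as the cobracket canonically built from the KKS double bracket and the standard divergence. Hence $\delta^+_\theta-\delta^{\alg}$ is exactly the framing part $c_\theta$ of $\delta^+_\theta$, which by the same formula (now with $\kappa_{\KKS}$) is the linear operator $|A|\to|A|\otimes|A|$ determined by one cyclic word $r_\theta\in|A|$.

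The third step identifies $r_\theta$; this is where the real work lies. For the exponential expansion, a direct computation involving the self-intersections of curves and the rotation numbers of a genus-zero framing should identify the framing cyclic word with $|h(x_1)+h(x_2)-h(x_1+x_2)|$, $h$ the Duflo function of the framing. Passing to $\theta=F^{-1}\circ\theta^{\exp}$ via the transformation rule of the first step, the geometric parts no longer match on the nose: the divergence $\tdiv$ is not conjugation-invariant but a cocycle, and its failure to be invariant under conjugation by $F$ is measured precisely by $\tJ(F^{-1})$ (the upgrade of the group $1$-cocycle $j$ integrating $\div$); projecting to $|A|$ this contributes $j(F^{-1})$. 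Collecting the terms,
\[
r_\theta \;=\; j(F^{-1}) - |h(x_1)+h(x_2)-h(x_1+x_2)| ,
\]
so that $\delta^+_\theta=\delta^{\alg}$ if and only if the operator attached to $r_\theta$ vanishes. Since (cyclic word) $\mapsto$ (operator) is injective in all degrees $\ge 2$, its degree-one part being the only ambiguity and corresponding exactly to the linear coefficient of $h$ (equivalently to the residual choice of framing), this is equivalent to \eqref{eq:intro_KV2} holding up to linear terms.

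I expect the main obstacle to be the third step, and inside it two points. First, matching the $\theta^{\exp}$-framing term with the Duflo expression: this is where the combinatorics of self-intersections and of the winding numbers of a genus-zero framing enters, and one must check that it is genuinely the Duflo function --- in the normalisation of \cite{AT12} --- that appears, rather than some reparametrisation. Second, getting the correction from the non-invariance of $\tdiv$ right --- its exact form, its sign, and whether it is $j(F)$ or $j(F^{-1})$ that appears --- and, hand in hand with this, controlling the degree-one terms so as to reconcile the normalisation $j(\mathrm{id})=0$ with the framing ambiguity; this is what produces the ``up to linear terms'' clause. The first two steps I regard as comparatively formal: the transformation rule is bookkeeping with the expansion isomorphism, and the reduction to the framing term is the defining property of special expansions fed into the structural formula for $\delta^+$.
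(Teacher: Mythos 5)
Your Steps 1 and 2 do track the paper's argument: $\delta^+_{\theta^{\exp}}=-\tdiv_{\mult}$, conjugating by $F$ turns $\Pi_{\mult}$ into $\Pi_{\add}=\Pi_{\KKS}+\Pi_s$ (this is where \eqref{eq:intro_KV1} enters), $\Pi_s$ contributes neither to the induced bracket on $|A|$ nor to $\tdiv$, and the cocycle property of $\tdiv$ produces the correction term, yielding $\delta^+_{\theta_F}(|a|)=\delta^{\alg}(|a|)-\{|a|,-\}_{\KKS}\cdot\tde\bigl(j(F^{-1})\bigr)$. One caveat already here: for $\theta^{\exp}$ itself no Duflo term appears at all --- $h$ enters only through $j(F^{-1})$ --- so the ``framing cyclic word'' you propose to extract from self-intersections and rotation numbers does not exist; the term $|h(x_1)+h(x_2)-h(x_1+x_2)|$ you subtract in $r_\theta$ is central, hence acts by zero, so it is harmless but it is not where the work lies.

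The genuine gap is in Step 3. The map sending a cyclic word $r$ to the operator $|a|\mapsto\{|a|,-\}_{\KKS}\cdot\tde(r)$ is \emph{not} injective in degrees $\ge 2$: its kernel contains the entire center of $(|A|,\{-,-\}_{\KKS})$, e.g.\ $|x_1^k|$ and $|x_0^k|$ for every $k$, because $\{|a|,|h(x_i)|\}_{\KKS}=0$ and the center is a sub-coalgebra. So the correct intermediate statement is ``the operator vanishes iff $j(F^{-1})$ is central'', not ``iff $r_\theta$ is linear'', and two further nontrivial inputs are needed to reach \eqref{eq:intro_KV2} up to linear terms: the computation $Z(|A|,\{-,-\}_{\KKS})=\sum_{i=0}^n|\K[[x_i]]|$ (to which the paper devotes its appendix), and the observation that the substitutions $x_{k'}\mapsto 0$ for $k'\neq k$ annihilate $j(F^{-1})$ for tangential $F$, which forces the a priori independent one-variable series occurring in a central element to coincide, modulo linear terms, with the single series attached to $x_0$. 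Without these your criterion would also accept, say, $j(F^{-1})=|h(x_1)+h(x_2)-h(x_1+x_2)|+|x_1^3|$, for which \eqref{eq:intro_KV2} fails even up to linear terms. Finally, in the ``if'' direction you need a reason why centrality of $j(F^{-1})$ kills the whole tensor $\{|a|,-\}_{\KKS}\cdot\tde(j(F^{-1}))$ rather than just its $|A|\otimes{\bf 1}$ component; the paper uses precisely that the center is a sub-coalgebra of $|A|$.
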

We also define a version of the KV problem for surfaces of genus zero and an arbitrary number of boundary components $n \geq 3$. It turns out that all these problems can be easily solved using the $n=3$ case. Moreover, a result analogous to Theorem \ref{thm:intro_KV} holds true for any $n$.

The operations $\kappa_\theta$ and $\mu_\theta$ also acquire a nice form for expansions $\theta$ defined by solutions of the Kashiwara-Vergne problem. In more detail, we have
$$
\kappa_\theta = \kappa_{\rm KKS} + \kappa_s,
$$
where $\kappa_{\rm KKS}$ is again the degree $(-1)$ part of $\kappa_\theta$. The double bracket $\kappa_s$ contains certain higher order terms encoded in the power series $s(z) \in \mathbb{K}[[z]]$ related to the even part $h_{\rm even}$ of the Duflo function by the following formula
$$
\frac{dh_{\rm even}}{dz} = \frac{1}{2}\left( s(z) + \frac{1}{2}\right) = -\frac{1}{2} \sum_{k\geq 1}
\frac{B_{2k}}{ (2k)!} z^{2k-1},
$$
where $B_{2k}$ are Bernoulli numbers.

There is a similar description for $\mu$:
$$
\mu_\theta=\mu^{\rm alg}+\mu_s + \tilde{\mu}_g,
$$
where $\mu_s$ is defined by the formal power series $s(z)$. The new contribution $\tilde{\mu}_g$ is defined by the formal power series 
$$
g(z) = \frac{dh}{dz} = -\frac{1}{2} \sum_{k\geq 1}
\frac{B_{2k}}{ (2k)!} z^{2k-1} + \sum_{k\geq 1} (2k+1)h_{2k+1} z^{2k},
$$
where $h_{2k+1}$ are the odd Taylor coefficients of the Duflo function. Hence, $g(z)$
encodes the information on the full Duflo function (and not only on its even part).

\begin{rem}
In \cite{Mas15}, Massuyeau constructed expansions $\theta$ verifying the condition $\delta_{\theta}^+=\delta^{\alg}$ using the Kontsevich integral.
Moreover, he obtained a description of an operation which is closely related (and essentially equivalent) to $\mu$.
His description works for any choice of a Drinfeld associator $\Phi$ which one needs for the construction of the Kontsevich integral, and it involves the $\Gamma$-function of $\Phi$;
see \cite{En06}.
\end{rem}

Our results are based on the following theorem:
\begin{thm}  \label{thm:intro_delta_div}
Let $F$ be an automorphism of the free Lie algebra with two generators which satisfies equation \eqref{eq:intro_KV0} and let $\theta=F^{-1} \circ \theta^{\rm exp}$ be the corresponding expansion. Then, 
\begin{equation} \label{eq:intro_key}
\delta^+_\theta(|a|) = \tdiv (\{ |a|, -\}_\theta),
\end{equation}
if $j(F^{-1}) \in |A|$ belongs to the center of the Lie algebra $(|A|, \{ -, -\}_\theta)$.
\end{thm}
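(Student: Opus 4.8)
\emph{Proof strategy.} Begin with the right-hand side of \eqref{eq:intro_key}. The Goldman bracket on $|A|$ is the single bracket associated with the double bracket $\kappa_\theta$, and for a fixed $|a|\in|A|$ the map $b\mapsto\{|a|,b\}_\theta$ is a derivation of $A$ obtained by contracting $\kappa_\theta(a,-)$. One first checks that this derivation is \emph{tangential}, so that $\{|a|,-\}_\theta\in\tder(A)$ and $\tdiv(\{|a|,-\}_\theta)$ is defined; this is part of the standard structure of the Goldman double bracket when $\partial\Sigma\neq\emptyset$ (bracketing with a boundary loop vanishes, and the relevant double brackets have the form that forces the associated derivations to preserve the conjugacy classes of the boundary generators), and condition \eqref{eq:intro_KV0} guarantees that this survives the transfer along $\theta=F^{-1}\circ\theta^{\rm exp}$ --- in particular it guarantees that the extension of $F^{-1}$ to $A$ lies in the domain of the group cocycle $j$. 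With $\{|a|,-\}_\theta\in\tder(A)$ in hand, both sides of \eqref{eq:intro_key} belong to $|A|\otimes|A|$ and have contributions in the same range of degrees.

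The core step is the case of the exponential expansion, where I claim the clean identity $\delta^+_{\theta^{\rm exp}}(|b|)=\tdiv(\{|b|,-\}_{\theta^{\rm exp}})$ holds for all $|b|$. This is exactly the statement that the Turaev cobracket is built from the double bracket and the non-commutative divergence. To prove it I would use the description of both sides through (self-)intersections of curves on the genus-zero surface with its standard framing: a self-intersection point of a loop representing $b$ produces, after replacing the loop by a parallel framed copy, a matched pair of transverse intersection points of the loop with its copy, so that the two resolutions defining $\delta^+$ can be read off from $\kappa_{\theta^{\rm exp}}(b,b)$; on the other hand, because $\{|b|,-\}_{\theta^{\rm exp}}$ is itself manufactured from $\kappa_{\theta^{\rm exp}}$, computing $\tdiv$ of this tangential derivation unwinds to precisely the same expression in $\kappa_{\theta^{\rm exp}}(b,b)$. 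The framing and the basepoint of the doubling contribute boundary terms on both sides which, in the exponential expansion, cancel.

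For a general solution $F$ of \eqref{eq:intro_KV0} write $\theta=F^{-1}\circ\theta^{\rm exp}$. Since $\delta^+$ is defined topologically, its transfer is conjugation-equivariant: $\delta^+_\theta=(F^{-1}\otimes F^{-1})\circ\delta^+_{\theta^{\rm exp}}\circ F$, and likewise $\{|a|,-\}_\theta=F^{-1}\circ\{F(|a|),-\}_{\theta^{\rm exp}}\circ F$. The tangential divergence, however, is only a $1$-cocycle and not conjugation-equivariant: the defect of $\tdiv$ under conjugation by $F$ is governed by the cocycle $j$ integrating $\div$, and differentiating the cocycle relation for $j$ gives $\tdiv(F^{-1}\circ v\circ F)=(F^{-1}\otimes F^{-1})\tdiv(v)-\nu\big((F^{-1}\circ v\circ F)\cdot j(F^{-1})\big)$ for a fixed map $\nu:|A|\to|A|\otimes|A|$, where the tangential derivation $F^{-1}\circ v\circ F$ is applied to $j(F^{-1})\in|A|$ via its induced derivation of $|A|$. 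Taking $v=\{F(|a|),-\}_{\theta^{\rm exp}}$ and substituting the identity of the previous paragraph, one obtains
\[
\tdiv(\{|a|,-\}_\theta)=\delta^+_\theta(|a|)-\nu\big(\{|a|,j(F^{-1})\}_\theta\big).
\]
Hence, if $j(F^{-1})$ lies in the center of $(|A|,\{-,-\}_\theta)$ then $\{|a|,j(F^{-1})\}_\theta=0$ for every $|a|$, the correction term vanishes, and \eqref{eq:intro_key} follows.

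The main obstacle is the clean identity of the second paragraph. Matching the self-intersection description of $\delta^+$ with the intersection description of $\kappa$ requires careful control of the doubling procedure --- keeping track of the order of intersection points along the parallel copy, of the framing contribution, and of the corrections near the basepoint --- and then one must recognise the outcome as exactly $\tdiv$ of the Goldman tangential derivation, with no residual discrepancy in the exponential expansion. A secondary difficulty is making the cocycle computation of the third paragraph precise enough to pin down the map $\nu$ and to see that the correction appears exactly in the form ``$\nu$ applied to $\{|a|,j(F^{-1})\}_\theta$'', so that the centrality of $j(F^{-1})$ is precisely the hypothesis needed.
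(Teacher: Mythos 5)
Your global architecture matches the paper's: establish the identity for $\theta^{\rm exp}$, then transport it along $F^{-1}$ using the cocycle property of the divergence, and kill the correction term using centrality of $j(F^{-1})$. However, there are two genuine gaps.

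First, the base case. The paper does not prove $\delta^+_{\theta^{\rm exp}} = -\tdiv_{\mult}$ by doubling a loop into a parallel framed copy and reading $\delta^+$ off from $\kappa(b,b)$. It goes through the self-intersection map $\mu$: one shows that $\mu_{\theta^{\rm exp}}$ and $\tdiv^T_{\mult}$ satisfy the \emph{same product formula} (with $\kappa$, resp.\ $\Pi_{\mult}$, as the cross term) and both vanish on the generators $\gamma_i$, resp.\ $e^{x_i}$; hence they coincide. Then $\delta^+$ is recovered from $\mu_{\bullet *}$ and $\mu_{*\bullet}$, and $\tdiv$ from $\tdiv^T$ and $\underline{\tdiv}^T$. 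Your doubling heuristic conflates self-intersections of a loop with intersections of the loop with a push-off, which is exactly where framing and basepoint corrections do \emph{not} obviously cancel; as stated it is not a proof, and you correctly flag it as the main obstacle. (Note also that with the paper's conventions the exponential-expansion identity carries a sign: $\delta^+_{\theta^{\rm exp}} = -\tdiv_{\mult}$.)

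Second, and more decisively, the correction term in the conjugation step is misidentified. The cocycle identity gives
\begin{equation*}
\tdiv(\mathrm{Ad}_{F^{-1}} v) \;=\; F^{-1}\cdot\tdiv(v) \;-\; (\mathrm{Ad}_{F^{-1}}v)\cdot \tJ(F^{-1}),
\qquad \tJ(F^{-1})=\tde\big(j(F^{-1})\big)\in |A|\otimes|A|,
\end{equation*}
where the tangential derivation acts \emph{diagonally on the tensor square} $|A|\otimes|A|$. This is not the same as $\nu$ applied to $\{|a|,j(F^{-1})\}_\theta$: the map $\tde$ does not commute with the action of a general tangential derivation (it is only a morphism of $\der(L)$-modules, and $\{|a|,-\}_\theta$ does not preserve $L$). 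Consequently, $\{|a|,j(F^{-1})\}_\theta=0$ does \emph{not} by itself imply that the correction vanishes. The paper's argument needs the additional structural input that the center of $(|A|,\{-,-\}_\theta)$ is a sub-coalgebra, i.e.\ that $\tde$ sends central elements into the span of $(\text{center})\otimes(\text{center})$; this in turn relies on the explicit description of the center as spanned by $|x_i^k|$ and $|(F^{-1}\log(e^{x_1}\cdots e^{x_n}))^k|$ (Theorem \ref{thm:center} and Remark \ref{rem:center_theta}), a nontrivial result proved in the Appendix. Your proposal omits this ingredient entirely, so the step ``centrality $\Rightarrow$ correction vanishes'' is not justified as written.
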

In proving Theorem \ref{thm:intro_delta_div} we first show it for $\theta =\theta^{\rm exp}$ and then extend it for $F$'s solving the KV problem. In the course of the proof, we need the description of the center of the Lie bracket $\{ -, -\}_{\rm KKS}$. We give a direct algebraic proof of the fact that the center is spanned by elements of the form
$$
Z(|A|, \{ -, -\}_{\rm KKS}) = \sum_{i=0}^n \, |\mathbb{K}[[x_i]]|,
$$
where $x_0= -\sum_{i=1}^n x_i$.
Equation \eqref{eq:intro_key} shows that the cocycle property of the divergence cocycle implies the cocycle property of the cobracket $\delta^+_\theta$. However, it does not shed light on the co-Jacobi identity for $\delta^+_\theta$.

The structure of the paper is as follows. In Section \ref{sec:der}, we recall the notions of tangential and double derivations and introduce the space of cyclic words. In Section \ref{sec:div}, we discuss various versions of the divergence map and derive their multiplicative properties. In Section \ref{sec:db}, we consider double brackets (in the sense of van den Bergh) and study properties of composition maps such as \eqref{eq:intro_key}. In Section \ref{sec:GTLB}, we recall the topological definitions of the Goldman bracket, Turaev cobracket and of the maps $\kappa$ and $\mu$. In Section \ref{sec:mult}, we introduce the notion of expansions and define the transfer of structures to the Hopf algebra $A$. In Section \ref{sec:KVA}, we show that solutions of the Kashiwara-Vergne problem give rise to expansions with property $\delta^+_\theta=\delta^{\rm alg}$. In Section \ref{sec:KVL}, we prove Theorem \ref{thm:intro_KV}. Appendix A contains an algebraic proof of the description of the center of the KKS Lie bracket.

Our results were announced in \cite{short}. This paper focuses on the case of genus zero. The higher genus case is more involved, and it will be considered in the forthcoming paper \cite{next}.

\vskip 1em

{\bf Acknowledgements.}
We are grateful to G. Massuyeau
for sharing with us the results of his work. We thank B. Enriquez, S. Merkulov, P. Severa and A. Tsuchiya for fruitful discussions.
The work of A.A. and F.N. was supported in part by the grants 159581 and 165666 of the Swiss National Science Foundation, by the ERC grant MODFLAT and by the NCCR SwissMAP. Research of N.K. was supported in part by the grants JSPS KAKENHI 15H03617 and 24224002. Y.K. was supported in part by the grant JSPS KAKENHI 26800044.

%\section{Derivations, divergences and double brackets}

\section{Derivations on free associative/Lie algebras}
\label{sec:der}

In this section, we give basic definitions of derivations and double derivations on free associative and Lie algebras.

\subsection{Tangential derivations}

Let $\K$ be a field of characteristic $0$.
For a positive integer $n$, let $A=A_n$ be the degree completion of the free associative algebra over $\K$ generated by indeterminates $x_1,\ldots,x_n$.
The algebra $A$ has a complete Hopf algebra structure whose coproduct $\Delta$, augmentation $\varepsilon$, and antipode $\iota$ are defined by
\[
\Delta(x_i):=x_i\otimes 1+1\otimes x_i, \quad \varepsilon(x_i):=0, \quad \iota(x_i):=-x_i.
\]
Here and throughout the paper, we simply use $\otimes$ for the complete tensor product $\widehat{\otimes}$.

We denote by $A_{\ge 1}=\ker(\varepsilon)$ the positive degree part of $A_n$. The primitive part
$L=L_n:=\{ a\in A \mid \Delta(a)=a\otimes 1+1\otimes a\}$
of $A$ is naturally identified with the degree completion of the free Lie algebra over $\K$ generated by $x_1,\ldots,x_n$.
Sometimes we drop $n$ from the notation and simply use $A$ and $L$.

Following \cite{vdB08} \S 2.1, we consider the following natural $A$-bimodule structures on the algebra $A\otimes A$.
The \emph{outer} bimodule structure is given by
\begin{equation}
\label{eq:out}
c_1(a\otimes b)c_2:=c_1a\otimes bc_2
=(c_1\otimes 1)(a\otimes b)(1\otimes c_2),
\end{equation}
while the \emph{inner} bimodule structure is given by
\begin{equation}
\label{eq:in}
c_1*(a\otimes b)*c_2:=ac_2\otimes c_1b
=(1\otimes c_1)(a\otimes b)(c_2\otimes 1).
\end{equation}
Here, $a,b,c_1,c_2\in A$.

Let $\der(A)$ be the Lie algebra of (continuous) derivations on the algebra $A$.
Note that an element of $\der(A)$ is completely determined by its values on  generators $x_1,\ldots,x_n$.
We set
$$
\tder(A):=A^{\oplus n},
$$
and define a map $\rho: \tder(A) \to \der(A)$ such that for
 $u=(u_1,\ldots,u_n)\in \tder(A)$ we have
$$
\rho(u)(x_i):=[x_i,u_i].
$$
Then, we can equip $\tder(A)$ with a Lie bracket given as follows: for $u=(u_1,\ldots,u_n),v=(v_1,\ldots,v_n)\in \tder(A)$, we have $[u,v]=w$, where $w=(w_1,\ldots,w_n)$ and 
$$
w_i:=\rho(u)(v_i)-\rho(v)(u_i)+[u_i,v_i].
$$
The map
$\rho: \tder(A)\to \der(A), u\mapsto \rho(u)$
becomes a Lie algebra homomorphism.

\begin{rem}  \label{rem:kernel_rho}
The kernel of the map $\rho$ is spanned by elements of the form 
$u=(h_1(x_1), \dots, h_n(x_n))$, where $h_i(z) \in \mathbb{K}[[z]]$ are formal power series in one variable.
\end{rem}

\begin{rem}
The map $\rho$ lifts to an injective Lie algebra homomorphism
$\hat\rho: \tder(A) \to A^{\oplus n}\rtimes \der(A)$ 
defined by $\hat\rho(u) := (u, \rho(u)) $. The
target is a semi-direct product of Lie algebras 
$A^{\oplus n}$ and $\der(A)$, where
$\der(A)$ acts diagonally 
on the $n$-fold direct product $A^{\oplus n}$.
%For the definition of a semi-direct product of Lie algebras, see, for example,
%\cite{Bou71} ch.1, \S1, no.8.
\end{rem}

We define Lie algebras $\der(L)$ and $\tder(L)$ by replacing $A$ in the preceding paragraphs with the Lie algebra $L$.
In particular, $\tder(L)=L^{\oplus n}$ as a set.
In an obvious way, $\der(L)$ and $\tder(L)$ are Lie subalgebras of $\der(A)$ and $\tder(A)$, respectively.
We use the same notation $\rho: \tder(L) \to \der(L)$ for the restriction of $\rho$ to $\tder(L)$. The kernel of this map is spanned by the elements 
\begin{equation}  \label{eq:q_i}
q_i = (0, \dots, x_i, \dots, 0) \in \tder(L),    
\end{equation}
where $x_i$ is placed in the $i$th position.

\begin{rem}
Here we follow the notation in \cite{AET10}.
In \cite{AT12}, the notation is different and elements in the image of $\rho$ are called \emph{tangential} derivations.
%Although ``$\tder$'' stands for tangential derivations, elements of $\tder(A)$ are not actual derivations since the kernel of $\rho$ is non-trivial.
%See also \cite{AT12} Remark 3.3.
\end{rem}

\subsection{Double derivations}

\begin{dfn}
A \emph{double derivation} on $A$ is a $\K$-linear map $\phi:A\to A\otimes A$ satisfying
\[
\phi(ab)=\phi(a)b+a\phi(b)
\]
for any $a,b\in A$.
\end{dfn}

In the definition,  we make use of the outer bimodule structure (\ref{eq:out}).
Let $\D_A=\der(A,A\otimes A)$ be the set of double derivations on $A$.
It has a natural $A$-bimodule structure given by
\[
(c_1 \phi c_2)(a)=c_1*\phi (a)*c_2
\]
for $c_1,c_2\in A$ and $\phi\in \D_{A}$.
Here we use the inner bimodule structure (\ref{eq:in}). 

As an $A$-bimodule, $\D_A$ is free of rank $n$.
In fact, the map
$\D_A\to (A\otimes A)^{\oplus n}, \phi\mapsto (\phi(x_1),\ldots,\phi(x_n))$
is an isomorphism of $A$-bimodules where the $A$-bimodule structure on $A\otimes A$ is the inner one.
There is a basis of $\D_A$ given by double derivations $\pa_i, i = 1, \dots, n$ such that
$$
\partial_i(x_j)= \delta_{i,j}( 1 \otimes 1),
$$
where $\delta_{i,j}$ is the Kronecker $\delta$-symbol. 
It is convenient to use the Sweedler convention for the action of double derivations:
$\phi(a)  = \phi'(a) \otimes \phi''(a)$.
For instance, we write $\phi(x_i) = \phi'_i \otimes \phi''_i$ which implies $\phi=\sum_i \phi''_i \partial_i \phi'_i$.
We also write $\partial_i a = \partial'_i a \otimes \partial''_i a$. Using this notation, we obtain
\begin{equation}   \label{eq:phi(a)}
\phi(a) = \sum_i \phi''_i \partial_i \phi'_i (a) = \sum_i (\partial'_i a) \phi'_i \otimes  \phi''_i (\partial''_i a).
\end{equation}

The Lie algebra of derivations $\der(A)$ naturally acts on $A \otimes A$ as follows: for $u \in \der(A), a,b \in A$ we have
$ u(a \otimes b) = u(a) \otimes b + a \otimes u(b)$.
This implies that the space of double derivations  $\D_A$ carries a natural action of $\der(A)$,
$$
u: \phi \mapsto  [u, \phi] = u \circ \phi - \phi \circ u
= (u\otimes 1 + 1\otimes u) \circ \phi - \phi \circ u.
$$
The double derivation $[u, \phi]$ is completely determined by the $n$-tuple $[u, \phi](x_j)$ for $j=1, \dots, n$. For example, for $\phi=\partial_i$, we have
\begin{equation}    \label{partial_commute}
[u, \partial_i](x_j) = \delta_{i,j} u(1 \otimes 1) - \partial_i u(x_j) = - \partial'_i u(x_j) \otimes \partial''_i u(x_j).
\end{equation}

Let $\tD_A$ denote the $A$-bi-submodule of $\D_A$ generated by
$\ad_{x_i}\pa_i=x_i\pa_i-\pa_ix_i$, $i=1,\ldots,n$.
Note that 
$$
(\ad_{x_i}\pa_i) (x_j)=\delta_{i,j}(1\otimes x_i-x_i\otimes 1).
$$
Sometimes it is convenient to have a special notation for the injective map $i: \tD_A \to \D_A$ which simply sends an element $\phi \in \tD_A$ to the same tangential double derivation  $\phi$ but viewed as an element of $\D_A$.

In what follows we will need the following Lemma:
\begin{lem}
Let $C \in A \otimes A$ and assume that $Cx_i - x_i C=0$ for some $i$. Then, $C=0$.
\end{lem}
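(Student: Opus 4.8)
The plan is to expand $C$ in one tensor factor and extract a recursion that forces all coefficients to vanish by a degree argument. Write $C=\sum_w a_w\otimes w$, the (completed) sum running over words $w$ in $x_1,\dots,x_n$ with coefficients $a_w\in A$; this is just the identification $A\,\widehat\otimes\,A\cong\prod_p\bigoplus_{|w|=p}(A\otimes\K w)$, and every operation below respects the total grading, so there is no convergence issue. Using the outer bimodule structure \eqref{eq:out} we get $Cx_i=\sum_w a_w\otimes wx_i$ and $x_iC=\sum_w x_ia_w\otimes w$ (for the inner structure \eqref{eq:in} the argument is identical, the only change being on which side of which factor $x_i$ is appended).

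Next I would compare, for each word $u$, the coefficient of $\otimes u$ on the two sides of the hypothesis $Cx_i=x_iC$. On the left this coefficient equals $a_w$ when $u=wx_i$ ends in $x_i$, and $0$ otherwise; on the right it is always $x_ia_u$. Hence $a_w=x_ia_{wx_i}$ for every word $w$ (and, incidentally, $x_ia_u=0$ whenever $u$ does not end in $x_i$, a relation we will not even need).

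Finally, iterating $a_w=x_ia_{wx_i}$ gives $a_w=x_i^{\,k}a_{wx_i^{\,k}}$ for all $k\ge 1$. Since $a_{wx_i^{\,k}}\in A$ has no component of negative degree, $x_i^{\,k}a_{wx_i^{\,k}}$ has no component of degree $<k$; as $k$ is arbitrary, $a_w=0$. Therefore $C=\sum_w a_w\otimes w=0$. There is essentially no obstacle: the only inputs are that $A$ is degree-completed (so the degree estimate applies) and that left multiplication by $x_i$ raises degree by one. Alternatively, one may run a short two-step induction on the bidegree of the components $C_{p,q}\in A_p\otimes A_q$, using that $c\mapsto cx_i$ is injective on each finite-dimensional piece $A_q$; this yields the same conclusion.
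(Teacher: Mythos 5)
Your proof is correct, but it is organized differently from the paper's. The paper introduces the bidegree on $A\otimes A$, reduces to a fixed total degree $l$, and runs a finite descending induction: the bidegree-$(k{+}1,l{-}k)$ component of the hypothesis reads $C^{k+1,l-k-1}x_i=x_iC^{k,l-k}$, so starting from the vacuous top case one gets $x_iC^{l,0}=0$, hence $C^{l,0}=0$ by injectivity of left multiplication by $x_i$, and then $C^{l-1,1}=0$, etc. You instead expand $C$ over the word basis in the second tensor factor, extract the single recursion $a_w=x_ia_{wx_i}$, and iterate it infinitely, killing each $a_w$ because $x_i^k a_{wx_i^k}$ lives in degrees $\ge k$. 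Both arguments are elementary degree arguments and both are valid; the paper's version stays inside one finite-dimensional total-degree piece and needs only injectivity of multiplication by $x_i$ in the free algebra, whereas yours trades that finite induction for an unbounded iteration that leans on the degree-completeness of $A$ (and on the legitimacy of the coefficientwise comparison in $A\widehat\otimes A$, which you correctly justify). The two-step induction on bidegree you sketch at the end is essentially the paper's proof, up to exchanging the roles of the two tensor factors.
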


\begin{proof}
Introduce a bi-degree corresponding to the degrees in the first and second copies of $A$ in $A \otimes A$.
Since the equation $Cx_i - x_iC=0$ is homogeneous, we can assume that $C$ is of some total degree $l$.
Then, $C=\sum_{k=0}^l C^{k, l-k}$ splits into a sum of terms with given bi-degrees. The equation $Cx_i - x_iC=0$ reads
$$
C^{k+1, l-k-1} x_i - x_i C^{k, l-k} =0.
$$
In particular, for $k=l$ the equation yields  $x_i C^{l,0}=0$ which implies $C^{l,0}=0$. For $k=l-1$ we obtain $0=C^{l,0} x_i - x_i C^{l-1,1}$ which implies $C^{l-1, 1}=0$. By induction, we get $C^{k, l-k}=0$ for all $k$ and $C=0$, as required.
\end{proof}

\begin{lem}
\label{lem:ali}
$\tD_A \subset \D_A$ is a free $A$-bimodule of rank $n$ with a basis $\{ \ad_{x_i}\pa_i\}_{i=1}^n$.
 \end{lem}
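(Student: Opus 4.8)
The plan is to show that the tautological $A$-bimodule surjection from the free $A$-bimodule of rank $n$ onto $\tD_A$ is in fact an isomorphism; the injectivity will be reduced to the Lemma just proved. Recall that $\phi\mapsto(\phi(x_1),\dots,\phi(x_n))$ identifies $\D_A$ with $(A\otimes A)^{\oplus n}$, where each summand carries the inner bimodule structure \eqref{eq:in}. Since $\tD_A$ is, by definition, the sub-bimodule of $\D_A$ generated by $\ad_{x_1}\pa_1,\dots,\ad_{x_n}\pa_n$, there is a surjective homomorphism of $A$-bimodules
$$
\Phi:\bigoplus_{i=1}^n (A\otimes A)\longrightarrow \tD_A,\qquad (\xi_1,\dots,\xi_n)\longmapsto \sum_{i=1}^n \xi_i\cdot(\ad_{x_i}\pa_i),
$$
where the source carries the outer bimodule structure and $\xi_i\cdot(\ad_{x_i}\pa_i)$ denotes $a(\ad_{x_i}\pa_i)b$ for $\xi_i=a\otimes b$. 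Proving Lemma \ref{lem:ali} amounts exactly to proving that $\Phi$ is injective.

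The first step is to compute $\Phi$ on generators. A double derivation is determined by its values on $x_1,\dots,x_n$, so $\Phi(\xi_1,\dots,\xi_n)=0$ if and only if $\Phi(\xi_1,\dots,\xi_n)(x_j)=0$ for every $j$. Using $(\ad_{x_i}\pa_i)(x_j)=\delta_{i,j}(1\otimes x_i-x_i\otimes 1)$, the terms with $i\neq j$ vanish, and for $\xi_j=\sum_k a_k\otimes b_k$ the inner bimodule structure gives
$$
\Phi(\xi_1,\dots,\xi_n)(x_j)=\sum_k\bigl(b_k\otimes a_k x_j-x_j b_k\otimes a_k\bigr)=\tau(\xi_j)\,x_j-x_j\,\tau(\xi_j),
$$
where $\tau:A\otimes A\to A\otimes A$ is the $\K$-linear flip $a\otimes b\mapsto b\otimes a$ and the products on the right-hand side are outer. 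Consequently $\ker\Phi=\bigoplus_j\ker\sigma_j$, where $\sigma_j:A\otimes A\to A\otimes A$ is $\sigma_j(C):=\tau(C)x_j-x_j\tau(C)$.

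It then remains to see that each $\sigma_j$ is injective. Since $\tau$ is a linear isomorphism, $\sigma_j(C)=0$ forces $\tau(C)x_j-x_j\tau(C)=0$, and applying the Lemma just proved (with $\tau(C)$ in the role of $C$ and $i=j$) yields $\tau(C)=0$, hence $C=0$. Therefore $\Phi$ is injective, hence an isomorphism of $A$-bimodules, which is precisely the statement that $\tD_A$ is a free $A$-bimodule of rank $n$ with basis $\{\ad_{x_i}\pa_i\}_{i=1}^n$.

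I expect the only delicate point to be bookkeeping: one must keep the inner and outer bimodule structures carefully separated when computing $\Phi(-)(x_j)$, and recognize the resulting expression as $\tau(\xi_j)x_j-x_j\tau(\xi_j)$ so that the preceding Lemma applies. Once this identification is made there is no genuine obstacle — the content of the Lemma is entirely contained in the injectivity statement $Cx_i-x_iC=0\Rightarrow C=0$ already established, transported through the flip $\tau$. (All maps are continuous for the degree filtration, so nothing changes upon passing to the completed tensor product.)
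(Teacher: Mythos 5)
Your proof is correct and follows essentially the same route as the paper: apply a purported relation $\sum_i \xi_i\cdot(\ad_{x_i}\pa_i)=0$ to each generator $x_j$, observe that the inner bimodule action turns this into $(\xi_j)^{\circ}x_j-x_j(\xi_j)^{\circ}=0$ (the paper's $(a\otimes b)^{\circ}=b\otimes a$ is your $\tau$), and invoke the preceding lemma to conclude $\xi_j=0$. The only difference is presentational — you package the argument as injectivity of an explicit bimodule surjection $\Phi$, while the paper works directly with a general element $\sum_i C_i'(\ad_{x_i}\pa_i)C_i''$ — but the computation and the key input are identical.
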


\begin{proof}
Assume that  $\sum_i C'_i (\ad_{x_i}\pa_i) C''_i=0$ for some $C_i=C'_i \otimes C''_i \in A \otimes A, i=1, \dots, n$.
By applying this double derivation to $x_j$, we obtain $(C_j)^{\circ}x_j-x_j(C_j)^{\circ}=0$, where $(a\otimes b)^{\circ}=b\otimes a$ for $a,b\in A$. Hence, $C_j=0$ for all $j$, as required.
\end{proof}

As a sample computation, we introduce an element of $\tD_A$ which will be used later.

\begin{lem}
\label{lem:phi_0}
Let $\phi_0:= \sum_i \ad_{x_i}\pa_i \in \tD_A$.
Then, $\phi_0(a)=1\otimes a-a\otimes 1$ for all $a\in A$.
\end{lem}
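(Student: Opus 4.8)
The plan is to observe that both sides of the asserted identity are continuous double derivations on $A$, and then to check that they agree on the generators $x_1,\ldots,x_n$; since the evaluation map $\D_A\to (A\otimes A)^{\oplus n}$, $\phi\mapsto(\phi(x_1),\ldots,\phi(x_n))$, is an isomorphism (as recalled after the definition of $\D_A$), agreement on generators forces agreement on all of $A$.

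First I would check that $\psi\colon A\to A\otimes A$, $a\mapsto 1\otimes a-a\otimes 1$, is a double derivation for the outer bimodule structure \eqref{eq:out}. This is a one-line computation: $\psi(a)b+a\psi(b)=(1\otimes ab-a\otimes b)+(a\otimes b-ab\otimes 1)=1\otimes ab-ab\otimes 1=\psi(ab)$, and $\psi$ is visibly $\K$-linear and continuous. On the other side, $\phi_0=\sum_i\ad_{x_i}\pa_i$ is a double derivation, being a finite sum of the double derivations $\ad_{x_i}\pa_i=x_i\pa_i-\pa_ix_i\in\tD_A\subset\D_A$. Evaluating on a generator and using $(\ad_{x_i}\pa_i)(x_j)=\delta_{i,j}(1\otimes x_i-x_i\otimes 1)$ gives $\phi_0(x_j)=1\otimes x_j-x_j\otimes 1=\psi(x_j)$ for all $j$.

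Since $\phi_0$ and $\psi$ are double derivations that coincide on the generators, they coincide on $A$, which is the claim. I do not expect any real obstacle here; the only point requiring a moment's care is that the identification of double derivations with their values on generators already incorporates continuity, so the equality $\phi_0(x_j)=\psi(x_j)$ indeed propagates from the free algebra to its degree completion $A$.
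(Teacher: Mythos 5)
Your proposal is correct and follows exactly the paper's own argument: both $\phi_0$ and $a\mapsto 1\otimes a-a\otimes 1$ are double derivations that agree on the generators $x_i$, hence coincide on $A$. The only difference is that you spell out the verification that $a\mapsto 1\otimes a-a\otimes 1$ is a double derivation for the outer bimodule structure, which the paper leaves implicit.
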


\begin{proof}
Both $\phi_0:= \sum_i \ad_{x_i}\pa_i$ and the map $a \mapsto 1\otimes a - a \otimes 1$ are double derivations on $A$.
Since $\phi_0(x_i)=1 \otimes x_i - x_i \otimes 1$ they agree on generators of $A$, and therefore they coincide.
\end{proof}

Through the map $\rho$, the Lie algebra $\tder(A)$ acts on $\D_A$.
\begin{lem} \label{lem:tdertD}
The action of $\tder(A)$ on $\D_A$ preserves $\tD_A$.
\end{lem}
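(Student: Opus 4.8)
The plan is to reduce the claim to a computation on the generators $\ad_{x_i}\pa_i$ of the $A$-bimodule $\tD_A$, using the fact (Lemma \ref{lem:ali}) that these form a basis and that the $\tder(A)$-action interacts in a controlled way with the bimodule structure. Since $\tD_A$ is $A$-bilinearly spanned by the $\ad_{x_i}\pa_i$, and since for $u\in\tder(A)$ and $c_1,c_2\in A$ one has the Leibniz-type identity
\[
[\rho(u), c_1\phi c_2] = \rho(u)(c_1)\,\phi\, c_2 + c_1\,[\rho(u),\phi]\, c_2 + c_1\,\phi\,\rho(u)(c_2)
\]
(which follows directly from the definition of the $\der(A)$-action on $\D_A$ together with the inner bimodule structure \eqref{eq:in}), it suffices to show that $[\rho(u), \ad_{x_i}\pa_i] \in \tD_A$ for each $i$ and each $u = (u_1,\dots,u_n)\in\tder(A)$.

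To verify this, I would compute the double derivation $[\rho(u), \ad_{x_i}\pa_i]$ by evaluating it on the generators $x_j$ and then re-expressing the result in terms of the basis $\{\ad_{x_k}\pa_k\}$. Using $(\ad_{x_i}\pa_i)(x_j) = \delta_{i,j}(1\otimes x_i - x_i\otimes 1)$ and the formula for the action of $\rho(u)$ on $A\otimes A$, one gets
\[
[\rho(u), \ad_{x_i}\pa_i](x_j) = \delta_{i,j}\bigl(1\otimes \rho(u)(x_i) - \rho(u)(x_i)\otimes 1\bigr) - (\ad_{x_i}\pa_i)\bigl(\rho(u)(x_j)\bigr).
\]
Now substitute $\rho(u)(x_i) = [x_i, u_i] = x_i u_i - u_i x_i$ and expand. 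The first term on the right becomes $\delta_{i,j}\bigl(1\otimes(x_iu_i - u_ix_i) - (x_iu_i - u_ix_i)\otimes 1\bigr)$, which one should recognize, using $\phi_0$ from Lemma \ref{lem:phi_0} and the identity $(\ad_{x_i}\pa_i)(a) = (\pa'_i a)\otimes x_i(\pa''_i a) - (\pa'_i a)x_i\otimes(\pa''_i a)$ evaluated appropriately, as coming from an element of $\tD_A$; likewise the second term, being $\ad_{x_i}\pa_i$ applied to $\rho(u)(x_j)\in A$, produces something of the form $C'\ad_{x_i}\pa_i C''$ plus the correction terms from $\pa_i$ differentiating the explicit $x_i$'s and $u_i$'s inside $\rho(u)(x_j)$. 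Collecting everything and matching against the basis $\{\ad_{x_k}\pa_k\}_{k=1}^n$ (which is legitimate precisely because of Lemma \ref{lem:ali}) shows the commutator lies in $\tD_A$.

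The main obstacle is purely bookkeeping: one must carefully track how the double derivation $\pa_i$ differentiates the word $\rho(u)(x_j)$, which itself contains occurrences of $x_i$ (coming from the bracket $[x_i,u_i]$ when $j=i$, and from $u_j$ when $x_i$ appears as a letter in $u_j$), and then reassemble the resulting sum of terms in $A\otimes A$ into an $A$-bilinear combination of the $\ad_{x_k}\pa_k$. The natural way to make this clean — and the route I would actually take in the write-up — is to compute on the single universal element $\phi_0 = \sum_i\ad_{x_i}\pa_i$ first (using Lemma \ref{lem:phi_0}, which gives $\phi_0(a) = 1\otimes a - a\otimes 1$), show $[\rho(u),\phi_0]\in\tD_A$ directly from that closed formula, and then handle the general $\ad_{x_i}\pa_i$ by an analogous but indexed argument; the bimodule-Leibniz identity above then upgrades this from generators to all of $\tD_A$.
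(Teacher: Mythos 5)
Your proposal is correct and follows essentially the same route as the paper: the paper likewise evaluates $[\rho(u),\phi](x_i)$ for a general $\phi=\sum_i c_i'(\ad_{x_i}\pa_i)c_i''$ on the generators and exhibits the result as the value of an explicit element of $\tD_A$, namely $u(c'_i)(\ad_{x_i}\pa_i)c''_i+c'_i(\ad_{x_i}\pa_i)u(c''_i)+\phi''(u_i)(\ad_{x_i}\pa_i)\phi'(u_i)+c_i'(\ad_{x_i}\pa_i)u_ic_i''-c_i' u_i(\ad_{x_i}\pa_i)c_i''$. Your preliminary reduction to the generators $\ad_{x_i}\pa_i$ via the bimodule Leibniz identity is a harmless reorganization of the same computation; just note that $[\rho(u),\ad_{x_i}\pa_i]$ involves \emph{all} $n$ basis elements $\ad_{x_k}\pa_k$ (through the terms $-(\ad_{x_i}\pa_i)(\rho(u)(x_k))$ for $k\neq i$), which your step of matching against the full basis correctly anticipates.
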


\begin{proof}
Let $u\in \tder(A)$ and $\phi=\sum_i c_i'(\ad_{x_i}\pa_i) c_i''\in \tD_A$. Then, by a straightforward computation, we see that
$(u\cdot \phi)(x_i)=[\rho(u),\phi](x_i) =u\cdot \phi(x_i)-\phi(u(x_i))$ is equal to
the value at $x_i$ of the following double derivation
\[
u(c'_i)(\ad_{x_i}\pa_i)c''_i+c'_i(\ad_{x_i}\pa_i)u(c''_i)
+\phi''(u_i)(\ad_{x_i}\pa_i)\phi'(u_i)
+c_i'(\ad_{x_i}\pa_i)u_ic_i''-c_i' u_i(\ad_{x_i}\pa_i)c_i''.
\]
This proves the assertion.
\end{proof}

\subsection{Cyclic words and extra parameters}

For a (topological) $A$-bimodule $N$, let $[A,N]$ be (the closure of) the $\K$-linear subspace of $A$ spanned by elements of the form $an-na$, where $a\in A$ and $n\in N$. 
Set $|N|:=N/[A,N]$ and let $|\ |:N\to |N|$ be the natural projection. 
Since $A$ is naturally an $A$-bimodule, we can apply the construction described above to obtain
$$
|A|:=A/[A,A].
$$
Note that $|A|$ is also isomorphic to the $\mathbb{K}$-span of cyclic words in letters $x_1, \dots, x_n$. Examples of such words are $|x_1|$, 
$|x_1x_2|=|x_2x_1|$ {\em etc}.

The space of double derivations $\D_A$ is also an $A$-bimodule. 

\begin{lem}
The map
$$
\phi \mapsto |\phi|: a \mapsto \phi'(a) \phi''(a)
$$
establishes an isomorphism $|\D_A| \cong \der(A)$.
\end{lem}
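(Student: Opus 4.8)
The plan is to construct an explicit inverse map and check that the two composites are the identity. Given a double derivation $\phi \in \D_A$, the proposed "trace" $|\phi|$ is the linear map $A \to A$ sending $a \mapsto \phi'(a)\phi''(a)$, i.e. the composition of $\phi: A \to A \otimes A$ with the multiplication map $m: A \otimes A \to A$. First I would verify that $|\phi|$ is indeed a derivation of $A$: applying $m$ to the Leibniz rule $\phi(ab) = \phi(a)b + a\phi(b)$ (with respect to the \emph{outer} bimodule structure \eqref{eq:out}) gives $m(\phi(ab)) = m(\phi(a))\,b + a\,m(\phi(b))$, which is exactly the derivation property. Next I would check that this construction descends to $|\D_A|$, that is, that $|c_1 \phi c_2|$ depends only on the class of $c_1 c_2$ (equivalently $c_2 c_1$) modulo $[A,A]$ — actually the cleaner statement is that for $\phi$ and $c\phi - \phi c$ one gets derivations differing by... let me be careful: one must show $|x\phi| = |\phi x|$ as derivations when we pass to the quotient, i.e. that the image of $[A, \D_A]$ under $\phi \mapsto |\phi|$ is zero in $\der(A)$. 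Using the inner bimodule structure \eqref{eq:in}, $(c_1 \phi c_2)(a) = \phi'(a)c_2 \otimes c_1 \phi''(a)$, so $|c_1\phi c_2|(a) = \phi'(a)c_2 c_1 \phi''(a)$; thus $|x\phi|(a) - |\phi x|(a) = \phi'(a)(x\cdot 1)\phi''(a) - \phi'(a)(1\cdot x)\phi''(a)$ — wait, I need $\phi'(a) x \phi''(a) - \phi'(a)x\phi''(a)$; the point is the difference $| [\ad_x \text{-something}]|$ vanishes because $xc - cx$ contributes $\phi'(a) \cdot x c \cdot \phi''(a) - \phi'(a)\cdot cx \cdot \phi''(a)$, and since $x$ sits between $\phi'(a)$ and $\phi''(a)$ one rearranges to get a commutator in $A$, hence zero in $\der(A)$. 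This is the computation I would write out carefully.

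For surjectivity and injectivity, I would use the free-module structure. Recall $\D_A$ is a free $A$-bimodule of rank $n$ on the basis $\partial_1, \dots, \partial_n$, and every $\phi$ can be written $\phi = \sum_i \phi_i'' \partial_i \phi_i'$ where $\phi(x_i) = \phi_i' \otimes \phi_i''$. Hence $|\D_A|$ is spanned, as a $\K$-module, by classes of the form $|a \partial_i b|$, and using the commutator relation modulo $[A, \D_A]$ every such class reduces to $|c \,\partial_i|$ for $c \in A$ — so $|\D_A|$ is generated by $\{|c\,\partial_i| : c \in A, i = 1,\dots,n\}$ modulo the relations coming from $[A,\D_A]$. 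On the other side, $\der(A)$ is freely described by the values on generators: a derivation $D$ is determined by $(D(x_1), \dots, D(x_n)) \in A^{\oplus n}$ and any such tuple occurs. I would compute $|c\,\partial_i|(x_j) = \delta_{ij}\, c$ — more precisely, using \eqref{eq:phi(a)} or directly, $(c\,\partial_i)(x_j) = \partial_i'(x_j)\, c \otimes \,\partial_i''(x_j)$, hmm, I should instead use that $c\partial_i$ acts by $(c \partial_i)(x_j) = \delta_{ij}(1 \cdot c \otimes 1)$ in the inner structure giving $|c\partial_i|(x_j) = \delta_{ij} c$ — so under $\phi \mapsto |\phi|$, the class $|c \partial_i|$ maps to the derivation $x_j \mapsto \delta_{ij} c$. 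These derivations visibly span $\der(A)$ (giving surjectivity) and are linearly independent over $\K$ as $c$ ranges over $A$ (giving injectivity), provided we have already identified $|\D_A|$ with $\bigoplus_{i=1}^n A$ via $|\phi| \leftrightarrow (\text{components})$.

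The main obstacle, and the step deserving the most care, is establishing that $|\D_A| \cong A^{\oplus n}$ \emph{as a $\K$-module} via $|\phi| \mapsto (\text{value on } x_1, \dots, x_n \text{ of the associated derivation})$ — equivalently, showing that the relations defining $[A, \D_A]$ inside $\D_A$ cut $\D_A \cong (A \otimes A)^{\oplus n}$ down to exactly $A^{\oplus n}$ and no further. Concretely: writing a general element as $\sum_i a_i \partial_i b_i$, the subspace $[A, \D_A]$ is spanned by $c(a_i\partial_i b_i) - (a_i \partial_i b_i)c$; in the inner bimodule structure this is $a_i \partial_i (b_i c) - (a_i c')\dots$ — one must check that modulo these relations the coefficient $a_i \otimes b_i \in A \otimes A$ is reduced precisely to $b_i a_i \in A$ (multiplication), and that two elements with the same such reduced coefficients are equal modulo $[A,\D_A]$. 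This is a tensor-algebra bookkeeping argument entirely parallel to the identification $|A \otimes A| = A$; I expect it to go through cleanly but it is where the real content sits, so I would present it as the core lemma, with the derivation property and the well-definedness as the easy bracketing steps around it.
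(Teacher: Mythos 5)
Your proposal is correct and follows essentially the same route as the paper: well-definedness via the inner-bimodule computation $|c_1\phi c_2|(a)=\phi'(a)c_2c_1\phi''(a)$, surjectivity via $|a_i\partial_i|(x_j)=\delta_{ij}a_i$, and injectivity from the freeness of $\D_A$ as a rank-$n$ bimodule. The only difference is one of emphasis: you spell out the identification $|A\otimes A|\cong A$ (inner structure) underlying the injectivity step, which the paper compresses into the one-line assertion that $\{\sum_i a_i\partial_i\}$ is a section of $\D_A\to|\D_A|$.
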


\begin{proof}
First, observe that
$$
|b\phi - \phi b|: a \mapsto \phi'(a) (b\phi''(a)) - (\phi'(a)b) \phi''(a) =0
$$
for any $\phi\in \D_A$ and $b\in A$.
Therefore, the map $\phi \mapsto |\phi|$ descends to a map $|\D_A| \to \der(A)$. This map is surjective since 
$\sum_i |a_i \partial_i|: x_i \mapsto a_i$
for arbitrary $a_i \in A$, and hence we obtain all derivations of $A$ in that way. Finally, this map is injective since $\D_A$ is a free $A$-bimodule of rank $n$ with generators $\partial_i$ and $\{ \sum_i a_i \partial_i; \,\, a_i \in A\} \subset \D_A$
is a section of the canonical projection $\D_A \to |\D_A|$.
\end{proof}

\begin{exple}  \label{example:inner}
Let $a \in A$ and consider the derivation $u=|a \phi_0|=|a \sum_i {\rm ad}_{x_i} \partial_i|$. By computing it on generators, we obtain
$$
u(x_j)= [a, x_j].
$$
Hence, it is an inner derivation with generator $a$.
\end{exple}

For the $A$-bimodule $\tD_A$, we define the map
$$
\psi =\sum_i c'_i(\ad_{x_i}\pa_i)c''_i \mapsto |\psi|= - (c''_1c'_1,\ldots, c''_nc'_n) \in \tder(A)
$$
which defines an isomorphism $|\tD_A| \to \tder(A)$. It is easy to check that $\rho(|\psi|) = |i(\psi)|$ for all $\psi \in \tD_A$. 

\begin{lem} \label{lem:tD_A_equiv}
The map $\tD_A \to \tder(A), \psi \mapsto |\psi|$ is $\tder(A)$-equivariant.
\end{lem}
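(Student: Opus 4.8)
The plan is to verify directly that the isomorphism $|\tD_A| \cong \tder(A)$, $\psi \mapsto |\psi|$, intertwines the two $\tder(A)$-actions: the one on $\tD_A$ induced through $\rho$ (as in Lemma \ref{lem:tdertD}), and the adjoint action of $\tder(A)$ on itself. So for $u \in \tder(A)$ and $\psi \in \tD_A$ I need to show $|u \cdot \psi| = [u, |\psi|]$, where on the left $u$ acts via $\rho(u)$ on the bimodule $\tD_A$, and on the right we use the Lie bracket on $\tder(A)$.

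First I would reduce to a convenient generating set. Since every element of $\tD_A$ is an $A$-bimodule combination of the $\ad_{x_i}\pa_i$, and since the map $\psi \mapsto |\psi|$ is linear, it suffices to check equivariance on elements of the form $\psi = c'(\ad_{x_i}\pa_i)c''$ for a fixed $i$ and $c', c'' \in A$; by the explicit formula, $|\psi| = -(0,\dots, c''c', \dots, 0)$ with $c''c'$ in the $i$-th slot. On the $\tder(A)$ side, I would use the fact (noted in the excerpt) that $\rho(|\psi|) = |i(\psi)|$, so the bracket $[u, |\psi|] \in \tder(A)$ is computed from its defining formula $w_i = \rho(u)(v_i) - \rho(v)(u_i) + [u_i, v_i]$ with $v = |\psi|$.

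The computational core is to take the five-term expression for $(u\cdot\psi)(x_i)$ from the proof of Lemma \ref{lem:tdertD},
\[
u(c'_i)(\ad_{x_i}\pa_i)c''_i+c'_i(\ad_{x_i}\pa_i)u(c''_i)
+\phi''(u_i)(\ad_{x_i}\pa_i)\phi'(u_i)
+c_i'(\ad_{x_i}\pa_i)u_ic_i''-c_i' u_i(\ad_{x_i}\pa_i)c_i'',
\]
apply $|\cdot|$ to it, and using $|c'(\ad_{x_i}\pa_i)c''| = -(0,\dots,c''c',\dots,0)$ collect the $i$-th components. Term by term this produces $-u(c''_i)c'_i - c''_i u(c'_i) - \phi'(u_i)\phi''(u_i) - c''_i u_i c'_i + c''_i c'_i u_i$. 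The first two terms combine, via the Leibniz rule for the derivation $\rho(u)$, into $-\rho(u)(c''_i c'_i) = \rho(u)(v_i)$ where $v_i$ is the $i$-th component of $-|\psi|$ — wait, with signs: $v_i = -c''_i c'_i$, so $-u(c''_i)c'_i - c''_i u(c'_i) = \rho(u)(v_i)$. The middle term $-\phi'(u_i)\phi''(u_i)$ is, by definition of the derivation associated to a single double derivation, $-\rho(v)(u_i)$ applied appropriately — more precisely it matches the $-\rho(v)(u_i)$ contribution since $|\psi|$ as a tangential derivation sends $x_i \mapsto [x_i, v_i]$ and the cross term is exactly $\phi'(u_i)\phi''(u_i)$ with $\phi = i(\psi)$. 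Finally $-c''_i u_i c'_i + c''_i c'_i u_i = [v_i, u_i] \cdot(\text{sign}) = [u_i, v_i]$ after tracking the sign from $v_i = -c''_ic'_i$. Assembling, the $i$-th component of $|u\cdot\psi|$ equals $\rho(u)(v_i) - \rho(v)(u_i) + [u_i, v_i]$, which is exactly the $i$-th component of $[u, v] = [u, |\psi|]$.

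The main obstacle I anticipate is purely bookkeeping: getting every sign right, since the isomorphism $|\psi| = -(c''_1 c'_1, \dots)$ carries a minus sign, the inner bimodule structure swaps tensor factors, and the bracket on $\tder(A)$ has its own sign conventions. I would handle this by fixing the conventions at the outset and checking the identity first on the simplest nontrivial case $\psi = \ad_{x_i}\pa_i$ (so $|\psi| = -q_i$ in the notation of the excerpt, extended to $A$) against $u = a\phi_0$ for $a \in A$, using Example \ref{example:inner} and Lemma \ref{lem:phi_0}, where both sides can be computed by hand; then the general case follows by bimodule-linearity. A clean alternative that sidesteps much of the sign-chasing is to observe that $\hat\rho: \tder(A) \to A^{\oplus n} \rtimes \der(A)$ is an injective Lie algebra map, and that both $|u\cdot\psi|$ and $[u,|\psi|]$ have the same image under $\hat\rho$ — the $\der(A)$-component being forced by $\rho(|\psi|) = |i(\psi)|$ together with $\tder(A)$-equivariance of $|\D_A| \cong \der(A)$ (which follows from $|\cdot|$ being a map of $\tder(A)$-modules in the previous lemmas), and the $A^{\oplus n}$-component being a short direct computation — so injectivity of $\hat\rho$ then gives the result.
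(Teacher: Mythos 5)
Your proposal follows the paper's proof essentially verbatim: apply $|\cdot|$ to the five-term expression from the proof of Lemma \ref{lem:tdertD} and match the result, slot by slot, with the defining formula $[u,v]_i=\rho(u)(v_i)-\rho(v)(u_i)+[u_i,v_i]$ for $v=|\psi|$. The only blemish is a transcription slip in the fourth term, where $|c'_i(\ad_{x_i}\pa_i)u_ic''_i|$ contributes $-u_ic''_ic'_i$ (not $-c''_iu_ic'_i$) to the $i$-th component; with that corrected the last two terms combine to $[u_i,v_i]$ exactly as you assert.
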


\begin{proof}
Let $u\in \tder(A)$ and $\phi=\sum_i c'_i (\ad_{x_i}\pa_i) c''_i \in \tD_A$. The coefficient of $\ad_{x_i}\pa_i$ in $u\cdot \phi \in \tD_A$ is computed in the proof of Lemma \ref{lem:tdertD}.
Then the $i$th component of $|u\cdot \phi|\in \tder(A)$ is given by
\begin{align*}
& -\left( c''_iu(c'_i)+u(c''_i)c'_i+\phi'(u_i)\phi''(u_i)+u_ic''_ic'_i -c''_ic'_iu_i \right) \\
=& u(-c''_i c'_i)-|\phi| (u_i)+[u_i, -c''_i c'_i] \\
=& u(|\phi|_i)-|\phi|(u_i)+[u_i,|\phi|_i] \\
=& [u,|\phi|]_i.
\end{align*}
This proves that $|u\cdot \phi|=[u,|\phi|]$, as required.
\end{proof}

Let $A^T$ be the degree completion of the free associative algebra over $\K$ generated by $x_1,\ldots,x_n$, and $T$.
The algebra $A^T$ is isomorphic to $A_{n+1}$, but in what follows the extra generator $T$ will play a different role from the other generators. Define an injective map $\D_A \to \der(A^T)$ defined by formula
$$
\phi \mapsto \phi^T: a \mapsto \phi'(a) T \phi''(a)
$$
for $a \in A$ and $\phi^T(T) =0$. The image of this map consists of derivations of $A^T$ which are linear in $T$ and which vanish on the generator $T$. The map $\phi \mapsto \phi^T$ commutes with the natural actions of $\der(A)$ on $\D_A$ and on $\der(A^T)$.
By composing with the evaluation map ${\rm ev}_1$ (which puts the extra generator $T$ equal 1), we obtain a derivation of $A$:
$a \mapsto ev_1(\phi^T(a))$ which coincides with $|\phi|$.

\begin{rem}  \label{rem:double_acts}
Let $\phi \in \D_A$ be a double derivation. The derivation $\phi^T \in \der(A^T)$ descends to an endomorphism of $|A^T|$ and it maps $|A| \subset |A^T|$ to cyclic words linear in $T$. But the space of cyclic words linear in $T$ is isomorphic to $A$ via the map $a \mapsto |Ta|$. Hence, the double derivation $\phi$ defines a map $\phi: |A| \to A$ by formula
$$
\phi: |a| \mapsto \phi^T(|a|) = |\phi'(a)T\phi''(a)| \mapsto \phi''(a)\phi'(a).
$$
\end{rem}

Similarly, we define a map $\tD_A \to \tder(A^T)$ by formula
$$
\psi=\sum_i c'_i(\ad_{x_i}\pa_i)c''_i \mapsto \psi^T:= - (c''_1Tc'_1,\ldots, c''_nTc'_n,0).
$$
As before, it is equivariant under the action of $\tder(A)$ on $\tD_A$ and under the adjoint action of $\tder(A)$ on itself.
Again, it is easy to check that $\rho(\psi^T) = i(\psi)^T$.

%\begin{lem}
%For all $\phi \in \tder(A)$, one has $\rho(\phi^T) = \phi_T$ and $\rho(u^\phi) = |\phi|$.
%\end{lem}
%
%\begin{proof}
%The proof is by direct computation:
%
%$$
%\phi_T(x_i) = c''_i T c'_i x_i - x_i c''_i T c'_i = - [x_i, c''_i T c'_i] = \rho(\phi^T)(x_i).
%$$
%Putting $T=1$ yields $\rho(u^\phi) = |\phi|$.
%\end{proof}

\subsection{Integration to groups}

Let $\mathfrak{g}$ be a non-negatively graded Lie algebra and assume that the degree zero part $\mathfrak{g}_0$ is contained in the center of $\mathfrak{g}$. Then, one can use the Baker-Campbell-Hausdorff formula to define a group law on $\mathfrak{g}$. We denote the corresponding group by $\mathcal{G}$.  It is convenient to use the multiplicative notation for the group elements: to $u\in \mathfrak{g}$ corresponds the group element $e^u \in \mathcal{G}$ and $e^u e^v= e^{u * v}$, where
$$
u*v = {\rm bch}(u,v)=\log(e^u e^v)= u+ v + \frac{1}{2} [u, v] + \dots 
$$

Lie algebras $\der^+(A)$ (derivations of $A$ of positive degree), $L$ and $\tder(L)$ are positively graded pro-nilpotent Lie algebras. Lie algebras $A$ (with Lie bracket the commutator) and $\tder(A)$ are non-negatively graded, and their zero degree parts are contained in the center. Hence, the BCH formula defines a group law on all of them.  We denote the groups integrating $A$ and $L$ by $\exp(A)$ and $\exp(L)$, respectively. For the integration of $\tder(A)$ and $\tder(L)$ we use the notation $\taut(A)$ and $\taut(L)$.

As a set, the group $\taut(A)$ is isomorphic to 
$$
\taut(A) \cong \left( \exp(A) \right)^{\times n}.
$$
Elements of $\taut(A)$ are $n$-tuples $F=(F_1, \dots, F_n)$, where $F_i \in \exp(A)$.
The map $\rho: \tder(A) \to \der(A)$ lifts to a map (denoted by the same letter) $\rho: \taut(A) \to {\rm Aut}(A)$ described in the following way.
First of all, note that any element $F=\exp(f)\in \exp(A)$ with $f\in A$ decomposes as $F=F_0F_{\ge 1}$, where $f=f_0+f_{\ge 1}$ is the decomposition into the degree $0$ part $f_0\in \K$ and the positive degree part $f_{\ge 1}\in A_{\ge 1}$, $F_0=\exp(f_0)$ and $F_{\ge 1}=\exp(f_{\ge 1})$.
Then, $F_{\ge 1}$ makes sense as an invertible element of $A$, by the exponential series $\exp(f_{\ge 1})=\sum_{k=0}^{\infty}(1/k!) {f_{\ge 1}}^k$.
With this notation, the whole group $\exp(A)$ acts on $A$ by inner automorphisms:
\[
F.a:=(F_{\ge 1})^{-1} a F_{\ge 1}, \quad \text{for $F\in \exp(A)$, $a\in A$}.
\]
We will use the shorthand notation $F^{-1}aF$ for $(F_{\ge 1})^{-1} a F_{\ge 1}$.
Now, the map $\rho: \taut(A) \to {\rm Aut}(A)$ is defined by
$$
\rho(F): x_i \mapsto F_i^{-1} x_i F_i.
$$
The group law on $\taut(A)$ is given by
$$
(F \cdot G)_i= F_i (\rho(F) G_i),
$$
where $F \cdot G$ stands for the product in $\taut(A)$ and the product $F_i (\rho(F) G_i)$ takes place in $\exp(A)$. A similar description applies to the group $\taut(L)$.
Note that if $F\in \taut(L)$, then $F_{\ge 1}=F$.

All Lie algebra actions described in the previous section integrate to group actions. For instance, the adjoint action of $\tder(A)$ on itself integrates to the adjoint action ${\rm Ad}$ of $\taut(A)$ on $\tder(A)$, and 
the natural action of $\tder(A)$ on $\tD_A$ (Lemma \ref{lem:tdertD}) integrates to a group action of $\taut(A)$ on $\tD_A$.
By Lemma \ref{lem:tD_A_equiv}, the map $\psi \mapsto |\psi|$ from $\tD_A$ to $\tder(A)$ is $\taut(A)$-equivariant: for any $F \in \taut(A)$, we have
$$
|F.\psi|= {\rm Ad}_F \, |\psi|.
$$

\section{Divergence maps}
\label{sec:div}

In this section, we describe various versions of the non-commutative divergence map and describe their properties.

\subsection{Double divergence maps}

Consider  $|A|:=A/[A,A]$ and let ${\bf 1}\in |A|$ denote the image of the unit $1\in A$.
The natural action of $\der(A)$ on $A$ descends to an action on $|A|$.
By using the map $\rho$, we can view $A$ and $|A|$ as left $\tder(A)$-modules.
In the following proposition, we introduce the \emph{divergence map} $\ddiv: \der(A) \to |A| \otimes |A|$.
By abuse of notation, we use the letter $|\ |$ for the map $|\ |^{\otimes 2}: A\otimes A\to |A|\otimes |A|$.

\begin{prop}
\label{prop:Div}
The map
$\ddiv: \der(A)\to |A|\otimes |A|$
defined by formula
$$
\ddiv(u):=\sum_i |\pa_i(u(x_i))|
$$
for $u\in \der(A)$ is a $1$-cocycle on the Lie algebra $\der(A)$. That is, for all $u,v \in \der(A)$, we have
$$
\ddiv([u,v]) = u \cdot \ddiv(v) - v \cdot \ddiv(u).
$$
\end{prop}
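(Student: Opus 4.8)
The plan is to expand $\ddiv([u,v])$ straight from the definition and to split the result into the expected coboundary term plus a remainder which is symmetric under exchanging $u$ and $v$ and therefore drops out of the antisymmetric combination.

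First I would write $[u,v](x_i)=u(v(x_i))-v(u(x_i))$ and apply $\pa_i$. The basic input is that the action of $u\in\der(A)$ on the double derivation $\pa_i\in\D_A$ is $[u,\pa_i]=(u\otimes 1+1\otimes u)\circ\pa_i-\pa_i\circ u$, which rearranges to
$$\pa_i(u(a))=(u\otimes 1+1\otimes u)(\pa_i a)-[u,\pa_i](a),\qquad a\in A.$$
Applying this with $a=v(x_i)$, together with the analogous identity obtained by swapping $u$ and $v$, summing over $i$, and passing to $|A|\otimes|A|$ via the map $|\ |^{\otimes 2}$, which intertwines $u\otimes 1+1\otimes u$ with the diagonal action of $u$ on $|A|\otimes|A|$ (because $|\ |\colon A\to|A|$ is $\der(A)$-equivariant), yields
$$\ddiv([u,v])=u\cdot\ddiv(v)-v\cdot\ddiv(u)-\sum_i|[u,\pa_i](v(x_i))|+\sum_i|[v,\pa_i](u(x_i))|.$$
So the statement is reduced to the identity $\sum_i|[u,\pa_i](v(x_i))|=\sum_i|[v,\pa_i](u(x_i))|$.

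To prove this last identity I would use \eqref{partial_commute}, which identifies $[u,\pa_i]\in\D_A$ as the double derivation with $[u,\pa_i](x_j)=-\pa'_i(u(x_j))\otimes\pa''_i(u(x_j))$, i.e. $[u,\pa_i]=-\sum_j\pa''_i(u(x_j))\,\pa_j\,\pa'_i(u(x_j))$ in the basis $\{\pa_j\}$ of $\D_A$. Evaluating on $a=v(x_i)$ by means of the inner bimodule structure $(b_1\pa_j b_2)(a)=\pa'_j(a)b_2\otimes b_1\pa''_j(a)$ gives
$$\sum_i[u,\pa_i](v(x_i))=-\sum_{i,j}\pa'_j(v(x_i))\,\pa'_i(u(x_j))\otimes\pa''_i(u(x_j))\,\pa''_j(v(x_i)).$$
Projecting by $|\ |^{\otimes 2}$ and using cyclicity $|ab|=|ba|$ in each tensor slot, the right-hand side becomes invariant under simultaneously interchanging $u\leftrightarrow v$ and relabelling the summation indices $i\leftrightarrow j$; this is exactly $\sum_i|[v,\pa_i](u(x_i))|$, as needed.

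The whole argument is formal once the two displayed identities are in place, so the only point requiring care — and the place I expect the main (though still routine) obstacle to be — is the last step: correctly evaluating the double derivation $[u,\pa_i]$ on $v(x_i)$ with the right inner/outer bimodule conventions, and then recognising the $u\leftrightarrow v$ symmetry after reducing cyclically.
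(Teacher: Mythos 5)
Your argument is correct and is essentially the paper's own proof in a slightly more unpacked form: the paper bundles the same computation into the single map $Z_i=\partial_i\circ[u,v]-u\circ\partial_i\circ v+v\circ\partial_i\circ u=[v,\partial_i]\circ u-[u,\partial_i]\circ v$ and shows $\sum_i|Z_i(x_i)|=0$, which is exactly your reduction to $\sum_i|[u,\partial_i](v(x_i))|=\sum_i|[v,\partial_i](u(x_i))|$ followed by the same evaluation via \eqref{partial_commute} and the same cyclicity-plus-relabelling cancellation. No gaps.
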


\begin{proof}
Consider a map $Z_i: A \to A \otimes A$ defined as follows:
$$
Z_i:= \partial_i \circ [u,v] - u \circ \partial_i \circ v + v \circ \partial_i \circ u = [v, \partial_i] \circ u - [u, \partial_i] \circ v.
$$
When applied to the generator $x_i$, it yields
$$
\begin{array}{lll}
Z_i(x_i) & = & [v, \partial_i](u(x_i)) - [u, \partial_i](v(x_i)) \\
& = & \sum_k \left( - (\partial'_k u(x_i))(\partial'_i v(x_k)) \otimes (\partial''_i v(x_k)) (\partial''_k u(x_i)) \right. \\
&  & \hspace{2em} +
\left. (\partial'_k v(x_i))(\partial'_i u(x_k)) \otimes (\partial''_i u(x_k)) (\partial''_k v(x_i)) \right) ,
\end{array}
$$
where we have used equation \eqref{partial_commute}.
Note that
$$
\ddiv([u,v]) - u \cdot \ddiv(v) + v \cdot \ddiv(u) = \sum_i |Z_i(x_i) |,
$$
and combining with the previous formula we obtain
$$
\begin{array}{lll}
\sum_i |Z_i(x_i) | & = &  \sum_{i,k} \left( - |(\partial'_k u(x_i))(\partial'_i v(x_k))| \otimes |(\partial''_i v(x_k)) (\partial''_k u(x_i)) |\right. \\
& & \hspace{3em} +
\left. |(\partial'_k v(x_i))(\partial'_i u(x_k))| \otimes |(\partial''_i u(x_k)) (\partial''_k v(x_i))| \right)  = 0.
\end{array}
$$
Here the two terms cancel each other after using the cyclic property of $| \cdot |$ and renaming $i$ and $k$.
\end{proof}

The Lie algebra $\tder(A)$, which is related to $\der(A)$ by the map $\rho: \tder(A)\to \der(A)$, carries additional 1-cocycles as explained in the following lemma.

\begin{lem}
\label{lem:c_i}
For $1\le i\le n$, the map $c_i: \tder(A)\to |A|$ defined by formula
$c_i(u):=|u_i|$ for $u=(u_1,\ldots,u_n)\in \tder(A)$ is a $1$-cocycle on the Lie algebra $\tder(A)$.
\end{lem}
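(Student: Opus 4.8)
The statement is a direct bookkeeping computation once the module structure is made explicit, so the plan is simply to unwind the definitions and observe that the only potentially obstructing term is a commutator in $A$, which vanishes in $|A|$.

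First I would recall that $|A|$ is regarded as a left $\tder(A)$-module through the homomorphism $\rho\colon\tder(A)\to\der(A)$ and the induced action of $\der(A)$ on $|A|=A/[A,A]$; concretely, for $u=(u_1,\dots,u_n)\in\tder(A)$ and $a\in A$ one has $u\cdot|a|=|\rho(u)(a)|$. The $1$-cocycle condition to be checked is then
\[
c_i([u,v])=u\cdot c_i(v)-v\cdot c_i(u)\qquad\text{for all }u,v\in\tder(A).
\]

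Next I would plug in the definition of the bracket on $\tder(A)$. Writing $[u,v]=w$ with $w_i=\rho(u)(v_i)-\rho(v)(u_i)+[u_i,v_i]$, and using linearity of $|\ |\colon A\to|A|$, we get
\[
c_i([u,v])=|w_i|=|\rho(u)(v_i)|-|\rho(v)(u_i)|+|[u_i,v_i]|.
\]
The term $|[u_i,v_i]|$ is the image of a commutator in $A$, hence zero in $|A|$. The remaining two terms are exactly $|\rho(u)(v_i)|=u\cdot|v_i|=u\cdot c_i(v)$ and $|\rho(v)(u_i)|=v\cdot|u_i|=v\cdot c_i(u)$, which gives the desired identity.

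\textbf{Main obstacle.} There is essentially no obstacle: the content is entirely in the definition of the bracket on $\tder(A)$ and the fact that $|A|$ kills commutators. The only point that requires a moment of care is consistency of sign and module conventions (that the action on $|A|$ is via $\rho$, and that the extra commutator term $[u_i,v_i]$ in the $\tder(A)$-bracket is precisely what drops out), but once these are fixed the verification is immediate.
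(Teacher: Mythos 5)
Your proposal is correct and follows exactly the same route as the paper's proof: expand the $i$th component of $[u,v]$ via $w_i=\rho(u)(v_i)-\rho(v)(u_i)+[u_i,v_i]$, note that $|[u_i,v_i]|=0$ in $|A|$, and identify the surviving terms as $u\cdot c_i(v)-v\cdot c_i(u)$. No issues.
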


\begin{proof}
Let $u=(u_1,\ldots,u_n),v=(v_1,\ldots,v_n)\in \tder(A)$.
The $i$th component of $[u,v]$ is
$w_i=\rho(u)v_i-\rho(v)u_i+[u_i,v_i]$.
Since $|[u_i,v_i]|=0$, we obtain
$$
c_i([u,v])=|w_i|=|\rho(u) v_i| - |\rho(v) u_i| =
u\cdot |v_i|-v\cdot |u_i|=u\cdot c_i(v)-v\cdot c_i(u).
$$
\end{proof}

We define the \emph{tangential divergence} cocycle $\tdiv: \tder(A) \to |A| \otimes |A|$ as a linear combination of the divergence cocycle and of the cocycles $c_i$.

\begin{dfn}
For $u \in \tder(A)$, set
$$ 
\tdiv(u) = \ddiv(\rho(u)) + \sum_i (c_i(u) \otimes {\bf 1} - {\bf 1} \otimes c_i(u)).
$$
\end{dfn}
Since $\rho$ is a Lie algebra homomorphism and $\ddiv$ and $c_i$'s are 1-cocycles, it is obvious that $\tdiv$ is a 1-cocylce as well. In the following lemma we give a more explicit formula for $\tdiv$.

\begin{lem} 
For $u = (u_1, \dots, u_n) \in \tder(A)$, we have
$$
\tdiv(u) = \sum_i |x_i (\partial_i u_i) - (\partial_i u_i) x_i|.
$$
\end{lem}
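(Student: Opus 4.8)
The plan is to simply unwind the definition of $\tdiv$ and compute; no structural input is needed beyond the Leibniz rule for $\pa_i$ and the cyclicity of $|\ |$. By definition $\tdiv(u) = \ddiv(\rho(u)) + \sum_i\bigl(c_i(u)\otimes{\bf 1} - {\bf 1}\otimes c_i(u)\bigr)$, and since $c_i(u)=|u_i|$ the second summand equals $\sum_i\bigl(|u_i|\otimes{\bf 1} - {\bf 1}\otimes|u_i|\bigr)$. So it remains to evaluate $\ddiv(\rho(u)) = \sum_i|\pa_i(\rho(u)(x_i))| = \sum_i|\pa_i([x_i,u_i])|$, where $|\ |$ abbreviates $|\ |^{\otimes 2}$.

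First I would expand $\pa_i([x_i,u_i]) = \pa_i(x_iu_i) - \pa_i(u_ix_i)$ using the double-derivation Leibniz rule with respect to the outer bimodule structure \eqref{eq:out}, together with $\pa_i(x_i)=1\otimes 1$. This gives
$$
\pa_i(x_iu_i) = (1\otimes 1)u_i + x_i\pa_i(u_i) = 1\otimes u_i + x_i(\pa_i u_i),
$$
$$
\pa_i(u_ix_i) = \pa_i(u_i)x_i + u_i(1\otimes 1) = (\pa_i u_i)x_i + u_i\otimes 1,
$$
hence
$$
\pa_i([x_i,u_i]) = 1\otimes u_i + x_i(\pa_i u_i) - (\pa_i u_i)x_i - u_i\otimes 1 .
$$
Applying $|\ |^{\otimes 2}$, using $|1\otimes u_i|^{\otimes 2}={\bf 1}\otimes|u_i|$ and $|u_i\otimes 1|^{\otimes 2}=|u_i|\otimes{\bf 1}$, and summing over $i$ yields
$$
\ddiv(\rho(u)) = \sum_i\Bigl({\bf 1}\otimes|u_i| + \bigl|x_i(\pa_i u_i) - (\pa_i u_i)x_i\bigr| - |u_i|\otimes{\bf 1}\Bigr).
$$

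Finally I would add back the $c_i$-contribution $\sum_i\bigl(|u_i|\otimes{\bf 1} - {\bf 1}\otimes|u_i|\bigr)$: the two boundary families cancel pairwise, leaving $\tdiv(u) = \sum_i\bigl|x_i(\pa_i u_i) - (\pa_i u_i)x_i\bigr|$, as claimed. There is no genuine obstacle here; the only thing demanding care is the bookkeeping of the outer bimodule structure on $A\otimes A$ — so that $x_i(\pa_i u_i)$ multiplies into the first tensor slot while $(\pa_i u_i)x_i$ multiplies into the second — and the convention that $|\ |$ in the statement denotes $|\ |^{\otimes 2}$.
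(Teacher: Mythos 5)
Your proposal is correct and follows exactly the same computation as the paper: expand $\tdiv(u)=\ddiv(\rho(u))+\sum_i(|u_i|\otimes{\bf 1}-{\bf 1}\otimes|u_i|)$, apply the Leibniz rule to get $\pa_i([x_i,u_i])=1\otimes u_i+x_i(\pa_i u_i)-(\pa_i u_i)x_i-u_i\otimes 1$, and cancel the boundary terms against the $c_i$ contributions. Your version just spells out the bimodule bookkeeping a little more explicitly; no discrepancies.
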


\begin{proof}
We compute
$$
\begin{array}{lll}
\tdiv(u) & = & \sum_i | \partial_i [x_i, u_i]| + \sum_i ( |u_i| \otimes {\bf 1} - {\bf 1} \otimes |u_i|) \\
& = & \sum_i |1 \otimes u_i + x_i (\partial_i u_i) - (\partial_i u_i) x_i - u_i \otimes 1| + \sum_i ( |u_i| \otimes {\bf 1} - {\bf 1} \otimes |u_i|) \\
& = & \sum_i |x_i (\partial_i u_i) - (\partial_i u_i) x_i|.
\end{array}
$$
\end{proof}

\subsection{Relation to the ordinary divergence}

Recall the \emph{divergence map} introduced in \cite{AT12} \S 3.3.
Any element $a\in A$ can be uniquely written as $a=a^0+\sum_i a^i x_i$, where $a^0\in \K$ and $a^i\in A$. 
One defines the map $\div: \tder(L)\to |A|$ by formula 
$$
\div(u):=\sum_i |x_i(u_i)^i |,
$$
where $u=(u_1,\ldots,u_n)\in \tder(L)$. The map $\div$  is a 1-cocycle on the Lie algebra $\tder(L)$
with values in $|A|$.

Consider an algebra homomorphism from $A$ to $A\otimes A^{\op}$ defined by formula
$$
\tde:=(1\otimes \iota)\Delta: A\to A\otimes A.
$$
It induces a map from $|A|$ to $|A|\otimes |A|$, for which we use the same letter $\tde$.
In the following proposition, we show that $\tde$ intertwines the divergence maps $\div$ and $\tdiv$.

\begin{prop}
\label{prop:dext}
The following diagram is commutative:
\[
\begin{CD}
\tder(L) @> \div >> |A| \\
@VVV @VV \tde V \\
\tder(A) @> \tdiv >> |A|\otimes |A|
\end{CD}
\]
\end{prop}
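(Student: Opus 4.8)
The plan is to verify commutativity of the diagram by a direct computation, working out both $\tdiv(\iota_*(u))$ (where $\iota_*: \tder(L) \to \tder(A)$ is the inclusion) and $\tde(\div(u))$ for an arbitrary $u = (u_1, \dots, u_n) \in \tder(L)$, and checking they agree in $|A| \otimes |A|$. Since both sides of the diagram are built from $1$-cocycles and linear maps, one could hope for a more conceptual argument, but the cleanest route is to exploit the explicit formulas already available: by the last Lemma of the section, $\tdiv(u) = \sum_i |x_i (\partial_i u_i) - (\partial_i u_i) x_i|$, while $\div(u) = \sum_i |x_i (u_i)^i|$ where $a = a^0 + \sum_i a^i x_i$ is the canonical decomposition.

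The key step is to relate the operator $\partial_i$ (the $i$-th double derivation, valued in $A \otimes A$) to the "right coefficient" operator $a \mapsto a^i$ after applying $\tde = (1 \otimes \iota)\Delta$. Concretely, I would first establish, for any $a \in A$, an identity expressing $\tde(\partial_i' a \otimes \partial_i'' a)$ — really $(1 \otimes \iota)\Delta$ applied to the product-like object $\partial_i a$ — in terms of $a^i$ and $\tde(a)$. The relevant fact is that the double derivation $\partial_i$ and the coproduct are compatible: since $\Delta$ is an algebra map and $\partial_i$ is characterized on generators, one gets a "Leibniz-type" formula $\Delta(a) = \sum_i (\text{stuff}) $ relating $\Delta$ to the $\partial_i$'s, or more usefully the first-order Taylor/Fox-derivative identity $a - \varepsilon(a) = \sum_i (\partial_i' a)\, x_i\, (\partial_i'' a)$-style expansion. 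In fact the cleanest input is: for $a \in A$, $\sum_i (\partial_i' a)\, x_i\, (\partial_i'' a)$ and the decomposition $a = a^0 + \sum_i a^i x_i$ are governed by the same Fox calculus, so that applying $\tde$ and then the multiplication-collapse from Remark~\ref{rem:double_acts} converts $\sum_i \partial_i u_i$ into an expression involving $(u_i)^i$. I would then push $\tde$ through the commutator $|x_i(\partial_i u_i) - (\partial_i u_i)x_i|$, using that $\tde$ is an algebra homomorphism to $A \otimes A^{\op}$ and that $\tde(x_i) = x_i \otimes 1 - 1 \otimes x_i$, and match the result termwise with $\tde(\sum_i |x_i (u_i)^i|)$.

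The main obstacle I anticipate is bookkeeping: correctly tracking the opposite-algebra twist in $A \otimes A^{\op}$, the antipode signs coming from $(1 \otimes \iota)$, and the cyclic identifications in $|A| \otimes |A|$ all at once, while making sure the Fox-derivative identity linking $\partial_i$ to $(-)^i$ is applied with the right side conventions (left versus right Fox derivatives). A secondary subtlety is that $u \in \tder(L)$ means each $u_i \in L$ is primitive, i.e. $\Delta(u_i) = u_i \otimes 1 + 1 \otimes u_i$; this primitivity should be used to simplify $\tde(u_i)$ and is presumably what makes the two sides collapse to the same thing — so I would isolate early where primitivity of the $u_i$ enters, since without it the statement would be false (the map $\tder(L) \to \tder(A)$, not $\tder(A) \to \tder(A)$, appears in the diagram precisely for this reason). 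Once the single-generator identity $\tde|x_i(\partial_i u_i) - (\partial_i u_i)x_i| = \tde|x_i (u_i)^i|$ is verified for each $i$ using primitivity, summing over $i$ finishes the proof.
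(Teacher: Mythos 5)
Your proposal is correct and follows essentially the same route as the paper: rewrite $\tdiv(u)$ as $\sum_i |\tde(x_i)(\partial_i u_i)|$ with the product in $A\otimes A^{\op}$, use multiplicativity of $\tde$ to expand $\tde(\div(u))=\sum_i|\tde(x_i)\tde((u_i)^i)|$, and reduce everything to the identity $\tde(a^i)=\pa_i a$ for $a\in L$. The paper isolates that identity as a separate lemma and proves it exactly as you anticipate — both maps satisfy the same Leibniz-type rule on Lie brackets and agree on generators, which is where the primitivity of the $u_i$ enters.
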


\begin{lem}
\label{lem:pay}
For any $a\in L$ and $1\le i\le n$, we have $\tde(a^i)=\pa_i a$.
\end{lem}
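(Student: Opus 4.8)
The plan is to observe that, once restricted to $L$, the map $a \mapsto \tde(a^i)$ satisfies the same Leibniz-type recursion on brackets as the double derivation $\pa_i$ and takes the same values on the generators; the equality then follows by induction on degree.

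I would first record two elementary facts. Writing $a=a^0+\sum_k a^kx_k$ and $b=b^0+\sum_k b^kx_k$ and multiplying out (note $a^0=\varepsilon(a)$), one gets $(ab)^i=\varepsilon(b)\,a^i+a\,b^i$ for all $a,b\in A$. Since $L\subset A_{\ge1}=\ker\varepsilon$, this specializes, for $a,b\in L$, to
$$
[a,b]^i=a\,b^i-b\,a^i .
$$
Secondly, $\tde=(1\otimes\iota)\Delta\colon A\to A\otimes A^{\op}$ is an algebra homomorphism with $\tde(x_j)=x_j\otimes1-1\otimes x_j$, and an immediate induction on degree shows $\tde(c)=c\otimes1-1\otimes c$ for every $c\in L$.

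Set $D_i(a):=\tde(a^i)$ for $a\in L$. Applying $\tde$ to the displayed formula for $[a,b]^i$, using the multiplicativity of $\tde$ in $A\otimes A^{\op}$ together with $\tde(a)=a\otimes1-1\otimes a$, and then rewriting the resulting terms via the outer bimodule structure \eqref{eq:out} on $A\otimes A$, one obtains for $a,b\in L$
$$
D_i([a,b])=a\,D_i(b)-D_i(b)\,a-b\,D_i(a)+D_i(a)\,b ,
$$
where juxtaposition denotes the outer bimodule action. Since $\pa_i$ is a double derivation for exactly this bimodule structure, $\pa_i([a,b])=\pa_i(ab)-\pa_i(ba)$ satisfies the identical identity; moreover $D_i(x_j)=\tde(\delta_{ij}1)=\delta_{ij}(1\otimes1)=\pa_i(x_j)$. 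As $D_i$ and $\pa_i|_L$ are continuous and linear, it suffices to prove $D_i=\pa_i$ on each homogeneous piece $L_d$, which is done by induction on $d$: the case $d=1$ is the computation on generators, and for $d\ge2$ one writes any element of $L_d$ as a linear combination of brackets $[x_j,w]$ with $w\in L_{d-1}$ and applies the common recursion and the induction hypothesis. This yields $\tde(a^i)=\pa_i a$ for all $a\in L$.

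The only point requiring care is the passage to the recursion for $D_i$: one must keep track of the opposite multiplication in $A\otimes A^{\op}$ when expanding $\tde(a)\,\tde(b^i)=(a\otimes1-1\otimes a)\,\tde(b^i)$, and correctly match the outcome with the outer-bimodule commutators $a\,D_i(b)-D_i(b)\,a$ and $-\bigl(b\,D_i(a)-D_i(a)\,b\bigr)$. Everything else is a routine verification.
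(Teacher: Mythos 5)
Your proposal is correct and follows essentially the same route as the paper: both arguments verify that $a\mapsto\tde(a^i)$ and $a\mapsto\pa_i a$ satisfy the identical Leibniz-type identity on brackets (with respect to the outer bimodule structure) and agree on the generators $x_j$, then conclude by the freeness of $L$. You merely make explicit the intermediate computations ($[a,b]^i=a\,b^i-b\,a^i$, the multiplicativity of $\tde$ into $A\otimes A^{\op}$, and the induction on degree) that the paper leaves implicit.
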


\begin{proof}
Both maps $L \to A \otimes A$, $a \mapsto \pa_i a$ and $a \mapsto \tde(a^i)$ have the property
$$
\phi([a,b]) = \phi(a) b + a \phi(b) - b \phi(a) - \phi(b) a.
$$
Since $\pa_i(x_j) = \tde((x_j)^i) = \delta_{i,j} (1\otimes 1)$, we conclude that $\tde(a^i)=\pa_i a$ for all $a \in L$, as required.
%First of all, we claim that for any $a\in L$ and $1\le i,j\le n$, we have
%$\pa_i([x_j,a])=\tde(x_j)(\pa_i a)-\delta_{i,j}\tde(a)$.
%Here, the product of $\tde(x_j)$ and $\pa_i a$ is taken at $A\otimes A^{\rm op}$.
%In fact, since $a\in L$, we have
%$\tde(a)=(1\otimes \iota)(a\otimes 1+1\otimes a)
%=a\otimes 1-1\otimes a$.
%Then
%$\pa_i([x_j,a])=\pa_i(x_ja-ax_j)
%=\delta_{i,j}(1\otimes a)+x_j(\partial_i a)-(\partial_i a)x_j-\delta_{i,j}(a\otimes 1)
%=x_j(\partial_i a)-(\partial_i a)x_j-\delta_{i,j}(a\otimes 1-1\otimes a)
%=\tde(x_j)(\partial_i a)-\delta_{i,j}\tde(a)$,
%as required.
%
%To prove the lemma, we may assume that $a$ is homogeneous.
%If $a=x_j$ for some $j$, we have
%$\tde(a^i)=\delta_{i,j}(1\otimes 1)=\pa_i a$.
%Assume that $\deg(a)=m\ge 1$ and $\tde(a^i)=\pa_i a$.
%Take some $1\le j\le n$ and set $a'=[x_j,a]$.
%Then
%$\tde((a')^i)=\tde([x_j,a]^i)=\tde(x_ja^i-\delta_{i,j}a)
%=\tde(x_j)\tde(a^i)-\delta_{i,j}\tde(a)
%=\tde(x_j)\pa_i a-\delta_{i,j}\tde(a)$.
%On the other hand, by the claim, we have
%$\pa_i(a')=\tde(x_j)(\pa_i a)-\delta_{i,j}\tde(a)$.
%Hence $\tde((a')^i)=\pa_i(a')$.
%Now the lemma follows from induction on $\deg(a)$.
\end{proof}

\begin{proof}[Proof of Proposition \ref{prop:dext}]
Note that we have the following expression for $\tdiv(u)$:
\begin{equation}
\label{eq:tdivA}
\tdiv(u)=\sum_i | \tde(x_i) (\pa_i u_i) |
\end{equation}
for any $u\in \tder(A)$, where the product of $\tde(x_i)=x_i\otimes 1-1\otimes x_i$ and $\pa_i u_i$ is taken in $A\otimes A^{\op}$.
Now let $u\in \tder(L)$.
Then, using equation (\ref{eq:tdivA}) and Lemma \ref{lem:pay}, we compute
\[
\tde(\div(u))=\tde\left(\sum_i |x_i(u_i)^i| \right)
=\sum_i |\tde(x_i)\tde((u_i)^i) |
=\sum_i |\tde(x_i)(\pa_i u_i) |
=\tdiv(u).
\]
\end{proof}

The following lemma will be used later.

\begin{lem}
\label{lem:tdeeq}
The maps $\tde:A\to A\otimes A$ and $\tde:|A|\to |A|\otimes |A|$ are injective.
They are maps of $\der(L)$-modules.
\end{lem}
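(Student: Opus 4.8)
The plan is to exhibit an explicit left inverse of $\tde$ that is compatible with passing to cyclic words, and then to check $\der(L)$-equivariance on generators.

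\textbf{Injectivity on $A$.} I would use the Hopf algebra identities $\varepsilon\circ\iota=\varepsilon$ and $(1\otimes\varepsilon)\circ\Delta=\mathrm{id}_A$. Together they give
\[
(1\otimes\varepsilon)\circ\tde=(1\otimes\varepsilon)\circ(1\otimes\iota)\circ\Delta=(1\otimes(\varepsilon\circ\iota))\circ\Delta=(1\otimes\varepsilon)\circ\Delta=\mathrm{id}_A,
\]
so $1\otimes\varepsilon\colon A\otimes A\to A$ is a left inverse of $\tde\colon A\to A\otimes A$, whence the latter is injective.

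\textbf{Injectivity on $|A|$.} Since $\varepsilon(ab-ba)=\varepsilon(a)\varepsilon(b)-\varepsilon(b)\varepsilon(a)=0$, the augmentation descends to a map $|A|\to\K$, hence $1\otimes\varepsilon$ descends to a map $|A|\otimes|A|\to|A|$. As $\tde$ manifestly commutes with the projections $|\ |$ and $|\ |^{\otimes 2}$, reducing the identity above modulo commutators shows that this descended map is a left inverse of $\tde\colon|A|\to|A|\otimes|A|$, so the latter is injective as well.

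\textbf{$\der(L)$-equivariance.} Let $u\in\der(L)$, acting on $A$ naturally and on $A\otimes A$ diagonally. Both $\tde\circ u$ and $u\circ\tde$ are derivations from $A$ to $A\otimes A$, viewed as an $A$-bimodule via the algebra homomorphism $\tde\colon A\to A\otimes A^{\op}$: for $\tde\circ u$ this is immediate since $\tde$ is such a homomorphism and $u$ is a derivation of $A$, while for $u\circ\tde$ one checks directly that the diagonal action of $\der(A)$ is by derivations of the product $(a\otimes b)(c\otimes d)=ac\otimes db$ of $A\otimes A^{\op}$. A derivation out of the free algebra $A$ is determined by its values on $x_1,\dots,x_n$, and since $u(x_i)\in L$ is primitive one has $\tde(u(x_i))=u(x_i)\otimes 1-1\otimes u(x_i)$, which equals $u(x_i\otimes 1-1\otimes x_i)=u(\tde(x_i))$. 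Hence $\tde\circ u=u\circ\tde$ on $A$; and since the $\der(L)$-actions descend to $|A|$ and $|A|\otimes|A|$ and $\tde$ commutes with the projections, the same holds for $\tde\colon|A|\to|A|\otimes|A|$.

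The only mildly delicate point is the bookkeeping in the last step: fixing the $A$-bimodule structure on $A\otimes A^{\op}$ induced by $\tde$ so that both composites are honestly derivations, and noting that primitivity of $u(x_i)$ — which is where $u\in\der(L)$ rather than merely $u\in\der(A)$ enters — is exactly what makes the two derivations agree on generators. Everything else is a direct use of the Hopf algebra axioms.
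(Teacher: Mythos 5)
Your proof is correct and follows essentially the same route as the paper: injectivity via the left inverse $1\otimes\varepsilon$ (using $\varepsilon\circ\iota=\varepsilon$), and equivariance from the compatibility of the Hopf structure with $\der(L)$. The paper simply asserts that $\Delta$ and $\iota$ are morphisms of $\der(L)$-modules, whereas you verify the commutation $\tde\circ u=u\circ\tde$ directly on generators using primitivity of $u(x_i)$ and the fact that a derivation out of the free algebra is determined by its values on generators — a legitimate filling-in of the detail the paper leaves implicit.
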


\begin{proof}
The injectivity follows from $(1\otimes \varepsilon)\tde(a)=(1\otimes \varepsilon)\Delta(a)=a$ for any $a\in A$.
Since the coproduct $\Delta$ and the antipode $\iota$ are morphisms of $\der(L)$-modules, so is the map $\tde$.
%The map $\tde$ is a morphism of $\der(L)$-modules
\end{proof}

\subsection{Divergence maps with extra parameters}

Next we introduce a certain refinement of $\tdiv$.
Recall that  $A^T$ is the degree completion of the free associative algebra over $\K$ generated by $x_1,\ldots,x_n$, and $T$.
We have an embedding
\[
j_T: (|A|\otimes A)\oplus (A \otimes |A|) \to |A^T|\otimes |A^T|, \quad
(|a|\otimes b,c\otimes |d|)\mapsto |a|\otimes |Tb|+|Tc|\otimes |d|,
\]
through which we identify the source and the image.
Let $p_1:\mathrm{im}(j_T) \to |A|\otimes A$ and $p_2:\mathrm{im}(j_T)\to A \otimes |A|$ be projections on the  first and the second factors in the tensor product, respectively.
The map $\tD_A\to \tder(A^T), \phi \mapsto \phi^T$ has the property that
$\tdiv(\phi^T) \in \mathrm{im}(j_T)$ 
since every component of $\phi^T$ (with the exception of the last one which vanishes) is linear in $T$.

\begin{dfn}
\label{dfn:tdiv^T}
Define the maps $\tdiv^T: \tD_A\to |A|\otimes A$ and
$\underline{\tdiv}^T: \tD_A\to A\otimes |A|$
by
$\tdiv^T(\phi):=p_1(\tdiv(\phi^T) )$ and
$\underline{\tdiv}^T(\phi):=p_2(\tdiv(\phi^T))$
for $\phi\in \tD_A$.
\end{dfn}

Let $\phi=\sum_i c'_i(\ad_{x_i} \pa_i) c''_i\in \tD_A$.
By (\ref{eq:tdivA}),
$\tdiv(\phi^T)=-\sum_i |\tde(x_i) \partial_i(c''_iTc'_i)|$, and the terms where $T$ appears to the right of the symbol $\otimes$ are
$$
-\sum_i |\tde(x_i)((\pa_ic''_i)Tc'_i)|=-\sum_i |(x_i\otimes 1)(\pa_i c''_i)(1\otimes Tc'_i)-(\pa_i c''_i)(1\otimes Tc'_i)(1\otimes x_i) |.
$$
Therefore, we obtain
\begin{equation}
\label{eq:tdiv^T}
\tdiv^T(\phi)=(|\ |\otimes 1) \left( \sum_i (1\otimes c'_ix_i-x_i\otimes c'_i)(\pa_i c''_i) \right).
\end{equation}
In a similar way, we obtain
\begin{equation}
\label{eq:tdiv^T2}
\underline{\tdiv}^T(\phi)=
(1\otimes |\ |)\left( \sum_i (\pa_i c'_i) (c''_i\otimes x_i-x_ic''_i \otimes 1) \right).
\end{equation}

\begin{prop}
\label{prop:aphi}
For any $a\in A$ and $\phi\in \tD_A$,
\begin{align*}
\tdiv^T(a\phi) =& (1\otimes a)\tdiv^T(\phi), \\
\tdiv^T(\phi a)=& \tdiv^T(\phi)(1\otimes a)+(|\ |\otimes 1)\phi(a), \\
\underline{\tdiv}^T(a\phi)=& (a\otimes 1)\underline{\tdiv}^T(\phi)+(1\otimes |\ |)\phi(a), \\
\underline{\tdiv}^T(\phi a)=& \underline{\tdiv}^T(\phi)(a\otimes 1).
\end{align*}
\end{prop}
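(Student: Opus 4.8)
The plan is to verify the four formulas directly from the explicit expressions \eqref{eq:tdiv^T} and \eqref{eq:tdiv^T2} together with the Leibniz rule for double derivations. The key point is that $\tdiv^T$ and $\underline{\tdiv}^T$ are (up to a cyclic projection) built out of $\partial_i c''_i$ and $\partial_i c'_i$, so the $A$-bimodule structure on $\tD_A$ acts by modifying the $c'_i$ and $c''_i$, and we must track how this interacts with the double derivations $\partial_i$ and the cyclic projections. Since all four identities have the same flavor and the pairs (first two) / (last two) are related by the obvious symmetry $a\otimes b \leftrightarrow b\otimes a$ (this is essentially the $(\,\cdot\,)^\circ$ symmetry already exploited in the proof of Lemma \ref{lem:ali}), I would prove the first formula carefully and then indicate that the remaining three follow by the same computation, possibly combined with this symmetry.

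First I would fix $\phi=\sum_i c'_i(\ad_{x_i}\pa_i)c''_i$ and compute $a\phi$ and $\phi a$ in terms of the inner bimodule structure \eqref{eq:in}. Recall that $c_1 \phi c_2$ has $(c_1 \phi c_2)(x_j) = c_1 * \phi(x_j) * c_2$, so acting on the left by $a$ multiplies each $c'_i$ on the left by $a$ \emph{in the inner structure}, which by \eqref{eq:in} means $c'_i \mapsto c'_i$ but $c''_i \mapsto ?$ — here I need to be careful: by \eqref{eq:in}, $1*(c'_i \otimes c''_i)*1$ and left inner multiplication by $a$ gives $(1\otimes a)(c'_i\otimes c''_i) = c'_i \otimes a c''_i$, so $a\phi$ has coefficients $c'_i(\ad_{x_i}\pa_i)(ac''_i)$, i.e. $c''_i \mapsto ac''_i$. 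Then in \eqref{eq:tdiv^T} the term $\partial_i c''_i$ becomes $\partial_i(ac''_i) = (\partial_i a)c''_i + a(\partial_i c''_i)$ by the Leibniz rule (outer structure). Plugging this in, the $a(\partial_i c''_i)$ part yields $(1\otimes a)\tdiv^T(\phi)$ after moving $a$ through the cyclic projection in the first slot — which is allowed precisely because $|\ |$ is a trace, so $a$ can be absorbed into the second tensor factor; the $(\partial_i a)c''_i$ part must be shown to vanish, which should follow from summing over $i$ and using $\sum_i c'_i(\ad_{x_i}\pa_i)c''_i$ applied to $a$ combined with a cyclic cancellation, exactly as in the vanishing argument at the end of the proof of Proposition \ref{prop:Div}. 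For $\phi a$, right inner multiplication sends $c'_i \mapsto c'_i x_i$? No — by \eqref{eq:in}, $(c'_i\otimes c''_i)(x_i^{?})$... right inner multiplication by $a$ is $(c'_i \otimes c''_i)(a \otimes 1) = c'_i a \otimes c''_i$, so $c'_i \mapsto c'_i a$, and now \eqref{eq:tdiv^T} produces both the shifted term $\tdiv^T(\phi)(1\otimes a)$ (from the factor $c'_i x_i$ and $x_i \otimes c'_i$ becoming $c'_i a x_i$ and $x_i \otimes c'_i a$) and a leftover term which I expect to reassemble into $(|\ |\otimes 1)\phi(a)$ via the second expression in \eqref{eq:phi(a)}, $\phi(a) = \sum_i (\partial'_i a)\phi'_i \otimes \phi''_i(\partial''_i a)$, specialized to the tangential case where $\phi'_i, \phi''_i$ come from $c'_i, c''_i$.

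The main obstacle I anticipate is bookkeeping: keeping straight the distinction between the outer bimodule structure (governing the Leibniz rule for $\partial_i$) and the inner bimodule structure (governing $a\phi$ versus $\phi a$), and tracking where the single generator $T$ sits — because $\tdiv^T$ is defined by extracting the part of $\tdiv(\phi^T)$ linear in $T$ with $T$ on a specified side, the Leibniz terms that produce the ``extra'' contributions $(|\ |\otimes 1)\phi(a)$ and $(1\otimes|\ |)\phi(a)$ are exactly the ones where the derivative hits $a$ rather than the $T$-containing factor. I would organize the computation by first writing everything in $A^T$, applying \eqref{eq:tdivA} to $(a\phi)^T$ and $(\phi a)^T$, then projecting via $p_1$; the identity $(a\phi)^T = ?$ in terms of $\phi^T$ needs one line (left inner multiplication commutes with the $(-)^T$ construction up to the placement of $T$, since $T$ only enters between $c''_i$ and $c'_i$). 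Once the translation to $A^T$ is set up cleanly, each of the four formulas reduces to a one- or two-line manipulation using Leibniz and the trace property of $|\ |$, with the vanishing of the spurious terms following the cyclic-cancellation pattern of Proposition \ref{prop:Div}; the last two formulas then follow either by repeating this with $\underline{\tdiv}^T$ and \eqref{eq:tdiv^T2}, or by applying the flip symmetry to the first two.
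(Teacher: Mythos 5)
Your overall strategy --- computing directly from \eqref{eq:tdiv^T} and \eqref{eq:tdiv^T2} via the Leibniz rule for $\pa_i$ --- is the same as the paper's, but there is a concrete error that derails the execution: you identify the $A$-bimodule action on $\tD_A$ the wrong way around. Writing $\phi=\sum_i c'_i(\ad_{x_i}\pa_i)c''_i$, the action $(c_1\psi c_2)(b)=c_1*\psi(b)*c_2$ is associative in $c_1,c_2$, so $a\phi=\sum_i (ac'_i)(\ad_{x_i}\pa_i)c''_i$ (it modifies $c'_i$, not $c''_i$) and $\phi a=\sum_i c'_i(\ad_{x_i}\pa_i)(c''_ia)$ (it modifies $c''_i$, not $c'_i$); this is easily checked by evaluating both sides on $x_j$. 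Your claim that $a\phi$ has coefficients $c'_i(\ad_{x_i}\pa_i)(ac''_i)$ describes an element that is neither $a\phi$ nor $\phi a$.

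This matters because in \eqref{eq:tdiv^T} only $c''_i$ is differentiated. With the correct identification the first formula is immediate: left multiplication only changes the undifferentiated factor $c'_i$, the prefactor $(1\otimes ac'_ix_i-x_i\otimes ac'_i)=(1\otimes a)(1\otimes c'_ix_i-x_i\otimes c'_i)$ pulls out, and no Leibniz expansion or cyclic cancellation occurs --- the cancellation ``as in Proposition~\ref{prop:Div}'' that you invoke does not exist and is not needed. Conversely, the correction term $(|\ |\otimes 1)\phi(a)$ in the second formula arises precisely because right multiplication hits the differentiated factor: $\pa_i(c''_ia)=(\pa_ic''_i)(1\otimes a)+(c''_i\otimes 1)(\pa_i a)$, and the second summand reassembles into $\phi(a)$ via \eqref{eq:phi(a)}. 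With your assignment $c'_i\mapsto c'_ia$ there is no Leibniz term at all, so the ``leftover'' you expect to reassemble into $(|\ |\otimes 1)\phi(a)$ never appears. In fact your assignment would place the correction terms in the first and fourth formulas rather than the second and third, contradicting the very statement you are proving. Once the bimodule action is fixed, the rest of your plan (including the symmetric treatment of $\underline{\tdiv}^T$, where the roles of $c'_i$ and $c''_i$ are exchanged) goes through exactly as in the paper.
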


\begin{proof}
The first equation is an immediate consequence of  \eqref{eq:tdiv^T}.
For the second one, we compute,
\begin{align*}
\tdiv^T(\phi a)= &(|\ |\otimes 1)\left( \sum_i (1\otimes c'_ix_i-x_i\otimes c'_i) \pa_i (c''_i a) \right) \\
=& (|\ |\otimes 1)\left( \sum_i (1\otimes c'_ix_i-x_i\otimes c'_i)(\pa_ic''_i)a \right) \\
&+ (|\ |\otimes 1)\left( \sum_i (1\otimes c'_ix_i-x_i\otimes c'_i)(c''_i\otimes 1)\pa_i a \right) \\
=& \tdiv^T(\phi)(1\otimes a)+(|\ |\otimes 1)\left( \sum_i \phi(x_i) \pa_i a \right).
\end{align*}
Since $(|\ |\otimes 1)\phi(a)=(|\ |\otimes 1)( \sum_i \phi(x_i) (\pa_i a))$, the second equation follows.
The formulas for $\underline{\tdiv}^T$ can be obtained similarly
(we use $(1\otimes |\ |)\phi(a)=(1\otimes |\ |)(\sum_i (\pa_i a)\phi(x_i))$).
\end{proof}

\begin{prop}
\label{prop:rel_div}
%Consider the map $\tD_A\to \tder(A)$ which sends $\phi=\sum_i c'_i (\ad_{x_i}\pa_i)c''_i$
%to $u^{\phi}:=(-c''_1c'_1,\ldots,-c''_nc'_n)$.
%(Here, to be more correct,
%we take the image of $-c''_ic'_i$ by the projection
%$A\to A/ \K \cong A_{\ge 1}$.)
For any $\phi\in \tD_A$, we have
\[
\tdiv(|\phi|)=(1\otimes |\ |)\tdiv^T(\phi)+
(|\ |\otimes 1)\underline{\tdiv}^T(\phi).
\]
\end{prop}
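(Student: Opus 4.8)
The plan is to verify the identity by reducing everything to the explicit formulas \eqref{eq:tdiv^T} and \eqref{eq:tdiv^T2}, together with the formula for $\tdiv$ on $\tder(A)$ from the Lemma preceding Subsection~2.3, namely $\tdiv(v) = \sum_j |x_j (\partial_j v_j) - (\partial_j v_j) x_j|$. First I would fix a generic element $\phi = \sum_i c_i'(\ad_{x_i}\pa_i) c_i'' \in \tD_A$ and recall from the definition of the isomorphism $|\tD_A| \to \tder(A)$ that $|\phi| = -(c_1'' c_1', \dots, c_n'' c_n') \in \tder(A)$, so that the $j$th component of $|\phi|$ is $|\phi|_j = -c_j'' c_j'$. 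Plugging this into the Lemma's formula gives
\[
\tdiv(|\phi|) = -\sum_j \bigl| x_j\, \partial_j(c_j'' c_j') - \partial_j(c_j'' c_j')\, x_j \bigr|,
\]
and expanding $\partial_j(c_j'' c_j') = (\partial_j c_j'')c_j' + c_j''(\partial_j c_j')$ via the Leibniz rule for the outer bimodule structure \eqref{eq:out} produces four groups of terms. The goal is to match these against the right-hand side after applying $1\otimes|\ |$ to \eqref{eq:tdiv^T} and $|\ |\otimes 1$ to \eqref{eq:tdiv^T2} and summing.

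The key computational step is bookkeeping: apply $1\otimes |\ |$ to \eqref{eq:tdiv^T}, which gives $\sum_i (|\ |\otimes|\ |)\bigl( (1\otimes c_i' x_i - x_i \otimes c_i')(\partial_i c_i'') \bigr)$, and apply $|\ |\otimes 1$ to \eqref{eq:tdiv^T2}, which gives $\sum_i (|\ |\otimes|\ |)\bigl( (\partial_i c_i')(c_i''\otimes x_i - x_i c_i'' \otimes 1) \bigr)$. Writing $\partial_i c_i'' = (\partial_i' c_i'')\otimes(\partial_i'' c_i'')$ in Sweedler notation and using that $|\ |$ is cyclic in each tensor slot, one expands both expressions into sums of terms of the shape $|{-}|\otimes|{-}|$ and checks term-by-term that their sum equals the image of $-\sum_j|x_j\partial_j(c_j''c_j') - \partial_j(c_j''c_j')x_j|$ under $\tde$'s analogue — more precisely, that $\tdiv(|\phi|) = (1\otimes|\ |)\tdiv^T(\phi) + (|\ |\otimes 1)\underline{\tdiv}^T(\phi)$ holds slot by slot. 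I expect several terms to cancel in pairs (the ones with no $x_i$ insertion, coming from the "inner" parts), while the surviving terms reassemble exactly into $x_j\,\partial_j(\cdots) - \partial_j(\cdots)\,x_j$ after relabeling the summation index.

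The main obstacle will be keeping the two $A$-factors straight while $|\ |$ is applied in only one of them at a time: the cyclicity of $|\ |$ can be used to move factors around within a slot, but one must be careful not to move a factor across the $\otimes$. A clean way to organize this is to work in $|A^T| \otimes |A^T|$ throughout (where the full $\tdiv(\phi^T)$ lives) and only pass to the quotient identifications $|A|\otimes A$, $A\otimes|A|$ at the very end; then the identity becomes the statement that $\tdiv$ applied to $|\phi|$ — computed directly on $\tder(A)$ — agrees with the $T$-decomposition of $\tdiv(\phi^T)$ after setting the two copies of $T$ to $1$, which is essentially the compatibility of the evaluation map $\mathrm{ev}_1$ with $\tdiv$ together with the relation $\rho(|\psi|) = |i(\psi)|$ noted earlier. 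Once the generic $\phi$ is handled, linearity and continuity extend the identity to all of $\tD_A$, completing the proof.
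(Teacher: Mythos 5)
Your proposal is correct and follows essentially the same route as the paper's proof: start from \eqref{eq:tdiv^T} and \eqref{eq:tdiv^T2}, use cyclicity of $|\ |$ within each tensor slot to normalize the positions of $c'_i$ and $c''_i$, and recombine via the Leibniz rule $(\pa_i c''_i)(1\otimes c'_i)+(c''_i\otimes 1)(\pa_i c'_i)=\pa_i(c''_ic'_i)$ so that the result is recognized as $\tdiv(|\phi|)$ through \eqref{eq:tdivA} with $|\phi|_i=-c''_ic'_i$. One small correction to your expectation: nothing cancels in pairs --- all four groups of terms on the right-hand side survive and simply reassemble, via Leibniz, into $\sum_i(x_i\otimes 1-1\otimes x_i)\,\pa_i(-c''_ic'_i)$.
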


\begin{proof}
By (\ref{eq:tdiv^T}) and (\ref{eq:tdiv^T2}), we compute
\begin{align*}
& (1\otimes |\ |)\tdiv^T(\phi)+(|\ |\otimes 1)\underline{\tdiv}^T(\phi) \\
=& (|\ |\otimes |\ |)\left( \sum_i (1\otimes x_i-x_i\otimes 1)(\pa_i c''_i)(1\otimes c'_i)
+\sum_i (1\otimes x_i-x_i\otimes 1) (c''_i \otimes 1) (\pa_i c'_i)
\right) \\
=& (|\ |\otimes |\ |) \left( \sum_i (x_i\otimes 1-1\otimes x_i) 
\pa_i( -c''_ic'_i) \right) \\
=& \tdiv(|\phi|).
\end{align*}
\end{proof}

\subsection{Integration of divergence cocycles}

Let $\mathfrak{g}$ be non-negatively graded Lie algebra with the zero degree part $\mathfrak{g}_0$ contained in the center of $\mathfrak{g}$. Let 
$\mathcal{G}$ be the group integrating $\mathfrak{g}$ (using the BCH series) and  $M$ a graded module over $\mathfrak{g}$. Assume that the action of $\mathfrak{g}_0$ on $M$ is trivial. Then, the Lie algebra action of $\mathfrak{g}$ integrates to a group action of $\mathcal{G}$ on $M$.
Let $c: \mathfrak{g} \to M$ be a Lie algebra 1-cocycle. Under the conditions listed above, it integrates to a group 1-cocycle $C: \mathcal{G} \to M$ which satisfies the following defining properties: the first one is the group cocycle condition
\begin{equation}
\label{eq:jco}
C(fg) = C(f) + f \cdot C(g),
\end{equation}
for all $f,g \in \mathcal{G}$. 
%Here $f\cdot m$ for $f \in \mathcal{G}$ and $m \in M$ is the integration of the $\mathfrak{g}$-action on $M$ to $\mathcal{G}$. 
The second one is the relation between $c$ and $C$:
\begin{equation}
\label{eq:j'=div}
\frac{d}{dt} C(\exp(tu))|_{t=0}=c(u),
\end{equation}
for all $u \in \mathfrak{g}$. These two conditions imply 
\begin{equation}
\label{eq:jfdiv}
C(\exp(u))=\frac{e^u-1}{u}\cdot c(u)
\end{equation}
for $u \in \mathfrak{g}$ and $\exp(u) \in \mathcal{G}$.

The $1$-cocycles  $\ddiv: \der^+(A) \to |A| \otimes |A|$ (the restriction of the 1-cocycle $\ddiv$ to derivations of positive degree), $\tdiv: \tder(A) \to |A| \otimes |A|$ and $\div: \tder(L) \to |A|$ 
integrate to group 1-cocycles ${\rm J}: \exp(\der^+(A)) \to |A| \otimes |A|, \tJ: \taut(A) \to |A| \otimes |A|$ and  $j: \taut(L) \to |A|$.
 The statement of Proposition \ref{prop:dext} and Lemma \ref{lem:tdeeq} gives rise to the following relation between group cocycles:
\begin{equation}
\label{eq:tdej}
\tde(j(F))=\tJ (F)
\end{equation}
for all $F \in \taut(L)$.
The following lemma establishes a property of Lie algebra and group cocycles which will be important later in the text:
\begin{lem}
Let $M$ be a graded $\mathfrak{g}$-module, $c: \mathfrak{g} \to M$ a Lie algebra 1-cocycle and $C: \mathcal{G} \to M$ the group 1-cocycle integrating $c$. Then, for all $u \in \mathfrak{g}, f \in \mathcal{G}$ we have
\begin{equation}   \label{eq:cocycle_f}
c({\rm Ad}_f u) = f \cdot \left(c(u) + u \cdot C(f^{-1}) \right).
\end{equation}
\end{lem}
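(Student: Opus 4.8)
The plan is to reduce the statement to the group cocycle identity \eqref{eq:jco} plus the infinitesimal relation \eqref{eq:j'=div}, using the standard trick of differentiating a one-parameter family. First I would fix $f\in\mathcal{G}$ and $u\in\mathfrak{g}$, and observe that $\mathrm{Ad}_f u = \frac{d}{dt}\big|_{t=0} f\exp(tu)f^{-1}$ in $\mathcal{G}$, where the product is taken via the BCH group law. Hence $c(\mathrm{Ad}_f u) = \frac{d}{dt}\big|_{t=0} C\big(f\exp(tu)f^{-1}\big)$ by \eqref{eq:j'=div} (applied to the curve $t\mapsto f\exp(tu)f^{-1}$, which passes through the identity at $t=0$ with velocity $\mathrm{Ad}_f u$).

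Next I would expand $C\big(f\exp(tu)f^{-1}\big)$ using the cocycle relation \eqref{eq:jco} twice. Writing $g_t = \exp(tu)f^{-1}$, we get
\[
C\big(f g_t\big) = C(f) + f\cdot C(g_t) = C(f) + f\cdot\big(C(\exp(tu)) + \exp(tu)\cdot C(f^{-1})\big).
\]
Now differentiate at $t=0$: the term $C(f)$ is constant and drops out; $\frac{d}{dt}\big|_{t=0} C(\exp(tu)) = c(u)$ by \eqref{eq:j'=div}; and $\frac{d}{dt}\big|_{t=0}\big(\exp(tu)\cdot C(f^{-1})\big) = u\cdot C(f^{-1})$ since the action of $\mathcal{G}$ on $M$ integrates the Lie algebra action of $\mathfrak{g}$. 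Collecting terms, $c(\mathrm{Ad}_f u) = f\cdot\big(c(u) + u\cdot C(f^{-1})\big)$, which is \eqref{eq:cocycle_f}.

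The main point to be careful about is that all the manipulations take place in the pro-nilpotent (or non-negatively graded with central degree-zero part) setting, so that the exponential map, the BCH product, and the one-parameter subgroups $t\mapsto\exp(tu)$ are all well defined, and differentiation in $t$ is legitimate degree by degree. One should also note that $\mathrm{Ad}$ on $\mathfrak{g}$ is by definition obtained by differentiating conjugation in $\mathcal{G}$, so the first step is essentially a definition-unwinding rather than a computation; likewise the compatibility of the $\mathcal{G}$-action and the $\mathfrak{g}$-action on $M$ is part of the integration hypothesis stated just above the lemma. No genuine obstacle arises; the only care needed is bookkeeping with the order of factors (since $M$ need not be a trivial module and $f\cdot(-)$ does not commute with $u\cdot(-)$ in general, the grouping displayed above must be respected).
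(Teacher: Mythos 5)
Your proposal is correct and follows essentially the same argument as the paper: differentiate $C(f\exp(tu)f^{-1})$ at $t=0$ in two ways, once via $f\exp(tu)f^{-1}=\exp(t\,\mathrm{Ad}_f u)$ and the defining relation between $c$ and $C$, and once via a double application of the group cocycle identity. The bookkeeping of the module action is handled exactly as in the paper's proof.
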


\begin{proof}
Consider the expression $C(f e^{tu} f^{-1})$ and compute the $t$-derivative at $t=0$ in two ways. On the one hand,
$$
\frac{d}{dt} \, C(f e^{tu} f^{-1})|_{t=0} = \frac{d}{dt} \, C(e^{t {\rm Ad}_f u})|_{t=0} = c({\rm Ad}_f u).
$$
On the other hand,
$$
\begin{array}{lll}
\frac{d}{dt} \, C(f e^{tu} f^{-1})|_{t=0} & =  & \frac{d}{dt} \,\left( C(f) + f\cdot C(e^{tu}) + (f e^{tu}) \cdot C(f^{-1})\right)|_{t=0} \\
& = & f \cdot c(u) + f\cdot (u \cdot C(f^{-1}) ) \\
& = & f \cdot \left( c(u) + u \cdot C(f^{-1}) \right).
\end{array}
$$

\end{proof}
Among other things, equation \eqref{eq:cocycle_f} implies that $c({\rm Ad}_f u) = f \cdot c(u)$ if and only if $u \cdot C(f^{-1})=0$.

By applying equation \eqref{eq:cocycle_f} to the cocycle $\tdiv$, we obtain the following equation which will be of importance in the next sections:
$$
\tdiv({\rm Ad}_F u) = F \cdot (\tdiv(u) + u \cdot \tJ(F^{-1}) ),
$$
where $u \in \tder(A), F \in \taut(A)$. It is convenient to introduce a ``pull-back'' of the $\tdiv$ cocycle by the automorphism $F$: $F^* \tdiv(u)= F^{-1}\cdot \tdiv({\rm Ad}_F u)$. It is again a 1-cocycle on $\tder(A)$, and it satisfies the formula
\begin{equation} \label{eq:F*tdiv}
F^* \tdiv(u) = \tdiv(u) + u\cdot \tJ(F^{-1}).
\end{equation}

There is a similar transformation property for the map $\tdiv^T: \tD_A \to |A| \otimes A$. We compute
\begin{equation} \label{eq:F*tdivT}
\begin{array}{lll}
F^* \tdiv^T(\phi) & = & (F^{-1}\otimes F^{-1}) \cdot \tdiv^T((F\otimes F) \circ \phi \circ F^{-1})\\
& = &  (F^{-1} \otimes F^{-1}) \cdot p_1(\tdiv({\rm Ad}_F \phi^T)) \\
&= & p_1(\tdiv(\phi^T)) + p_1 ( \phi^T \cdot \tJ(F^{-1})) \\
& = & \tdiv^T(\phi) + (1 \otimes \phi) \cdot \tJ(F^{-1}).
\end{array}
\end{equation}
Here $F$ is uniquely extended to a tangential automorphism of $A^T$ preserving $T$, and in the last line we use the map $\phi: |A| \to A$ defined in Remark \ref{rem:double_acts}.

\section{Double brackets and auxiliary operations}
\label{sec:db}

In this section, we introduce double brackets (in the sense of van den Bergh) and discuss various operations which can be built from double brackets and divergence maps.

\subsection{Definition and basic properties}

\begin{dfn}
\label{dfn:db}
A \emph{double bracket} on $A$ is a $\K$-linear map $\Pi:A\otimes A\mapsto A\otimes A, a\otimes b\mapsto \Pi(a,b)$
such that for any $a,b,c\in A$,
\begin{align}
\Pi(a,bc)=& \Pi(a,b)c+b\Pi(a,c), \label{eq:a,bc} \\
\Pi(ab,c)=& \Pi(a,c)*b+a*\Pi(b,c) \label{eq:ab,c}.
\end{align}
\end{dfn}

One often uses the notation
$$
\{ a, b\}_\Pi = \Pi(a,b) = \Pi(a,b)' \otimes \Pi(a,b)''.
$$

\begin{rem}
We use the terminology ``double bracket'' in a wider sense than \cite{vdB08}, where a double bracket is defined to be a map
satisfying (\ref{eq:a,bc}) and the skew-symmetry condition:
for any $a,b\in A$,
\begin{equation}
\label{eq:sks}
\Pi(b,a)=-\Pi(a,b)^{\circ}.
\end{equation}
Here, $(c\otimes d)^{\circ}=d\otimes c$ for $c,d\in A$.
As is easily seen, (\ref{eq:a,bc}) and (\ref{eq:sks}) imply (\ref{eq:ab,c}).
\end{rem}

For a given double bracket, there are several auxiliary operations.

\begin{lem}
Let $\Pi: A \otimes A \to A \otimes A$ be a double bracket. Then, for every $a \in A$ the map 
$$
\{ a, - \}_\Pi: b \mapsto \Pi(a,b)' \otimes \Pi(a,b)''
$$
is a double derivation on $A$.
\end{lem}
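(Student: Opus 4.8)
The plan is to unwind the definitions and observe that the required property is, up to a translation of conventions, one of the defining axioms of a double bracket. Fix $a\in A$ and set $\phi:=\{a,-\}_\Pi$, so that $\phi(b)=\Pi(a,b)'\otimes\Pi(a,b)''=\Pi(a,b)$ for all $b\in A$. Since $\Pi$ is $\K$-bilinear, the map $\phi\colon A\to A\otimes A$ is $\K$-linear; in particular it makes sense to ask whether it is a double derivation.

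It then remains to verify the Leibniz rule $\phi(bc)=\phi(b)c+b\phi(c)$ for all $b,c\in A$, where the right-hand side is formed using the \emph{outer} $A$-bimodule structure \eqref{eq:out} on $A\otimes A$, the structure appearing in the definition of a double derivation. Spelling out that structure, $\phi(b)c=(\Pi(a,b)'\otimes\Pi(a,b)'')c=\Pi(a,b)'\otimes\Pi(a,b)''c$ and $b\phi(c)=b\Pi(a,c)'\otimes\Pi(a,c)''$; these are precisely the two terms $\Pi(a,b)c$ and $b\Pi(a,c)$ on the right-hand side of axiom \eqref{eq:a,bc}. Hence $\phi(bc)=\Pi(a,bc)=\Pi(a,b)c+b\Pi(a,c)=\phi(b)c+b\phi(c)$, which is exactly the defining identity of a double derivation. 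Note that this uses only \eqref{eq:a,bc}, so the statement holds for double brackets in the wider sense adopted here, without invoking the skew-symmetry condition \eqref{eq:sks} or \eqref{eq:ab,c}.

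I do not anticipate any real obstacle: the argument is a direct unpacking of notation. The only point requiring a little care is the bookkeeping of bimodule conventions, namely confirming that the products occurring in axiom \eqref{eq:a,bc} are the outer ones \eqref{eq:out} (matching the definition of a double derivation) and not the inner ones \eqref{eq:in} that intervene in axiom \eqref{eq:ab,c}. Once this identification is made explicit, the proof is complete.
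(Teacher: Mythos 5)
Your proof is correct and follows essentially the same route as the paper's: both reduce the claim to axiom \eqref{eq:a,bc}, noting that the products appearing there are exactly the outer bimodule operations \eqref{eq:out} used in the definition of a double derivation. The extra care you take in spelling out the bimodule conventions is a fine addition but does not change the argument.
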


\begin{proof}
Indeed, by  equation (\ref{eq:a,bc}) we have
$$
\{ a, bc\}_\Pi= \Pi(a,b) \, c+ b \, \Pi(a,c)  = \{a, b\}_\Pi \, c + b \, \{a, c\}_\Pi,
$$
as required.
\end{proof}

By applying the map $| \ |: \D_A \to \der(A)$, we obtain a derivation on $A$ of the form
$$
| \{ a, - \}_\Pi|: b  \mapsto \Pi(a,b)'\Pi(a,b)''.
$$

\begin{lem}
The map $A \to \der(A)$ given by $a \mapsto |\{ a, -\}_\Pi|$ descends to $|A|$.
\end{lem}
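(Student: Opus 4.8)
The plan is to show that for all $a,b\in A$ the derivations $|\{ab,-\}_\Pi|$ and $|\{ba,-\}_\Pi|$ coincide. Since the subspace $[A,A]$ is the closure of the span of elements of the form $ab-ba$, and the map $a\mapsto |\{a,-\}_\Pi|$ is $\K$-linear (and continuous), this is precisely what is needed for it to factor through the quotient $A\to|A|=A/[A,A]$.

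First I would unwind the Leibniz rule \eqref{eq:ab,c} in the first argument. Writing $\{a,c\}_\Pi=\{a,c\}_\Pi'\otimes\{a,c\}_\Pi''$ and expanding the inner bimodule operations \eqref{eq:in}, the identity $\Pi(ab,c)=\Pi(a,c)*b+a*\Pi(b,c)$ becomes
\[
\{ab,c\}_\Pi=\{a,c\}_\Pi'\,b\otimes\{a,c\}_\Pi''+\{b,c\}_\Pi'\otimes a\,\{b,c\}_\Pi''
\]
for every $c\in A$. Then I apply the multiplication map $|\ |\colon\D_A\to\der(A)$, which sends a double derivation $\phi$ to the derivation $c\mapsto\phi'(c)\phi''(c)$. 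Evaluating on $c$ gives
\[
|\{ab,-\}_\Pi|(c)=\{a,c\}_\Pi'\,b\,\{a,c\}_\Pi''+\{b,c\}_\Pi'\,a\,\{b,c\}_\Pi''.
\]
The right-hand side is manifestly invariant under interchanging $a$ and $b$, hence $|\{ab,-\}_\Pi|=|\{ba,-\}_\Pi|$, which proves the claim.

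The only point requiring attention is the bookkeeping with the inner bimodule structure \eqref{eq:in} when expanding \eqref{eq:ab,c}: one must verify that in $\Pi(a,c)*b$ the factor $b$ multiplies the first tensor leg on the right, whereas in $a*\Pi(b,c)$ the factor $a$ multiplies the second tensor leg on the left; this is exactly what makes the two contributions symmetric after applying $|\ |$. Beyond this there is no genuine obstacle — the statement is an immediate consequence of the Leibniz rule in the first slot together with the cyclicity built into the map $|\ |$.
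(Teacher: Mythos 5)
Your proof is correct and is essentially the paper's own argument: expand $\Pi(ab,c)$ via the Leibniz rule \eqref{eq:ab,c} with the inner bimodule structure, apply the multiplication map $|\ |$, and observe that the resulting expression $\Pi(a,c)'\,b\,\Pi(a,c)''+\Pi(b,c)'\,a\,\Pi(b,c)''$ is symmetric in $a$ and $b$. Your bookkeeping of the inner bimodule operations is also correct, so nothing is missing.
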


\begin{proof}
We have,
$$
|\{ ac, -\}_\Pi|: b \mapsto \Pi(a,b)' c \Pi(a,b)'' + \Pi(c,b)' a \Pi(c,b)''.
$$
The right hand side is symmetric under exchange of $a$ and $c$. Hence, it vanishes on commutators, as required.
\end{proof}

Motivated by this lemma, we often use the notation
$$
|\{ a, -\}_\Pi| =  \{ |a|, -\}_\Pi.
$$
By equation (\ref{eq:a,bc}), the double bracket induces a map $\{-,-\}_{\Pi}: |A| \otimes |A|\to |A|$ defined by formula
\begin{equation}   \label{eq:bracket_on_|A|}
|a|\otimes |b|\mapsto |\{|a|,b\}_{\Pi}| =: \{|a|, |b|\}_\Pi.
\end{equation}
Furthermore, by Remark \ref{rem:double_acts}, we obtain a map
$\{-,-\}_\Pi : A\otimes |A|\to A$ defined by
\begin{equation} \label{eq:A|A|toA}
a\otimes |b|\mapsto \{a,-\}_{\Pi}(|b|)=
\Pi(a,b)''\Pi(a,b)'=: \{a,|b|\}_\Pi.
\end{equation}
All the three operations above are called the \emph{bracket} associated with $\Pi$.

\begin{lem} \label{lem:if_skew}
Let $\Pi$ be a skew-symmetric double bracket. Then, for any $a,b\in A$,
\[
\{a,|b|\}_\Pi=-\{|b|,a\}_\Pi.
\]
\end{lem}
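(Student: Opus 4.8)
The statement to prove is Lemma~\ref{lem:if_skew}: for a skew-symmetric double bracket $\Pi$ and any $a,b \in A$, one has $\{a,|b|\}_\Pi = -\{|b|,a\}_\Pi$. The two sides are elements of $A$: the left-hand side is $\Pi(a,b)''\Pi(a,b)'$ by \eqref{eq:A|A|toA}, and the right-hand side is obtained by first forming the derivation $\{|b|,-\}_\Pi = |\{b,-\}_\Pi|$ and then evaluating it on $a$, which gives $\Pi(b,a)'\Pi(b,a)''$. So the identity to establish is simply
\[
\Pi(a,b)''\,\Pi(a,b)' = -\,\Pi(b,a)'\,\Pi(b,a)''
\]
in $A$. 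The plan is to expand the right-hand side using the skew-symmetry condition \eqref{eq:sks}, which says $\Pi(b,a) = -\Pi(a,b)^\circ$, i.e. if $\Pi(a,b) = \Pi(a,b)' \otimes \Pi(a,b)''$ then $\Pi(b,a) = -\Pi(a,b)'' \otimes \Pi(a,b)'$.

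\textbf{Main step.} Substituting the skew-symmetry relation directly: $\Pi(b,a)' = -\Pi(a,b)''$ and $\Pi(b,a)'' = \Pi(a,b)'$ (with the sign absorbed into the first slot, using Sweedler-type bookkeeping — more precisely, $\Pi(b,a) = -\Pi(a,b)'' \otimes \Pi(a,b)'$ as an element of $A \otimes A$). Therefore
\[
\{|b|,a\}_\Pi = \Pi(b,a)'\,\Pi(b,a)'' = -\,\Pi(a,b)''\,\Pi(a,b)' = -\,\{a,|b|\}_\Pi,
\]
where the last equality is exactly the definition \eqref{eq:A|A|toA}. Rearranging gives the claim. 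I would write out the middle equality carefully, noting that multiplying the two tensor factors of $-\Pi(a,b)'' \otimes \Pi(a,b)'$ in $A$ (in the order second-slot-times-first-slot as dictated by the formula for the bracket $A \otimes |A| \to A$) produces $-\Pi(a,b)''\Pi(a,b)'$.

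\textbf{Obstacle.} There is essentially no obstacle here — this is a bookkeeping lemma, and the only thing to be careful about is the order of multiplication in the various bracket operations (the map $|A| \otimes |A| \to |A|$, the map $A \otimes |A| \to A$, and the underlying double derivation), together with keeping track of which Sweedler leg goes where under the flip $(\ )^\circ$. I would double-check the convention from Remark~\ref{rem:double_acts}: there the double derivation $\phi$ acting on $|a|$ returns $\phi''(a)\phi'(a)$, which is why $\{a,|b|\}_\Pi = \Pi(a,b)''\Pi(a,b)'$ (apply the double derivation $\{a,-\}_\Pi$ to $|b|$). Once these conventions are pinned down the proof is a two-line substitution of \eqref{eq:sks}, and I would present it as such.
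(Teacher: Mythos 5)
Your proof is correct and is essentially identical to the paper's: both substitute the skew-symmetry relation $\Pi(b,a)=-\Pi(a,b)''\otimes\Pi(a,b)'$ into $\{|b|,a\}_\Pi=\Pi(b,a)'\Pi(b,a)''$ to obtain $-\Pi(a,b)''\Pi(a,b)'=-\{a,|b|\}_\Pi$. Your care with the multiplication order conventions matches the paper's (implicit) bookkeeping exactly.
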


\begin{proof}
Since $\Pi$ is skew-symmetric,
$\Pi(b,a)=-\Pi(a,b)^{\circ}=-\Pi(a,b)''\otimes \Pi(a,b)'$.
Then
\[
\{ |b|,a\}_\Pi=\Pi(b,a)'\Pi(b,a)''=-\Pi(a,b)''\Pi(a,b)'=-\{a,|b| \}_\Pi.
\]
\end{proof}

In order to give concrete examples of double brackets we consider the space $\D_A \otimes_A \D_A$ and its $A$-bimodule structure given by
$c_1 (\phi_1\otimes_A \phi_2) c_2:=(c_1\phi_1)\otimes_A (\phi_2c_2)$
for $c_1,c_2\in A$ and $\phi_1,\phi_2\in \D_A$.
Set
\[
(DA)_2:=| \D_A \otimes_A \D_A |.
\]

For $\phi_1,\phi_2\in \D_{A}$, we define a map $\Pi_{\phi_1\otimes \phi_2}:A\otimes A\to A\otimes A$ by
\begin{equation} \label{eq:phi1phi2}
\Pi_{\phi_1\otimes \phi_2}(a,b):=\phi'_2(b)\phi''_1(a)\otimes \phi'_1(a)\phi''_2(b),
\end{equation}
where we write $\phi_1(a)=\phi'_1(a)\otimes \phi''_1(a)$ and
$\phi_2(b)=\phi'_2(b)\otimes \phi''_2(b)$.
In other words,
\begin{equation}
\label{eq:ph1ph2}
\Pi_{\phi_1\otimes \phi_2}(a,-)=\phi'_1(a)\phi_2\phi''_1(a)\in \D_A
\end{equation}
for any $a\in A$.
One can check that $\Pi_{\phi_1\otimes \phi_2}$ is a double bracket on $A$.
In \cite{vdB08} \S 4, it is proved that this assignment induces a $\K$-linear isomorphism
%\begin{equation}  \label{eq:DA_2}
$$
(DA)_2 \xrightarrow{\cong} \{ \text{double brackets on $A$} \}.
$$
%\end{equation}
It is convenient to describe double brackets on $A$ through this isomorphism.

\subsection{Tangential double brackets}

In this section, we consider a class of double brackets of  particular interest.
%As we will see, all the double brackets on $A$ that we consider in the paper belong to this class.

\begin{dfn}
A double bracket $\Pi$ on $A$ is called \emph{tangential} if
$\{ a,-\}_\Pi \in \tD_A$ for any $a\in A$.
\end{dfn}

Notice that $\Pi$ is tangential if and only if $\{x_i,-\}_\Pi \in \tD_A$ for every $i$. Recall that to a tangential double derivation $\{ a, -\}_\Pi \in \tD_A$ one can associate an element $| \{ a, -\}_\Pi | \in \tder(A)$ such that
$$
\rho\left( | \{ a, -\}_\Pi | \right) = |i(\{ a, -\}_\Pi)| = \{ |a|, -\}_\Pi \in \der(A).
$$
For simplicity, with the assumption that $\Pi$ is tangential,
we often use the same notation $\{|a|,-\}_{\Pi}$ for the lift $|\{ a, -\}_\Pi | \in \tder(A)$.

%\begin{prop}
%\label{prop:lift}
%Let $\Pi$ be a tangential double bracket on $A$.
%Then the map $|A|\to \der(A), |a|\mapsto \{|a|,-\}_{\Pi}$
%in Definition \ref{dfn:aux} lifts to
%a map $|A|\to \tder(A)$.
%\end{prop}

%\begin{proof}
%Consider the composition of the map $A\to \tD_A, a\mapsto \Pi(a,-)$
%and the map $\tD_A\to \tder(A)$ introduced in Proposition \ref{prop:rel_div}.
%Explicitly, let $a\in A$ and write $\Pi(a,-)=\sum_i a'_i(\ad_{x_i}\pa_i)a''_i$.
%Then this composition sends $a$ to
%$\tilde{a}=(-a''_1a'_1,\ldots,-a''_na'_n)\in \tder(A)$.
%We claim that $\tilde{a}$ is the desired lift of $\{|a|,-\}_{\Pi} \in \der(A)$.
%We need to prove that (i) $\tilde{a}$ does not depend on the representative $a$, and (ii) $\rho(\tilde{a})=\{|a|,-\}_{\Pi}$.
%
%(i)
%Let $a,b\in A$. If $\Pi(a,-)=\sum_i a'_i(\ad_{x_i}\pa_i)a''_i$ and
%$\Pi(b,-)=\sum_i b'_i(\ad_{x_i}\pa_i)b''_i$, then by (\ref{eq:Pi(ab,-)}),
%we compute $\Pi(ab,-)=\Pi(a,-)b+a\Pi(b,-)=\sum_i a'_i(\ad_{x_i}\pa_i)a''_ib+\sum_i ab'_i(\ad_{x_i}\pa_i)b''_i$.
%Therefore, $\widetilde{ab}=(-(a''_1ba'_1+b''_1ab'_1),\ldots,-(a''_nba'_n+b''_nab'_n))$, which is easily seen to equal $\widetilde{ba}$.

%(ii)
%Since $\Pi(a,x_i)=a'_i*(1\otimes x_i-x_i\otimes 1)*a''_i=
%a''_i\otimes a'_ix_i-x_ia''_i\otimes a'_i$, we compute
%$\{|a|,x_i\}_{\Pi}=a''_ia'_ix_i-x_ia''_ia'_i=[x_i,-a''_ia'_i]=\rho(\tilde{a})(x_i)$. Since $A$ is generated by $x_1,\ldots,x_n$ as a complete $\K$-algebra, we conclude $\rho(\tilde{a})=\{|a|,-\}_{\Pi}$.
%\end{proof}

\begin{dfn}
\label{dfn:divp}
Let $\Pi$ be a tangential double bracket on $A$.
\begin{enumerate}
\item[(i)]
Define the map $\tdiv_{\Pi}:|A|\to |A| \otimes |A|$ by
$\tdiv_{\Pi}(|a|):=\tdiv(\{ |a|,- \}_{\Pi})$ for $|a|\in |A|$.
%Here, $\{ |a|,- \}_{\Pi} \in \tder(A)$ is the lift in Proposition \ref{prop:lift}.
\item[(ii)]
Define the maps $\tdiv^T_{\Pi}: A\to |A|\otimes A$ and
$\underline{\tdiv}^T_{\Pi}:A\to A\otimes |A|$ by
$\tdiv^T_{\Pi}(a):=\tdiv^T(\Pi(a,-))$ and
$\underline{\tdiv}^T_{\Pi}(a):=\underline{\tdiv}^T(\Pi(a,-))$ for $a\in A$.
\end{enumerate}
\end{dfn}

Another description of the map $\tdiv^T_{\Pi}$ is as follows.
First, we consider the unique extension of $\Pi$ to a tangential double bracket
$\Pi^T:A^T\otimes A^T\to A^T\otimes A^T$ by imposing the condition
$\Pi^T(T,-)=\Pi^T(-,T)=0$.
%Note that $\Pi^T(Ta,-)=T*\Pi(a,-)$ for any $a\in A$.
Note that $\Pi^T(Ta,-)=T\,\Pi(a,-)$ for any $a\in A$.
%Following the construction in Definition \ref{dfn:aux}, we obtain
%the bracket $\{-,-\}_{\Pi^T}$.
To simplify the notation, we drop $T$ and denote the corresponding bracket by $\{-,-\}_{\Pi}$.
Then, it is easy to check that
\[
\tdiv^T_{\Pi}(a)=p_1(\tdiv(\{|Ta|,-\}_{\Pi})),
\]
where $\{|Ta|,-\}_{\Pi}\in \der(A^T)$ is lifted to an element of
$\tder(A^T)$, and $\tdiv:\tder(A^T)\to |A^T|\otimes |A^T|$ is the tangential divergence on $\tder(A^T)$.

Note that $\tdiv_{\Pi}$, $\tdiv^T_{\Pi}$, and $\underline{\tdiv}^T_{\Pi}$
are $\K$-linear in $\Pi$, i.e., $\tdiv_{\Pi_1+\Pi_2}=\tdiv_{\Pi_1}+\tdiv_{\Pi_2}$
for $\Pi_1$ and $\Pi_2$ tangential, etc.

\begin{prop}
\label{prop:prodP}
Let $\Pi$ be a tangential double bracket on $A$. Then
for any $a,b\in A$,
\begin{align}
\tdiv^T_{\Pi}(ab)=&
\tdiv^T_{\Pi}(a)(1\otimes b)+(1\otimes a)\tdiv^T_{\Pi}(b)+(|\ |\otimes 1)\Pi(a,b), \label{eq:tdivab1} \\
\underline{\tdiv}^T_{\Pi}(ab)=&
\underline{\tdiv}^T_{\Pi}(a)(b\otimes 1)+(a\otimes 1)\underline{\tdiv}^T_{\Pi}(b)
+(1\otimes |\ |)\Pi(b,a), \label{eq:tdivab2}
\end{align}
and for any $a\in A$,
\begin{equation}
\label{eq:tdiv(a)}
\tdiv_{\Pi}(|a|)=(1\otimes |\ |)\tdiv^T_{\Pi}(a)+
(|\ |\otimes 1)\underline{\tdiv}^T_{\Pi}(a).
\end{equation}
\end{prop}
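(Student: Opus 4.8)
The plan is to reduce each of the three identities to the corresponding multiplicative properties of the ``raw'' maps $\tdiv^T$, $\underline{\tdiv}^T$ on $\tD_A$ (Proposition \ref{prop:aphi}) together with the structural formula of Proposition \ref{prop:rel_div}, using the fact that $\Pi$ being tangential means $\Pi(a,-)=\{a,-\}_\Pi\in\tD_A$ for every $a\in A$. The key observation linking the two levels is that $b\mapsto\{a,-\}_\Pi$ is itself a double derivation, so by \eqref{eq:ab,c} we have, for the inner bimodule structure,
\[
\{ab,-\}_\Pi = \{a,-\}_\Pi\, b + a\, \{b,-\}_\Pi,
\]
i.e.\ $\Pi(ab,-) = \Pi(a,-)*b + a*\Pi(b,-)$ as elements of $\tD_A$, where the $A$-bimodule on $\tD_A$ is the one built from the inner structure on $A\otimes A$. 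Here one must be slightly careful: the left (resp.\ right) $A$-action on $\tD_A$ corresponds, under $\phi\mapsto\phi^T$, to multiplying $T$ on the appropriate side inside $A^T$, which is exactly why the ``$a$'' ends up on the factor opposite to where one naively expects in \eqref{eq:tdivab1}--\eqref{eq:tdivab2}.

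For \eqref{eq:tdivab1}: apply Proposition \ref{prop:aphi} to $\phi=\Pi(b,-)$ and $\psi=\Pi(a,-)$. Writing $\Pi(ab,-)=a\,\Pi(b,-)+\Pi(a,-)\,b$ (left/right action in the sense above) and using the additivity of $\tdiv^T$ in $\phi$, we get
\[
\tdiv^T_\Pi(ab)=\tdiv^T(a\,\Pi(b,-))+\tdiv^T(\Pi(a,-)\,b).
\]
The first term equals $(1\otimes a)\tdiv^T_\Pi(b)$ by the first identity of Proposition \ref{prop:aphi}; the second equals $\tdiv^T_\Pi(a)(1\otimes b)+(|\ |\otimes 1)\Pi(a,-)(b)=\tdiv^T_\Pi(a)(1\otimes b)+(|\ |\otimes 1)\Pi(a,b)$ by the second identity of Proposition \ref{prop:aphi}, where $\Pi(a,-)(b)=\Pi(a,b)$ by definition of the double derivation $\{a,-\}_\Pi$. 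Adding gives \eqref{eq:tdivab1}. The identity \eqref{eq:tdivab2} is proved the same way using the third and fourth identities of Proposition \ref{prop:aphi} and the relation $\underline{\tdiv}^T(\phi a)=\underline{\tdiv}^T(\phi)(a\otimes 1)$, $\underline{\tdiv}^T(a\phi)=(a\otimes 1)\underline{\tdiv}^T(\phi)+(1\otimes|\ |)\phi(a)$; the appearance of $\Pi(b,a)$ rather than $\Pi(a,b)$ is forced by which of the two summands of $\Pi(ab,-)$ carries the extra $\phi(\cdot)$ term, namely the $a*\Pi(b,-)$ piece, whose double-derivation value is $\Pi(b,a)$.

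For \eqref{eq:tdiv(a)}: set $\phi=\Pi(a,-)\in\tD_A$ and recall from Definition \ref{dfn:divp}(i)--(ii) that $\tdiv_\Pi(|a|)=\tdiv(\{|a|,-\}_\Pi)=\tdiv(|\phi|)$ while $\tdiv^T_\Pi(a)=\tdiv^T(\phi)$ and $\underline{\tdiv}^T_\Pi(a)=\underline{\tdiv}^T(\phi)$. Then Proposition \ref{prop:rel_div} applied to $\phi$ is literally the assertion. I expect the main obstacle to be purely bookkeeping: keeping straight the distinction between the outer and inner bimodule structures on $A\otimes A$ and on $\tD_A$, and correctly tracking on which side the auxiliary generator $T$ sits after passing through $\phi\mapsto\phi^T$, since that is what dictates the precise placement of the factors $(1\otimes a)$, $(1\otimes b)$, $(a\otimes 1)$, $(b\otimes 1)$ and the asymmetry between $\Pi(a,b)$ and $\Pi(b,a)$ in the two product formulas. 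Once the translation $\Pi(ab,-)=a*\Pi(b,-)+\Pi(a,-)*b$ is pinned down with the right conventions, everything else is a direct substitution into Propositions \ref{prop:aphi} and \ref{prop:rel_div}.
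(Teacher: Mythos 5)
Your proposal is correct and takes essentially the same route as the paper: both decompose $\Pi(ab,-)=\Pi(a,-)b+a\,\Pi(b,-)$ in $\tD_A$ via axiom \eqref{eq:ab,c} and then apply Proposition \ref{prop:aphi} termwise, with \eqref{eq:tdiv(a)} being exactly Proposition \ref{prop:rel_div} applied to $\phi=\Pi(a,-)$. Your bookkeeping of which summand produces the extra term $(|\ |\otimes 1)\Pi(a,b)$ versus $(1\otimes|\ |)\Pi(b,a)$ matches the paper's computation.
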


\begin{proof}
By Proposition \ref{prop:aphi}, we compute
\begin{align*}
& \tdiv^T_{\Pi}(ab)= \tdiv^T( \Pi(ab,-)) \\
=& \tdiv^T (\Pi(a,-)b+a\Pi(b,-)) \\
=& \tdiv^T (\Pi(a,-))(1\otimes b)+(|\ |\otimes 1)\Pi(a,b)+
(1\otimes a)\tdiv^T(\Pi(b,-)) \\
=& \tdiv^T_{\Pi}(a)(1\otimes b)+(1\otimes a)\tdiv^T_{\Pi}(b)
+(|\ |\otimes 1)\Pi(a,b).
\end{align*}
This proves (\ref{eq:tdivab1}).
To prove (\ref{eq:tdivab2}), we compute
\begin{align*}
& \underline{\tdiv}^T_{\Pi}(ab)= \underline{\tdiv}^T( \Pi(ab,-)) \\
=& \underline{\tdiv}^T (\Pi(a,-)b+a\Pi(b,-)) \\
=& \underline{\tdiv}^T (\Pi(a,-))(b\otimes 1)
+(a\otimes 1)\underline{\tdiv}^T(\Pi(b,-))+(1\otimes |\ |)\Pi(b,a)
 \\
=& \underline{\tdiv}^T_{\Pi}(a)(b\otimes 1)
+(a\otimes 1)\underline{\tdiv}^T_{\Pi}(b)
+(|\ |\otimes 1)\Pi(b,a).
\end{align*}
Finally, (\ref{eq:tdiv(a)}) follows from Proposition \ref{prop:rel_div}.
\end{proof}

\begin{rem}
\label{rem:any}
The notion of a double bracket (Definition \ref{dfn:db}) and its auxiliary operations make sense for any associative $\K$-algebra.
In \S \ref{sec:GTLB}, we consider a topologically defined double bracket on the group ring $\K \pi$, where $\pi$ is the fundamental group of an oriented surface.
\end{rem}

\subsection{The KKS double bracket}

In this section, we consider an important example of a double bracket: the Kirillov-Kostant-Souriau (KKS) double bracket. We will use the map \eqref{eq:phi1phi2} to identify double brackets with elements of $(DA)_2$. In this identification, the symbol $\otimes$ stands for $\otimes_A$. 
The KKS double bracket is given by the following formula:

\begin{equation}
\Pi_{\rm KKS} =  \sum_{i=1}^n  | \partial_i \otimes {\rm ad}_{x_i} \partial_i |.
\end{equation}

\begin{lem} \label{lem:KKS_task}
The double bracket $\Pi_{\rm KKS}$ is tangential and skew-symmetric.
\end{lem}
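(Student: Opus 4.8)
The plan is to verify the two properties of $\Pi_{\KKS} = \sum_{i=1}^n |\pa_i \otimes \ad_{x_i}\pa_i|$ directly from the description \eqref{eq:ph1ph2} of the double bracket attached to an element of $(DA)_2$. First I would compute $\{x_j, -\}_{\KKS}$ for a generator $x_j$: using \eqref{eq:ph1ph2}, the double derivation $\Pi_{\phi_1 \otimes \phi_2}(a, -) = \phi_1'(a)\,\phi_2\,\phi_1''(a)$, so with $\phi_1 = \pa_i$, $\phi_2 = \ad_{x_i}\pa_i$ and $a = x_j$ we get $\pa_i'(x_j)\,(\ad_{x_i}\pa_i)\,\pa_i''(x_j) = \delta_{ij}\,(\ad_{x_i}\pa_i)$ (since $\pa_i(x_j) = \delta_{ij}(1\otimes 1)$). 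Summing over $i$ gives $\{x_j, -\}_{\KKS} = \ad_{x_j}\pa_j \in \tD_A$. Since a double bracket is determined by its values on generators in the first slot, and $\{x_j, -\}_{\KKS} \in \tD_A$ for every $j$, the remark following the definition of ``tangential'' shows $\Pi_{\KKS}$ is tangential.

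For skew-symmetry I would check condition \eqref{eq:sks}, $\Pi_{\KKS}(b,a) = -\Pi_{\KKS}(a,b)^\circ$. Since both sides are double derivations in $a$ (for fixed $b$) and double derivations in $b$, and since each is $\K$-linear, it suffices to verify the identity when $a = x_j$ and $b = x_k$ are generators — one has to be a little careful here because $a \mapsto \Pi(a,b)$ satisfies the ``inner'' Leibniz rule \eqref{eq:ab,c}, but the bilinear-extension-from-generators argument still applies since an element of $A \otimes A$ is determined by its components and both sides transform the same way. Using the computation above, $\Pi_{\KKS}(x_j, x_k) = \delta_{jk}(\ad_{x_j}\pa_j)(x_k) = \delta_{jk}(1\otimes x_j - x_j \otimes 1)$. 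Then $\Pi_{\KKS}(x_k, x_j) = \delta_{jk}(1 \otimes x_k - x_k \otimes 1) = \delta_{jk}(1 \otimes x_j - x_j \otimes 1)$, while $-\Pi_{\KKS}(x_j, x_k)^\circ = -\delta_{jk}(x_j \otimes 1 - 1 \otimes x_j) = \delta_{jk}(1 \otimes x_j - x_j \otimes 1)$. These agree, so \eqref{eq:sks} holds on generators and hence everywhere.

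The only genuine subtlety — and the step I expect to require the most care — is justifying that it is enough to check skew-symmetry on generators. The point is that \eqref{eq:a,bc} (the outer Leibniz rule in the second argument) and \eqref{eq:sks} together are known to imply \eqref{eq:ab,c}, so one can build $\Pi_{\KKS}(a,b)$ from its values on generators via the outer Leibniz rule in both slots once the values on pairs of generators are fixed; but to be safe one should instead argue via the isomorphism $(DA)_2 \xrightarrow{\cong} \{\text{double brackets}\}$ cited from \cite{vdB08}, which reduces any identity between double brackets to an identity between the corresponding elements of $(DA)_2$, and there the skew-symmetry of $\Pi_{\KKS}$ amounts to the manifest symmetry $|\pa_i \otimes \ad_{x_i}\pa_i| = -|(\ad_{x_i}\pa_i)\otimes \pa_i|^{\text{swap}}$ inside $|\D_A \otimes_A \D_A|$, which in turn follows from $\ad_{x_i}\pa_i = x_i\pa_i - \pa_i x_i$ and the cyclic identification defining $|{-}|$. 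I would present the generator-level check as the main computation and then invoke either the van den Bergh isomorphism or the $\eqref{eq:a,bc}+\eqref{eq:sks}\Rightarrow\eqref{eq:ab,c}$ remark to conclude, keeping the write-up short.
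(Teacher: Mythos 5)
Your proposal is correct and follows essentially the same route as the paper: tangentiality is checked on generators via $\Pi_{\KKS}(x_i,-)=\ad_{x_i}\pa_i\in\tD_A$, and skew-symmetry is reduced to the identity $\sum_i|\pa_i\otimes\ad_{x_i}\pa_i|=-\sum_i|\ad_{x_i}\pa_i\otimes\pa_i|$ in $(DA)_2$, which follows from cyclicity of $|\cdot|$ applied to $\ad_{x_i}\pa_i=x_i\pa_i-\pa_ix_i$. The extra generator-level verification of \eqref{eq:sks} and the discussion of why it suffices are sound but not needed once you invoke the $(DA)_2$ argument, which is exactly what the paper does.
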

\begin{proof}
The skew-symmetry \eqref{eq:sks} is obvious from the following computation:
$$
\Pi_{\rm KKS} =
\sum_i |\partial_i \otimes (x_i \partial_i - \partial_i x_i) |= - \sum_i
|(x_i \partial_i  - \partial_i x_i) \otimes \partial_i|,
$$
where we have used the cyclicity property for $x_i$ under the $| \cdot |$ sign.
The fact that $\Pi_{\rm KKS}$ is tangetial can be checked on generators:
$$
\Pi_{\rm KKS}(x_i, -)= {\rm ad}_{x_i} \partial_i \in \tD_A.
$$
\end{proof}

Denote $x_0=-\sum_{i=1}^n x_i$.

\begin{lem}  \label{prop:KKS}
For the KKS double bracket, we have
\begin{align*}
& \{ |h(x_i)|, a \}_{\KKS} =\{a,|h(x_i)|\}_{\KKS}= 0, \\
& \{ x_0, a \}_{\KKS} = -\phi_0(a), \\
& \{ x_0, |a|\}_{\KKS}=\{ |a|, x_0\}_{\rm KKS}=0, \\
& \{ |h(x_0)|, a \}_{\rm KKS}= -\{a,|h(x_0)|\}_{\KKS}=  [a, \dot{h}(x_0)],
\end{align*}
where $a \in A$, $i=1,\ldots,n$, $h(z) \in \mathbb{K}[[z]]$ and $\dot{h}(z)$ its derivative.
\end{lem}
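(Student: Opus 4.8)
The plan is to verify each of the four formulas by reducing everything to the basic double derivations $\partial_i$ and the element $\phi_0=\sum_i\mathrm{ad}_{x_i}\partial_i$, using the three descriptions of the bracket associated with $\Pi_{\KKS}$ (namely $\{|a|,-\}$ as an element of $\der(A)$, $\{a,-\}$ as a double derivation, and $\{a,|b|\}$ as the map $|A|\to A$ from Remark \ref{rem:double_acts}), together with the skew-symmetry proved in Lemma \ref{lem:KKS_task} and Lemma \ref{lem:if_skew}. Since all the maps in sight are (double) derivations in the $A$-slot, it suffices to check identities on the generators $x_1,\dots,x_n$; the power-series statements then follow because $h(x_i)$ and $h(x_0)$ are limits of polynomials in a single element, and a (double) derivation is continuous.

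First I would record the elementary computation $\Pi_{\KKS}(x_j,-)=\mathrm{ad}_{x_j}\partial_j$, hence $\Pi_{\KKS}(x_j,x_k)=\delta_{j,k}(1\otimes x_j-x_j\otimes 1)$, and therefore $\{|x_j|,-\}_{\KKS}=|i(\mathrm{ad}_{x_j}\partial_j)|$ sends $x_k\mapsto \delta_{j,k}[x_j,x_j]=0$, i.e. $\{|x_j|,-\}_{\KKS}=0$ as a derivation. More generally, for a word $w=x_{j_1}\cdots x_{j_m}$ the double derivation $\Pi_{\KKS}(w,-)$ is a sum of terms, each of which, after applying $|\,\cdot\,|$ to turn it into a derivation of $A$, is an inner derivation by a letter appearing in $w$; when $w=x_j^m$ all these collapse and $\{|x_j^m|,-\}_{\KKS}=0$. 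Passing to the completion, $\{|h(x_i)|,-\}_{\KKS}=0$ for any $h\in\K[[z]]$; by skew-symmetry (Lemma \ref{lem:if_skew}) also $\{a,|h(x_i)|\}_{\KKS}=0$. This gives the first line. For the second line, $x_0=-\sum_i x_i$, so $\{x_0,-\}_{\KKS}=-\sum_i\mathrm{ad}_{x_i}\partial_i=-\phi_0$ as an element of $\D_A$, and by Lemma \ref{lem:phi_0} this double derivation sends $a\mapsto -(1\otimes a-a\otimes 1)=a\otimes 1-1\otimes a$; thus $\{x_0,a\}_{\KKS}=-\phi_0(a)$. Taking $|\,\cdot\,|$ of $-\phi_0$ gives the derivation $a\mapsto -\phi'_0(a)\phi''_0(a)$; since $\phi_0(a)=1\otimes a-a\otimes 1$ has $\phi'_0(a)\phi''_0(a)=a-a=0$, the derivation $\{|x_0|,-\}_{\KKS}$ vanishes, which is the third line (the other equality there is skew-symmetry again, using that the bracket on $|A|$ is skew-symmetric). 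Equivalently $x_0=-\sum x_i$ together with the first line gives $\{|x_0|,a\}_{\KKS}=-\sum_i\{|x_i|,a\}_{\KKS}=0$.

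For the fourth line I would use Remark \ref{rem:double_acts}: the double derivation $\phi=\{x_0,-\}_{\KKS}=-\phi_0$ defines a map $|A|\to A$ by $|a|\mapsto \phi''_0(a)\phi'_0(a)\cdot(-1)$; again $\phi_0(a)=1\otimes a-a\otimes 1$, so $\phi''_0(a)\phi'_0(a)=a-a=0$ and this map is zero — but that computes $\{x_0,|a|\}_{\KKS}$, not $\{|h(x_0)|,a\}_{\KKS}$, so here I must instead differentiate. The clean route is: write $h(x_0)$ as a limit of polynomials and compute $\{|x_0^m|,-\}_{\KKS}$ directly. Using the derivation property $\{|x_0^m|,-\}$ and the Leibniz-type rule for the bracket on $|A|$ in the first slot, one reduces to $\{|x_0|,-\}$ and products; more efficiently, compute the double derivation $\Pi_{\KKS}(x_0^m,-)=\sum_{p+q=m-1}x_0^p\,(\mathrm{ad}_{x_0}?)\dots$ — but $x_0$ is not a generator, so one expands $x_0^m$ in the $x_i$, applies \eqref{eq:ab,c}, and collects. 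The upshot, which I expect to be the one genuinely computational point, is that $\{|x_0^m|,a\}_{\KKS}=[a,\,m x_0^{m-1}]$; summing the series $h(z)=\sum h_m z^m$ yields $\{|h(x_0)|,a\}_{\KKS}=[a,\dot h(x_0)]$, and skew-symmetry gives the sign-flipped companion $\{a,|h(x_0)|\}_{\KKS}=-[a,\dot h(x_0)]$.

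The main obstacle is precisely this last identity: $x_0$ is a sum of generators rather than a generator, so one cannot just read off $\Pi_{\KKS}(x_0,-)$ as $\mathrm{ad}_{x_0}\partial_0$, and one has to control the commutators that arise when expanding $x_0^m=(-\sum x_i)^m$ and repeatedly applying \eqref{eq:ab,c}. A slick way to sidestep the messy expansion is to pass to the algebra $A'=A_{n+1}$ with generators $x_1,\dots,x_n$ together with $x_0$, subject to $\sum_{i=0}^n x_i=0$; equivalently, observe that the derivation $\{|x_0^m|,-\}_{\KKS}$ is tangential with $i$-th tangential component obtained from the formula $|\psi|=-(c''_1c'_1,\dots,c''_nc'_n)$ applied to $\psi=\Pi_{\KKS}(x_0^m,-)\in\tD_A$, reducing the claim to a one-variable identity in $\K[[x_0]]$ after noting $\rho(|\psi|)(a)=[a,\dot h(x_0)]$ is inner. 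Once the formula $\{|x_0^m|,a\}_{\KKS}=[a,m x_0^{m-1}]$ is verified for, say, $m=1,2$ and the Leibniz step is in place, induction on $m$ closes it, and continuity upgrades polynomials to power series.
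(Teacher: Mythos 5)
Your proposal follows essentially the same route as the paper: everything is reduced to $\phi_0$ and the identity $\phi_0(a)=1\otimes a-a\otimes 1$, with Lemma \ref{lem:if_skew} supplying the companion identities, and Example \ref{example:inner} identifying the result as an inner derivation. Two remarks. First, a bookkeeping slip: the vanishing of the \emph{derivation} $\{|x_0|,-\}_{\KKS}$ that you establish in your third paragraph is the $h(z)=z$ case of the fourth line, not the third line; the third line, $\{x_0,|a|\}_{\KKS}=0$, is the computation you actually perform (correctly) at the start of your fourth paragraph via Remark \ref{rem:double_acts}, so the content is all present but the labels are crossed. Second, you overestimate the difficulty of the last identity and leave it incomplete ("I expect\dots induction closes it"). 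There is no need to expand $x_0^m$ in the generators: since $\{x_0,-\}_{\KKS}=-\phi_0$ is already known as a double derivation, iterating (\ref{eq:ab,c}) gives $\{x_0^m,-\}_{\KKS}=-\sum_{p+q=m-1}x_0^p\,\phi_0\,x_0^q$ in $\D_A$, and the cyclicity of $|\cdot|$ on $\D_A$ (i.e.\ $|b\phi-\phi b|=0$) collapses this to $\{|x_0^m|,-\}_{\KKS}=-m\,|x_0^{m-1}\phi_0|$, which by Example \ref{example:inner} is the inner derivation $a\mapsto[a,m x_0^{m-1}]$; summing the series gives $\{|h(x_0)|,-\}_{\KKS}=-|\dot{h}(x_0)\phi_0|:a\mapsto[a,\dot{h}(x_0)]$. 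This is exactly the paper's one-line argument and replaces your proposed case-checking and induction; the detour through $A_{n+1}$ is unnecessary.
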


\begin{proof}
First of all, note that $\Pi_{\KKS}$ is skew-symmetric by Lemma \ref{lem:KKS_task} so that we can use Lemma \ref{lem:if_skew}.

For the first equality, note that $\{ x_i, -\}_{\rm KKS} = [x_i, \partial_i]$. Then, for any formal power series $h(z) \in \mathbb{K}[[z]]$, we have
$$
\{ |h(x_i)|, -\}_{\rm KKS} = |\dot{h}(x_i)[x_i, \partial_i]| = 0,
$$
where we have used the cyclic property under the $| \cdot |$ sign.

For the second equality, we have
$$
\{ x_0, -\}_{\rm KKS} = -\{ \sum_i x_i, -\}_{\rm KKS} = - \sum_i [x_i, \partial_i] = - \phi_0.
$$
This also proves $\{|a|,x_0\}_{\KKS}=0$.

Finally, for the last equality we compute
$$
\{ |h(x_0)|, -\}_{\rm KKS} = - |\dot{h}(x_0) \phi_0|
$$
which is the inner derivation with generator $-\dot{h}(x_0)$ (see Example \ref{example:inner}), as required.
\end{proof}

\begin{lem} \label{lem:maKKS}
For $h(z)\in \K[[z]]$ and $a\in A$, let $h' \otimes h'' = \tde h(x_0)$. Then
\begin{align*}
\{|h'|,ah''\}_{\KKS} = -\dot{h}(\ad_{x_0})(a) + \dot{h}(0) a, \\
\{|h'|,h''a\}_{\KKS} = \dot{h}(-\ad_{x_0})(a) - \dot{h}(0) a.
\end{align*}
\end{lem}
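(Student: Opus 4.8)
\textbf{Proof plan for Lemma \ref{lem:maKKS}.}

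The plan is to reduce both identities to the computation of $\{|h(x_0)|, -\}_{\KKS}$ already carried out in Lemma \ref{prop:KKS}, using the fact that $\{|h'|, -\}_{\KKS}$ (with $h'\otimes h''=\tde h(x_0)$) is the double-derivation lift whose associated derivation is $\{|h(x_0)|, -\}_{\KKS}$. The key structural remark is that, writing $\Pi=\Pi_{\KKS}$, the expression $\{|h'|, a h''\}_{\KKS}$ is obtained from the double bracket $\Pi(h(x_0), -)\in\D_A$ by the recipe $\phi\mapsto(b\mapsto \phi'(b)\,a\,\phi''(b))$, i.e.\ it is the ``twisted'' version of $|\{h(x_0),-\}_\Pi|$ where the insertion point carries an extra factor $a$. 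So the first step is to identify $\Pi(h(x_0), -)\in\D_A$ explicitly. By Lemma \ref{prop:KKS} we know $\{x_0, -\}_{\KKS}=-\phi_0$ with $\phi_0=\sum_i\ad_{x_i}\pa_i$ satisfying $\phi_0(b)=1\otimes b - b\otimes 1$; feeding this through the Leibniz rule \eqref{eq:a,bc}--\eqref{eq:ab,c} for $h(x_0)=\sum_k c_k x_0^k$ should give a closed form for $\Pi(h(x_0),b)$ in terms of $x_0$ and $b$, presumably something like a ``finite difference'' expression built from $\ad_{x_0}$.

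The main computation is then the following. First I would establish that for $\phi=\Pi(h(x_0),-)$ one has, after applying $|\ |$ to the first tensor slot (which is what produces $h'$ and lets one slide the $a$ in), an identity of the shape $\{|h'|, a h''\}_{\KKS} = \sum (\text{terms of the form } x_0^j\, a\, x_0^{k})$ with coefficients the Taylor coefficients of $h$; summing the geometric-type series in $\ad_{x_0}$ collapses this to $-\dot h(\ad_{x_0})(a)+\dot h(0)a$. Concretely, from $\Pi(x_0, b) = -(1\otimes b - b\otimes 1) = b\otimes 1 - 1\otimes b$ one gets by \eqref{eq:a,bc} that $\Pi(x_0^k, b)=\sum_{p+q=k-1} x_0^p\, b\, x_0^q \otimes 1 \,-\, \text{(symmetric term with the factor on the right)}$, wait — more carefully, iterating \eqref{eq:a,bc} on $x_0^k = x_0\cdot x_0^{k-1}$ and \eqref{eq:ab,c} gives $\Pi(x_0^k,b)=\sum_{r=0}^{k-1}\big(x_0^r\otimes x_0^{k-1-r}\big)\ast$-conjugated copies of $\Pi(x_0,b)$; I would carry this bookkeeping out once, apply $|\ |$ to the left leg with the extra insertion $a$, use cyclicity to bring all $x_0$'s to one side, and recognize $\sum_{k\ge1} c_k\sum_{j=0}^{k-1}(-\ad_{x_0})^j$ (times $a$, up to endpoint corrections) as $-\dot h(\ad_{x_0})$ plus the boundary term $\dot h(0)a$. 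The second identity $\{|h'|, h'' a\}_{\KKS}=\dot h(-\ad_{x_0})(a)-\dot h(0)a$ follows by the same computation with the insertion on the other side, or — more cleanly — from the first identity together with the skew-symmetry of $\Pi_{\KKS}$ (Lemma \ref{lem:if_skew}) and the antipode behaviour $\tde h(x_0) = h'\otimes h''$ versus its opposite, since $x_0$ is primitive-like only up to sign and $\tde=(1\otimes\iota)\Delta$ turns $h(x_0)$ into something symmetric under swapping the legs with $x_0\mapsto -x_0$.

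The step I expect to be the main obstacle is the precise bookkeeping of where the extra factor $a$ sits relative to the two copies of $A$ coming out of $\tde h(x_0)=h'\otimes h''$, and keeping the signs straight: one must be careful that $\{|h'|, ah''\}_{\KKS}$ really means $\{-,-\}_{\KKS}:|A|\otimes A\to A$ applied after tensoring in $a$, that the $|\ |$ is on the $h'$-leg only, and that the resulting element of $A$ is read off via the $\phi\mapsto(|a|\mapsto\phi''(a)\phi'(a))$ convention of Remark \ref{rem:double_acts}. Once that convention is pinned down, collapsing the sum over Taylor coefficients into $\dot h(\pm\ad_{x_0})$ is routine; the endpoint terms $\pm\dot h(0)a$ come exactly from the $j=0$ (respectively $j=k-1$) summand where no $\ad_{x_0}$ acts. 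I would also double-check the degenerate case $h(z)=z$ by hand, where the left side should be $\{|x_0|, a\cdot 1\}_{\KKS} + \{|1|, a x_0\}_{\KKS}$-type expressions reproducing $-\ad_{x_0}(a)+a = [a,x_0]+\dots$, consistent with $\{|h(x_0)|,-\}_{\KKS}=[\,-,\dot h(x_0)\,]$ from Lemma \ref{prop:KKS}.
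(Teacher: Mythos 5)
There is a genuine conceptual error in the step that drives your whole plan. You assert that $\{|h'|,ah''\}_{\KKS}$ is obtained from the double derivation $\Pi(h(x_0),-)\in\D_A$ by the recipe $\phi\mapsto\bigl(b\mapsto\phi'(b)\,a\,\phi''(b)\bigr)$, i.e.\ by inserting $a$ at the splitting point of the double bracket. This conflates two different objects: the legs $h'\otimes h''$ are by definition $\tde(h(x_0))=(1\otimes\iota)\Delta(h(x_0))$, a fixed element of $A\otimes A$ coming from the coproduct, and have nothing to do with the output legs of $\Pi(h(x_0),b)$, which depend on $b$. For instance $\Pi_{\KKS}(x_0^2,b)=bx_0\otimes 1-x_0\otimes b+b\otimes x_0-1\otimes x_0b$, whereas $\tde(x_0^2)=x_0^2\otimes1-2x_0\otimes x_0+1\otimes x_0^2$. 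The quantity to be computed is the Sweedler-componentwise application of the derivation $\{|h'|,-\}_{\KKS}$ to the element $ah''$; you do state this correct reading near the end, but it contradicts your opening structural remark, and the computation you actually outline (iterating the Leibniz rules \eqref{eq:a,bc}--\eqref{eq:ab,c} on $\Pi(x_0^k,b)$ and then inserting $a$ at the tensor symbol) would produce a different, $b$-dependent quantity. As a sanity check with $h(z)=z^2$: the correct answer is $2[a,x_0]$, while your recipe applied to $\Pi(x_0^2,b)$ gives $bx_0a-x_0ab+bax_0-ax_0b$, which is not even independent of $b$.

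The correct route, which is the paper's and is far shorter than the bookkeeping you anticipate, is: since $\{|h'|,-\}_{\KKS}$ is a derivation of $A$, one has $\{|h'|,ah''\}_{\KKS}=\{|h'|,a\}_{\KKS}h''+a\{|h'|,h''\}_{\KKS}$; by Lemma \ref{prop:KKS} each $\{|x_0^j|,-\}_{\KKS}$ is the inner derivation $[-,jx_0^{j-1}]$, so the second summand vanishes (powers of $x_0$ commute) and resumming the Sweedler components turns the first into $[a,(\dot h)'](\dot h)''$ with $(\dot h)'\otimes(\dot h)''=\tde(\dot h(x_0))$. One then concludes with $(\dot h)'(\dot h)''=\varepsilon(\dot h(x_0))=\dot h(0)$ and $(\dot h)'a(\dot h)''=\dot h(\ad_{x_0})(a)$. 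Your correct instincts --- that Lemma \ref{prop:KKS} is the key input, that the Taylor series collapses to $\dot h(\pm\ad_{x_0})$, and that the $\dot h(0)a$ terms are endpoint contributions --- all survive, but only after the structural identification is fixed. (Also, the second identity does not follow from the skew-symmetry of $\Pi_{\KKS}$, which exchanges the two arguments of the bracket rather than the two legs of $\tde$; it is just the mirror computation $\{|h'|,h''a\}_{\KKS}=(\dot h)''[a,(\dot h)']$ together with $(\dot h)''a(\dot h)'=\dot h(-\ad_{x_0})(a)$.)
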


\begin{proof}
One computes
\begin{align*}
    \{|h'|,ah''\}_{\KKS} &= \{|h'|, a \}_\KKS \, h'' + a \{|h'|, h'' \}_\KKS \\
    &= [a, (\dot{h})'] \, (\dot{h})'' \\
    & = a (\dot{h})'(\dot{h})'' - (\dot{h})'a (\dot{h})'' \\
    &= a \dot{h}(0) - \dot{h}(\ad_{x_0})a,
\end{align*}
where in the second line we have used Lemma \ref{prop:KKS} and in the last line we have used that $(\dot{h})' (\dot{h})''=\varepsilon(\dot{h}(x_0))=\dot{h}(0)$ and $(\dot{h})'a(\dot{h})''=\dot{h}(\ad_{x_0})a$.
The other equality can be proven in a similar way.
\end{proof}

One can obtain an explicit formula for the KKS double bracket on a pair of words $z=z_1\cdots z_l, w=w_1\cdots w_m$, where $z_j,w_k\in \{x_i\}_{i=1}^n$. Indeed,

$$
\begin{array}{lll}
\Pi_{\rm KKS}(z,w) & = & \sum_k w_1 \dots w_{k-1} \Pi_{\rm KKS}(z, w_k) w_{k+1} \dots w_m \\
& = & \sum_{j,k}
w_1\cdots w_{k-1} \left( (z_1\cdots z_{j-1})*
\Pi_{\rm KKS}(z_j, w_k)*(z_{j+1}\cdots z_l) \right)
w_{k+1}\cdots w_m.
\end{array}
$$
Since $\Pi_{\rm KKS}(z_j, w_k)=\delta_{z_j, w_k} (1 \otimes z_j - z_j \otimes 1)$, we obtain
$$
\begin{array}{rcl}
\Pi_{\rm KKS}(z,w) & = & \sum_{j,k} \delta_{z_j, w_k}
\left( w_1\cdots w_{k-1}z_{j+1}\cdots z_l \otimes z_1\cdots z_j w_{k+1}\cdots w_m \right. \\
&  & \hspace{6em} \left. -w_1\cdots w_{k-1}z_j\cdots z_l \otimes z_1\cdots z_{j-1} w_{k+1}\cdots w_m \right).
\end{array}
$$

\begin{rem}
The KKS double bracket is an algebraic counterpart of the version of the homotopy intersection form $\kappa$ introduced in \cite{KK15}, \cite{MT14}. To align the notation, we sometimes use the symbol $\kappa^{\rm alg}: A\otimes A \to A \otimes A$ for the KKS double bracket.
Double brackets can be naturally associated to quivers \cite{vdB08}. In this formalism, the KKS double bracket is associated to the star-shaped quiver with one internal vertex and $n$ external vertices.
\end{rem}

\begin{rem}
The double bracket $\Pi_{\rm KKS}$ verifies the van den Bergh version of the Jacobi identity \cite{vdB08}. In particular, this implies that 
$$
(|a|, |b|) \mapsto \{ |a|, |b|\}_{\rm KKS}
$$
is a Lie bracket on $|A|$.
In the framework of double brackets associated to quivers, this Lie algebra is known as the necklace Lie algebra \cite{BB02}, \cite{Gi01}. See also \cite{MW} for a related structure in higher genus.
\end{rem}

\begin{thm}\label{thm:center}
The center of the Lie algebra $(|A|, \{-,-\}_{\KKS})$ is spanned by 
$|\K[[x_i]]|$, $0\le i\le n$,
$$
Z(|A|, \{-,-\}_{\KKS}) = \sum^n_{i=0}|\K[[x_i]]|.
$$
\end{thm}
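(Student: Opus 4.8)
The plan is to prove the two inclusions separately. The easy direction is $\sum_{i=0}^n |\K[[x_i]]| \subseteq Z(|A|, \{-,-\}_{\KKS})$, which is essentially immediate from Lemma \ref{prop:KKS}: for $i=1,\dots,n$ one has $\{|h(x_i)|, a\}_{\KKS}=0$ for all $a\in A$, and for $i=0$ one has $\{|h(x_0)|, a\}_{\KKS}=[a,\dot h(x_0)]$, which vanishes after projection to $|A|$. So every $|h(x_i)|$ is central, and since the center is a linear subspace the sum is contained in it.

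For the reverse inclusion, I would take a homogeneous central element $|a|\in |A|$ of degree $m\ge 1$ (degree $0$ is trivially in the sum) and show it lies in $\sum_i |\K[[x_i]]|$ by analyzing the condition $\{|a|,|x_j|\}_{\KKS}=0$ for each $j$, or more efficiently the stronger condition that $\{|a|,-\}_{\KKS}\in\tder(A)$ acts as zero on $|A|$, equivalently the derivation $\rho(\{|a|,-\}_{\KKS}) = \{|a|,-\}_{\KKS}\in\der(A)$ kills $|A|$. Recall from Lemma \ref{prop:KKS} (and its proof) that if $a = \sum w$ is written in the monomial basis, then $\{|a|,-\}_{\KKS}$ is computed from $\Pi_{\KKS}(a,x_j) = \delta\text{-terms}$; concretely, writing $a$ as a cyclic word, $\{|a|, x_j\}_{\KKS}$ picks out, for each occurrence of the letter $x_j$ in the cyclic word $a$, the "rotation" of $a$ that cuts it open at that letter. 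The vanishing of all these in $|A|$ is a strong combinatorial constraint on the cyclic word $a$. The cleanest route is: represent $|a|$ by a genuine element of $A$ via the section $\{\sum_i a_i\partial_i\}$, use the explicit formula for $\Pi_{\KKS}(z,w)$ derived just before the theorem, and deduce that in a monomial of $a$ containing two \emph{different} letters $x_j \ne x_k$ the cutting operations cannot all cancel — so $a$ must be supported on monomials in a single letter, i.e. $|a|\in\sum_i|\K[[x_i]]|$ modulo the one relation $\sum x_i = -x_0$ which accounts for the $i=0$ summand and the slight overcounting.

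The main obstacle I expect is the combinatorial bookkeeping in this cancellation argument: showing that if a cyclic word involves at least two distinct generators then some cutting term survives. A robust way to organize it is by induction on the number of distinct letters, or by a leading-term / lexicographic argument — pick the lex-largest monomial $w$ appearing in $a$, look at the term of $\{|a|,-\}_{\KKS}$ that it produces by cutting at a suitable letter, and argue that no other monomial of $a$ can produce a cancelling term of that lex-type, forcing $w$ to be a power of a single generator; then subtract and iterate. One must be a little careful that cutting a cyclic word can produce the \emph{same} linear word from two different monomials, and that the two kinds of terms in $\Pi_{\KKS}(z_j,w_k)$ (the $1\otimes z_j$ part and the $z_j\otimes 1$ part) can interact; but tracking degrees in the two tensor factors, as in the proof of the preliminary Lemma about $Cx_i - x_iC = 0$, keeps this under control. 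Since the excerpt defers the full argument to Appendix A, I would present here only this strategy and the reduction to the single-letter case, citing the appendix for the detailed combinatorics.
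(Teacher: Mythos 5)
Your easy inclusion is fine and matches the paper. The hard inclusion, however, has a genuine gap: the combinatorial claim you want to prove --- that if the cyclic word $a$ involves at least two distinct generators then some cutting term of $\{|a|,-\}_{\KKS}$ survives in $|A|$, so that $a$ must be supported on monomials in a single letter --- is false. The element $|x_0^2|=\sum_{i,j}|x_ix_j|$ is central and contains cross-monomials such as $|x_1x_2|$, yet all its cutting terms cancel modulo commutators, because $\{|h(x_0)|,-\}_{\KKS}$ is a nonzero \emph{inner} derivation (with generator $-\dot h(x_0)$, by Lemma \ref{prop:KKS}) and inner derivations land in $[A,A]$. Your parenthetical ``modulo the one relation $\sum x_i=-x_0$'' is exactly where the difficulty sits, and the lex-leading-term/subtract-and-iterate scheme you outline gives no way to detect and strip off the $|x_0^k|$-components before the cancellation analysis; without that, the induction on the number of distinct letters cannot close.

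The paper's proof works around precisely this point. The centrality condition says only that the derivation $u=\{|a|,-\}_{\KKS}$ maps $A$ into $[A,A]$, and the key step (Theorem \ref{prop:inner} in Appendix A) is that any such derivation is \emph{inner}, $u=[-,w]$; this is itself nontrivial and is proved by the criterion $a\in[x,A]\Leftrightarrow |ax^l|=0$ for all $l$, plus a gluing lemma for the local generators $a_i$. One then pins down $w$ by noting $[x_0,w]=\{|a|,x_0\}_{\KKS}=0$, hence $w\in\K[[x_0]]$, and recovers $|a|$ from the tangential components of $u$ via the symmetrization map $N$ of Lemma \ref{lem:m-times}, which is how the $|\K[[x_0]]|$ summand enters. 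Your sketch contains neither the reduction to innerness nor a substitute for it, so as it stands the argument would fail on the central elements $|h(x_0)|$ themselves.
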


\begin{proof}
A proof using Poisson geometry of representation varieties can be obtained with the techniques of \cite{CBEG07}. In Appendix A, we give a direct algebraic proof of this statement.
\end{proof}

Next, we compute the maps $\tdiv_\Pi$ and $\tdiv_\Pi^T$ for the KKS double bracket. We will use the following shorthand notation: $\tdiv_{\rm KKS}$ and $\tdiv^T_{\rm KKS}$. A similar style notation will be later used for other double brackets.

\begin{prop} \label{prop:KKS_div}
For $z=z_1z_2 \cdots z_m$ with  $z_j\in \{ x_i\}_{i=1}^n$, we have
\begin{align*}
\tdiv_{\rm KKS}(|z|)=& -\sum_{j<k} \delta_{z_j,z_k}
\left(
|z_j \cdots z_{k-1}| \wedge |z_{k+1}\cdots z_m z_1 \cdots z_{j-1}| \right. \\
& \hspace{6em} \left. +|z_k \cdots z_m z_1 \cdots z_{j-1}| \wedge |z_{j+1}\cdots z_{k-1}| \right),
\end{align*}
\begin{align*}
\tdiv^T_{\rm KKS}(z)=& \sum_{j<k} \delta_{z_j,z_k}
\left(
|z_{j+1}\cdots z_{k-1}| \otimes z_1\cdots z_j z_{k+1} \cdots z_m \right. \\
& \hspace{6em} \left. -|z_j\cdots z_{k-1}|\otimes z_1\cdots z_{j-1}z_{k+1}\cdots z_m \right).
\end{align*}
\end{prop}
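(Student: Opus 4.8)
The plan is to compute $\tdiv_{\rm KKS}$ and $\tdiv^T_{\rm KKS}$ directly from their definitions by assembling the pieces already established. For the $\tdiv^T_{\rm KKS}$ formula, I start from Definition \ref{dfn:divp}(ii), which says $\tdiv^T_{\rm KKS}(z) = \tdiv^T(\Pi_{\rm KKS}(z,-))$, and I need an explicit expression for the tangential double derivation $\Pi_{\rm KKS}(z,-) \in \tD_A$. The key input is the explicit word formula for $\Pi_{\rm KKS}(z,w)$ derived just above the theorem (following the Remark on $\kappa^{\rm alg}$): since $\Pi_{\rm KKS}(z_j,w_k) = \delta_{z_j,w_k}(1\otimes z_j - z_j\otimes 1)$, the operator $\Pi_{\rm KKS}(z,-)$ acts on $w = w_1\cdots w_m$ by summing over all positions $k$ with $w_k = z_j$ for some $j$. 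The point is to write $\Pi_{\rm KKS}(z,-) = \sum_j \delta_{z_j,\bullet}\, (z_1\cdots z_{j-1})\,(\ad_{x_{z_j}}\partial_{z_j})\,(z_{j+1}\cdots z_m)$, i.e. as an element of $\tD_A$ in the form $\sum_i c_i'(\ad_{x_i}\partial_i)c_i''$ with $c_i', c_i''$ collected from the subwords of $z$ between consecutive occurrences of the letter $x_i$.

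Next I would plug this expression into formula \eqref{eq:tdiv^T} for $\tdiv^T$, namely $\tdiv^T(\phi) = (|\ |\otimes 1)\big(\sum_i (1\otimes c_i' x_i - x_i\otimes c_i')(\partial_i c_i'')\big)$. Here $c_i'' = z_{j+1}\cdots z_m$ is a word in the generators, so $\partial_i c_i'' = \sum_{k>j,\ z_k = z_j} z_{j+1}\cdots z_{k-1}\otimes z_{k+1}\cdots z_m$. Combining the sum over $j$ (first occurrence) and the sum over $k>j$ (second occurrence of the same letter), and multiplying out $(1\otimes c_i'x_i - x_i\otimes c_i')$ against $\partial_i c_i''$, gives exactly two families of terms matching $|z_{j+1}\cdots z_{k-1}|\otimes z_1\cdots z_j z_{k+1}\cdots z_m$ (from the $1\otimes c_i'x_i$ piece, with $c_i' x_i = z_1\cdots z_j$) and $-|z_j\cdots z_{k-1}|\otimes z_1\cdots z_{j-1}z_{k+1}\cdots z_m$ (from the $-x_i\otimes c_i'$ piece, using cyclicity to move $z_j = x_i$ into the cyclic word). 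So the $\tdiv^T_{\rm KKS}$ formula is essentially a bookkeeping exercise in matching occurrences of letters.

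For the $\tdiv_{\rm KKS}(|z|)$ formula I have two options. One is to apply \eqref{eq:tdiv(a)} from Proposition \ref{prop:prodP}, which expresses $\tdiv_\Pi(|a|)$ in terms of $\tdiv^T_\Pi(a)$ and $\underline{\tdiv}^T_\Pi(a)$; by skew-symmetry of $\Pi_{\rm KKS}$ (Lemma \ref{lem:KKS_task}) and Lemma \ref{lem:if_skew}, $\underline{\tdiv}^T_{\rm KKS}$ is related to $\tdiv^T_{\rm KKS}$, so this reduces to the computation already done. The other, more direct route is Definition \ref{dfn:divp}(i): $\tdiv_{\rm KKS}(|z|) = \tdiv(\{|z|,-\}_{\rm KKS})$, using the explicit formula $\tdiv(u) = \sum_i |x_i(\partial_i u_i) - (\partial_i u_i)x_i|$ applied to the tangential derivation $|\{z,-\}_{\rm KKS}|\in\tder(A)$ whose $i$th component is $-c_i''c_i'$ (from the isomorphism $|\tD_A|\to\tder(A)$). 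I would take the second route, as the $\wedge$-notation (antisymmetrized tensor, $a\wedge b = a\otimes b - b\otimes a$) in the target matches the structure of $|x_i(\partial_i u_i) - (\partial_i u_i)x_i|$ naturally. The main obstacle, and the only real work, is the combinatorial bookkeeping: correctly tracking which cyclic rotations of subwords of $z$ appear, getting the signs right when using cyclicity of $|\ |$ to normalize the words, and ensuring the double-counting over ordered pairs $(j,k)$ versus unordered pairs of equal letters is handled consistently so that the final answer is symmetrized into the stated $j<k$ form. No conceptual difficulty is expected beyond this; everything else follows from the already-proven Propositions \ref{prop:aphi}, \ref{prop:rel_div}, \ref{prop:prodP} and the explicit formulas \eqref{eq:tdiv^T}, \eqref{eq:tdiv^T2}.
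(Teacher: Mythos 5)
Your proposal is correct and follows essentially the same route as the paper: the paper likewise computes $\tdiv_{\rm KKS}(|z|)$ by applying the explicit divergence formula (in the form \eqref{eq:tdivA}) to the tangential derivation $\{|z|,-\}_{\KKS}$ whose $i$th component is $-\sum_j \delta_{x_i,z_j}\,z_{j+1}\cdots z_m z_1\cdots z_{j-1}$, and computes $\tdiv^T_{\rm KKS}(z)$ by substituting $\{z,-\}_{\KKS}=\sum_j z_1\cdots z_{j-1}(\ad_{z_j}\pa_{z_j})z_{j+1}\cdots z_m$ into \eqref{eq:tdiv^T}. The remaining work is exactly the bookkeeping over pairs of equal letters that you describe, and your intermediate expressions (for $\pa_i c_i''$ and the components of the tangential derivation) agree with the paper's.
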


\begin{proof}
Since $\{z,-\}_{\KKS}=\sum_j z_1\cdots z_{j-1} ({\rm ad}_{z_j}\pa_{z_j}) z_{j+1}\cdots z_m$,
the $i$th component of $\{|z|,-\}_{\KKS} \in \tder(A)$ is
$-\sum_j \delta_{x_i,z_j} z_{j+1}\cdots z_m z_1\cdots z_{j-1}$.
Therefore,
\begin{align*}
\tdiv_{\KKS}(|z|)
=&
-\sum_j | \tde(z_j) \pa_{z_j} (z_{j+1}\cdots z_m z_1 \cdots z_{j-1})| \\
=& -\sum_{j<k} \delta_{z_j,z_k}
\left(
|z_j \cdots z_{k-1}| \otimes |z_{k+1} \cdots z_m z_1 \cdots z_{j-1}| \right. \\
& \hspace{6em} \left. -|z_{j+1} \cdots z_{k-1}| \otimes |z_{k+1} \cdots z_m z_1 \cdots z_j|
\right) \\
& \hspace{1em} -\sum_{k<j} \delta_{z_j,z_k}
\left(
|z_j\cdots z_mz_1 \cdots z_{k-1}| \otimes |z_{k+1} \cdots z_{j-1}| \right. \\
& \hspace{7em} \left. -|z_{j+1}\cdots z_m z_1\cdots z_{k-1}| \otimes |z_{k+1} \cdots z_j|
\right) \\
=& -\sum_{j<k} \delta_{z_j,z_k}
\left(
|z_j\cdots z_{k-1}| \wedge |z_{k+1} \cdots z_mz_1 \cdots z_{j-1}| \right. \\
& \hspace{6em} \left. +|z_k\cdots z_mz_1\cdots z_{j-1}| \wedge |z_{j+1}\cdots z_{k-1}|
\right).
\end{align*}

For $\tdiv^T_{\KKS}$, using (\ref{eq:tdiv^T}) we compute
\begin{align*}
\tdiv^T_{\KKS}(z) =&
(|\ |\otimes 1)\left(
\sum_j (1\otimes z_1 \cdots z_j-z_j\otimes z_1\cdots z_{j-1})
\pa_{z_j}(z_{j+1}\cdots z_m)
\right) \\
=& \sum_{j<k} \delta_{z_j,z_k}
\left(
|z_{j+1}\cdots z_{k-1}| \otimes z_1\cdots z_j z_{k+1} \cdots z_m \right. \\
& \hspace{6em} \left. -|z_j\cdots z_{k-1}|\otimes z_1\cdots z_{j-1}z_{k+1}\cdots z_m \right).
\end{align*}
\end{proof}
 
\begin{rem}
The operations $\tdiv_{\rm KKS}$ and $\tdiv^T_{\rm KKS}$ are algebraic counterparts of the Turaev cobracket $\delta$ and of the operation $\mu$ (a self-intersection map introduced in \cite{KK15}). To align the notation, we sometimes use the following symbols: $\delta^{\rm alg}= - \tdiv_{\rm KKS}, \mu^{\rm alg} = \tdiv^T_{\rm KKS}$.  
\end{rem}

\begin{rem}
The vector space $|A|$ endowed with the Lie bracket induced by $\Pi_{\rm KKS}$ and with the operation $\delta^{\rm alg}: |A| \to \wedge^2 |A|$ is in fact a Lie bialgebra. This Lie bialgebra was introduced and studied in \cite{Schedler} (see also \cite{MW} for a similar structure in higher genus).
It can be viewed as an algebraic counterpart of the Goldman-Turaev Lie bialgebra for a surface of genus zero defined in the next section.
\end{rem}

\subsection{Double brackets $\Pi_s$}

Another family of examples are the double brackets of the form
$$
\Pi_s=| \phi_0 \otimes s(-{\rm ad}_{x_0}) \phi_0| = |s({\rm ad}_{x_0}) \phi_0 \otimes \phi_0|,
$$
for $s=s(z) \in \mathbb{K}[[z]]$ a formal power series. In particular, for $s=1$ we obtain $\Pi_1 = |\phi_0 \otimes \phi_0|$. For the later use, we will need the following Sweedler style notation:
$$
\tde(s(-x_0))=(1\otimes \iota)\Delta(s(-x_0))= s'\otimes s''
 \in A \otimes A.
$$

\begin{prop}
\label{prop:add'}
For any $s$, the double bracket $\Pi_s$ is tangential. For any $a,b\in A$, we have
\[
\Pi_s(a,b)=
s''a\otimes s'b-bs''a\otimes s'
-s''\otimes as'b+bs''\otimes as'.
\]
The tangential derivation $\{|a|,-\}_s\in \tder(A)$ vanishes for all $a \in A$.
All the bracket operations $\{-,-\}_s: |A| \otimes A \to A$,
$|A|\otimes |A| \to |A|$ and $A\otimes |A|\to A$
are identically zero.
\end{prop}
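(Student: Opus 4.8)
The plan is to derive the explicit formula for $\Pi_s(a,b)$ first; every remaining assertion then falls out by inspection. The key preliminary step is to identify the series $s(\ad_{x_0})\phi_0$ occurring in the definition of $\Pi_s$ with a concrete element of $\tD_A$. Since $x_0=-\sum_i x_i$ is primitive, the iterated bimodule commutator $\ad_{x_0}\phi_0:=x_0\phi_0-\phi_0 x_0$ (for the inner $A$-bimodule structure on $\D_A$) expands binomially, $\ad_{x_0}^k\phi_0=\sum_j\binom{k}{j}(-1)^j x_0^{k-j}\phi_0 x_0^j$, and, with the powers of $x_0$ in the opposite order, this matches the Sweedler expansion of $\tde(s(-x_0))=(1\otimes\iota)\Delta(s(-x_0))$; hence $s(\ad_{x_0})\phi_0=s''\phi_0 s'$, where $\tde(s(-x_0))=s'\otimes s''$. (A parallel computation, together with cyclicity of $|\ |$ on $\D_A\otimes_A\D_A$ and the balancing relations of $\otimes_A$, shows that the two expressions for $\Pi_s$ in the statement agree; in particular the series converge in the degree completion since $\ad_{x_0}$ raises degree.) Because $\phi_0\in\tD_A$ by Lemma \ref{lem:phi_0} and $\tD_A$ is an $A$-sub-bimodule of $\D_A$, we get $s'\phi_0 s''\in\tD_A$, so $\Pi_s=\Pi_{\phi_0\otimes s'\phi_0 s''}$ has $\Pi_s(a,-)=\phi_0'(a)(s'\phi_0 s'')\phi_0''(a)\in\tD_A$ for all $a$, which is exactly tangentiality.

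For the formula I would use the moves above to write $\Pi_s=|\phi_0\otimes_A s'\phi_0 s''|$ and then apply the identification \eqref{eq:phi1phi2} with $\phi_1=\phi_0$ and $\phi_2=s'\phi_0 s''$. Using $\phi_0(c)=1\otimes c-c\otimes 1$ (Lemma \ref{lem:phi_0}) one computes $\phi_2(b)=s''\otimes s'b-bs''\otimes s'$, and then $\Pi_s(a,b)=\phi_2'(b)\phi_0''(a)\otimes\phi_0'(a)\phi_2''(b)$ expands into the four Sweedler terms $s''a\otimes s'b$, $-bs''a\otimes s'$, $-s''\otimes as'b$, $bs''\otimes as'$, which is the asserted formula. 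This is pure bookkeeping once $\phi_2$ has been identified.

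The remaining assertions are immediate from the formula. Multiplying the two legs of $\Pi_s(a,b)$ in either order cancels all four terms in pairs: $\Pi_s(a,b)'\Pi_s(a,b)''=s''as'b-bs''as'-s''as'b+bs''as'=0$, and likewise $\Pi_s(a,b)''\Pi_s(a,b)'=0$. Hence $\{|a|,b\}_{\Pi_s}=0$ and $\{a,|b|\}_{\Pi_s}=0$ for all $a,b$, and consequently the induced bracket $|A|\otimes|A|\to|A|$ vanishes as well. For the tangential-derivation statement I would write $\{a,-\}_{\Pi_s}=\phi_0'(a)(s'\phi_0 s'')\phi_0''(a)=\sum_i\bigl(s'(\ad_{x_i}\partial_i)(s''a)-(as')(\ad_{x_i}\partial_i)s''\bigr)$ and pass to $\tder(A)$ via the isomorphism $\sum_i c_i'(\ad_{x_i}\partial_i)c_i''\mapsto-(c_1''c_1',\dots,c_n''c_n')$; the $i$-th component becomes $-\bigl((s''a)s'-s''(as')\bigr)=0$, so $\{|a|,-\}_s=0$ in $\tder(A)$. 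This is genuinely stronger than $\{|a|,-\}_{\Pi_s}=0$ in $\der(A)$, since $\rho$ has the nontrivial kernel of Remark \ref{rem:kernel_rho}.

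The only real obstacle is the first step: correctly identifying $s(\ad_{x_0})\phi_0=s''\phi_0 s'$ and keeping straight the attendant bookkeeping — the inner bimodule structure on $\D_A$, the balanced tensor and cyclicity defining $(DA)_2$, and the fact that it is $\tde(s(-x_0))$ and not $\tde(s(x_0))$ that appears (this is where the sign in $s(-\ad_{x_0})$ versus $s(\ad_{x_0})$ is absorbed, and where the left/right order of the powers of $x_0$ matters). Everything downstream of that identification is mechanical.
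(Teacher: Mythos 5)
Your proposal is correct and follows essentially the same route as the paper: identify $s(-\ad_{x_0})\phi_0$ with $s'\phi_0 s''$ (the paper uses this implicitly via \eqref{eq:ph1ph2} to get $\Pi_s(a,-)=s'\phi_0 s''a-as'\phi_0 s''$), deduce tangentiality from $\phi_0\in\tD_A$ and the bimodule structure, expand to get the four-term formula, and observe that the tangential lift is the inner-type element with generator $s''as'-s''as'=0$, whence all brackets vanish. Your extra care with the sign and the order of the Sweedler legs in $\tde(s(-x_0))$ is exactly the bookkeeping the paper leaves to the reader.
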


\begin{proof}
Let $a\in A$.
By Lemma \ref{lem:phi_0} and (\ref{eq:ph1ph2}), we have
\begin{equation}
\label{eq:add'a}
\Pi_s(a,-)
=(s(-\ad_{x_0})\phi_0) a-a (s(-\ad_{x_0})\phi_0)
=s'\phi_0 s''a-as'\phi_0 s''.
\end{equation}
On the one hand, this shows that $\Pi_s$ is tangential.
On the other hand, for  $b\in A$ we have
$$
\Pi_s(a,b)=s'*(1\otimes b-b\otimes 1)*(s''a)
-(as')*(1\otimes b-b\otimes 1)*s''.
$$
Expanding the right hand side of this equation, we obtain the second statement in the proposition.

The equation (\ref{eq:add'a}) shows that the tangential derivation $\{|a|,-\}_s \in \tder(A)$ is an inner derivation with generator
$s''as' - s''as'=0$.
The last statement on the bracket follows from the second statement. %As a consequence, the Lie bracket $\{-,-\}_s: |A| \otimes |A| \to |A|$ vanishes as well.
\end{proof}

\begin{prop}
\label{prop:add'_div}
For any $a\in A$,
$\tdiv_s(|a|)=0$ and 
$$
\tdiv^T_s(a)=
|s''|\otimes as'-|s''a|\otimes s'.
$$
\end{prop}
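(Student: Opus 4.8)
The plan is to compute $\tdiv^T_s(a) = \tdiv^T(\Pi_s(a,-))$ directly from the explicit formula for $\Pi_s(a,-)$ established in Proposition \ref{prop:add'}, namely $\Pi_s(a,-) = s'\phi_0 s'' a - as'\phi_0 s'' \in \tD_A$. The key is to recognize this element of $\tD_A$ in the standard form $\sum_j c_j'(\ad_{x_j}\pa_j)c_j''$, which follows immediately from $\phi_0 = \sum_j \ad_{x_j}\pa_j$ (Lemma \ref{lem:phi_0}): one has $c_j' = s'$, $c_j'' = s''a$ for the first term and $c_j' = as'$, $c_j'' = s''$ for the second. Then I would simply plug these coefficients into formula \eqref{eq:tdiv^T}, which reads $\tdiv^T(\phi) = (|\ |\otimes 1)\left(\sum_j (1\otimes c_j'x_j - x_j\otimes c_j')(\pa_j c_j'')\right)$.

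First I would handle the first piece, $s'\phi_0 s''a$. Here $c_j'' = s''a$ does not depend on $j$, and I would use $\sum_j \pa_j(s''a)$. The point is that $\sum_j \pa_j$ can be related to $\phi_0$ or handled via the identity $\phi(b) = \sum_j(\pa_j' b)\phi_j'\otimes\phi_j''(\pa_j'' b)$ from \eqref{eq:phi(a)}; more concretely, it is cleanest to split $\pa_j(s''a) = (\pa_j s'')a + s''(\pa_j a)$ by the double-derivation Leibniz rule and then assemble the resulting sums over $j$ using $\sum_j x_j = -x_0$ together with $\sum_j (\cdot)\pa_j(\cdot)$ reconstructing $\phi_0$. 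Similarly for the second piece $as'\phi_0 s''$ with $c_j' = as'$, $c_j'' = s''$: again split $\pa_j s''$ and use that $\sum_j(1\otimes as'x_j - x_j\otimes as')\pa_j(\cdot)$ reassembles via Lemma \ref{lem:phi_0}. Throughout I would use $\tde(s(-x_0)) = s'\otimes s''$ and the coproduct/antipode relations to simplify, since $s(-x_0)$ is primitive-data-free — actually $s(-x_0)$ is a power series in $x_0$, so $\tde(s(-x_0))$ has a manageable form and $s's'' = \varepsilon(s(-x_0)) = s(0)$.

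After assembling, the two contributions should combine, with all genuinely $x_j$-dependent cross terms either cancelling or collapsing (using the cyclic property of $|\ |$ on the left factor), to leave precisely $|s''|\otimes as' - |s''a|\otimes s'$. The claim $\tdiv_s(|a|) = 0$ then follows either from the analogous (even shorter) computation of $\underline{\tdiv}^T_s$ combined with \eqref{eq:tdiv(a)} of Proposition \ref{prop:prodP}, or — more elegantly — directly from Proposition \ref{prop:add'}: since $\{|a|,-\}_s \in \tder(A)$ is the \emph{zero} tangential derivation for every $a$, we get $\tdiv_s(|a|) = \tdiv(\{|a|,-\}_s) = \tdiv(0) = 0$ with no computation at all. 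I would present the $\tdiv_s$ statement this way and reserve the calculation for $\tdiv^T_s$.

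The main obstacle will be the bookkeeping in the $\tdiv^T_s$ computation: keeping track of which tensor factor each piece of $s'\otimes s''$ and $\pa_j' a\otimes \pa_j'' a$ lands in after applying $(|\ |\otimes 1)$, and making sure the $x_j$-terms in $(1\otimes c_j'x_j - x_j\otimes c_j')$ cancel correctly after using cyclicity in the first slot. The coassociativity-type manipulations relating $\sum_j \pa_j$ and $\phi_0$ to $\tde$ of a power series in $x_0$ are routine but need care; I expect that writing $s'\otimes s'' = \tde(s(-x_0))$ and using that this is group-like-compatible data (it satisfies $\Delta$-type identities inherited from $\Delta$ and $\iota$) is what makes the cross-terms collapse to the stated two-term answer. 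No step is conceptually deep; the risk is purely computational slippage.
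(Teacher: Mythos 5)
Your proposal is correct and follows essentially the same route as the paper: the claim $\tdiv_s(|a|)=0$ is disposed of exactly as you suggest (the tangential derivation $\{|a|,-\}_s$ vanishes by Proposition \ref{prop:add'}), and the formula for $\tdiv^T_s(a)$ is computed from $\Pi_s(a,-)=s'\phi_0 s''a - as'\phi_0 s''$ using $s's''=\varepsilon(s(-x_0))\in\K$. The only difference is that the paper sidesteps the term-by-term bookkeeping you anticipate by invoking the product rules of Proposition \ref{prop:aphi} together with the observation $\tdiv^T(\phi_0)=0$, which immediately reduces the computation to $(|\ |\otimes 1)$ applied to $s'*\phi_0(s''a)-(as')*\phi_0(s'')$ and then to the stated two-term answer via Lemma \ref{lem:phi_0}.
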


\begin{proof}
By Proposition \ref{prop:add'}, the tangential derivation $\{|a|,-\}_s\in \tder(A)$ vanishes. Hence, $\tdiv_s(|a|)=0$ for all $a \in A$.
By Lemma \ref{lem:phi_0} and Proposition \ref{prop:aphi},
\begin{align*}
\tdiv^T_s(a) =& \tdiv^T(s'\phi_0 s''a-as'\phi_0 s'') \\
=& \tdiv^T(s'\phi_0)(1\otimes s''a)+(|\ |\otimes 1)(s'\phi_0)(s''a) \\
& -\tdiv^T(as'\phi_0)(1\otimes s'')-(|\ |\otimes 1)(as'\phi_0)(s'') \\
=& (|\ |\otimes 1)\left(
s'* (1\otimes s''a-s''a\otimes 1)
-(as')* (1\otimes s''-s''\otimes 1)
\right) \\
=& (|\ |\otimes 1)(1\otimes s's''a-s''a\otimes s'
-1\otimes as's''+s''\otimes as') \\
=& |s''|\otimes as'-|s''a|\otimes s'.
\end{align*}
Here we have used the facts that  $\tdiv^T(\phi_0)=0$
(this can be seen directly from (\ref{eq:tdiv^T})), and
that $s's''=\varepsilon(s(-x_0)) \in \mathbb{K}$.
\end{proof}

\section{The Goldman-Turaev Lie bialgebra}
\label{sec:GTLB}

In this section, we recall the notions of Goldman bracket, Turaev cobracket and their upgrades to the group algebra of the fundamental group. All these operations are defined in terms  of intersections and self-intersections of curves on a surface $\Sigma$.

Let $\Sigma$ be a connected oriented smooth 2-dimensional manifold with non-empty boundary $\partial \Sigma$.
Fix a point $*\in \pa \Sigma$, and
let $\pi:=\pi_1(\Sigma,*)$ be the fundamental group of $\Sigma$ with base point $*$. 
The group ring $\K \pi$ has a Hopf algebra structure whose coproduct $\Delta$, augmentation $\varepsilon$, and antipode $\iota$ are defined by the following formulas: for $\alpha\in \pi$,
\[
\Delta(\alpha):=\alpha\otimes \alpha, \quad \varepsilon(\alpha):=1, \quad \iota(\alpha):=\alpha^{-1}.
\]

By a \emph{curve}, we mean a path or a loop on $\Sigma$.
In what follows, we consider several operations on (homotopy classes of) curves on $\Sigma$.
%To do this we introduce some notations.
Let $I$ be the unit interval $[0,1]$ or the circle $S^1$, and let $\alpha$ be a smoothly immersed curve, i.e., a $C^{\infty}$-immersion $\alpha: I \to \Sigma$.
For distinct points $s,t \in I$, where we further assume that $s<t$ if $I=[0,1]$, we define the curve $\alpha_{st}$ to be the restriction of $\alpha$ to the interval $[s,t]$.
For $t\in I$, let $\dot{\alpha}(t)\in T_{\alpha(t)}\Sigma$ denote the velocity vector of $\alpha$ at $t$.
If $p\in \alpha(I)$ is a simple point of $\alpha$ and $t=\alpha^{-1}(p)$, we also write $\dot{\alpha}_p$ for $\dot{\alpha}(t)$.
For paths $\alpha,\beta:[0,1]\to \Sigma$, their concatenation $\alpha\beta$ is defined if and only if $\alpha(1)=\beta(0)$, and is the path first traversing $\alpha$, then $\beta$.
If there is no fear of confusion, we use the same letter for a curve and its (regular) homotopy class.

For a path $\gamma:[0,1]\to \Sigma$,
its inverse $\overline{\gamma}:[0,1]\to \Sigma$ is defined by
$\overline{\gamma}(t):=\gamma(1-t)$.

Let $p\in \Sigma$ and let $\vec{a},\vec{b}\in T_p\Sigma$ be linearly independent tangent vectors.
The \emph{local intersection number} $\varepsilon(\vec{a},\vec{b})\in \{ \pm 1\}$ of $\vec{a}$ and $\vec{b}$ is defined as follows.
If the ordered pair $(\vec{a},\vec{b})$ is a positively oriented frame of $\Sigma$, then $\varepsilon(\vec{a},\vec{b}):=+1$; otherwise,  $\varepsilon(\vec{a},\vec{b}):=-1$.

Take an orientation preserving embedding
$\nu: [0,1]\to \pa \Sigma$
with $\nu(1)=*$, and set $\bullet:=\nu(0)$.
Using $\nu$, we obtain the following isomorphism of groups:
\begin{equation}
\label{eq:bul}
\pi \xrightarrow{\cong} \pi_1(\Sigma,\bullet), \quad \alpha\mapsto \nu \alpha \overline{\nu}.
\end{equation}

\subsection{The Goldman bracket and the double bracket $\kappa$}

We recall  an operation which measures the intersection of two based loops on $\Sigma$ (see \cite{KK15} \S 4.2 and \cite{MT14} \S 7).
Let $\alpha$ and $\beta$ be loops on $\Sigma$ based at $\bullet$ and $*$, respectively.
We assume that $\alpha$ and $\beta$ are $C^{\infty}$-immersions with their intersections consisting of transverse double points.
Then set
\begin{equation}
\label{eq:kap}
\kappa(\alpha,\beta):=
-\sum_{p\in \alpha \cap \beta}
\varepsilon(\dot{\alpha}_p,\dot{\beta}_p) \,
\beta_{*p}\alpha_{p\bullet}\nu \otimes
\overline{\nu}\alpha_{\bullet p}\beta_{p *} \in \K \pi \otimes \K \pi.
\end{equation}
The result only depends on the homotopy classes of $\alpha$ and $\beta$, and using the isomorphism (\ref{eq:bul}), we obtain a $\K$-linear map $\kappa: \K\pi \otimes \K\pi \to \K\pi \otimes \K\pi$.

\begin{prop}
The map
$\kappa: \K\pi \otimes \K\pi \to \K\pi \otimes \K\pi$
is a double bracket on $\K \pi$ in the sense of Remark \ref{rem:any}.
For any $x,y\in \K\pi$, we have
\begin{align}
& \kappa(y,x)=-\kappa(x,y)^{\circ}+xy\otimes 1+1\otimes yx
-x\otimes y-y\otimes x, \label{eq:k(y,x)} \\
&
\kappa(\iota(x),\iota(y))=(\iota\otimes \iota)\kappa(x,y)^{\circ}.
\label{eq:k(ix,iy)}
\end{align}
Here, $(u\otimes v)^{\circ}=v\otimes u$ for $u,v\in \K\pi$.
\end{prop}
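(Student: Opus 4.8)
The plan is to verify that $\kappa$ satisfies the defining axioms of a double bracket, Definition \ref{dfn:db}, and then to establish the two identities \eqref{eq:k(y,x)} and \eqref{eq:k(ix,iy)} by direct geometric computation using the formula \eqref{eq:kap}. For the Leibniz rule \eqref{eq:a,bc} in the second argument, I would take three based loops $\alpha$ (at $\bullet$) and $\beta, \gamma$ (at $*$), put them in general position, and observe that $\alpha \cap (\beta\gamma) = (\alpha\cap\beta) \sqcup (\alpha\cap\gamma)$ as sets of transverse double points; for an intersection point $p$ lying on $\beta$ the factor $\gamma$ is appended to the $*$-to-$p$ and $p$-to-$*$ portions in the obvious way, giving $\kappa(\alpha,\beta\gamma) = \kappa(\alpha,\beta)\gamma + \beta\kappa(\alpha,\gamma)$ (outer bimodule structure). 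For the second axiom \eqref{eq:ab,c}, by the remark following Definition \ref{dfn:db} it suffices to check skew-symmetry \eqref{eq:sks} — except that $\kappa$ is \emph{not} skew-symmetric, so instead I must verify \eqref{eq:ab,c} directly, again by splitting $(\alpha_1\alpha_2) \cap \gamma$ into contributions from $\alpha_1$ and $\alpha_2$ and bookkeeping which loop gets inserted via the \emph{inner} bimodule structure $*$; this is the same kind of argument as for \eqref{eq:a,bc} with the roles and the relevant bimodule structure switched.

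For \eqref{eq:k(y,x)}, the point is that $\kappa$ differs from a genuinely skew-symmetric double bracket by boundary (framing) corrections. Concretely I would compute $\kappa(\beta,\alpha)$ by reversing the roles of the two loops in \eqref{eq:kap}: the sum is again over $p \in \alpha\cap\beta$, but now each point contributes with sign $\varepsilon(\dot\beta_p, \dot\alpha_p) = -\varepsilon(\dot\alpha_p,\dot\beta_p)$ and with the two tensor factors built from $\alpha$- and $\beta$-arcs rearranged. Comparing term by term with $-\kappa(\alpha,\beta)^\circ$, the interior double points match up exactly; the discrepancy comes from the basepoint $*$ (equivalently $\bullet$) where the two loops are attached, and accounting for that attachment point as a ``boundary intersection'' produces precisely the correction terms $xy\otimes 1 + 1\otimes yx - x\otimes y - y\otimes x$. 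I would phrase this by using the isomorphism \eqref{eq:bul} carefully and tracking how the arc $\nu$ joining $\bullet$ to $*$ enters: the terms with $1$ in a tensor slot arise when one of the arcs in \eqref{eq:kap} becomes constant.

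For \eqref{eq:k(ix,iy)}, I would use that $\iota(\alpha) = \alpha^{-1} = \overline\alpha$ reverses orientation of loops. Reversing both $\alpha$ and $\beta$ leaves the set $\alpha\cap\beta$ of double points unchanged, multiplies each local intersection sign by $(-1)^2 = +1$ (both velocity vectors flip), and at each $p$ the four subarcs $\beta_{*p}, \alpha_{p\bullet}, \alpha_{\bullet p}, \beta_{p*}$ get replaced by the reverses of the complementary subarcs, which after the substitution rule and reindexing is exactly the effect of applying $(\iota\otimes\iota)$ and swapping the two tensor factors. So $\kappa(\iota x,\iota y) = (\iota\otimes\iota)\kappa(x,y)^\circ$ follows by matching summands. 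It is enough to check all of this for $x = \alpha$, $y = \beta$ single loops, since both sides of each identity are $\K$-bilinear.

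The main obstacle, and the step requiring the most care, is the basepoint bookkeeping in \eqref{eq:k(y,x)}: the surgery/concatenation conventions at $*$ and $\bullet$, the role of the collar arc $\nu$ from \eqref{eq:bul}, and getting the four correction terms with correct signs and tensor placement. The Leibniz axioms are essentially formal once general position is invoked, and \eqref{eq:k(ix,iy)} is a clean orientation-reversal symmetry; but pinning down exactly why the ``self-intersection'' of the loop with the basepoint yields $xy\otimes 1 + 1\otimes yx - x\otimes y - y\otimes x$ rather than some other combination is where one must be meticulous. I would double-check it on the simplest nontrivial example — $\Sigma$ an annulus or pair of pants with $\alpha,\beta$ a small pair of generating loops — to fix all signs before writing the general argument.
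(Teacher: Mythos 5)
Your proposal is correct, and it is in substance the argument the paper relies on --- but note that the paper itself does not carry out any of it: it cites \cite{KK15} Lemma 4.3.1 for the double-bracket axioms, the proof of Lemma 7.2 in \cite{MT14} for \eqref{eq:k(y,x)}, and dismisses \eqref{eq:k(ix,iy)} as immediate from \eqref{eq:kap}. What you sketch is exactly the content of those citations: general position plus the splitting $\alpha\cap(\beta\gamma)=(\alpha\cap\beta)\sqcup(\alpha\cap\gamma)$ for the two Leibniz rules (and you are right that \eqref{eq:ab,c} must be checked directly, since $\kappa$ is not skew-symmetric and the shortcut via \eqref{eq:sks} is unavailable); the orientation-reversal computation $\overline{\beta}_{*p}=\overline{\beta_{p*}}$, $\varepsilon(-\dot\alpha_p,-\dot\beta_p)=\varepsilon(\dot\alpha_p,\dot\beta_p)$ for \eqref{eq:k(ix,iy)}, which indeed closes up term by term; and, for \eqref{eq:k(y,x)}, the comparison of $\kappa(\beta,\alpha)$ with $-\kappa(\alpha,\beta)^{\circ}$ in which the interior double points cancel and the four correction terms come from sliding the two basepoints past each other along the boundary arc $\nu$. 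You have correctly isolated the only genuinely delicate step (the basepoint bookkeeping producing $xy\otimes 1+1\otimes yx-x\otimes y-y\otimes x$ with the right signs and tensor placement), and your plan to calibrate it on a pair of pants is a sensible way to pin the conventions down before writing the general case.
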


\begin{proof}
For the proof of the first statement, see \cite{KK15} Lemma 4.3.1.
(Note that we use a different convention here. See also Remark
\ref{rem:KK15} below.)
The formula (\ref{eq:k(y,x)}) can be found in the proof of Lemma 7.2 in \cite{MT14}.
The formula (\ref{eq:k(ix,iy)}) can be seen directly from the defining formula (\ref{eq:kap}).
\end{proof}

\begin{rem}
Essentially the same operations as the map $\kappa$ were independently introduced by Papakyriakopoulos \cite{Papa} and Turaev \cite{Tu78}. Yet another version is the \emph{homotopy intersection form} $\eta: \K \pi \otimes \K \pi \to \K \pi$ introduced by Massuyeau and Turaev \cite{MT13}.
\end{rem}

Let $\hat{\pi}:=[S^1,\Sigma]$ be the set of
homotopy classes of free loops on $\Sigma$.
For any $p\in \Sigma$ (in particular, for $p=*$), let $|\ |: \pi_1(\Sigma,p)\to \hat{\pi}$ be the map obtained by forgetting the base point of a based loop.
Notice that the map $|\ |: \pi \to \hat{\pi}$ induces a $\K$-linear isomorphism
\[
\K\hat{\pi}\cong |\K\pi|,
\]
where $|\K\pi|=\K\pi/[\K\pi,\K\pi]$.
In this paper, we mainly use the notation $|\K \pi|$.

By equation \eqref{eq:bracket_on_|A|}, the double bracket $\kappa$ induces a bracket $\{-,-\}=\{-,-\}_\kappa$ on $\K \hat{\pi} \cong |\K \pi|$. 
%Using the construction in Definition \ref{dfn:aux} (ii) with the isomorphism $\K \hat{\pi} \cong |\K \pi|$ in mind, we obtain a $\K$-lienar map
%$\{-,-\}_{\kappa}:\K \hat{\pi} \otimes \K \hat{\pi}\to \K \hat{\pi}$.
To be more explicit, suppose that $\alpha$ and $\beta$ are smoothly immersed loops on $\Sigma$, such that their intersections consist of transverse double points.
For each intersection $p\in \alpha \cap \beta$,
let $\alpha_p$ and $\beta_p$ be the loops $\alpha$ and $\beta$ \emph{based at $p$}, and let $\alpha_p\beta_p\in \pi_1(\Sigma,p)$ be their concatenation.
Then from the formula (\ref{eq:kap}), we obtain
\[
\{\alpha,\beta\}=-\sum_{p\in \alpha\cap \beta}
\varepsilon(\dot{\alpha}_p,\dot{\beta}_p)
|\alpha_p \beta_p|.
\]
Note that $\{-,-\}$ is  \emph{minus}  the \emph{Goldman bracket} defined in  \cite{Go86}.

\subsection{Turaev cobracket and the self-intersection map $\mu$}

Choose inward non-zero tangent vectors $\xi_{\bullet}\in T_{\bullet}\Sigma$ and $\xi_*\in T_*\Sigma$, respectively.
Let $\pi^+=\pi_{\bullet *}^+$ be the set of regular homotopy classes relative to the boundary $\{0,1\}$ of $C^{\infty}$-immersions
$\gamma: ([0,1],0,1) \to (\Sigma,\bullet,*)$ such that 
$\dot{\gamma}(0)=\xi_{\bullet}$ and $\dot{\gamma}(1)=-\xi_*$.
We define a group structure on $\pi^+$ as follows: for $\alpha,\beta\in \pi^+$, their product $\alpha \beta$ is
the insertion of a \emph{positive} monogon to
(a suitable smoothing of) $\alpha \overline{\nu} \beta$ (see Figure \ref{fig:prod}).
The map $\pi^+\to \pi, \gamma \mapsto \overline{\nu}\gamma$ is a surjective group homomorphism, and its kernel is an infinite cyclic group.

\begin{figure}
\begin{center}
\input{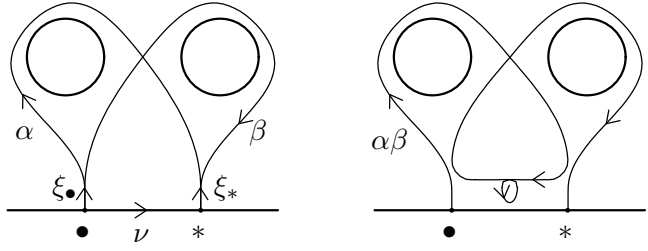}
\end{center}
\caption{the group structure on $\pi^+=\pi^+_{\bullet *}$}
\label{fig:prod}
\end{figure}

Let $\gamma:([0,1],0,1)\to (\Sigma,\bullet,*)$ be a $C^{\infty}$-immersion such that $\dot{\gamma}(0)=\xi_{\bullet}$ and $\dot{\gamma}(1)=-\xi_*$.
We assume that the self-intersections of $\gamma$ consist of transverse double points.
For each self-intersection $p$ of $\gamma$, let $\gamma^{-1}(p)=\{ t_1^p,t_2^p\}$ with $t_1^p<t_2^p$.
Then we define $\mu(\gamma)=\mu_{\bullet *}(\gamma)
\in |\K \pi|\otimes \K \pi$ by
\begin{equation}
\label{eq:mu(g)}
\mu(\gamma):=
-\sum_p \varepsilon(\dot{\gamma}(t_1^p),\dot{\gamma}(t_2^p)) \, 
|\gamma_{t_1^pt_2^p}|\otimes \overline{\nu}\gamma_{0t_1^p}\gamma_{t_2^p1},
\end{equation}
where the sum runs over all the self-intersections of $\gamma$.
Using the same argument as in \cite{KK15} Proposition 3.2.3,
we can show that the result depends only on the regular homotopy class of $\gamma$.
Since in this statement we deal with  \emph{regular homotopy} classes, we only have
to check the invariance under the second and third Reidemeister moves $(\omega 2)$ and $(\omega 3)$ (see \cite{Go86} \S 5).
In this way, we obtain a $\K$-linear map
$\mu=\mu_{\bullet *}:\K \pi^+\to |\K \pi| \otimes \K \pi$.

\begin{prop}
\label{prop:prod}
For any $\alpha,\beta\in \pi^+$,
\[
\mu(\alpha\beta)=
\mu(\alpha)(1\otimes \overline{\nu}\beta)+(1\otimes \overline{\nu}\alpha)\mu(\beta)+(|\ |\otimes 1)\kappa(\overline{\nu}\alpha,\overline{\nu}\beta).
\]
\end{prop}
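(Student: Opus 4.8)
The plan is to evaluate $\mu(\alpha\beta)$ directly from its definition \eqref{eq:mu(g)} using a carefully chosen immersed representative of the product, and to partition its self-intersections into three families matching the three terms on the right-hand side. First I would fix $C^{\infty}$-immersions $a$ and $b$ representing $\alpha$ and $\beta$, with $\dot{a}(0)=\dot{b}(0)=\xi_{\bullet}$ and $\dot{a}(1)=\dot{b}(1)=-\xi_{*}$, together with an embedded arc $c\subset\partial\Sigma$ representing $\overline{\nu}$, arranged so that $a$ and $b$ meet $\partial\Sigma$ only at $\bullet$ and $*$, so that $a\cup b$ has only transverse double points, and so that these double points together with the behaviour near the two base points are in general position. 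Concatenating $a\cdot c\cdot b$, smoothing its two corners (at $*$ and at $\bullet$), and inserting a positive monogon (as prescribed in the definition of the product on $\pi^{+}$) yields an immersed representative $\gamma$ of $\alpha\beta$ whose image, away from small neighbourhoods of the base points and of the monogon, is the union of the images of $a$ and $b$, and whose parametrisation traverses the $a$-portion before the $b$-portion.

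Next I would enumerate the self-intersections $p$ of $\gamma$, with preimages $t_{1}^{p}<t_{2}^{p}$. If both preimages lie in the $a$-portion then $p$ is a self-intersection of $a$; here $\gamma_{t_{1}^{p}t_{2}^{p}}$ is the corresponding sub-loop of $a$ while the tail $\gamma_{t_{2}^{p}1}$ is the remaining arc of $a$ followed by $c$ and then $b$, so by \eqref{eq:mu(g)} and $c\simeq\overline{\nu}$ this point contributes $|a_{s_{1}s_{2}}|\otimes(\overline{\nu}a_{0s_{1}}a_{s_{2}1})(\overline{\nu}\beta)$ with unchanged sign, and summing over all such $p$ gives $\mu(\alpha)(1\otimes\overline{\nu}\beta)$. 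Symmetrically, the self-intersections with both preimages in the $b$-portion are exactly the self-intersections of $b$ — and since the $a$-portion precedes the $b$-portion the ordering $t_{1}^{p}<t_{2}^{p}$ is automatic — yielding $(1\otimes\overline{\nu}\alpha)\mu(\beta)$. The remaining self-intersections of $\gamma$ are those with one preimage in the $a$-portion and one in the $b$-portion (these are precisely the points of $\alpha\cap\beta$, the $a$-occurrence coming first), together with the self-intersection created by the monogon and any crossings near the base points introduced by the smoothings. For $p$ of the first kind, with $a(s)=b(s')=p$, formula \eqref{eq:mu(g)} gives $-\varepsilon(\dot{a}_{p},\dot{b}_{p})\,|a_{s1}\,\overline{\nu}\,b_{0s'}|\otimes\overline{\nu}a_{0s}b_{s'1}$, and I would show that the sum of these terms, together with the monogon and smoothing contributions, equals $(|\ |\otimes 1)\kappa(\overline{\nu}\alpha,\overline{\nu}\beta)$: the transverse terms match the sum over $\alpha\cap\beta$ in \eqref{eq:kap} after transporting base points via \eqref{eq:bul}, and the additional contributions account for the non-transverse part of $\kappa$ recorded in \eqref{eq:k(y,x)}.

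The step I expect to be the main obstacle is precisely this last matching. One must make the smoothing of the two corners and the placement of the positive monogon completely explicit, confirm that they create no self-intersections beyond the ones listed above, and verify point-by-point that the local signs $\varepsilon(\dot{\gamma}(t_{1}^{p}),\dot{\gamma}(t_{2}^{p}))$ of \eqref{eq:mu(g)} agree with the signs $\varepsilon(\dot{\alpha}_{p},\dot{\beta}_{p})$ of \eqref{eq:kap} under the base-point change \eqref{eq:bul} — the same kind of delicate framing and base-point bookkeeping carried out in \cite{KK15} (Proposition 3.2.3) and \cite{MT14}. Once the contributions of $c$, the two smoothed corners and the monogon are shown to assemble into the non-transverse part of $\kappa(\overline{\nu}\alpha,\overline{\nu}\beta)$, the three families of self-intersections sum to the three terms of the asserted identity, proving the proposition. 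A convenient consistency check is to take $\beta$ equal to the unit of $\pi^{+}$ (so that $\alpha\beta$ is $\alpha$ with a positive monogon inserted) and to consider the case where $\alpha$ and $\beta$ admit disjoint embedded representatives.
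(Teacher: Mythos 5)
Your decomposition of the self-intersections of a representative of $\alpha\beta$ into the $a$--$a$, $b$--$b$ and mixed families, with the monogon and corner crossings absorbed into the $\kappa$-term, is exactly the crossing-counting argument of Lemma 4.3.3 of \cite{KK15} to which the paper defers, so your proposal matches the paper's (omitted) proof. One small correction of framing: $\kappa(\overline{\nu}\alpha,\overline{\nu}\beta)$ has no intrinsic ``non-transverse part'' --- all of \eqref{eq:kap} is a sum over transverse crossings of the chosen representatives $\nu(\overline{\nu}\alpha)\overline{\nu}$ and $\overline{\nu}\beta$ --- rather, the monogon and corner crossings of the concatenated curve correspond to the crossings of those two based loops occurring near $\bullet$, $*$ and along $\nu$, as one can check on the example $\alpha=\beta=\nu\gamma_i$, where the monogon supplies the term ${\bf 1}\otimes\gamma_i^2$ of $(|\ |\otimes 1)\kappa(\gamma_i,\gamma_i)$.
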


\begin{proof}
The proof is essentially the same as the proof of Lemma 4.3.3
in \cite{KK15}, so we omit the detail.
\end{proof}

\begin{rem}
In \cite{Tu78}, Turaev introduced essentially the same operation as $\mu$.
\end{rem}

We denote by $\hat{\pi}^+$ the set of regular homotopy classes of immersed free loops on $\Sigma$.
Let $\alpha:S^1\to \Sigma$ be a $C^{\infty}$-immersion whose self-intersections consist of transverse double points.
Setting $D_{\alpha}:=\{ (t_1,t_2)\in S^1\times S^1 \mid t_1\neq t_2, \alpha(t_1)=\alpha(t_2) \}$, we define
\begin{equation}
\label{eq:d^+}
\delta^+(\alpha):=
\sum_{(t_1,t_2)\in D_{\alpha}}
\varepsilon(\dot{\alpha}(t_1),\dot{\alpha}(t_2)) \,
|\alpha_{t_1t_2}| \otimes |\alpha_{t_2t_1}| \in |\K \pi|\otimes |\K \pi|.
\end{equation}
The right hand side only depends on the regular homotopy class of $\gamma$,
and we obtain a $\K$-linear map $\delta^+: \mathbb{K}\hat{\pi}^+\to |\K \pi|\otimes |\K \pi|$.

\begin{rem}
To show that $\delta^+(\alpha)$ is well-defined, 
we can use a similar argument to \cite{Tu91} \S 8. A regular homotopy version of the cobracket $\delta$ was first introduced by Turaev in \cite{Tu91} \S 18. Another version which is  closer to our definition was introduced in \cite{Ka15}.
\end{rem}

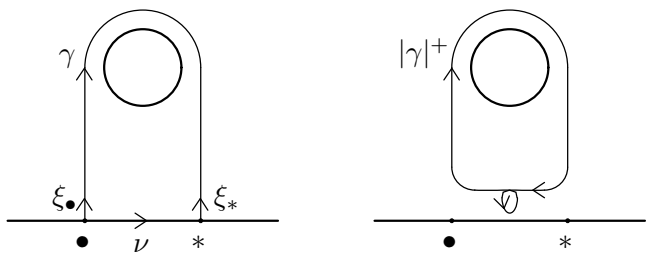
\begin{figure}
\begin{center}
%WinTpicVersion4.31b
{\unitlength 0.1in%
\begin{picture}( 33.0000, 11.4000)(  3.0000,-18.4000)%
% LINE 1 0 3 0 Black White  
% 2 3600 1800 2200 1800
% 
\special{pn 13}%
\special{pa 3600 1800}%
\special{pa 2200 1800}%
\special{fp}%
% CIRCLE 1 0 3 0 Black White  
% 4 2900 1000 2900 1200 2900 1200 2900 1200
% 
\special{pn 13}%
\special{ar 2900 1000 200 200  0.0000000  6.2831853}%
% DOT 1 0 3 0 Black White  
% 1 3200 1800
% 
\special{pn 4}%
\special{sh 1}%
\special{ar 3200 1800 10 10 0  6.28318530717959E+0000}%
% DOT 1 0 3 0 Black White  
% 1 2600 1800
% 
\special{pn 4}%
\special{sh 1}%
\special{ar 2600 1800 10 10 0  6.28318530717959E+0000}%
% STR 2 0 3 0 Black White  
% 4 2550 1900 2550 1950 2 0 0 0
% $\bullet$
\put(25.5000,-19.5000){\makebox(0,0)[lb]{$\bullet$}}%
% STR 2 0 3 0 Black White  
% 4 3150 1900 3150 1950 2 0 0 0
% $*$
\put(31.5000,-19.5000){\makebox(0,0)[lb]{$*$}}%
% LINE 1 0 3 0 Black White  
% 2 1700 1800 300 1800
% 
\special{pn 13}%
\special{pa 1700 1800}%
\special{pa 300 1800}%
\special{fp}%
% CIRCLE 1 0 3 0 Black White  
% 4 1000 1000 1000 1200 1000 1200 1000 1200
% 
\special{pn 13}%
\special{ar 1000 1000 200 200  0.0000000  6.2831853}%
% DOT 1 0 3 0 Black White  
% 1 1300 1800
% 
\special{pn 4}%
\special{sh 1}%
\special{ar 1300 1800 10 10 0  6.28318530717959E+0000}%
% DOT 1 0 3 0 Black White  
% 1 700 1800
% 
\special{pn 4}%
\special{sh 1}%
\special{ar 700 1800 10 10 0  6.28318530717959E+0000}%
% LINE 2 0 3 0 Black White  
% 4 1020 1800 960 1760 1020 1800 960 1840
% 
\special{pn 8}%
\special{pa 1020 1800}%
\special{pa 960 1760}%
\special{fp}%
\special{pa 1020 1800}%
\special{pa 960 1840}%
\special{fp}%
% STR 2 0 3 0 Black White  
% 4 650 1900 650 1950 2 0 0 0
% $\bullet$
\put(6.5000,-19.5000){\makebox(0,0)[lb]{$\bullet$}}%
% STR 2 0 3 0 Black White  
% 4 1250 1900 1250 1950 2 0 0 0
% $*$
\put(12.5000,-19.5000){\makebox(0,0)[lb]{$*$}}%
% STR 2 0 3 0 Black White  
% 4 950 1915 950 1965 2 0 0 0
% $\nu$
\put(9.5000,-19.6500){\makebox(0,0)[lb]{$\nu$}}%
% LINE 2 0 3 0 Black White  
% 4 698 1684 657 1745 698 1684 737 1745
% 
\special{pn 8}%
\special{pa 698 1684}%
\special{pa 657 1745}%
\special{fp}%
\special{pa 698 1684}%
\special{pa 737 1745}%
\special{fp}%
% LINE 2 0 3 0 Black White  
% 4 1298 1684 1257 1745 1298 1684 1337 1745
% 
\special{pn 8}%
\special{pa 1298 1684}%
\special{pa 1257 1745}%
\special{fp}%
\special{pa 1298 1684}%
\special{pa 1337 1745}%
\special{fp}%
% STR 2 0 3 0 Black White  
% 4 530 1728 530 1748 2 0 0 0
% $\xi_{\bullet}$
\put(5.3000,-17.4800){\makebox(0,0)[lb]{$\xi_{\bullet}$}}%
% STR 2 0 3 0 Black White  
% 4 1368 1728 1368 1748 2 0 0 0
% $\xi_*$
\put(13.6800,-17.4800){\makebox(0,0)[lb]{$\xi_*$}}%
% ELLIPSE 2 0 3 0 Black White  
% 4 2900 1680 2860 1600 2860 1680 2940 1680
% 
\special{pn 8}%
\special{ar 2900 1680 40 80  6.2831853  3.1415927}%
% SPLINE 2 0 3 0 Black White  
% 5 2860 1680 2860 1660 2900 1640 2940 1640 3080 1640
% 
\special{pn 8}%
\special{pa 2860 1680}%
\special{pa 2866 1651}%
\special{pa 2899 1640}%
\special{pa 2932 1640}%
\special{pa 2963 1641}%
\special{pa 3059 1641}%
\special{pa 3080 1640}%
\special{fp}%
% SPLINE 2 0 3 0 Black White  
% 5 2938 1680 2938 1660 2898 1640 2858 1640 2718 1640
% 
\special{pn 8}%
\special{pa 2938 1680}%
\special{pa 2932 1651}%
\special{pa 2899 1640}%
\special{pa 2866 1640}%
\special{pa 2835 1641}%
\special{pa 2739 1641}%
\special{pa 2718 1640}%
\special{fp}%
% LINE 2 0 3 0 Black White  
% 4 3020 1640 3080 1600 3020 1640 3080 1680
% 
\special{pn 8}%
\special{pa 3020 1640}%
\special{pa 3080 1600}%
\special{fp}%
\special{pa 3020 1640}%
\special{pa 3080 1680}%
\special{fp}%
% LINE 2 0 3 0 Black White  
% 4 2870 1736 2819 1685 2870 1736 2898 1670
% 
\special{pn 8}%
\special{pa 2870 1736}%
\special{pa 2819 1685}%
\special{fp}%
\special{pa 2870 1736}%
\special{pa 2898 1670}%
\special{fp}%
% CIRCLE 2 0 3 0 Black White  
% 4 1000 1000 1300 1000 1300 1000 900 1000
% 
\special{pn 8}%
\special{ar 1000 1000 300 300  3.1415927  6.2831853}%
% LINE 2 0 3 0 Black White  
% 2 700 1000 700 1800
% 
\special{pn 8}%
\special{pa 700 1000}%
\special{pa 700 1800}%
\special{fp}%
% LINE 2 0 3 0 Black White  
% 2 1300 1000 1300 1800
% 
\special{pn 8}%
\special{pa 1300 1000}%
\special{pa 1300 1800}%
\special{fp}%
% LINE 2 0 3 0 Black White  
% 4 698 1004 657 1065 698 1004 737 1065
% 
\special{pn 8}%
\special{pa 698 1004}%
\special{pa 657 1065}%
\special{fp}%
\special{pa 698 1004}%
\special{pa 737 1065}%
\special{fp}%
% CIRCLE 2 0 3 0 Black White  
% 4 2900 1000 3200 1000 3200 1000 2800 1000
% 
\special{pn 8}%
\special{ar 2900 1000 300 300  3.1415927  6.2831853}%
% CIRCLE 2 0 3 0 Black White  
% 4 2720 1520 2600 1520 2600 1520 2720 1680
% 
\special{pn 8}%
\special{ar 2720 1520 120 120  1.5707963  3.1415927}%
% LINE 2 0 3 0 Black White  
% 2 2600 1520 2600 1000
% 
\special{pn 8}%
\special{pa 2600 1520}%
\special{pa 2600 1000}%
\special{fp}%
% CIRCLE 2 0 3 0 Black White  
% 4 3080 1520 3200 1520 3080 1680 3200 1520
% 
\special{pn 8}%
\special{ar 3080 1520 120 120  6.2831853  1.5707963}%
% LINE 2 0 3 0 Black White  
% 2 3200 1520 3200 1000
% 
\special{pn 8}%
\special{pa 3200 1520}%
\special{pa 3200 1000}%
\special{fp}%
% LINE 2 0 3 0 Black White  
% 4 2600 1004 2559 1065 2600 1004 2639 1065
% 
\special{pn 8}%
\special{pa 2600 1004}%
\special{pa 2559 1065}%
\special{fp}%
\special{pa 2600 1004}%
\special{pa 2639 1065}%
\special{fp}%
% STR 2 0 3 0 Black White  
% 4 565 950 565 1000 2 0 0 0
% $\gamma$
\put(5.6500,-10.0000){\makebox(0,0)[lb]{$\gamma$}}%
% STR 2 0 3 0 Black White  
% 4 2320 954 2320 1004 2 0 0 0
% $|\gamma|^+$
\put(23.2000,-10.0400){\makebox(0,0)[lb]{$|\gamma|^+$}}%
\end{picture}}%
\end{center}
\caption{the closing operation $|\ |^+$}
\label{fig:clo}
\end{figure}

Consider the closing operation $|\ |^+ : \pi^+ \to \hat{\pi}^+$,
which maps $\gamma$ to an immersed loop
obtained by inserting a positive monogon to $\overline{\nu}\gamma$
(see Figure \ref{fig:clo}).
Let ${\bf 1}\in \hat{\pi}$ denote the class of a constant loop.
Then for any $\gamma \in \pi^+$,
\begin{equation}
\label{eq:d^+alt}
\delta^+( |\gamma|^+)
=-\alt (1\otimes |\ |)\mu(\gamma)+|\gamma| \wedge {\bf 1}.
\end{equation}
Here, $\alt: |\K \pi| \otimes |\K \pi| \to |\K \pi| \otimes |\K \pi|$ is the map sending $\alpha \otimes \beta$ to $\alpha \wedge \beta=\alpha \otimes \beta-\beta \otimes \alpha$.

The original \emph{Turaev cobracket} \cite{Tu91} is a map
$\delta: |\K \pi|/\K {\bf 1} \to (|\K \pi|/\K {\bf 1}) \otimes (|\K \pi|/\K {\bf 1})$.
It is defined using the formula (\ref{eq:d^+}), where the loop $\alpha$ represents an element of $\hat{\pi}$ (not of $\hat{\pi}^+$) and the right hand side is regarded as an element of  $(|\K \pi|/\K {\bf 1}) \otimes (|\K \pi|/\K {\bf 1})$.

\begin{rem}
\label{rem:GLA}
The Goldman bracket $[-,-]_{\rm G}=-\{-,-\}$ is a Lie bracket on $\K\hat{\pi}$, and $\K {\bf 1}$ is a Lie ideal \cite{Go86}.
The map $\delta$ is a Lie cobracket on the quotient Lie algebra $\K\hat{\pi}/\K {\bf 1}$, and the triple $(\K\hat{\pi}/\K {\bf 1},[-,-]_{\rm G},\delta)$ is a \emph{Lie bialgebra} \cite{Tu91}.
As was shown by Chas \cite{Cha04}, it is \emph{involutive} in the sense that $[-,-]_{\rm G}\circ \delta=0$.
\end{rem}

\begin{rem}
\label{rem:KK15}
The operations $\mu$ and $\kappa$ in this paper are different from the original versions in \cite{KK15}.
One can convert one version into the other by switching the first and second components of the target.
\end{rem}

\subsection{The case of genus zero}
\label{subsec:g0}

In this subsection, let $\Sigma$ be a surface of genus 0 with $n+1$ boundary components.
We label the boundary components of $\Sigma$ as
$\pa_0 \Sigma,\pa_1 \Sigma,\ldots,\pa_n \Sigma$,
so that $*\in \pa_0 \Sigma$.
Take a free generating system $\gamma_1,\ldots,\gamma_n$ for $\pi$, such that
each $\gamma_i$ is freely homotopic to the positively oriented boundary component $\pa_i \Sigma$
and the product $\gamma_1 \cdots \gamma_n$ is the negatively oriented boundary component $\pa_0 \Sigma$ (see Figure \ref{fig:gen}).

\begin{figure}
\begin{center}
%WinTpicVersion4.31b
{\unitlength 0.1in%
\begin{picture}( 24.6000, 14.4000)( 11.0000,-18.4000)%
% LINE 1 0 3 0 Black White  
% 2 3000 1800 1600 1800
% 
\special{pn 13}%
\special{pa 3000 1800}%
\special{pa 1600 1800}%
\special{fp}%
% CIRCLE 1 0 3 0 Black White  
% 4 2300 900 2300 1100 2300 1100 2300 1100
% 
\special{pn 13}%
\special{ar 2300 900 200 200  0.0000000  6.2831853}%
% DOT 1 0 3 0 Black White  
% 1 2600 1800
% 
\special{pn 4}%
\special{sh 1}%
\special{ar 2600 1800 10 10 0  6.28318530717959E+0000}%
% DOT 1 0 3 0 Black White  
% 1 2000 1800
% 
\special{pn 4}%
\special{sh 1}%
\special{ar 2000 1800 10 10 0  6.28318530717959E+0000}%
% LINE 2 0 3 0 Black White  
% 4 2320 1800 2260 1760 2320 1800 2260 1840
% 
\special{pn 8}%
\special{pa 2320 1800}%
\special{pa 2260 1760}%
\special{fp}%
\special{pa 2320 1800}%
\special{pa 2260 1840}%
\special{fp}%
% STR 2 0 3 0 Black White  
% 4 1950 1900 1950 1950 2 0 0 0
% $\bullet$
\put(19.5000,-19.5000){\makebox(0,0)[lb]{$\bullet$}}%
% STR 2 0 3 0 Black White  
% 4 2550 1900 2550 1950 2 0 0 0
% $*$
\put(25.5000,-19.5000){\makebox(0,0)[lb]{$*$}}%
% STR 2 0 3 0 Black White  
% 4 2250 1915 2250 1965 2 0 0 0
% $\nu$
\put(22.5000,-19.6500){\makebox(0,0)[lb]{$\nu$}}%
% LINE 2 0 3 0 Black White  
% 4 2000 1680 1959 1741 2000 1680 2039 1741
% 
\special{pn 8}%
\special{pa 2000 1680}%
\special{pa 1959 1741}%
\special{fp}%
\special{pa 2000 1680}%
\special{pa 2039 1741}%
\special{fp}%
% LINE 2 0 3 0 Black White  
% 4 2598 1684 2557 1745 2598 1684 2637 1745
% 
\special{pn 8}%
\special{pa 2598 1684}%
\special{pa 2557 1745}%
\special{fp}%
\special{pa 2598 1684}%
\special{pa 2637 1745}%
\special{fp}%
% STR 2 0 3 0 Black White  
% 4 1830 1728 1830 1748 2 0 0 0
% $\xi_{\bullet}$
\put(18.3000,-17.4800){\makebox(0,0)[lb]{$\xi_{\bullet}$}}%
% STR 2 0 3 0 Black White  
% 4 2668 1728 2668 1748 2 0 0 0
% $\xi_*$
\put(26.6800,-17.4800){\makebox(0,0)[lb]{$\xi_*$}}%
% CIRCLE 1 0 3 0 Black White  
% 4 1600 900 1600 1100 1600 1100 1600 1100
% 
\special{pn 13}%
\special{ar 1600 900 200 200  0.0000000  6.2831853}%
% CIRCLE 1 0 3 0 Black White  
% 4 3000 900 3000 1100 3000 1100 3000 1100
% 
\special{pn 13}%
\special{ar 3000 900 200 200  0.0000000  6.2831853}%
% ELLIPSE 1 0 3 0 Black White  
% 4 1600 1400 1100 1800 1100 1400 1600 1900
% 
\special{pn 13}%
\special{ar 1600 1400 500 400  1.5707963  3.1415927}%
% ELLIPSE 1 0 3 0 Black White  
% 4 3000 1400 3500 1800 3000 1900 3500 1400
% 
\special{pn 13}%
\special{ar 3000 1400 500 400  6.2831853  1.5707963}%
% ELLIPSE 1 0 3 0 Black White  
% 4 3000 800 3500 400 3500 800 3000 300
% 
\special{pn 13}%
\special{ar 3000 800 500 400  4.7123890  6.2831853}%
% ELLIPSE 1 0 3 0 Black White  
% 4 1600 800 1100 400 1600 300 1100 800
% 
\special{pn 13}%
\special{ar 1600 800 500 400  3.1415927  4.7123890}%
% LINE 1 0 3 0 Black White  
% 2 1600 400 3000 400
% 
\special{pn 13}%
\special{pa 1600 400}%
\special{pa 3000 400}%
\special{fp}%
% LINE 1 0 3 0 Black White  
% 2 3500 800 3500 1400
% 
\special{pn 13}%
\special{pa 3500 800}%
\special{pa 3500 1400}%
\special{fp}%
% LINE 1 0 3 0 Black White  
% 2 1100 1400 1100 800
% 
\special{pn 13}%
\special{pa 1100 1400}%
\special{pa 1100 800}%
\special{fp}%
% LINE 1 0 3 0 Black White  
% 2 2000 1800 2000 1680
% 
\special{pn 13}%
\special{pa 2000 1800}%
\special{pa 2000 1680}%
\special{fp}%
% CIRCLE 2 0 3 0 Black White  
% 4 1430 1500 1330 1500 1430 1400 1530 1500
% 
\special{pn 8}%
\special{ar 1430 1500 100 100  6.2831853  4.7123890}%
% LINE 2 0 3 0 Black White  
% 4 1530 1500 1470 1530 1530 1500 1565 1555
% 
\special{pn 8}%
\special{pa 1530 1500}%
\special{pa 1470 1530}%
\special{fp}%
\special{pa 1530 1500}%
\special{pa 1565 1555}%
\special{fp}%
% LINE 1 0 3 0 Black White  
% 2 2600 1800 2600 1680
% 
\special{pn 13}%
\special{pa 2600 1800}%
\special{pa 2600 1680}%
\special{fp}%
% CIRCLE 2 0 3 0 Black White  
% 4 1600 900 1320 900 1320 900 1320 900
% 
\special{pn 8}%
\special{ar 1600 900 280 280  0.0000000  6.2831853}%
% CIRCLE 2 0 3 0 Black White  
% 4 2300 900 2020 900 2020 900 2020 900
% 
\special{pn 8}%
\special{ar 2300 900 280 280  0.0000000  6.2831853}%
% CIRCLE 2 0 3 0 Black White  
% 4 3000 900 2720 900 2720 900 2720 900
% 
\special{pn 8}%
\special{ar 3000 900 280 280  0.0000000  6.2831853}%
% LINE 2 0 3 0 Black White  
% 2 3000 1180 2600 1680
% 
\special{pn 8}%
\special{pa 3000 1180}%
\special{pa 2600 1680}%
\special{fp}%
% LINE 2 0 3 0 Black White  
% 2 2600 1680 2300 1180
% 
\special{pn 8}%
\special{pa 2600 1680}%
\special{pa 2300 1180}%
\special{fp}%
% LINE 2 0 3 0 Black White  
% 2 2600 1680 1600 1180
% 
\special{pn 8}%
\special{pa 2600 1680}%
\special{pa 1600 1180}%
\special{fp}%
% LINE 2 0 3 0 Black White  
% 4 2046 1400 2076 1467 2046 1400 2118 1399
% 
\special{pn 8}%
\special{pa 2046 1400}%
\special{pa 2076 1467}%
\special{fp}%
\special{pa 2046 1400}%
\special{pa 2118 1399}%
\special{fp}%
% LINE 2 0 3 0 Black White  
% 4 2432 1400 2430 1474 2432 1400 2497 1430
% 
\special{pn 8}%
\special{pa 2432 1400}%
\special{pa 2430 1474}%
\special{fp}%
\special{pa 2432 1400}%
\special{pa 2497 1430}%
\special{fp}%
% LINE 2 0 3 0 Black White  
% 4 2820 1404 2750 1428 2820 1404 2815 1475
% 
\special{pn 8}%
\special{pa 2820 1404}%
\special{pa 2750 1428}%
\special{fp}%
\special{pa 2820 1404}%
\special{pa 2815 1475}%
\special{fp}%
% LINE 2 0 3 0 Black White  
% 4 1320 880 1279 941 1320 880 1359 941
% 
\special{pn 8}%
\special{pa 1320 880}%
\special{pa 1279 941}%
\special{fp}%
\special{pa 1320 880}%
\special{pa 1359 941}%
\special{fp}%
% LINE 2 0 3 0 Black White  
% 4 2020 880 1979 941 2020 880 2059 941
% 
\special{pn 8}%
\special{pa 2020 880}%
\special{pa 1979 941}%
\special{fp}%
\special{pa 2020 880}%
\special{pa 2059 941}%
\special{fp}%
% LINE 2 0 3 0 Black White  
% 4 2720 880 2679 941 2720 880 2759 941
% 
\special{pn 8}%
\special{pa 2720 880}%
\special{pa 2679 941}%
\special{fp}%
\special{pa 2720 880}%
\special{pa 2759 941}%
\special{fp}%
% STR 2 0 3 0 Black White  
% 4 1560 540 1560 560 2 0 0 0
% $\gamma_1$
\put(15.6000,-5.6000){\makebox(0,0)[lb]{$\gamma_1$}}%
% STR 2 0 3 0 Black White  
% 4 2240 540 2240 560 2 0 0 0
% $\gamma_2$
\put(22.4000,-5.6000){\makebox(0,0)[lb]{$\gamma_2$}}%
% STR 2 0 3 0 Black White  
% 4 2960 540 2960 560 2 0 0 0
% $\gamma_3$
\put(29.6000,-5.6000){\makebox(0,0)[lb]{$\gamma_3$}}%
% STR 2 0 3 0 Black White  
% 4 3560 1310 3560 1360 2 0 0 0
% $\partial_0 \Sigma$
\put(35.6000,-13.6000){\makebox(0,0)[lb]{$\partial_0 \Sigma$}}%
\end{picture}}%
\end{center}
\caption{a surface of genus $0$, embedded in $\mathbb{R}^2$ ($n=3$)}
\label{fig:gen}
\end{figure}
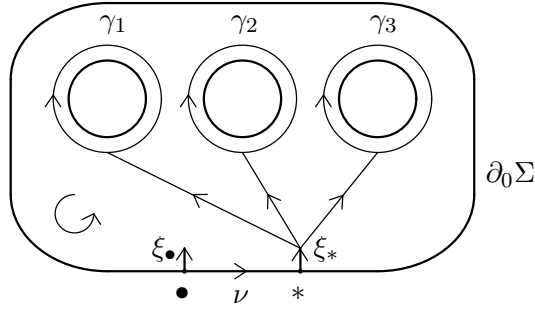

Figure \ref{fig:kgg} shows the following:

\begin{prop}
\label{prop:k(ga,ga)}
\begin{align*}
&\kappa(\gamma_i,\gamma_i)=
1\otimes \gamma_i^2-\gamma_i\otimes \gamma_i \quad \text{for any $i$}, \\
&\kappa(\gamma_i,\gamma_j)=0 \quad \text{if $i<j$},  \\
&\kappa(\gamma_i,\gamma_j)=
1\otimes \gamma_i\gamma_j+\gamma_j\gamma_i\otimes 1
-\gamma_i\otimes \gamma_j-\gamma_j\otimes \gamma_i
\quad \text{if $i>j$.}
\end{align*}
\end{prop}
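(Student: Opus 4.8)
The plan is to compute $\kappa$ directly from its topological definition \eqref{eq:kap}, using the explicit planar model of Figure \ref{fig:gen}. I would realize $\gamma_i$ by an immersed loop based at $\bullet$ and $\gamma_j$ by an immersed loop based at $*$, each consisting of a short arc from the base point out to a small simple loop encircling the relevant boundary component and back, arranged so that all intersections are transverse double points. Throughout, the class represented by the loop $\alpha$ at $\bullet$ is read off in $\pi=\pi_1(\Sigma,*)$ through the isomorphism \eqref{eq:bul}, $\alpha\mapsto\nu\alpha\overline{\nu}$.

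The case $i<j$ is the easy one: since the boundary components $\partial_1\Sigma,\dots,\partial_n\Sigma$ are linearly ordered and $\bullet$ sits to the left of $*$ on $\partial_0\Sigma$, the chosen representative of $\gamma_i$ can be isotoped to lie entirely to the left of that of $\gamma_j$. The two loops are then disjoint, the sum in \eqref{eq:kap} is empty, and $\kappa(\gamma_i,\gamma_j)=0$.

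For $i>j$ I would avoid a second diagram and instead use the skew-symmetry identity \eqref{eq:k(y,x)} with $x=\gamma_j$, $y=\gamma_i$: since $j<i$ the previous case gives $\kappa(\gamma_j,\gamma_i)=0$, so \eqref{eq:k(y,x)} collapses to $\kappa(\gamma_i,\gamma_j)=1\otimes\gamma_i\gamma_j+\gamma_j\gamma_i\otimes 1-\gamma_i\otimes\gamma_j-\gamma_j\otimes\gamma_i$, which is exactly the claimed formula. Equivalently, one can read it off Figure \ref{fig:kgg}: when $i>j$ the arc of $\gamma_i$ must pass over the loop $\gamma_j$, producing precisely two transverse double points whose contributions to \eqref{eq:kap} assemble into these four terms.

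The genuine computation is the diagonal case $\kappa(\gamma_i,\gamma_i)$, since skew-symmetry only forces $\kappa(\gamma_i,\gamma_i)+\kappa(\gamma_i,\gamma_i)^{\circ}=\gamma_i^2\otimes 1+1\otimes\gamma_i^2-2\gamma_i\otimes\gamma_i$ and does not determine $\kappa(\gamma_i,\gamma_i)$ on its own. Here I would put a representative of $\gamma_i$ at $\bullet$ next to a parallel copy at $*$, make them transverse, list the intersection points appearing in Figure \ref{fig:kgg}, record each local sign $\varepsilon(\dot{\alpha}_p,\dot{\beta}_p)$, and evaluate the two words $\beta_{*p}\alpha_{p\bullet}\nu$ and $\overline{\nu}\alpha_{\bullet p}\beta_{p*}$ in $\pi$; after the evident cancellations the result is $1\otimes\gamma_i^2-\gamma_i\otimes\gamma_i$. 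The hard part is exactly this bookkeeping: keeping track of the whisker $\nu$, the inward framing vectors $\xi_\bullet$ and $\xi_*$, and the orientation at each crossing, so that the subpaths are correctly identified with powers of $\gamma_i$ and the signs come out right. Everything else --- the disjointness observation and the one-line use of \eqref{eq:k(y,x)} --- is then immediate.
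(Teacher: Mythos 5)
Your proposal is correct and, for the cases $i=j$ and $i<j$, it is the same argument the paper makes: the paper's entire proof is the phrase ``Figure \ref{fig:kgg} shows the following,'' i.e.\ one draws $\nu\gamma_i\overline{\nu}$ at $\bullet$ and $\gamma_j$ at $*$ in the planar model, observes disjointness when $i<j$, and reads off the crossings, signs and subwords of \eqref{eq:kap} in the remaining cases. Where you genuinely diverge is the case $i>j$: the paper's figure contains a third panel in which both curves are drawn and their two intersection points are evaluated directly, whereas you deduce this case algebraically from the vanishing for $i<j$ together with the identity \eqref{eq:k(y,x)}. That substitution is legitimate (the identity is established before the proposition and holds for all $x,y\in\K\pi$), and it buys you one fewer diagram and one fewer sign-and-whisker computation; what it costs is that it leans on \eqref{eq:k(y,x)}, whose own proof is outsourced to \cite{MT14}, so the picture-based version is the more self-contained of the two. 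You are also right that the diagonal case cannot be recovered this way, since \eqref{eq:k(y,x)} only pins down the symmetrization $\kappa(\gamma_i,\gamma_i)+\kappa(\gamma_i,\gamma_i)^{\circ}$; there the explicit count of the two intersection points of the parallel copies is unavoidable, and your description of that bookkeeping matches what the first panel of Figure \ref{fig:kgg} encodes.
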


\begin{figure}
\begin{center}
\input{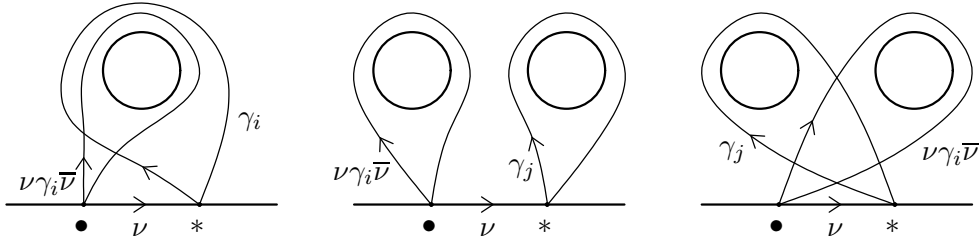}
\end{center}
\caption{the proof of Proposition \ref{prop:k(ga,ga)}}
\label{fig:kgg}
\end{figure}

Choose an orientation preserving embedding $\Sigma \to \mathbb{R}^2$ such that $\pa_0 \Sigma$ is the outermost boundary component of  $\Sigma$, as in Figure \ref{fig:gen}.
Note that such an embedding is unique up to isotopy.
The standard framing of $\mathbb{R}^2$ induces a framing
of $\Sigma$.
We may assume that the velocity vector of $\nu$ does not rotate
with respect to this framing.
Then we can define the rotation number function
\[
\rot=\rot_{\bullet *}: \pi^+ \to \frac{1}{2}+\Z.
\]
It satisfies the product formula 
$\rot(\alpha \beta)=\rot(\alpha)+\rot(\beta)+(1/2)$ for any $\alpha,\beta\in \pi^+$.

The surjective homomorphism $\pi^+ \to \pi, \gamma \mapsto \overline{\nu}\gamma$ has a section $i=i_{\bullet *}:\pi\to \pi^+$ defined by the condition that $\rot(i(\gamma))=-1/2$ for any $\gamma\in \pi$.
Composing $\mu: \K \pi^+\to |\K \pi| \otimes \K\pi$ with the section $i$, we obtain a $\K$-linear map
$\mu=\mu_{\bullet *}: \K\pi \to |\K \pi| \otimes \K\pi$
(again denoted by $\mu$).

\begin{prop}
\label{prop:prod2}
The map $\mu: \K\pi \to |\K \pi| \otimes \K\pi$ satisfies
the product formula
\[
\mu(xy)=
\mu(x)(1\otimes y)+(1\otimes x)\mu(y)+(|\ |\otimes 1)\kappa(x,y)
\]
for any $x,y\in \K \pi$, and we have $\mu(\gamma_i)=0$ for any $i$.
Moreover, these two properites characterise the map $\mu$.
\end{prop}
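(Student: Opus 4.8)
The plan is to deduce both assertions, and the uniqueness, from the results already proved for $\mu=\mu_{\bullet *}\colon\K\pi^+\to|\K\pi|\otimes\K\pi$ together with the defining property of the section $i\colon\pi\to\pi^+$.

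\textbf{Product formula.} First I would observe that $i$ is multiplicative on $\pi$. For $x,y\in\pi$ the element $i(x)i(y)\in\pi^+$ maps to $(\overline{\nu}i(x))(\overline{\nu}i(y))=xy$ under the homomorphism $\pi^+\to\pi$, and its rotation number is $\rot(i(x))+\rot(i(y))+\tfrac12=-\tfrac12-\tfrac12+\tfrac12=-\tfrac12$; since $i(xy)$ is \emph{defined} as the unique lift of $xy$ with rotation number $-\tfrac12$, we get $i(x)i(y)=i(xy)$. Applying Proposition \ref{prop:prod} to $\alpha=i(x)$, $\beta=i(y)$ and using $\overline{\nu}i(x)=x$, $\overline{\nu}i(y)=y$ then gives
\[
\mu(xy)=\mu(x)(1\otimes y)+(1\otimes x)\mu(y)+(|\ |\otimes 1)\kappa(x,y)
\]
for $x,y\in\pi$. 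All three terms on the right are $\K$-bilinear in $(x,y)$ (the bracket $\kappa$ being already $\K$-bilinear), so the identity extends from $\pi\times\pi$ to all of $\K\pi\otimes\K\pi$.

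\textbf{Vanishing on the generators.} Next I would compute $\mu(\gamma_i)=\mu_{\bullet *}(i(\gamma_i))$ by choosing a convenient representative. Inspecting the standard picture (Figure \ref{fig:gen}) one checks that $\gamma_i$ is realised by a \emph{simple} arc $\gamma\colon([0,1],0,1)\to(\Sigma,\bullet,*)$ with $\dot{\gamma}(0)=\xi_{\bullet}$, $\dot{\gamma}(1)=-\xi_*$ and $\rot(\gamma)=-\tfrac12$; by the characterisation of $i$ this arc represents $i(\gamma_i)$ in $\pi^+$. Since $\gamma$ has no self-intersections, the index set in the defining sum \eqref{eq:mu(g)} is empty, hence $\mu(\gamma_i)=0$.

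\textbf{Uniqueness.} Suppose $\mu'$ is another $\K$-linear map $\K\pi\to|\K\pi|\otimes\K\pi$ with the same two properties, and put $\nu:=\mu-\mu'$. Subtracting the two product formulas, the $\kappa$-terms cancel and we obtain $\nu(xy)=\nu(x)(1\otimes y)+(1\otimes x)\nu(y)$ for $x,y\in\pi$; that is, $\nu$ is a derivation from the group $\pi$ into the $\K\pi$-bimodule $|\K\pi|\otimes\K\pi$ on which $\K\pi$ acts on the right-hand tensor factor both on the left and on the right. From $\nu(1)=\nu(1\cdot 1)=2\nu(1)$ one gets $\nu(1)=0$, and then $\nu(\gamma_i^{-1})=-\gamma_i^{-1}\nu(\gamma_i)\gamma_i^{-1}$, so $\nu$ vanishes on every generator and its inverse. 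An induction on reduced word length ($\nu(sw')=s\nu(w')+\nu(s)w'$ with $s\in\{\gamma_i^{\pm1}\}$) then shows $\nu\equiv 0$ on $\pi$, hence on $\K\pi$. Thus $\mu'=\mu$.

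I expect the main obstacle to be the rotation-number bookkeeping in the vanishing step: asserting that the embedded arc representing $\gamma_i$ in $\pi^+$ has rotation number exactly $-\tfrac12$ is where the specific planar embedding, the framing inherited from $\mathbb{R}^2$, the non-rotating choice of $\nu$, and the particular generating system $\gamma_1,\dots,\gamma_n$ all enter, and it must be verified directly from the picture rather than formally.
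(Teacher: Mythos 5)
Your proof is correct and follows essentially the same route as the paper: the product formula is deduced from Proposition \ref{prop:prod} via the multiplicativity of the section $i$ (which the paper leaves implicit but which follows, as you note, from the rotation-number product formula), the vanishing $\mu(\gamma_i)=0$ comes from the observation that a smoothing of $\nu\gamma_i$ is a simple arc with rotation number $-1/2$, and uniqueness follows since $\pi$ is generated by the $\gamma_i$. Your write-up merely spells out the details that the paper's proof states in one line each.
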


\begin{proof}
The product formula follows from Proposition \ref{prop:prod}.
Notice that a suitable smoothing of
$\nu \gamma_i$ has no self-intersections and that its rotation number
is $-1/2$.
Thus $i(\gamma_i)=\nu \gamma_i$ and $\mu(\gamma_i)=0$.
The last statement follows from the fact that $\pi$ is generated by $\{\gamma_i\}_i$.
\end{proof}

\begin{prop} \label{prop:cob}
The  composition map
\[
\pi \xrightarrow{i} \pi^+ \xrightarrow{|\ |^+} \hat{\pi}^+
\xrightarrow{\delta^+} |\K \pi|\otimes |\K \pi|.
\]
descends to a map $\delta^+: \hat{\pi} \to |\K \pi|\otimes |\K \pi|$.
For any $\gamma\in \pi$, we have
\begin{equation}
\label{eq:d^+alt2}
\delta^+(|\gamma |)=
-\alt (1\otimes |\ |)\mu(\gamma)+|\gamma| \wedge {\bf 1}.
\end{equation}
Its $\K$-linear extension
$\delta^+: |\K \pi| \cong \K \hat{\pi} \to |\K \pi|\otimes |\K \pi|$
is a lift of the Turaev cobracket in the sense that
$\varpi^{\otimes 2}\circ \delta^+=\delta \circ \varpi$, where
$\varpi:|\K \pi|\to |\K \pi|/\K {\bf 1}$ is the natural projection.
\end{prop}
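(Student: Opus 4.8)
The plan is to assemble the statement out of facts already in hand: equation~\eqref{eq:d^+alt} for elements of $\pi^+$, the product formula of Proposition~\ref{prop:prod2} for $\mu$ on $\K\pi$, the rotation number function $\rot=\rot_{\bullet *}$, and the section $i=i_{\bullet *}\colon\pi\to\pi^+$. \textbf{Step 1 (the formula).} Substitute $i(\gamma)\in\pi^+$ into~\eqref{eq:d^+alt}. Since $\mu=\mu_{\bullet *}\circ i$ by the very definition of $\mu$ as a map on $\K\pi$, and since the underlying free loop of $|i(\gamma)|^+\in\hat\pi^+$ is $|\gamma|\in\hat\pi$ (indeed $\overline\nu\, i(\gamma)$ represents $\gamma\in\pi$, and inserting the positive monogon does not change the free homotopy class), this yields $\delta^+(|i(\gamma)|^+)=-\alt(1\otimes|\ |)\mu(\gamma)+|\gamma|\wedge{\bf 1}$, which is precisely~\eqref{eq:d^+alt2}.

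\textbf{Step 2 (descent to $\hat\pi$).} I would show that the composite $|\ |^+\circ i\colon\pi\to\hat\pi^+$ already factors through $|\ |\colon\pi\to\hat\pi$, so that the whole composition $\delta^+\circ|\ |^+\circ i$ descends. Regular homotopy classes of immersed free loops on the framed surface $\Sigma$ are classified by the pair (free homotopy class, rotation number) in $\hat\pi\times\Z$. For $\gamma\in\pi$ the free homotopy class of $|i(\gamma)|^+$ is $|\gamma|$, while its rotation number equals $\rot_{\bullet *}(i(\gamma))+1/2$ by the behaviour of $\rot$ under the closing operation $|\ |^+$, hence the fixed value $0$ by $\rot_{\bullet *}\circ i\equiv-1/2$. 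Thus $|i(\gamma)|^+\in\hat\pi^+$ depends only on $|\gamma|$, $\delta^+$ descends to $\delta^+\colon\hat\pi\to|\K\pi|\otimes|\K\pi|$, and extending $\K$-linearly along $\K\hat\pi\cong|\K\pi|$ together with Step~1 gives~\eqref{eq:d^+alt2} for the descended map. Alternatively, the descent can be read off directly from~\eqref{eq:d^+alt2}: the term $|\gamma|\wedge{\bf 1}$ is manifestly conjugation invariant, and $\alt(1\otimes|\ |)\mu(\gamma)$ is seen to be so using Proposition~\ref{prop:prod2} together with the symmetry formula~\eqref{eq:k(y,x)} for $\kappa$.

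\textbf{Step 3 (lift of the Turaev cobracket).} By construction $\delta^+(|\gamma|)$ is obtained by picking a generic immersed representative $\tilde\gamma$ of $|i(\gamma)|^+\in\hat\pi^+$ — whose underlying free loop is $|\gamma|$ — and evaluating the right-hand side of~\eqref{eq:d^+} on it. Turaev's cobracket $\delta$ is given by the same expression~\eqref{eq:d^+} for \emph{any} generic immersed representative of $|\gamma|\in\hat\pi$, the result being independent of the choice after projection to $(|\K\pi|/\K{\bf 1})^{\otimes2}$; this independence is exactly the well-definedness of $\delta$ established in~\cite{Tu91}. Applying it to the particular representative $\tilde\gamma$ gives $\varpi^{\otimes2}(\delta^+(|\gamma|))=\delta(\varpi(|\gamma|))$, and $\K$-linearity upgrades this to $\varpi^{\otimes2}\circ\delta^+=\delta\circ\varpi$.

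The real content sits in Step~2: verifying that closing up $i(\gamma)$ produces an immersed free loop whose \emph{regular} homotopy class depends only on the conjugacy class of $\gamma$. This rests on the classification of immersions $S^1\to\Sigma$ up to regular homotopy on a surface with trivialized tangent bundle, together with the bookkeeping that the relevant rotation number is the fixed value $0$; and in Step~3 on invoking the representative-independence of Turaev's construction. Everything else is substitution into formulas already proven in the text.
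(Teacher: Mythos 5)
Your proposal is correct, and Steps 1 and 3 essentially reproduce what the paper does: equation \eqref{eq:d^+alt2} is obtained by substituting $i(\gamma)$ into \eqref{eq:d^+alt}, and the lift property follows because $\varpi^{\otimes 2}$ kills the $|\gamma|\wedge{\bf 1}$ term and the remaining sum over self-intersections is, by definition, Turaev's $\delta$. The interesting divergence is in Step 2. The paper proves descent purely algebraically: it verifies $\delta^+(|i(\alpha\beta)|^+)=\delta^+(|i(\beta\alpha)|^+)$ by expanding the right-hand side of \eqref{eq:d^+alt2} with the product formula of Proposition \ref{prop:prod2} and cancelling the $\kappa$-terms via the symmetry relation \eqref{eq:k(y,x)} --- this is exactly the alternative you sketch at the end of your Step 2. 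Your primary route is genuinely different: you show the stronger statement that the composite $\pi\to\hat{\pi}^+$, $\gamma\mapsto |i(\gamma)|^+$, already factors through $\hat{\pi}$, by invoking the Hirsch--Smale classification of immersed loops on a framed surface with boundary by the pair (free homotopy class, rotation number) and observing that the rotation number of $|i(\gamma)|^+$ is a fixed constant since $\rot\circ i\equiv -1/2$. This is more conceptual and explains \emph{why} descent holds, at the price of importing a classification theorem the paper never states (the paper's computation is self-contained and stays within the formulas already established). Note that your argument does not actually need the closing correction to be exactly $+1/2$; only that it is a universal constant, which is all the factorization requires.
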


\begin{proof}
To prove the first statement, we need to show
$\delta^+(|i(\alpha\beta)|^+)=\delta^+(|i(\beta\alpha)|^+)$
for any $\alpha,\beta\in \pi$.
We write $\mu(\alpha)=|\alpha'|\otimes \alpha''$ and
$\mu(\beta)=|\beta'|\otimes \beta''$.
Then, by (\ref{eq:d^+alt}) and Proposition \ref{prop:prod2},
\begin{align*}
\delta^+(|i(\alpha\beta)|^+) =&
-\alt (1\otimes |\ |)\mu(i(\alpha\beta))+|i(\alpha\beta)|\wedge {\bf 1} \\
=& -\alt (1\otimes |\ |)(\mu(\alpha)(1\otimes \beta)
+(1\otimes \alpha)\mu(\beta)+(|\ |\otimes 1)\kappa(\alpha,\beta))
+|\alpha\beta|\wedge {\bf 1} \\
=& -\alt (|\alpha'|\otimes |\alpha''\beta|+|\beta'|\otimes |\alpha\beta''|
+(|\ |\otimes |\ |)\kappa(\alpha,\beta))
+|\alpha\beta|\wedge {\bf 1} \\
=& |\alpha''\beta|\wedge |\alpha'|+|\alpha\beta''|\wedge |\beta'|
+|\alpha\beta|\wedge {\bf 1}-\alt (|\ |\otimes |\ |)\kappa(\alpha,\beta).
\end{align*}
Also, by (\ref{eq:k(y,x)}), 
\[
\alt (|\ |\otimes |\ |)\kappa(\beta,\alpha)=
-\alt (|\ |\otimes |\ |)(\kappa(\alpha,\beta)^{\circ})
=\alt (|\ |\otimes |\ |)\kappa(\alpha,\beta).
\]
These two computations prove
$\delta^+(|i(\alpha\beta)|^+)=\delta^+(|i(\beta\alpha)|^+)$,
as required.
The formula (\ref{eq:d^+alt2}) follows from (\ref{eq:d^+alt}).
Finally, by (\ref{eq:d^+alt2}), $\varpi^{\otimes 2}\delta^+(|\gamma|)
=-\alt (1\otimes |\ |)\mu(\gamma)=\delta \varpi (|\gamma|)$,
where the last equality follows from (\ref{eq:mu(g)}) and (\ref{eq:d^+}) (the latter being interpreted as the defining formula for $\delta$).
\end{proof}

\begin{rem}
In fact, the $\K$-vector space $|\K \pi|$ is a Lie bialgebra with respect to the Goldman bracket and the map $\delta^+$.
%\textcolor{red}{Should we provide details?}
\end{rem}

By interchanging the roles of $*$ and $\bullet$, we obtain
an operation $\mu_{*\bullet}: \K \pi \to \K \pi \otimes |\K \pi|$ which will be used in the next section.
This map satisfies the product formula
\begin{equation}
\label{eq:prod*O}
\mu_{*\bullet}(xy)=\mu_{*\bullet}(x)(y\otimes 1)+
(x\otimes 1)\mu_{*\bullet}(y)+(1\otimes |\ |)\kappa(y,x)
\end{equation}
for any $x,y\in \K \pi$, and we have $\mu_{*\bullet}(\gamma_i^{-1})=0$ for all $i$.
Moreover, these two properties characterise $\mu_{*\bullet}$
(c.f. Proposition \ref{prop:prod2}).

One can show that
$(|\ |\otimes 1)(\mu_{\bullet *}(\gamma)^{\circ})
+|\gamma|\wedge {\bf 1}
=-(|\ |\otimes 1)\mu_{* \bullet}(\gamma)$
for any $\gamma\in \pi$. Therefore,
(\ref{eq:d^+alt2}) is rephrased as
\begin{equation}
\label{eq:O**O}
\delta^+(|\gamma|)=-(1\otimes |\ |)\mu_{\bullet *}(\gamma)
-(|\ |\otimes 1)\mu_{*\bullet}(\gamma).
\end{equation}

\if
By interchanging the roles of $*$ and $\bullet$, we introduce
a group $\pi_{*\bullet}^+$ and an operation $\mu_{*\bullet}$ which will be used in the next section.
%As we will see in \S \ref{sec:mult}, $\mu=\mu_{\bullet *}$ and $\mu_{*\bullet}$ correspond to $\tdiv^T_{\Pi}$ and $\underline{\tdiv}^T_{\Pi}$ in Definition \ref{dfn:divp} with $\Pi=\Pi_{\mult}$, respectively.
Let $\pi_{*\bullet}^+$ be the set of regular homotopy classes of immersions $\gamma:([0,1],0,1)\to (\Sigma,*,\bullet)$ such that
$\dot{\gamma}(0)=\xi_*$ and $\dot{\gamma}(1)=-\xi_{\bullet}$.
For $\alpha, \beta \in \pi_{*\bullet}^+$,
their product $\alpha \beta$ is defined to be the insertion of a \emph{negative} monogon to $\alpha\nu\beta$.
The map $\pi_{*\bullet}^+\to \pi, \gamma\mapsto \gamma\nu$
is a surjective group homomorphism.
We have the rotation number function
$\rot_{*\bullet}:\pi^+_{*\bullet}\to (1/2)+\mathbb{Z}$, and
this gives a section $i_{*\bullet}:\pi\to \pi_{*\bullet}^+$
defined by the condition that
$\rot_{*\bullet}(i_{*\bullet}(\gamma))=1/2$ for any $\gamma\in \pi$.
The map $\pi_{*\bullet}^+ \to \pi=\pi_{\bullet *}^+, \gamma\mapsto \overline{\gamma}$ is a bijective anti-homomorphism;
for any $\alpha,\beta\in \pi_{*\bullet}^+$, we have
$\overline{\alpha\beta}=\overline{\beta} \overline{\alpha}$.

Introduce the $\K$-linear map
$\mu_{*\bullet}: \K \pi_{*\bullet}^+ \to \K \pi \otimes \K \hat{\pi}$
by setting
\[
\mu_{*\bullet}(\gamma):=
-(\iota\otimes \iota)\mu_{\bullet*}(\overline{\gamma})^{\circ}
\]
for $\gamma\in \pi^+_{*\bullet}$.
Here, the antipode of $\K\pi$ induces a $\K$-linear involution $\iota:\K \hat{\pi}\to \K\hat{\pi}$.
Using (\ref{eq:k(y,x)}), (\ref{eq:k(ix,iy)}), and Proposition \ref{prop:prod}, we can show that for any $\alpha,\beta\in  \pi_{*\bullet}^+$,
\[
\mu_{*\bullet}(\alpha\beta)=
\mu_{*\bullet}(\alpha)(\beta\nu \otimes 1)+
(\alpha\nu\otimes 1)\mu_{*\bullet}(\beta)
+(1\otimes |\ |)\kappa(\beta\nu,\alpha\nu).
\]
Composing $\mu_{*\bullet}$ with the section $i_{*\bullet}$,
we obtain a $\K$-linear map
$\mu_{*\bullet}: \K \pi \to \K \pi \otimes \K \hat{\pi}$
(using the same letter).
This map satisfies
\begin{equation}
\label{eq:prod*O}
\mu_{*\bullet}(xy)=\mu_{*\bullet}(x)(y\otimes 1)+
(x\otimes 1)\mu_{*\bullet}(y)+(1\otimes |\ |)\kappa(y,x)
\end{equation}
for any $x,y\in \K \pi$.
We can show that
$(|\ |\otimes 1)(\mu_{\bullet *}(i_{\bullet *}\gamma)^{\circ})
+|\gamma|\wedge {\bf 1}
=-(|\ |\otimes 1)\mu_{* \bullet}(i_{*\bullet}\gamma)$
for any $\gamma\in \pi$. Therefore,
(\ref{eq:d^+alt2}) is rephrased as
\begin{equation}
\label{eq:O**O}
\delta^+(|\gamma|)=-(1\otimes |\ |)\mu_{\bullet *}(\gamma)
-(|\ |\otimes 1)\mu_{*\bullet}(\gamma).
\end{equation}
\fi

\subsection{Completions}
\label{rem:comp}
Let $I\pi:=\ker (\varepsilon)$ be the augmentation ideal of $\K\pi$.
The powers $(I\pi)^p$, $p\ge 0$, give a descending filtration
of two-sideded ideals of $\K\pi$.
The projective limit
$\widehat{\K\pi}:=\varprojlim_{p\to \infty} \K\pi/(I\pi)^p$ naturally
has the structure of a complete Hopf algebra.
Also, put $\widehat{|\K \pi|}:=\varprojlim_{p\to \infty} |\K \pi|/|(I\pi)^p|$.
Since $\pi$ is a free group of finite rank,
the natural map $\K \pi \to \widehat{\K\pi}$ is injective and
the image is dense with respect to the filtration
$\widehat{I\pi}^p:=\ker (\widehat{\K \pi}\to \K\pi/ (I\pi)^p)$, $p\ge 0$.
The natural map $|\K \pi|\to \widehat{|\K \pi|}$ has a similar property.

As  shown in \cite{KK15} \S 4 and \cite{KK16} \S 4,
the operations $\kappa$, the Goldman bracket, $\mu$, and $\delta^+$
on $\K \pi$ and $\K \hat{\pi}$ have
natural extensions to the completions $\widehat{\K\pi}$ and $\widehat{|\K \pi|}$.
Therefore, we have continuous maps
$\kappa: \widehat{\K\pi} \widehat{\otimes} \widehat{\K\pi}
\to \widehat{\K\pi} \widehat{\otimes} \widehat{\K\pi}$,
$\{-,-\}: \widehat{|\K \pi|} \widehat{\otimes} \widehat{|\K \pi|} \to \widehat{|\K \pi|} \widehat{\otimes} \widehat{|\K \pi|}$,
$\delta^+:\widehat{|\K \pi|} \to \widehat{|\K \pi|} \widehat{\otimes} \widehat{|\K \pi|}$,
and
$\mu:\widehat{\K \pi}\to \widehat{|\K \pi|} \widehat{\otimes} \widehat{\K \pi}$.

\section{Expansions and transfer of structures}
\label{sec:mult}

In this section, we introduce a notion of expansions which allow to transfer the operations ({\em e.g.} the double bracket and cobracket) from the group algebra $\mathbb{K}\pi$ to the free associative algebra $A$.

Keep the notation in \S \ref{subsec:g0} and identify the homology class of $\gamma_i$ with the $i$th generator $x_i$ of $A=A_n$.
Then $H_1(\Sigma;\K)$ is isomorphic to the degree $1$ part of $A$. Recall the notation $x_0=-\sum_{i=1}^n x_i\in H_1(\Sigma;\K)\subset A$.

\begin{dfn}
A \emph{group-like} expansion of $\pi$ is a map $\theta: \pi\to A$ such that
\begin{enumerate}
\item[(i)] $\theta(\alpha\beta)=\theta(\alpha)\theta(\beta)$ for any $\alpha,\beta\in \pi$,
\item[(ii)] for any $\alpha\in \pi$,
$\theta(\alpha)=1+[\alpha]+(\text{higher terms})$,
where $[\alpha]\in H_1(\Sigma;\K)$ is the homology class of $\alpha$,
\item[(iii)] for any $\alpha\in \pi$, $\theta(\alpha)\in \exp(L)$.
\end{enumerate}
A group-like expansion $\theta$ is called \emph{tangential} if
\begin{enumerate}
\item[(iv)] for any $1\le i\le n$,
there exists an element $g_i\in L$ such that
$\theta(\gamma_i)=e^{g_i}e^{x_i}e^{-g_i}$.
\end{enumerate}
A tangential expansion is called \emph{special} if
\begin{enumerate}
\item[(v)] $\theta(\gamma_1\cdots \gamma_n)=\exp(-x_0).$
\end{enumerate}
\end{dfn}

\begin{exple}
There is a distinguished tangential expansion $\theta^{\rm exp}$ defined on generators
$\theta^{\rm exp}(\gamma_i)=e^{x_i}$. This expansion is not special since $\theta^{\exp}(\gamma_1\cdots \gamma_n)=
e^{x_1}\cdots e^{x_n} \neq e^{-x_0}$.
\end{exple}

\begin{rem}
The notion of a group-like expansion and that of a special expansion were introduced by Massuyeau in \cite{Mas12} and \cite{Mas15}, respectively.
The first example of special expansions was given by \cite{HM} by using the Kontsevich integral.
Special expansions appear implicitly in \cite{AET10} \S 4.1.
\end{rem}

Any group-like expansion $\theta$ induces an injective map $\theta: \K \pi \to A$ of Hopf algebras and an isomorphism
$\theta:\widehat{\K \pi} \xrightarrow{\cong} A$ of complete Hopf algebras (\cite{Mas12} Proposition 2.10).
It also induces an isomorphism $\theta:\widehat{|\K \pi|} \xrightarrow{\cong} |A|$ of filtered $\K$-vector spaces (\cite{KK16} Proposition 7.1).
By the discussion in \S \ref{rem:comp}, there exist unique  continuous maps
$\kappa_\theta: A\otimes A \to A \otimes A, \mu_{\theta}:A\to |A|\otimes A$ and
$\delta^+_{\theta}:|A|\to |A|\otimes |A|$ such that
the following diagrams are commutative:
\[
\begin{CD}
\K \pi \otimes \K \pi @> \kappa >> \K \pi \otimes \K \pi \\
@V \theta\otimes \theta VV @VV \theta \otimes \theta V \\
A\otimes A @> \kappa_\theta >> A\otimes A,
\end{CD}
\]

\[
\begin{CD}
\K \pi @> \mu >> |\K \pi| \otimes \K \pi \\
@V \theta VV @VV \theta \otimes \theta V \\
A @> \mu_{\theta} >> |A| \otimes A,
\end{CD}
\hspace{5em}
\begin{CD}
|\K \pi| @> \delta^+ >> |\K \pi| \otimes |\K \pi| \\
@V \theta VV @VV \theta \otimes \theta V \\
|A| @> \delta^+_{\theta} >> |A| \otimes |A|.
\end{CD}
\]

We will first study the maps $\kappa_\theta, \mu_\theta$ and $\delta^+_\theta$ for the expansion $\theta^{\rm exp}$. In order to proceed, we need the following technical lemma:

\begin{lem}
\label{lem:p(e^x)}
Let $N$ be an $A$-bimodule and let $\phi:A\to N$ be a $\K$-linear continuous map such that $\phi(ab)=\phi(a)b+a\phi(b)$ for any $a,b\in A$.
Then, for any $x\in A_{\ge 1}$,
\[
\phi(e^x)=e^x\left( \frac{1-e^{-\ad_x}}{\ad_x} \phi(x) \right).
\]
\end{lem}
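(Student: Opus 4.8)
The plan is to reduce the identity to a first-order linear ODE in an auxiliary parameter $t$, solved degree by degree in the grading of $A$. For $t$ a formal parameter, set $u(t) := \phi(e^{tx}) \in N$. Since $x \in A_{\ge 1}$, in each fixed degree the element $e^{tx}$ is a polynomial in $t$ with coefficients in the corresponding finite-dimensional graded piece of $A$; hence $u(t)$ has polynomial degree components as well, and $t$-differentiation and $t$-integration are legitimate termwise operations which commute with the continuous Leibniz map $\phi$. Note first that $1 = 1\cdot 1$ together with the Leibniz rule forces $\phi(1) = 2\phi(1)$, so $\phi(1) = 0$ and hence $u(0) = 0$.

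Next I would differentiate in $t$. Using $\tfrac{d}{dt}e^{tx} = x e^{tx}$ and the Leibniz rule,
$$u'(t) = \phi(x e^{tx}) = \phi(x)\,e^{tx} + x\,u(t),$$
a linear inhomogeneous ODE. To solve it, perform the gauge change $v(t) := e^{-tx} u(t)$, computed with the $A$-bimodule structure on $N$; since $x$ commutes with $e^{-tx}$,
$$v'(t) = e^{-tx}\bigl(u'(t) - x u(t)\bigr) = e^{-tx}\,\phi(x)\,e^{tx}.$$
Using the standard identity $e^{-tx}\,n\,e^{tx} = e^{-t\,\ad_x}(n)$ for $n \in N$ (where $\ad_x$ denotes the operator $n \mapsto xn - nx$ on $N$, and both sides solve $w' = -\ad_x(w)$ with $w(0)=n$), this becomes $v'(t) = e^{-t\,\ad_x}\phi(x)$. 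Since $v(0) = u(0) = 0$, integrating gives $v(1) = \int_0^1 e^{-t\,\ad_x}\phi(x)\,dt = \tfrac{1 - e^{-\ad_x}}{\ad_x}\phi(x)$, where the right-hand side is understood as the power series $\sum_{k\ge 0}\tfrac{(-1)^k}{(k+1)!}\ad_x^{\,k}$ applied to $\phi(x)$, so that no actual division takes place. Finally $\phi(e^x) = u(1) = e^x v(1) = e^x\bigl(\tfrac{1-e^{-\ad_x}}{\ad_x}\phi(x)\bigr)$, as claimed.

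The only point requiring care is the rigor of this $t$-calculus inside the completed algebra, but it is dispatched by the degree-by-degree observation above: restricting to each graded piece turns everything into ordinary polynomial calculus with values in finite-dimensional vector spaces, so the product rule, differentiation, and integration all hold verbatim, and $\phi$ interchanges with $\tfrac{d}{dt}$ by continuity. An alternative, purely combinatorial route would expand $\phi(e^x) = \sum_{k\ge 1}\tfrac{1}{k!}\sum_{j=0}^{k-1} x^j\,\phi(x)\,x^{k-1-j}$ directly via the Leibniz rule and match it term-by-term against the expansion of the right-hand side using a binomial identity; the ODE argument above is shorter and more transparent, so that is the one I would write up.
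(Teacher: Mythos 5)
Your proof is correct and follows essentially the same route as the paper: both reduce the identity to the first-order linear ODE $u'(t)=\phi(x)e^{tx}+xu(t)$ with $u(0)=0$ coming from $\phi(1)=0$. The only cosmetic difference is that you construct the solution by the gauge change $v(t)=e^{-tx}u(t)$ and integrate, whereas the paper simply verifies that $e^{sx}\bigl(\frac{1-e^{-s\,\ad_x}}{\ad_x}\phi(x)\bigr)$ satisfies the same ODE and invokes uniqueness.
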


\begin{proof}
Note that the expression $I(s)=\phi(e^{sx})$ satisfies the ordinary differential equation
$$
\frac{dI(s)}{ds} = \phi(x e^{sx})=\phi(x)e^{sx} + x\phi(e^{sx})=
\phi(x)e^{sx} + x I(s)
$$
with initial condition $I(0)=0$ (since $\phi(1)=0$). It is easy to see that the function
$$
e^{sx} \left( \frac{1-e^{-s\, {\rm ad}_x}}{{\rm ad}_x} \phi(x) \right)
$$
satisfies both the differential equation and the initial condition. Hence, by uniqueness of solutions of ordinary differential equation, the expression above gives a formula for $I(s)$. Putting $s=1$ yields the desired result.
\end{proof}

\if 0
The assertion is equivalent to
$e^{-x}\phi(e^x)=((1-e^{-\ad_x})/\ad_x)\phi(x)$.
The left hand side of this equation expands as
\begin{align*}
\left( \sum_{a\ge 0} \frac{(-1)^a}{a!}x^a \right)
\left( \sum_{b\ge 1} \frac{1}{b!}\phi(x^b) \right)
=&
\sum_{a\ge 0, b\ge 1} \frac{(-1)^a}{a!b!}x^a
\left( \sum_{\substack{c+d=b-1 \\ c,d\ge 0}} x^c\phi(x)x^d \right) \\
\underset{\substack{p=a+c \\ q=d \\ r=c}}{=}&
\sum_{p,q\ge 0} \left(
\sum_{r=0}^p \frac{(-1)^{p+r}}{(p-r)!(q+r+1)!} \right)
x^p \phi(x) x^q,
\end{align*}
while the right hand side expands as
\[
\sum_{m\ge 0} \frac{(-1)^m}{(m+1)!}
\sum_{q=0}^m (-1)^q \binom{m}{q}x^{m-q}\phi(x)x^q
\underset{m-q=p}{=} \sum_{p,q\ge 0}
\frac{(-1)^p}{(p+q+1)!} \binom{p+q}{q} x^p \phi(x)x^q.
\]
Thus the lemma is reduced to the following combinatorial identity:
\[
\tag{$\sharp_{p,q}$}
\binom{p+q}{q}=\sum_{r=0}^p (-1)^r \binom{p+q+1}{p-r}
\]
for any integers $p,q\ge 0$.
If $p=0$ or $q=0$, ($\sharp_{p,q}$) holds since both the sides of ($\sharp_{p,q}$) become $1$.
Let $p,q\ge 1$ and suppose that ($\sharp_{p,q-1}$) and ($\sharp_{p-1,q}$) hold.
Then
\begin{align*}
\sum_{r=0}^p (-1)^r \binom{p+q+1}{p-r}= & \sum_{r=0}^p (-1)^r
\left[ \binom{p+q}{p-r}+\binom{p+q}{p-r-1} \right] \\
=& \sum_{r=0}^p (-1)^r \binom{p+(q-1)+1}{p-r}
+\sum_{r=0}^{p-1} (-1)^r \binom{(p-1)+q+1}{(p-1)-r} \\
=& \binom{p+q-1}{p}+\binom{p+q-1}{p-1}=\binom{p+q}{p}
=\binom{p+q}{q},
\end{align*}
thus ($\sharp_{p,q}$) holds.
By an inductive argument, we obtain ($\sharp_{p,q}$) for all $p,q\ge 0$.
\fi

\begin{prop}
\label{prop:mult}

The double bracket $\Pi_{\rm mult}:=\kappa_{\theta^{\rm exp}}$  is tangential, and it is given by the following formula:
\begin{equation}   \label{eq:Pimult}
\Pi_{\mult} =\sum_{i=1}^n \left| \pa_i \otimes
\left( \frac{1}{1-e^{-\ad_{x_i}}} \right)
\ad^2_{x_i} \pa_i \right|
-\sum_{1\le j<i\le n} | \ad_{x_i}\pa_i \otimes \ad_{x_j}\pa_j |.
\end{equation}
\end{prop}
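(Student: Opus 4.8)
The plan is to compute $\kappa_{\theta^{\exp}}$ directly from the topological formula \eqref{eq:kap} and the characterizing property of the expansion $\theta^{\exp}$, namely $\theta^{\exp}(\gamma_i)=e^{x_i}$. First I would use Proposition \ref{prop:k(ga,ga)}, which gives the values of the topological double bracket $\kappa$ on the generators $\gamma_i$ of $\pi$. Applying $\theta^{\exp}\otimes\theta^{\exp}$ to these formulas yields the values of $\Pi_{\mult}(e^{x_i},e^{x_j})$ for $i\le j$ and $i>j$. Then, since a double bracket is determined by its values on a generating set via the Leibniz rules \eqref{eq:a,bc}, \eqref{eq:ab,c}, and since the $e^{x_i}$ topologically generate $\widehat{\K\pi}$ (so the $e^{x_i}$ generate a dense subalgebra of $A$), it suffices to check that the right-hand side of \eqref{eq:Pimult} reproduces these same values on the pairs $(e^{x_i},e^{x_j})$.

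\textbf{Key steps.} The concrete computation splits into the diagonal case $i=j$ and the off-diagonal case $i\ne j$.
\begin{itemize}
\item For $i\ne j$: from Proposition \ref{prop:k(ga,ga)}, $\kappa(\gamma_i,\gamma_j)=0$ for $i<j$, so $\Pi_{\mult}(e^{x_i},e^{x_j})=0$; and for $i>j$, $\Pi_{\mult}(e^{x_i},e^{x_j})=1\otimes e^{x_i}e^{x_j}+e^{x_j}e^{x_i}\otimes 1-e^{x_i}\otimes e^{x_j}-e^{x_j}\otimes e^{x_i}=(e^{x_j}\otimes 1-1\otimes e^{x_j})(e^{x_i}\otimes 1-1\otimes e^{x_i})$ rewritten in the inner/outer bimodule notation. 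I would check that the term $-\sum_{j<i}|\ad_{x_i}\pa_i\otimes\ad_{x_j}\pa_j|$ in \eqref{eq:Pimult}, evaluated on $(e^{x_i},e^{x_j})$ via \eqref{eq:ph1ph2} and Lemma \ref{lem:p(e^x)} (applied to the double derivations $\ad_{x_k}\pa_k$), produces exactly this, while all other summands of \eqref{eq:Pimult} vanish on such a pair because $\pa_k(e^{x_i})=0$ unless $k=i$.
\item For $i=j$: Proposition \ref{prop:k(ga,ga)} gives $\kappa(\gamma_i,\gamma_i)=1\otimes\gamma_i^2-\gamma_i\otimes\gamma_i$, so $\Pi_{\mult}(e^{x_i},e^{x_i})=1\otimes e^{2x_i}-e^{x_i}\otimes e^{x_i}$. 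Here only the single summand $\bigl|\pa_i\otimes\bigl(\tfrac{1}{1-e^{-\ad_{x_i}}}\bigr)\ad_{x_i}^2\pa_i\bigr|$ of \eqref{eq:Pimult} contributes. Using \eqref{eq:ph1ph2}, the value is $\pa_i'(e^{x_i})\,\psi\,\pa_i''(e^{x_i})$ where $\psi=\bigl(\tfrac{1}{1-e^{-\ad_{x_i}}}\bigr)\ad_{x_i}^2\pa_i$, and one evaluates $\psi(e^{x_i})$ by Lemma \ref{lem:p(e^x)} together with the fact that $\pa_i$ commutes with $\ad_{x_i}$-type operators appropriately; the scalar series $\tfrac{z^2}{1-e^{-z}}\cdot\tfrac{1-e^{-z}}{z}=z\cdot\tfrac{z}{\,}\ldots$ collapses so that the $\ad_{x_i}$-dependence cancels and one is left with $1\otimes e^{2x_i}-e^{x_i}\otimes e^{x_i}$. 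I would carry this out carefully, tracking that $\ad_{x_i}^2\pa_i$ acting on $e^{x_i}$ and then the geometric series $(1-e^{-\ad_{x_i}})^{-1}$ combine via Lemma \ref{lem:p(e^x)}.
\end{itemize}
Tangentiality is immediate: each summand of \eqref{eq:Pimult} is visibly of the form $|\,\cdot\,\otimes(\text{element of }\tD_A)|$ or lies in $\tD_A$ when the first argument is a generator, so $\Pi_{\mult}(x_i,-)\in\tD_A$ for every $i$, and by the remark after the definition of tangential double brackets this suffices.

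\textbf{Main obstacle.} The delicate point is the diagonal case: one must show that the non-obvious series coefficient $\tfrac{1}{1-e^{-\ad_{x_i}}}\ad_{x_i}^2$ is exactly what is forced by matching $\theta^{\exp}$ applied to $\kappa(\gamma_i,\gamma_i)$, i.e.\ that the Leibniz-rule propagation of the value on $\gamma_i$ to the value on $\gamma_i=\exp(x_i)$ — equivalently, the appearance of Lemma \ref{lem:p(e^x)}'s kernel $\tfrac{1-e^{-\ad_x}}{\ad_x}$ — inverts correctly against this series. I expect the bookkeeping of which copy of $A$ the $\ad_{x_i}$ acts on (inner vs.\ outer bimodule structure, and the $\op$-multiplication implicit in $\kappa$ taking values in $A\otimes A^{\op}$-type expressions when restricted to a single generator) to be the part most prone to sign and side errors, so I would set up the computation by first recording $\Pi_{\mult}(x_i,-)$ as an element of $\tD_A$ explicitly, then apply it to $e^{x_i}$ via Lemma \ref{lem:p(e^x)}, rather than manipulating the $|{-}\otimes{-}|$ expression directly.
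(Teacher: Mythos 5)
Your proposal is correct and follows essentially the same route as the paper: reduce to checking $(\theta^{\exp}\otimes\theta^{\exp})\kappa(\gamma_i,\gamma_j)=\Pi_{\mult}(e^{x_i},e^{x_j})$ via Proposition \ref{prop:k(ga,ga)}, verify tangentiality by computing $\Pi_{\mult}(x_i,-)$ explicitly from \eqref{eq:ph1ph2}, and then apply Lemma \ref{lem:p(e^x)} so that the kernel $\frac{1-e^{-\ad_{x_i}}}{\ad_{x_i}}$ cancels against $\frac{1}{1-e^{-\ad_{x_i}}}\ad_{x_i}^2$ to leave $e^{x_i}(\ad_{x_i}\pa_i)$ plus the fusion terms. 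This is exactly the paper's computation.
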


\begin{rem}
The expression $\Pi_\mult$ is well known in quasi-Poisson geometry. The first term defines a canonical (non skew-symmetric) bracket on a connected Lie group $G$ with quadratic Lie algebra, see Example 1 in \cite{Severa_left}. The second term is the so-called fusion term, see Section 4 in \cite{Severa_left}.
\end{rem}

\begin{proof}
It suffices to show that
$(\theta^{\exp}\otimes \theta^{\exp})\kappa(\gamma_i,\gamma_j)
=\Pi_{\mult}(e^{x_i},e^{x_j})$ for any $1\le i,j\le n$.
By (\ref{eq:ph1ph2}),
\[
\Pi_{\mult}(x_i,-)=
\left( \frac{1}{1-e^{-\ad_{x_i}}} \right)
\ad_{x_i}^2 \pa_i
+\sum_{k<i} \ad_{x_i}(\ad_{x_k}\pa_k).
\]
This proves that $\Pi_{\mult}$ is tangential.
By Lemma \ref{lem:p(e^x)}, we compute
\begin{align}
& \Pi_{\mult}(e^{x_i},-)=
e^{x_i} \left(
\frac{1-e^{-\ad_{x_i}}}{\ad_{x_i}} \Pi_{\mult}(x_i,-)
\right) \nonumber \\
=& e^{x_i} (\ad_{x_i}\pa_i)+\sum_{k<i} \left(
e^{x_i}(\ad_{x_k}\pa_k)-(\ad_{x_k}\pa_k)e^{x_i} \right). \label{eq:P(e^x,-)}
\end{align}
From this equation, we obtain
\begin{align}
&\Pi_{\mult}(e^{x_i},e^{x_i})=
1\otimes e^{x_i}e^{x_i}-e^{x_i}\otimes e^{x_i} \quad \text{for any $i$}, \nonumber \\
&\Pi_{\mult}(e^{x_i},e^{x_j})=0 \quad \text{if $i<j$}, \nonumber \\
&\Pi_{\mult}(e^{x_i},e^{x_j})=
1\otimes e^{x_i}e^{x_j}+e^{x_j}e^{x_i}\otimes 1
-e^{x_i}\otimes e^{x_j}-e^{x_j}\otimes e^{x_i}
\quad \text{if $i>j$.} \label{eq:Pi(ex,ex)}
\end{align}
Comparing this with Proposition \ref{prop:k(ga,ga)}, we obtain
the result. 
\end{proof}

\begin{prop}
\label{prop:mult_div}
We have
$$
\mu_{\theta^{\rm exp}} = \tdiv^T_{\rm mult}, \hskip 0.3cm
(\mu_{*\bullet})_{\theta^{\rm exp}} = \underline{\tdiv}^T_{\mult}, \hskip 0.3cm
\delta^+_{\theta^{\rm exp}} = - \tdiv_\mult.
$$
\end{prop}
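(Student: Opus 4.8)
The plan is to characterise each of the three operations by a product rule together with its values on a topological generating set of $A$, and to verify that $\tdiv^T_{\rm mult}$ and $\underline{\tdiv}^T_{\rm mult}$ satisfy exactly the product rules that $\mu_{\theta^{\rm exp}}$ and $(\mu_{*\bullet})_{\theta^{\rm exp}}$ inherit from the surface. For the first identity, transferring the product formula of Proposition \ref{prop:prod2} along $\theta^{\rm exp}$ and using $\kappa_{\theta^{\rm exp}}=\Pi_{\rm mult}$ (Proposition \ref{prop:mult}) gives, first on the dense subalgebra $\theta^{\rm exp}(\K\pi)$ and then by continuity on all of $A$,
$$\mu_{\theta^{\rm exp}}(ab)=\mu_{\theta^{\rm exp}}(a)(1\otimes b)+(1\otimes a)\mu_{\theta^{\rm exp}}(b)+(|\ |\otimes 1)\Pi_{\rm mult}(a,b),$$
together with $\mu_{\theta^{\rm exp}}(e^{x_i})=(\theta^{\rm exp}\otimes\theta^{\rm exp})\mu(\gamma_i)=0$. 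By Proposition \ref{prop:prodP}, equation \eqref{eq:tdivab1}, the map $\tdiv^T_{\rm mult}$ obeys the very same product formula. Hence $D:=\mu_{\theta^{\rm exp}}-\tdiv^T_{\rm mult}$ is a continuous derivation from $A$ into the $A$-bimodule $|A|\otimes A$ on which $A$ acts through the second tensor factor only.

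It then remains to compute $\tdiv^T_{\rm mult}(e^{x_i})=\tdiv^T(\Pi_{\rm mult}(e^{x_i},-))$. Using the explicit expression \eqref{eq:P(e^x,-)} for $\Pi_{\rm mult}(e^{x_i},-)$, each summand has the form $u\,(\ad_{x_k}\pa_k)\,v$ with $u,v$ powers of $e^{x_i}$ and $k\le i$; applying Proposition \ref{prop:aphi} reduces everything to the values $\tdiv^T(\ad_{x_k}\pa_k)$ and to anomaly terms $(|\ |\otimes 1)(\ad_{x_k}\pa_k)(e^{x_i})$. Now $\tdiv^T(\ad_{x_j}\pa_j)=0$ is immediate from \eqref{eq:tdiv^T} since $\pa_j(1)=0$, and for $k<i$ one has $\pa_k(e^{x_i})=0$, hence $(\ad_{x_k}\pa_k)(e^{x_i})=0$; so $\tdiv^T_{\rm mult}(e^{x_i})=0=\mu_{\theta^{\rm exp}}(e^{x_i})$, i.e.\ $D(e^{x_i})=0$. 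Finally, Lemma \ref{lem:p(e^x)} gives $D(e^{x_i})=e^{x_i}\bigl(\tfrac{1-e^{-\ad_{x_i}}}{\ad_{x_i}}D(x_i)\bigr)$ in the bimodule $|A|\otimes A$; since left multiplication by $e^{x_i}$ is invertible and $\tfrac{1-e^{-\ad_{x_i}}}{\ad_{x_i}}$ is an invertible power series in the topologically nilpotent operator $1\otimes\ad_{x_i}$, we get $D(x_i)=0$ for all $i$, so $D$ vanishes on all noncommutative polynomials in the $x_i$ and, by continuity, on all of $A$. This proves $\mu_{\theta^{\rm exp}}=\tdiv^T_{\rm mult}$.

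The second identity, $(\mu_{*\bullet})_{\theta^{\rm exp}}=\underline{\tdiv}^T_{\rm mult}$, follows by the same scheme: transferring the product formula \eqref{eq:prod*O} and comparing with Proposition \ref{prop:prodP}, equation \eqref{eq:tdivab2} (both involve $\Pi_{\rm mult}(b,a)$, with the arguments reversed), shows that the difference is a continuous derivation into $A\otimes|A|$ with $A$ acting through the first factor; both sides vanish on the topological generators $e^{-x_i}$, since $(\mu_{*\bullet})_{\theta^{\rm exp}}(e^{-x_i})=(\theta^{\rm exp}\otimes\theta^{\rm exp})\mu_{*\bullet}(\gamma_i^{-1})=0$ while $\underline{\tdiv}^T_{\rm mult}(e^{-x_i})=0$ by a computation parallel to the previous one, now using $\Pi_{\rm mult}(e^{-x_i},-)=-e^{-x_i}*\Pi_{\rm mult}(e^{x_i},-)*e^{-x_i}$ (derivation property of $a\mapsto\Pi_{\rm mult}(a,-)$ for the inner bimodule structure), the identity $\underline{\tdiv}^T(\ad_{x_j}\pa_j)=0$ from \eqref{eq:tdiv^T2}, and $\pa_k(e^{-x_i})=0$ for $k\ne i$. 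The third identity is then a formal consequence: transferring \eqref{eq:O**O} along $\theta^{\rm exp}$ yields $\delta^+_{\theta^{\rm exp}}(|a|)=-(1\otimes|\ |)\mu_{\theta^{\rm exp}}(a)-(|\ |\otimes 1)(\mu_{*\bullet})_{\theta^{\rm exp}}(a)$, and substituting the first two identities and applying Proposition \ref{prop:prodP}, equation \eqref{eq:tdiv(a)}, gives $\delta^+_{\theta^{\rm exp}}(|a|)=-(1\otimes|\ |)\tdiv^T_{\rm mult}(a)-(|\ |\otimes 1)\underline{\tdiv}^T_{\rm mult}(a)=-\tdiv_{\rm mult}(|a|)$.

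There is no deep difficulty here; the work is bookkeeping. The points requiring care are: keeping track of which $A$-bimodule structure makes each difference an honest derivation (the algebra acting on a single tensor factor in each case); checking that the surface-side anomaly terms $(|\ |\otimes 1)\kappa$ and $(1\otimes|\ |)\kappa$ transfer precisely to $(|\ |\otimes 1)\Pi_{\rm mult}$ and $(1\otimes|\ |)\Pi_{\rm mult}$; and justifying that $\{e^{x_i}\}_i$ (respectively $\{e^{-x_i}\}_i$) topologically generates $A$, so that a continuous derivation killing these elements vanishes. The one-line observation $\tdiv^T(\ad_{x_j}\pa_j)=\underline{\tdiv}^T(\ad_{x_j}\pa_j)=0$ is what makes the vanishing-on-generators step go through cleanly.
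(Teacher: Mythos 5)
Your proposal is correct and follows essentially the same route as the paper: both arguments rest on showing that the two sides obey the same product formula (with anomaly term the intertwined double bracket $\Pi_{\mathrm{mult}}$) and both vanish on the topological generators $e^{\pm x_i}$, with the third identity deduced formally from \eqref{eq:O**O} and \eqref{eq:tdiv(a)}. The only difference is presentational: the paper invokes the characterization statement in Proposition \ref{prop:prod2} on the $\K\pi$ side, whereas you make the uniqueness step explicit on the $A$ side by showing the difference is a derivation killed on $e^{x_i}$ and hence, via Lemma \ref{lem:p(e^x)}, on $x_i$.
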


\begin{proof}
For the computation of $\mu_{\theta^{\rm exp}}$,
Proposition \ref{prop:prod2} and equation (\ref{eq:tdivab1}) show that $\mu$ and $\tdiv^T_{\rm mult}$ satisfy the same product formulas, and Proposition \ref{prop:mult}
shows that $\theta^{\rm exp}$ intertwines the double brackets $\kappa$ and $\Pi_{\rm mult}$.
Hence, it remains to show that $\theta^{\rm exp}$ intertwines $\mu$ and $\tdiv^T_{\rm mult}$
on some set of generators  of $\mathbb{K}\pi$.
By Proposition \ref{prop:prod2},  $\mu(\gamma_i)=0$ for all $i$.
The fact that $\tdiv^T_{\mult}(e^{x_i})=0$ can be checked directly from (\ref{eq:P(e^x,-)}) using (\ref{eq:tdiv^T}) and the fact that $\pa_k (e^{x_i})=0$ for $k<i$.

The formula for $(\mu_{*\bullet})_{\theta^{\rm exp}}$ can be proved similarly:
we use the fact that
$\mu_{* \bullet}(\gamma_i^{-1})=0$ for any $i$, and the computation
of $\Pi_{\mult}(e^{-x_i},e^{-x_j})$ obtained from (\ref{eq:Pi(ex,ex)}).
The formula for $\delta^+_{\theta^{\rm exp}}$ follows from 
equations (\ref{eq:tdiv(a)}) and (\ref{eq:O**O}).
\end{proof}

\begin{rem}
Explicit expressions for  $\mu_{\theta^{\rm exp}}$ and $\delta^+_{\theta^{\rm exp}}$ were obtained in \cite{Ka16}.
\end{rem}

For arbitrary special expansions,
Massuyeau and Turaev proved the following theorem:

\begin{thm}[\cite{MTpre}, see also \cite{Mas15} Theorem 5.2]
\label{thm:MT}
For any special expansion $\theta$, we have
$$
\kappa_\theta=\Pi_{\rm add}:= \Pi_{\rm KKS} + \Pi_s,
$$
where 
$$
s(z)= \frac{1}{z}-\frac{1}{1-e^{-z}}.
$$
%\[
%\begin{CD}
%\K \pi \otimes \K \pi @> \kappa >> \K \pi \otimes \K \pi \\
%@V \theta \otimes \theta VV @VV \theta \otimes \theta V \\
%A\otimes A @> \Pi_{\add} >> A \otimes A.
%\end{CD}
%\]
\end{thm}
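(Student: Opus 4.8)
The plan is to verify the identity $\kappa_\theta=\Pi_{\rm add}$ on a topological generating set and then reduce to an explicit computation. Both $\kappa_\theta$ and $\Pi_{\rm add}=\Pi_{\rm KKS}+\Pi_s$ are double brackets on $A$: for $\Pi_{\rm add}$ this is Lemma \ref{lem:KKS_task} together with Proposition \ref{prop:add'}, and for $\kappa_\theta$ it holds because $\kappa$ is a double bracket on $\mathbb{K}\pi$ whose defining identities \eqref{eq:a,bc}, \eqref{eq:ab,c} are transported by the Hopf algebra isomorphism $\theta\colon\widehat{\mathbb{K}\pi}\xrightarrow{\cong}A$. Using \eqref{eq:a,bc}, \eqref{eq:ab,c}, continuity, and the invertibility of group-like elements, a double bracket on $A$ is determined by its values $\Pi(\theta(\gamma_i),\theta(\gamma_j))$, since the elements $\theta(\gamma_i)=1+x_i+(\text{higher order})$ generate $A$ topologically. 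So it suffices to prove, for all $1\le i,j\le n$,
\[
\kappa_\theta(\theta(\gamma_i),\theta(\gamma_j))=\Pi_{\rm KKS}(\theta(\gamma_i),\theta(\gamma_j))+\Pi_s(\theta(\gamma_i),\theta(\gamma_j)).
\]

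First I would write out both sides. The left-hand side is $(\theta\otimes\theta)\kappa(\gamma_i,\gamma_j)$, which by Proposition \ref{prop:k(ga,ga)} equals $1\otimes\theta(\gamma_i)^2-\theta(\gamma_i)\otimes\theta(\gamma_i)$ when $i=j$, vanishes when $i<j$, and equals $1\otimes\theta(\gamma_i)\theta(\gamma_j)+\theta(\gamma_j)\theta(\gamma_i)\otimes 1-\theta(\gamma_i)\otimes\theta(\gamma_j)-\theta(\gamma_j)\otimes\theta(\gamma_i)$ when $i>j$; substituting $\theta(\gamma_k)=e^{g_k}e^{x_k}e^{-g_k}$, and where convenient the speciality identity $\theta(\gamma_1\cdots\gamma_n)=e^{-x_0}$, makes this a concrete element of $A\otimes A$. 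On the right-hand side the $\Pi_s$-term is already closed form: by Proposition \ref{prop:add'}, with $\tde(s(-x_0))=s'\otimes s''$,
\[
\Pi_s(\theta(\gamma_i),\theta(\gamma_j))=s''\theta(\gamma_i)\otimes s'\theta(\gamma_j)-\theta(\gamma_j)s''\theta(\gamma_i)\otimes s'-s''\otimes\theta(\gamma_i)s'\theta(\gamma_j)+\theta(\gamma_j)s''\otimes\theta(\gamma_i)s'.
\]
The $\Pi_{\rm KKS}$-term is computed from $\Pi_{\rm KKS}(x_k,x_l)=\delta_{kl}(1\otimes x_k-x_k\otimes 1)$ by the Leibniz rules, using Lemma \ref{prop:KKS} (in particular $\{|h(x_k)|,-\}_{\rm KKS}=0$ and $\{x_0,-\}_{\rm KKS}=-\phi_0$) to absorb the conjugations by $e^{\pm g_k}$, and Lemma \ref{lem:p(e^x)} to handle the exponentials.

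It then remains to match the two explicit answers, which I would do case by case. When $i\ne j$ the KKS part contributes off the diagonal only through $x_0=-\sum_k x_k$, and the comparison should collapse, via the explicit $\kappa(\gamma_i,\gamma_j)$ and the relation $\theta(\gamma_1\cdots\gamma_n)=e^{-x_0}$, to an identity with no residual dependence on the conjugators $g_k$. When $i=j$ one rewrites $\kappa(\gamma_i,\gamma_i)=\bigl(1\otimes\theta(\gamma_i)-\theta(\gamma_i)\otimes 1\bigr)\bigl(1\otimes\theta(\gamma_i)\bigr)$; peeling off the conjugators and re-expressing the exponentials via Lemma \ref{lem:p(e^x)} produces a series in $\ad_{x_i}$ with kernel $z/(1-e^{-z})$, and subtracting the KKS contribution (kernel $\ad_{x_i}$) leaves exactly $-z^2s(z)$ with $s(z)=1/z-1/(1-e^{-z})$ — this is the source of the stated power series. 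Speciality is essential here: it puts the boundary loop $\gamma_1\cdots\gamma_n$ on the same footing as the $\gamma_i$, so that the per-boundary contributions recombine into a single $x_0$-expression matching $\Pi_s$; for the non-special expansion $\theta^{\rm exp}$ the analogous correction is the larger $\Pi_{\rm mult}-\Pi_{\rm KKS}$ of Proposition \ref{prop:mult}, which is not of the form $\Pi_s$.

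The hard part is precisely this last bookkeeping: showing that all dependence on the Lie elements $g_k$ cancels and that the residual series is exactly $1/z-1/(1-e^{-z})$. A structurally cleaner, if less self-contained, alternative is induction on the number $n$ of boundary components. When $n=1$, speciality forces $\theta(\gamma_1)=e^{x_1}$, hence $\theta=\theta^{\rm exp}$, and the claim is Proposition \ref{prop:mult} specialized to $x_0=-x_1$ (with empty fusion term); the inductive step would be carried out by a surface-level gluing argument for $\kappa$. This is presumably the route of \cite{MTpre} and \cite{Mas15}.
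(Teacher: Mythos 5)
First, a point of comparison: the paper does not actually prove Theorem \ref{thm:MT} --- it is quoted from Massuyeau--Turaev \cite{MTpre} (see also \cite{Mas15}), and the only transferred double bracket computed in the paper is $\kappa_{\theta^{\rm exp}}$ in Proposition \ref{prop:mult}, for the single \emph{non-special} expansion $\theta^{\rm exp}$. So your proposal cannot be measured against an internal argument; it has to stand on its own.

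As a plan, the outline is sound: a continuous double bracket on $A$ is determined by its values on the group-like generators $\theta(\gamma_i)$, the left-hand side is given by Proposition \ref{prop:k(ga,ga)}, and your diagonal power-series identity $\frac{z^2}{1-e^{-z}}-z=-z^2s(z)$ correctly locates the origin of $s$ in the case $\theta=\theta^{\rm exp}$. But the entire content of the theorem sits in the step you defer as ``bookkeeping.'' For a general special expansion $\theta(\gamma_k)=e^{g_k}e^{x_k}e^{-g_k}$, the conjugators $g_k$ are arbitrary Lie series constrained only by the single relation $\theta(\gamma_1\cdots\gamma_n)=e^{-x_0}$, and $\Pi_{\KKS}(e^{\pm g_k},-)$ is a nonzero double derivation for generic $g_k$ (only the induced derivations $\{|h(x_j)|,-\}_{\KKS}$ vanish, by Lemma \ref{prop:KKS}); hence every value $\Pi_\add(\theta(\gamma_i),\theta(\gamma_j))$ a priori depends on all the $g_k$. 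Your assertion that this dependence ``should collapse'' via the speciality relation is exactly the theorem restated, and nothing in the proposal makes the cancellation happen. Equivalently, writing $\theta=F^{-1}\circ\theta^{\rm exp}$ with $F\in\taut(L)$ satisfying $F(x_1+\cdots+x_n)=\log(e^{x_1}\cdots e^{x_n})$, what must be proved is identity \eqref{eq:mult_add}, $(F^{-1}\otimes F^{-1})\,\Pi_\mult\,(F\otimes F)=\Pi_\add$ for \emph{every} such $F$ --- precisely the statement the paper attributes to \cite{MTpre} and, in purely algebraic form, to \cite{Florian}. The alternative induction on $n$ is likewise only named: the base case $n=1$ is degenerate (speciality forces $\theta=\theta^{\rm exp}$ and says nothing about conjugators), and the ``surface-level gluing argument'' for the inductive step, which would have to control how special expansions restrict to subsurfaces, is where all the remaining work lies.
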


\begin{rem}
Actually, Massuyeau and Turaev work mainly with the homotopy intersection form $\eta:\K \pi\otimes \K \pi \to \K \pi$.
Since $\kappa$ can be recovered from $\eta$ (\cite{KK16} Proposition 4.7) and vice versa, it is easy to translate their result into the form presented in  Theorem \ref{thm:MT}.
\end{rem}

\begin{rem}
Theorem \ref{thm:MT} admits an interpretation in terms of the exponentiation construction in quasi-Poisson geometry, see Section 7 in \cite{AKM}.  The KKS double bracket is a double Poisson bracket, and adding the term $\Pi_s$ turns it into a quasi-Poisson double bracket in the sense of van den Bergh. 
\end{rem}

\begin{rem}
Note that the function $s(z)$ admits the following presentation in terms of Bernoulli numbers:
$$
s(z)=-\frac{1}{2} - \sum_{k=1}^\infty \frac{B_{2k}}{(2k)!} \, z^{2k-1} .
$$
It defines a solution of the classical dynamical Yang-Baxter equation (CDYBE).
\end{rem}

As a consequence of Theorem \ref{thm:MT}, for a special expansion $\theta$, we have the following commutative diagram:
\begin{equation}
\label{eq:GoKKS}
\begin{CD}
|\K \pi| \otimes |\K \pi| @> \{-,-\} >> |\K \pi| \\
@V \theta\otimes  \theta VV @VV \theta V \\
|A|\otimes |A| @> \{-,-\}_{\KKS} >> |A|.
\end{CD}
\end{equation}
This is obtained independently by Massuyeau and Turaev \cite{MTpre}
and by the second and third authors \cite{KK16}. Note that $\Pi_s$ does not contribute in the Lie bracket on $|A|$
(Proposition \ref{prop:add'}).

\begin{rem}  \label{rem:mult_add}
By combining Theorem \ref{thm:MT} and Proposition \ref{prop:mult} we obtain the following result: let $F \in \taut(L)$ be such that $F(x_1 + \dots + x_n)=\log(e^{x_1} \dots e^{x_n})$. Then, 
\begin{equation} \label{eq:mult_add}
(F^{-1} \otimes F^{-1}) \Pi_\mult (F \otimes F) = \Pi_\add.
\end{equation}
%\textcolor{red}{

In \cite{Florian}, one gives an algebraic proof of this result and shows that its inverse also holds true. That is, if $F \in \taut(L)$ verifies \eqref{eq:mult_add} then $F(x_1+ \dots +x_n)=\log(e^{x_1} \dots e^{x_n})$. 
%}

%\textcolor{red}{

One can pose the following natural question: chracterize elements $F \in \taut(L)$ which induce Lie isomorphisms $(|A|, \{ \cdot, \cdot\}_{\rm mult}) \to (|A|, \{ \cdot, \cdot\}_{\rm KKS})$. Conjecturally, they are given by compositions $G \circ F$, where $G$ is an inner automorphism and $F(x_1 \dots x_n) =\log(e^{x_1} \dots e^{x_n})$.
%}

\end{rem}

\begin{rem}   \label{rem:center_theta}
Let $F \in \taut(L)$ and $\theta_F=F^{-1} \circ \theta^{\rm exp}$. Then, the map $F$ induces an isomorphism of Lie algebras 
$$
(|A|, \{-,-\}_\mult) \cong (|A|, \{-,-\}_\theta).
$$
In particular, this isomorphism restricts to an isomorphism of the centers. Since we know the center of the Lie algbera $(|A|, \{-,-\}_{\rm KKS})$, we conclude that the center of the Lie algebra $(|A|, \{-,-\}_\theta)$ is spanned by the elements $|(F^{-1}(x_i))^k|=|x_i^k|$ for $i=1,\dots, n$ and by
$|(F^{-1}\log(e^{x_1}\dots e^{x_n}))^k|$. 
\end{rem}

\section{The Kashiwara-Vergne problem and homomorphic expansions}
\label{sec:KVA}

In this section, we discuss the connection between the Kashiwara-Vergne (KV) problem in Lie theory and the transfered structures $\mu_\theta$ and $\delta^+_\theta$. 

%Let $\K[[z]]$ be the ring of formal power series in one variable $z$.

\subsection{KV problems for surfaces of genus zero}

We start with the formulation of the KV problem:
\vskip 0.2cm

\noindent \textbf{Kashiwara-Vergne problem:}
Find an element $F\in \taut(L_2)$
satisfying the conditions
\[
\tag{KVI}
F(x_1+x_2)=\log(e^{x_1}e^{x_2}),
\]
\[
\tag{KVII}
\exists\,  h(z) \in \K[[z]] \,\,  {\rm such} \,\, {\rm that} \, j(F^{-1})=|h(x_1)+h(x_2)-h(x_1+x_2)|.
\]

\begin{rem}
The KV problem was originally discovered as a property of the Baker-Campbell-Hausdorff series which implies the Duflo theorem on the center of the universal enveloping algebra. The formulation above follows  \cite{AT12}. 
%{\em A priori} it had no relation to topology of surfaces.
\end{rem}

Recall the following result:

\begin{thm}[\cite{AM06},\cite{AT12}]
The KV problem admits solutions.
\end{thm}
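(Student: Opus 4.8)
The plan is to follow \cite{AM06} and \cite{AT12} and to reduce the statement to Drinfeld's theorem on the existence of associators. Recall that a Drinfeld associator over $\K$ is a group-like element $\Phi$ of the degree completed free associative algebra on two generators $A,B$ which satisfies the pentagon equation, the two hexagon equations, and the unitarity relation $\Phi(A,B)\Phi(B,A)=1$. By Drinfeld's theorem such $\Phi$ exist over $\K$ (the Knizhnik--Zamolodchikov associator is an explicit example over $\mathbb{C}$, and a rational associator follows by the usual bootstrap). The first step is then to produce, from a chosen associator $\Phi$, a candidate automorphism $F=F_\Phi\in\taut(L_2)$: following \cite{AT12}, one transports $\Phi$ and the ``half-exponentials'' $e^{x_i/2}$ into the groups $\taut(L_n)$ through the standard embedding of the Drinfeld--Kohno Lie algebra into $\tder(L_n)$, and then reads off $F_\Phi$ from a suitable combination of the hexagon relations.

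The second step is to verify equation (KVI), that is, $F_\Phi(x_1+x_2)=\log(e^{x_1}e^{x_2})=\bch(x_1,x_2)$. This is a formal consequence of the hexagon equations together with unitarity: the hexagon supplies precisely a factorization of $\exp(\bch(x_1,x_2))$ through $\Phi$ and the half-exponentials, and transcribing this identity in $\taut(L_2)$ exhibits an automorphism that sends $x_1+x_2$ to $\bch(x_1,x_2)$. That $F_\Phi$ is group-like and tangential is automatic, since all the ingredients already lie in the groups $\taut(L_n)$.

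The third step --- and the \textbf{main obstacle} --- is equation (KVII): one must compute the group cocycle $j(F_\Phi^{-1})\in|A_2|$ and show that it has the special shape $|h(x_1)+h(x_2)-h(x_1+x_2)|$ for some power series $h\in\K[[z]]$ (the Duflo function). Here one combines the group $1$-cocycle identity \eqref{eq:jco} for $j$ --- which reduces $j(F_\Phi^{-1})$ to a sum of contributions of the individual factors --- with the computation that $\div$ (hence $j$) of each half-exponential factor is a one-variable term of the form $|h_0(x_i)|$, and, crucially, with a property of the divergence of $\Phi$ itself. The pentagon equation forces the genuinely two-variable part of $\div(\Phi)$ --- equivalently of the ``$\Gamma$-function'' of $\Phi$, cf.\ \cite{En06} --- to be a coboundary for the divergence cocycle, so that after assembling all the pieces the two-variable contributions cancel and exactly the combination $h(x_1)+h(x_2)-h(x_1+x_2)$ survives. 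This cancellation is the heart of the argument, and it is where the associator axioms are genuinely used.

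Since at least one associator over $\K$ exists, this construction yields at least one $F\in\taut(L_2)$ solving (KVI) and (KVII), proving the theorem. The argument of \cite{AM06} is parallel but more analytic: there $\Phi$ is the associator coming from the Kontsevich integral of configuration spaces, and the required vanishing of the two-variable divergence is extracted from the vanishing (the ``no-anomaly'' property) of certain configuration-space integrals.
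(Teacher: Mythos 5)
The paper offers no proof of this statement: it is recalled from \cite{AM06} and \cite{AT12} and used as an external input, so there is nothing internal to compare your argument against. Your sketch is a faithful outline of the associator-based proof of \cite{AT12} (see also \cite{AET10}), with the standard division of labour: hexagons plus unitarity yield (KVI), and the pentagon is responsible for (KVII); your description of \cite{AM06} as the analytic counterpart via the Kontsevich integral and a vanishing property is also accurate. The one place where your write-up is noticeably thinner than the actual argument is your Step 3: in \cite{AT12} condition (KVII) is not obtained by a direct cancellation of two-variable terms in $\div(\Phi)$, but by first deriving from the pentagon a functional (``cocycle'') equation satisfied by $j(F_\Phi^{-1})$ under the operadic insertions $a\mapsto a^{1,2}$, $a^{12,3}$, $a^{1,23}$, $a^{2,3}$, and then proving a separate lemma that the only elements of $|A_2|$ satisfying that equation are of the Duflo shape $|h(x_1)+h(x_2)-h(x_1+x_2)|$. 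That lemma is the genuinely nontrivial point; your sketch asserts its conclusion (``exactly the combination $h(x_1)+h(x_2)-h(x_1+x_2)$ survives'') rather than establishing it. As a blind reconstruction of a cited theorem this is acceptable, but it should not be mistaken for a self-contained proof.
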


As will be shown in our main theorems, the Kashiwara-Vergne problem 
is closely related to the topology of a surface of genus $0$ 
with $3$ boundary components. 
Regarding the topology of a surface of genus $0$ 
with $n+1$ boundary components for $n\geq 2$, 
we introduce a generalization of the Kashiwara-Vergne problem.

\vskip 0.2cm

\par
\noindent 
\textbf{Kashiwara-Vergne problem of type $(0,n+1)$:}
An element $F \in \taut(L_n)$ is a solution to 
the KV problem of type $(0, n+1)$ if 
\[
\tag{$\mathrm{KV}^{(0,n+1)}\mathrm{I}$}
F(x_1+x_2+\dots+x_n)= \log(e^{x_1}e^{x_2}\cdots e^{x_n}),
\]
\[
\tag{$\mathrm{KV}^{(0,n+1)}\mathrm{II}$}
\exists h(z)\in \K[[z]] \,\, {\rm such} \,\, {\rm that} \,\, j(F^{-1})=\left|\sum^n_{i=1}h(x_i)-h\left(\sum^n_{i=1}x_i\right)\right|.
\]

\par
The conditions ($\mathrm{KV}^{(0,3)}\mathrm{I}$) 
and ($\mathrm{KV}^{(0,3)}\mathrm{II}$) coincide with  
the conditions (KVI) and (KVII), respectively. 
The existence of a solution to the problem of type $(0,n+1)$ follows 
from that to the original problem as follows.

Let $F\in \taut(L_2)$ and write $F=\exp(u)$ with $u=(u_1,u_2)\in \tder(L_2)$.
For $n\ge 2$, we introduce the following elements of $\tder(L_n)=L^{\oplus n}$:
\begin{align*}
u^{n-1,n}:=& (0,\ldots,0,u_1(x_{n-1},x_n),u_2(x_{n-1},x_n)), \\
u^{n-2,(n-1)n}:=&
(0,\ldots,0,u_1(x_{n-2},x_{n-1}+x_n),
u_2(x_{n-2},x_{n-1}+x_n),u_2(x_{n-2},x_{n-1}+x_n)), \\
\vdots &  \\
u^{2,3\cdots n} :=&
(0,u_1(x_2,x_3+\cdots+x_n),u_2(x_2,x_3+\cdots+x_n),\ldots,
u_2(x_2,x_3+\cdots+x_n)), \\
u^{1,2\cdots n}:=&
(u_1(x_1,x_2+\cdots+x_n),u_2(x_1,x_2+\cdots+x_n),\ldots,
u_2(x_1,x_2+\cdots+x_n)).
\end{align*}
Then, set
\[
F^{(n)}:=F^{n-1,n}\circ F^{n-2,(n-1)n}\circ \cdots \circ
F^{2,3\cdots n} \circ F^{1,2\cdots n} \in \taut(L_n),
\]
where $F^{n-1, n}=\exp(u^{n-1, n})$ {\em etc.}
For more details about this operadic notation, see \cite{AET10}, \cite{AT12}.
For our purpose, the \emph{naturality} of this construction is important.
For example, consider the map $A_2\to A_n,a\mapsto a^{1,2\cdots n}=a(x_1,x_2+\cdots +x_n)$ which maps $x_1$ to $x_1$ and $x_2$ to $x_2+\cdots+x_n$.
Then, $F(a)^{1,2\cdots n}=F^{1,2\cdots n}(a^{1,2\cdots n})$ for any $a\in 
A_2$.
The divergence map is natural in this sense (\cite{AT12} \S 3), and so is its integrating cocycle $j$.
For example, $j(F)^{1,2\cdots n}=j(F^{1,2\cdots n})$ for any $F\in \taut(L_2)$.

\begin{lem}
\label{lem:F(-x_0)}
Let $F \in \taut(L_2)$ be a solution of
{\rm (KVI)} and {\rm (KVII)}. Then,
%$F_n(-x_0)=\log(e^{x_1}e^{x_2}^\cdots e^{x_n})$ and
%$j(F_n^{-1})=|\sum_i h(x_i) -h(-x_0)|$.
$F^{(n)}\in \taut(L_n)$ is a solution 
to the Kashiwara-Vergne problem of type $(0, n+1)$.
\end{lem}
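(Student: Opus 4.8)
The plan is to argue by induction on $n\ge 2$, proving both conditions $(\mathrm{KV}^{(0,n+1)}\mathrm{I})$ and $(\mathrm{KV}^{(0,n+1)}\mathrm{II})$ at once, with the same Duflo function $h$ as for $F$. The base case $n=2$ is exactly the hypothesis, since $F^{(2)}=F$. The first thing I would record for the inductive step is a factorization of the operadic composition defining $F^{(n)}$, namely $F^{(n)} = G \circ F^{1,2\cdots n}$, where $G:=F^{n-1,n}\circ F^{n-2,(n-1)n}\circ\cdots\circ F^{2,3\cdots n}\in\taut(L_n)$ is the image of $F^{(n-1)}\in\taut(L_{n-1})$ under the index-shift insertion $a(x_1,\ldots,x_{n-1})\mapsto a(x_2,\ldots,x_n)$; this is verified by comparing the explicit formulas for the tuples $u^{k,(k+1)\cdots n}$. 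Note $G$ involves only $x_2,\ldots,x_n$ and has first tuple-component $1$, so $\rho(G)$ fixes $x_1$ and preserves $\langle x_2,\ldots,x_n\rangle$.

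For $(\mathrm{KV}^{(0,n+1)}\mathrm{I})$ I would apply $\rho$ and set $X=x_2+\cdots+x_n$. Naturality of operadic insertion (\cite{AET10},\cite{AT12}), giving $\rho(F^{1,2\cdots n})(a^{1,2\cdots n})=(\rho(F)(a))^{1,2\cdots n}$, together with (KVI) applied to $a=x_1+x_2$, yields $\rho(F^{1,2\cdots n})(x_1+X)=\bch(x_1,X)$. The inductive hypothesis, transported along the index shift, gives $\rho(G)(X)=\bch(x_2,\ldots,x_n)$. Since $\rho(G)$ is an algebra automorphism fixing $x_1$, I then get $\rho(F^{(n)})(x_1+\cdots+x_n)=\rho(G)(\bch(x_1,X))=\bch(x_1,\rho(G)(X))=\bch(x_1,\bch(x_2,\ldots,x_n))=\bch(x_1,\ldots,x_n)=\log(e^{x_1}\cdots e^{x_n})$, the last steps by associativity of the Baker--Campbell--Hausdorff product.

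For $(\mathrm{KV}^{(0,n+1)}\mathrm{II})$ I would use the group $1$-cocycle property of $j$ on $(F^{(n)})^{-1}=(F^{1,2\cdots n})^{-1}\circ G^{-1}$, which gives $j((F^{(n)})^{-1})=j((F^{1,2\cdots n})^{-1})+(F^{1,2\cdots n})^{-1}\cdot j(G^{-1})$. Since operadic insertion commutes with inversion and $j$ is natural, $j((F^{1,2\cdots n})^{-1})=j(F^{-1})^{1,2\cdots n}=|h(x_1)+h(X)-h(x_1+X)|$ by (KVII); and $j(G^{-1})=|\sum_{i=2}^n h(x_i)-h(X)|$ by the inductive hypothesis transported along the index shift. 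The one genuinely new input I need is that $\rho(F^{1,2\cdots n})$, and hence its inverse, conjugates each of $x_2,\ldots,x_n$ by one and the same element of $\exp(L_n)$ — this holds because the tuple-components $2,\ldots,n$ of $u^{1,2\cdots n}$ all equal $u_2(x_1,X)$, and the subspace of $\tder(L_n)$ with equal components $2,\ldots,n$, being the image of a Lie homomorphism, exponentiates into the corresponding subgroup of $\taut(L_n)$. Since $j(G^{-1})$ is represented by an element of $\langle x_2,\ldots,x_n\rangle$, acting on it by $(F^{1,2\cdots n})^{-1}$ conjugates the whole expression by a single element, which is invisible to $|\ |$; hence $(F^{1,2\cdots n})^{-1}\cdot j(G^{-1})=j(G^{-1})$. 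Adding the two contributions, the $h(X)$ terms cancel and I obtain $j((F^{(n)})^{-1})=|\sum_{i=1}^n h(x_i)-h(\sum_{i=1}^n x_i)|$, as required. Finally $F^{(n)}\in\taut(L_n)$ is clear, being a composition of elements of $\taut(L_n)$.

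The hard part will be the operadic bookkeeping rather than any deep idea: justifying the factorization $F^{(n)}=G\circ F^{1,2\cdots n}$ and the identification of $G$ with the shifted $F^{(n-1)}$, the compatibility of $\rho$, of $j$, and of inversion with the relevant insertions and relabelings, and above all the uniform-conjugation property of $F^{1,2\cdots n}$ on $x_2,\ldots,x_n$ — it is exactly this property that makes the conjugation drop out under $|\ |$ and produces the clean cancellation of the $h(X)$ terms in $(\mathrm{KV}^{(0,n+1)}\mathrm{II})$. All of this is routine within the operad formalism of \cite{AET10} and \cite{AT12}, but it demands care.
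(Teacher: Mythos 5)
Your argument is correct and follows essentially the same route as the paper's proof: condition ($\mathrm{KV}^{(0,n+1)}\mathrm{I}$) via naturality of the operadic insertions and iterated $\bch$, and condition ($\mathrm{KV}^{(0,n+1)}\mathrm{II}$) via the group cocycle property of $j$ together with the observation that the factors $F^{k,(k+1)\cdots n}$ conjugate the later generators by a single element and therefore act trivially on the relevant classes in $|A|$, so the $h$-terms telescope. Packaging this as an induction on $n$ with the factorization $F^{(n)}=G\circ F^{1,2\cdots n}$ rather than unrolling the full product $G_1\cdots G_{n-1}$ is only a cosmetic difference.
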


\begin{proof}
We first show that $F^{(n)}$ satisfies ($\mathrm{KV}^{(0,n+1)}\mathrm{I}$).
We have
$F^{i,(i+1)\cdots n}(x_i+x_{i+1}+\cdots+x_n)
=\bch(x_i,x_{i+1}+\cdots+x_n)$ for $i=1,\ldots,n-1$.
Successive application of this equality yields
\begin{align*}
F^{(n)}(x_1+\cdots+x_n)
=& \bch(x_1,\bch(x_2,\bch(x_3,\cdots,\bch(x_{n-1},x_n)\cdots))) \\
=& \log(e^{x_1}e^{x_2}\cdots e^{x_n}).
\end{align*}

To prove that $F^{(n)}$ satisfies ($\mathrm{KV}^{(0,n+1)}\mathrm{II}$),
put $G_i:=(F^{i,(i+1)\cdots n})^{-1}$ for $i=1,\ldots,n-1$.
Then $j(G_i)=|h(x_i)+h(x_{i+1}+\cdots+x_n)-h(x_i+\cdots+x_n)|$
and $G_i$ acts on the last $n-i$ generators $x_{i+1},\ldots,x_n$
as an inner automorphism; there exists $g_i\in \exp(L_n)$ such
that $G_i(x_k)=g_i^{-1}x_kg_i$ for $i<k\le n$.
Therefore, $G_1\cdots G_{i-1}$ acts trivially on $j(G_i)$.
By (\ref{eq:jco}), we compute
\begin{align*}
j((F^{(n)})^{-1})=&
j(G_1G_2\cdots G_{n-1}) \\
=& j(G_1)+G_1 \cdot j(G_2)+
\cdots +G_1G_2\cdots G_{n-2}\cdot j(G_{n-1}) \\
=& \sum_{i=1}^{n-1} |h(x_i)+h(x_{i+1}+\cdots+x_n)
-h(x_i+\cdots+x_n)| \\
=& \left| \sum_{i=1}^{n} h(x_i)-h(x_1+\cdots+x_n) \right|.
\end{align*}
This completes the proof.
\end{proof}

\subsection{Homomorphic expansions}
In this section, we define the notion of homomorphic expansion 
and show that expansions defined by solutions of the KV problem are in fact homomorphic. 

\begin{lem}
\label{lem:Fspe}
Assume that $F \in \taut(L_n)$ satisfies 
$(\mathrm{KV}^{(0,n+1)}\mathrm{I})$. Then,
the map $\theta_F:=F^{-1}\circ \theta^{\exp}$ is a special
expansion.
\end{lem}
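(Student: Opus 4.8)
The plan is to verify, in turn, the defining properties (i)--(v) of a special expansion for the map $\theta_F=F^{-1}\circ\theta^{\exp}$. Properties (i)--(iv) are formal consequences of the corresponding properties of $\theta^{\exp}$ together with the fact that $F\in\taut(L_n)$, while property (v) is the actual content of the lemma and follows directly from $(\mathrm{KV}^{(0,n+1)}\mathrm{I})$.

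First I would record the following basic facts about $F$, viewed as a continuous automorphism of $A$ via $\rho$: it is an algebra automorphism, hence it commutes with $\exp$ and $\log$ on $A_{\ge 1}$; it is a Hopf algebra automorphism, since $F\in\taut(L_n)$ restricts to an automorphism of the Lie algebra $L_n$ and $A=\widehat{U(L_n)}$; and it preserves the degree filtration and induces the identity on the degree-one part $H_1(\Sigma;\K)$, because each component $F_i$ lies in $\exp(L_n)$, so $F(x_i)=F_i^{-1}x_iF_i=x_i+(\text{higher order})$. Granting these, properties (i)--(iii) of a group-like expansion follow at once: $\theta_F$ is multiplicative because it is the composite of the homomorphism $\theta^{\exp}$ with the algebra automorphism $F^{-1}$; it has the correct linear term because $F^{-1}$ is the identity modulo higher degrees and $\theta^{\exp}$ already has this property; and its values are group-like because $F^{-1}$ is a Hopf automorphism and the group-like elements of $A$ are precisely $\exp(L_n)$, which contains all $\theta^{\exp}(\alpha)$.

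For tangentiality (iv), write the $i$-th component of the group element $F^{-1}\in\taut(L_n)$ as $e^{g_i}$ with $g_i\in L_n$, so that $F^{-1}(x_i)=e^{-g_i}x_ie^{g_i}$. Since $F^{-1}$ is an algebra automorphism,
\[
\theta_F(\gamma_i)=F^{-1}\bigl(\theta^{\exp}(\gamma_i)\bigr)=F^{-1}(e^{x_i})=e^{F^{-1}(x_i)}=e^{-g_i}e^{x_i}e^{g_i},
\]
which is exactly the form required in (iv).

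The heart of the proof is property (v). By $(\mathrm{KV}^{(0,n+1)}\mathrm{I})$ we have $F(x_1+\cdots+x_n)=\log(e^{x_1}\cdots e^{x_n})$; applying $\exp$ and using that $F$ commutes with it gives $e^{x_1}\cdots e^{x_n}=F\bigl(e^{x_1+\cdots+x_n}\bigr)$. Applying $F^{-1}$ and recalling $x_0=-\sum_{i=1}^n x_i$, we obtain
\[
\theta_F(\gamma_1\cdots\gamma_n)=F^{-1}(e^{x_1}\cdots e^{x_n})=e^{x_1+\cdots+x_n}=\exp(-x_0),
\]
which is (v). I do not expect a genuine obstacle; the only point requiring care is to keep the conventions straight, in particular that $F$ acts as an \emph{algebra} automorphism of $A$ (so that it intertwines $\exp$ and $\log$ and preserves group-like elements) and that the tangential structure of $F$ is inherited by $\theta_F$ in the precise form (iv).
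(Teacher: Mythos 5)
Your proof is correct and follows essentially the same route as the paper: properties (i)--(iv) are formal consequences of $F^{-1}\in\taut(L_n)$ being a (tangential, filtration-preserving) Hopf algebra automorphism, and (v) is read off directly from $(\mathrm{KV}^{(0,n+1)}\mathrm{I})$ via $\theta_F(\gamma_1\cdots\gamma_n)=F^{-1}(e^{x_1}\cdots e^{x_n})=\exp(-x_0)$. The only cosmetic differences are your sign convention for $g_i$ in (iv) and that you spell out (ii)--(iii) where the paper simply notes they follow from (i) and (iv).
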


\begin{proof}
Referring to the definition of special expansions,
the condition (i) is satisfied since $F^{-1}$ and $\theta^{\exp}$
are algebra homomorphisms.
Since $F^{-1}\in \taut(L_n)$, for each $i$ 
there exists an element $g_i\in L$ such that
$F^{-1}(x_i)=e^{g_i}x_ie^{-g_i}$.
Therefore,
$\theta_F(\gamma_i)=F^{-1}(e^{x_i})=e^{g_i}e^{x_i}e^{-g_i}$,
proving (iv).
The condition (v) follows from the condition 
($\mathrm{KV}^{(0,n+1)}\mathrm{I}$):
$\theta_F(\gamma_1\cdots \gamma_n)
=F^{-1}(e^{x_1}\cdots e^{x_n})=\exp(-x_0)$.
The conditions (ii) and (iii) follow from (i) and (iv).
\end{proof}

We introduce a class of expansions which transform the Goldman bracket and the Turaev cobracket $\delta^+$ in a nice way:

\begin{dfn}
A special expansion $\theta: \pi \to A$ is called \emph{homomorphic}  if $\delta^+_\theta = \delta^{\rm alg}$.
\end{dfn}

Our terminology ``homomorphic'' follows the work of Bar-Natan and Dancso \cite{B-ND}.
Homomorphic expansions are closely related to solutions of the generalized Kashiwara-Vergne problem:

\begin{thm}
\label{thm:main}
Let $F\in \taut(L_n)$ be a solution 
to the Kashiwara-Vergne problem of type $(0,n+1)$.

\begin{enumerate}
\item[(i)]
Let us write $\tde(s(-x_0))=s'\otimes s''$ where $s(z)$ is the function in Theorem \ref{thm:MT}, let $g(z):=\dot{h}(z)\in \K[[z]]$ be the derivative of $h(z)$ and write
$\tde(g(-x_0))=g'\otimes g''\in A\otimes A$.
Then, for any $a\in A$,
\begin{equation}
\label{eq:muF}
\mu_{\theta_F}(a)=\mu^{\alg}(a)+|s''|\otimes as'-|s''a|\otimes s'+|g'|\otimes [a,g''].
\end{equation}
\item[(ii)]
We have $\delta^+_{\theta_F}=\delta^{\alg}$.
\end{enumerate}
\end{thm}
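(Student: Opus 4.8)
The plan is to prove (i) first and then deduce (ii) from it, since $\delta^+_{\theta_F}$ is expressed through $\mu_{\theta_F}$ (and its $*\bullet$-companion) via equation \eqref{eq:O**O} / \eqref{eq:d^+alt2}. For (i), the strategy is to identify $\mu_{\theta_F}$ by its characterizing properties, exactly as in Proposition \ref{prop:prod2}: it satisfies the product formula relative to $\kappa_{\theta_F}$ and vanishes on the generators $\theta_F(\gamma_i)=F^{-1}(e^{x_i})$. First I would record that since $F$ solves $(\mathrm{KV}^{(0,n+1)}\mathrm{I})$, Lemma \ref{lem:Fspe} gives that $\theta_F$ is special, so Theorem \ref{thm:MT} applies: $\kappa_{\theta_F}=\Pi_{\mathrm{add}}=\Pi_{\mathrm{KKS}}+\Pi_s$. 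Then I would check that the proposed right-hand side of \eqref{eq:muF}, call it $M(a):=\mu^{\mathrm{alg}}(a)+|s''|\otimes as'-|s''a|\otimes s'+|g'|\otimes[a,g'']$, satisfies the same product formula as $\mu_{\theta_F}$ with respect to $\Pi_{\mathrm{add}}$. The first two summands together are $\tdiv^T_{\mathrm{KKS}}+\tdiv^T_s=\tdiv^T_{\mathrm{add}}$ by Propositions \ref{prop:KKS_div} and \ref{prop:add'_div}, which by Proposition \ref{prop:prodP} obeys the product formula up to the term $(|\ |\otimes 1)\Pi_{\mathrm{add}}(a,b)$ — precisely the required correction. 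So the task reduces to showing the extra term $a\mapsto|g'|\otimes[a,g'']$ is a \emph{derivation} from $A$ to $|A|\otimes A$ (i.e. contributes no $\Pi$-correction), which is immediate since $\tde(g(-x_0))=g'\otimes g''$ and $[bc,g'']=b[c,g'']+[b,g'']c$, so the term is additive under the outer bimodule structure. Hence $M$ satisfies the same product formula as $\mu_{\theta_F}$.

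It then remains to check $M$ and $\mu_{\theta_F}$ agree on a generating set, for which I would take $\theta_F(\gamma_i)=F^{-1}(e^{x_i})$. We have $\mu_{\theta_F}(\theta_F(\gamma_i))=\theta_F(\mu(\gamma_i))=0$ by Proposition \ref{prop:prod2}. The crux is therefore to show $M(F^{-1}(e^{x_i}))=0$. This is where equation \eqref{eq:F*tdiv} (the pull-back formula $F^*\tdiv(u)=\tdiv(u)+u\cdot\tJ(F^{-1})$) and its $\tdiv^T$-analogue \eqref{eq:F*tdivT} enter: applying $F$ transports $\tdiv^T_{\mathrm{add}}$ composed with $\theta_F$ into something involving $\tdiv^T$ of the $\mathrm{mult}$ bracket (Remark \ref{rem:mult_add}) plus a correction term governed by $\tJ(F^{-1})=\tde(j(F^{-1}))$ (equation \eqref{eq:tdej}), and $(\mathrm{KV}^{(0,n+1)}\mathrm{II})$ identifies $j(F^{-1})=|\sum_i h(x_i)-h(\sum_i x_i)|$. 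Differentiating $h$ produces exactly $g=\dot h$, and the $\tde$ of the relevant pieces produces exactly the term $|g'|\otimes[a,g'']$; the contributions from $h(x_i)$ are killed because $\tdiv^T_{\mathrm{KKS}}(e^{x_i})$-type terms and the central behavior of $|h(x_i)|$ under $\{-,-\}_{\mathrm{KKS}}$ (Lemma \ref{prop:KKS}) make them vanish. The main obstacle is precisely this bookkeeping: tracking how the correction $u\cdot\tJ(F^{-1})$ in \eqref{eq:F*tdiv}/\eqref{eq:F*tdivT}, combined with Lemma \ref{prop:KKS} and Lemma \ref{lem:maKKS}, collapses to the single Duflo-function term $|g'|\otimes[a,g'']$ while everything else cancels — I expect this to require a careful use of $\tde(h(x_0))$ and the derivation property of $\{|h(x_0)|,-\}_{\mathrm{KKS}}=\mathrm{ad}_{-\dot h(x_0)}$.

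For part (ii), I would substitute \eqref{eq:muF} into the identity \eqref{eq:O**O}: $\delta^+_{\theta_F}(|a|)=-(1\otimes|\ |)\mu_{\theta_F}(a)-(|\ |\otimes 1)(\mu_{*\bullet})_{\theta_F}(a)$. Applying $(1\otimes|\ |)$ and $(|\ |\otimes 1)$ to the correction terms of \eqref{eq:muF}, the $\Pi_s$-part $|s''|\otimes as'-|s''a|\otimes s'$ contributes $(1\otimes|\ |)\tdiv^T_s(a)$ which by Proposition \ref{prop:tdiv(a)} (the relation \eqref{eq:tdiv(a)}) pairs with the analogous $\underline{\tdiv}^T_s$ term to give $\tdiv_s(|a|)=0$ (Proposition \ref{prop:add'_div}); alternatively, one checks directly that $(1\otimes|\ |)(|s''|\otimes as'-|s''a|\otimes s')=|s''|\otimes|as'|-|s''a|\otimes|s'|$ is symmetrized away. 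The Duflo term gives $(1\otimes|\ |)(|g'|\otimes[a,g''])=|g'|\otimes|[a,g'']|=0$ since $|\ |$ kills commutators. Thus all correction terms vanish after closing up, leaving $\delta^+_{\theta_F}(|a|)=-(1\otimes|\ |)\mu^{\mathrm{alg}}(a)-(|\ |\otimes 1)\underline{\mu}^{\mathrm{alg}}(a)=-\tdiv_{\mathrm{KKS}}(|a|)=\delta^{\mathrm{alg}}(|a|)$ by \eqref{eq:tdiv(a)}. This last part is essentially formal given (i).
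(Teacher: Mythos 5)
Your plan is built on the same engine as the paper's proof: the pull-back identity \eqref{eq:F*tdivT} for $\tdiv^T$, the identification $\kappa_{\theta_F}=\Pi_{\add}$ via \eqref{eq:mult_add}, and the evaluation of the correction term $(1\otimes\{a,-\}_{\add})\cdot\tJ(F^{-1})$ using ($\mathrm{KV}^{(0,n+1)}\mathrm{II}$) together with Lemma \ref{prop:KKS}. However, the ``characterization'' wrapper you put around this is redundant: once you invoke \eqref{eq:F*tdivT} you obtain $\mu_{\theta_F}(a)=\tdiv^T_{\add}(a)+(1\otimes\{a,-\}_{\add})\cdot\tJ(F^{-1})$ for \emph{all} $a$, so there is nothing left to pin down by the product formula and vanishing on generators; conversely, if you insist on verifying $M(F^{-1}(e^{x_i}))=0$ directly, the only available route is the very same identity, applied to these particular elements, which is no easier than applying it to a general $a$. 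More importantly, the step you defer as ``bookkeeping'' --- showing that $(1\otimes\{a,-\}_{\add})\cdot\tJ(F^{-1})$ collapses to $|g'|\otimes[a,g'']$ --- is the actual mathematical content of part (i) (the paper's Steps 3--5: the $\Pi_s$-part dies because the bracket $\{a,-\}_s$ vanishes on $|A|$, the $\sum_i h(x_i)$ part dies by Lemma \ref{prop:KKS}, and only $-h(-x_0)$ survives, producing the inner derivation by $\dot h(x_0)$). A proof that leaves this unexecuted has not yet proved \eqref{eq:muF}.

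For part (ii) there is a concrete gap in both of your routes. The route through \eqref{eq:O**O} requires the companion formula for $(\mu_{*\bullet})_{\theta_F}$ in terms of $\underline{\tdiv}^T_{\add}$ plus a Duflo term; this is plausible and provable by mirroring part (i), but it is not contained in the theorem statement and you never establish it. The alternative route (the one the paper takes, via \eqref{eq:d^+alt2}) does not need $\mu_{*\bullet}$, but your claim that the $s$-terms are ``symmetrized away'' is too quick: applying $\alt(1\otimes|\ |)$ to $|s''|\otimes as'-|s''a|\otimes s'$ leaves a nonzero $|a|\wedge{\bf 1}$ remainder, which must be cancelled against the $|\gamma|\wedge{\bf 1}$ term in \eqref{eq:d^+alt2}. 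Extracting that remainder uses the parity identity $s''\otimes s'=-1\otimes 1-s'\otimes s''$ (coming from the fact that $s(z)+\tfrac12$ is odd), which you do not invoke. The Duflo term is indeed harmless since $|\ |$ kills commutators, as you say. So the skeleton is right, but you should either prove the $\mu_{*\bullet}$ formula or carry out the $\alt$ computation with the parity of $s$; as written, (ii) does not close.
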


\begin{rem} \label{rem:Mas}
Massuyeau \cite{Mas15} proved that for any special expansion $\theta$ arising from the Kontsevich integral, $\delta^+_{\theta}=\delta^{\alg}$.
He also considered an operation which is closely related to our $\mu$, and showed its tensorial description similar to (\ref{eq:muF}).
\end{rem}

\begin{proof}[Proof of Theorem \ref{thm:main}]
We split the proof into several steps:

\vskip 0.2cm

\emph{Step 1.} Let $F \in \taut(L_n)$ be a solution of the KV problem of type $(0, n+1)$.
Consider the expansion $\theta_F=F^{-1} \circ \theta^{\rm exp}$ and the corresponding map 
$\mu_F=\mu_{\theta_F}$. By Proposition \ref{prop:mult_div}, $\mu_{\theta^{\rm exp}} = \tdiv^T_{\Pi_{\rm mult}}$. Hence, 
$$
\begin{array}{lll}
\mu_F(a) & = &  (F^{-1} \otimes F^{-1})\cdot \tdiv^T(\{ F(a), -\}_\mult) \\
& = & F^* \tdiv^T(\{ a, -\}_\add) \\
& = & \tdiv^T(\{ a, -\}_\add) + (1 \otimes \{ a, -\}_\add) \tJ(F^{-1}) \\
& = & \tdiv^T_\add(a) + (1 \otimes \{ a, -\}_\add) \tJ(F^{-1}).
\end{array}
$$
Here in the second line we used equation \eqref{eq:mult_add}, and in the third line 
the transformation property \eqref{eq:F*tdivT} of the map $\tdiv^T$. We now discuss the two terms on the right hand side separately.

\vskip 0.2cm

\emph{Step 2.} By Proposition  \ref{prop:add'_div}, we have
\begin{equation}
\label{eq:tdivKKS}
\tdiv^T_{\add}(a)=\tdiv^T_{\KKS}(a)+\tdiv^T_s(a)
=\mu^{\alg}(a)+|s''|\otimes as'-|s''a|\otimes s'.
\end{equation}
Note that an explicit formula for $\mu^{\rm alg}=\tdiv^T_{\rm KKS}$ is given in Proposition \ref{prop:KKS_div}.

\vskip 0.2cm

\emph{Step 3.} We now turn to the expression $(1 \otimes \{ a, -\}_\add) \tJ(F^{-1})$. It is convenient to use the Sweedler notation for $\tJ(F^{-1})=\tde(j(F^{-1})) = |f'| \otimes |f''|.$ Again, we represent the double bracket as a sum of two terms, $\Pi_\add=\Pi_{\rm KKS} + \Pi_s$. The term coming from $\Pi_s$ is equal to zero,
$$
(1 \otimes \{ a, -\}_s) \tJ(F^{-1}) = |f'| \otimes \{ a, |f''|\}_s =0
$$
since $\Pi_s$ vanishes on $|A|$ by Proposition \ref{prop:add'}.

\vskip 0.2cm

\emph{Step 4.} We now turn to the analysis of the term $(1 \otimes \{ a, -\}_{\rm KKS}) \tJ(F^{-1})$. 
By the second KV equation
($\mathrm{KV}^{(0,n+1)}\mathrm{II}$), we have $j(F^{-1})=|\sum_{i=1}^n h(x_i) -h(-x_0)|$. Note that $\tde(|{x_i}^k|)$ belongs to the span of elements of the form $|{x_i}^l| \otimes |{x_i}^m|$. By Lemma \ref{prop:KKS}, $\{ a, |h(x_i)| \}_{\rm KKS}=0$ for all $a \in A$. Hence, 
$$
(1 \otimes \{ a, -\}_{\rm KKS}) \tde(\sum_i |h(x_i)|) =0.
$$

\emph{Step 5.} The remaining contribution is as follows:
$$
\begin{array}{lll}
- (1 \otimes \{ a, -\}_{\rm KKS}) \tde(|h(-x_0)|) & = &
- (1 \otimes \{ a, -\}_{\rm KKS}) |h(1 \otimes x_0 - x_0 \otimes 1)| \\
& = & ( | \ | \otimes 1) [1 \otimes a, \dot{h}(1 \otimes x_0 - x_0 \otimes 1)].
\end{array}
$$
In the third line, we used Lemma \ref{prop:KKS}. Applying the Sweedler notation
$$
\tde(g(-x_0))=\tde(\dot{h}(-x_0))=\dot{h}(1 \otimes x_0 - x_0 \otimes 1) = g' \otimes g'',
$$ 
we can rewrite the result as $|g'| \otimes [a, g'']$.  This concludes the proof of equation \eqref{eq:muF}.

\vskip 0.2cm

\emph{Step 6.}
From the defining formulas for $\mu^{\alg}$ and $\delta^{\alg}$,
we see that $\alt (1\otimes |\ |)\mu^{\alg}(a)=-\delta^{\alg}(|a|)$ for any $a\in A$.
Since the constant term of $s(z)$ is $-1/2$ and all the other terms
are of odd degree, we have
$s''\otimes s'= - 1\otimes 1-s'\otimes s''$.
Therefore, from (\ref{eq:muF}) we get
$\alt (1\otimes |\ |)\mu_{\theta_F}(a)=-\delta^{\alg}(|a|)+|a|\wedge {\bf 1}$ for any $a\in A$.
By (\ref{eq:d^+alt2}), we have
\[
(\theta_F\otimes \theta_F)\delta^+(|\gamma|)
=-\alt (1\otimes |\ |)\mu_{\theta_F}(\theta_F(\gamma))
+|\theta(\gamma)| \wedge {\bf 1}
=\delta^{\alg}(|\theta_F(\gamma)|)
\]
for any $\gamma\in \pi$.
Since the image $\theta_F(|\K \pi|)$ is dense in $|A|$,
we conclude $\delta^+_{\theta_F}=\delta^{\alg}$.

This completes the proof of Theorem \ref{thm:main}.
\end{proof}

\section{Topological interpretation of the Kashiwara-Vergne theory}
\label{sec:KVL}

Theorem \ref{thm:main} shows that the Kashiwara-Vergne equations for an element $F \in \taut(L_n)$ are sufficient to prove the equality $\delta^+_{\theta_F} = \delta^{\rm alg}$ for the expansion 
$\theta_F=F^{-1} \circ \theta^{\rm exp}$. In this section, our goal is to show that assuming the first KV equation, the second one becomes a necessary condition for  $\delta^+_{\theta_F} = \delta^{\rm alg}$. 

\subsection{Special derivations}

As a technical preparation, we discuss Lie algebras of special derivations and Kashiwara-Vergne Lie algebras.

\begin{dfn}
\label{dfn:spd}
An element $u\in \tder(A)$ is called \emph{special}
if $\rho(u)(x_0)=0$.
\end{dfn}

We denote by $\sder(A)$ the set of special elements in $\tder(A)$.
Namely,
\[
\sder(A)=\{ u=(u_1,\ldots,u_n)\in \tder(A) \mid \sum_i [x_i,u_i]=0 \}.
\]
This is a Lie subalgebra of $\tder(A)$.
Likewise, we define $\sder(L)$ to be the set of special elements in $\tder(L)$.

Consider the symmetrization map $N: |A| \to A$. For $a=z_1\cdots z_m$ with $z_j \in \{x_i\}_{i=1}^n$, we have
$$
N(|a|) = \sum_{j=1}^m \, z_j\cdots z_m z_1\cdots z_{j-1},
$$
and $N(|a|)=0$ for $a \in \mathbb{K}$. If $a$ is homogeneous of degree $m$, then $|N(|a|)| = m|a|$.

\begin{rem}
The symmetrization map $N$ coincides with the map $|A| \to A$ induced by the double derivation $\sum_i x_i \partial_i$.
\end{rem}

\begin{lem}
\label{lem:m-times}
For any $|a| \in |A|$, the derivation $\{ |a|, -\} \in \sder(A)$ is special, and it is given by
\[
\{|a|, - \}_{\KKS}=-(u_1,\ldots,u_n),
\]
where $u_i \in A$ are uniquely defined by formula $N(|a|)= \sum_{i=1}^n x_i u_i$.
%In particular, if $|a|$ is homogeneous of degree $m\ge 1$, then
%$m|a|=-\sum_i |x_i u_i|$.
%Let $a\in A$ be a homogeneous element of degree $m\ge 1$, and write %$\{|a|,-\}_{\KKS}=(u_1,\ldots,u_n)\in \sder(A)$.
%Then $m|a|=-\sum_i |x_iu_i|$.
Furthermore, the map $|a|\mapsto \{|a|,- \}_{\KKS}$ induces an isomorphism
\begin{equation} \label{eq:exacta}
|A|/\K {\bf 1}  \cong  \sder(A).
\end{equation}
%\begin{equation} \label{eq:exacta}
%0\to \K {\bf 1} \oplus \bigoplus_{i=1}^n \K |x_i| \to |A|
%\xrightarrow{|a|\mapsto \{|a|,-\}_{\KKS}} \sder(A) \to 0.
%\end{equation}
\end{lem}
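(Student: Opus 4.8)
The plan is to first make the special derivation $\{|a|,-\}_{\KKS}$ explicit and then deduce the isomorphism \eqref{eq:exacta} from a kernel computation together with a dimension count carried out degree by degree. For the formula, I would write $\Pi_{\KKS}=\sum_i|\pa_i\otimes\ad_{x_i}\pa_i|$ and use \eqref{eq:ph1ph2} to get $\{a,-\}_{\KKS}=\sum_i(\pa'_i a)(\ad_{x_i}\pa_i)(\pa''_i a)\in\tD_A$, whose $i$th coefficient in the $A$-bimodule basis $\{\ad_{x_i}\pa_i\}$ of $\tD_A$ (Lemma \ref{lem:ali}) is $\pa'_i a\otimes\pa''_i a$. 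Applying the isomorphism $\tD_A\to\tder(A)$, $\psi\mapsto|\psi|$, the $i$th component of the lift $\{|a|,-\}_{\KKS}\in\tder(A)$ is $-(\pa''_i a)(\pa'_i a)$. On the other hand the identification of $N$ with the map $|A|\to A$ induced by the double derivation $\sum_i x_i\pa_i$ gives $N(|a|)=\sum_i x_i(\pa''_i a)(\pa'_i a)$, and since $A=\K\cdot 1\oplus\bigoplus_i x_iA$ (in the degree-completed sense) this is exactly the decomposition $N(|a|)=\sum_i x_iu_i$ of the statement, so $\{|a|,-\}_{\KKS}=-(u_1,\dots,u_n)$. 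In particular $\{|a|,-\}_{\KKS}$ depends only on $N(|a|)$, so the assignment is well defined on $|A|$ and annihilates ${\bf 1}$ (as $N({\bf 1})=0$); and $(u_1,\dots,u_n)\in\sder(A)$ since $\{|a|,x_0\}_{\KKS}=0$ by Lemma \ref{prop:KKS}, or directly because $\sum_i[x_i,u_i]=\sum_i x_iu_i-\sum_i u_ix_i=N(|a|)-N(|a|)=0$, the last step being the elementary fact that summing the cyclic rotations of a word grouped by their first, resp.\ last, letter gives the same element of $A$.

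For injectivity, if $\{|a|,-\}_{\KKS}=0$ then all $u_i$ vanish, hence $N(|a|)=0$; decomposing $|a|=\sum_{m\ge 0}|a_m|$ into homogeneous pieces and using $|N(|a_m|)|=m|a_m|$ (which needs $\K$ of characteristic zero) forces $|a_m|=0$ for every $m\ge1$, so $|a|\in\K{\bf 1}$ and the induced map $|A|/\K{\bf 1}\to\sder(A)$ is injective.

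For surjectivity I would argue by dimensions in each degree. The key point is that the map $(v_1,\dots,v_n)\mapsto\sum_i[x_i,v_i]\colon A_m^{\oplus n}\to A_{m+1}$ has image exactly $[A,A]_{m+1}$: the inclusion $\sum_i[x_i,A_m]\subseteq[A,A]_{m+1}$ is clear, and conversely the identity $z_1\cdots z_{m+1}-z_2\cdots z_{m+1}z_1=[x_{z_1},z_2\cdots z_{m+1}]$ shows that every degree-$(m+1)$ word is congruent to each of its cyclic rotations modulo $\sum_i[x_i,A_m]$, so the projection $A_{m+1}\to A_{m+1}/\sum_i[x_i,A_m]$ factors through $|A|_{m+1}$ and hence kills $[A,A]_{m+1}$. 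Therefore $\dim\sder(A)_m=n^{m+1}-\dim[A,A]_{m+1}=\dim|A|_{m+1}$, while $|a|\mapsto\{|a|,-\}_{\KKS}$ maps $|A|_{m+1}$ into $\sder(A)_m$ and, by the previous paragraph, is injective there for every $m\ge0$ (the kernel $\K{\bf 1}$ sitting in degree $0$). Hence it is an isomorphism in each degree, and passing to the degree completion gives $|A|/\K{\bf 1}\cong\sder(A)$. I expect the identification $[A,A]_{m+1}=\sum_i[x_i,A_m]$ underlying this count to be the only step requiring a real (if short) argument; the rest is bookkeeping with the formula from the first step.
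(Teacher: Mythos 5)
Your proof is correct. The computation of the explicit formula and the injectivity step essentially coincide with the paper's argument: the paper reads off the $i$th component of $\{|a|,-\}_{\KKS}$ on monomials from the proof of Proposition \ref{prop:KKS_div} (which is the same computation you do in Sweedler notation via \eqref{eq:ph1ph2} and the identification of $N$ with the map induced by $\sum_i x_i\pa_i$), and it deduces speciality from Lemma \ref{prop:KKS} just as you do. Where you genuinely diverge is surjectivity. The paper exhibits an explicit preimage: given $v=(v_1,\dots,v_n)\in\sder(A)$ homogeneous of degree $m$, it sets $a:=-\frac{1}{m+1}\sum_i x_iv_i$, notes that the relation $\sum_i x_iv_i=\sum_i v_ix_i$ makes $a$ invariant under cyclic rotation, so that $N(|a|)=(m+1)a=-\sum_i x_iv_i$ and hence $\{|a|,-\}_{\KKS}=v$. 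You instead prove $\dim\sder(A)_m=\dim|A|_{m+1}$ degree by degree, the key input being the identity $[A,A]_{m+1}=\sum_i[x_i,H^{\otimes m}]$, and then conclude from injectivity. Both routes are complete: the paper's is shorter and constructive, producing the inverse map $v\mapsto\bigl|-\frac{1}{m+1}\sum_i x_iv_i\bigr|$ directly, while yours isolates a reusable structural fact (that in positive degree every commutator is a sum of commutators with the generators, equivalently that cyclic rotations are realized by such commutators) at the cost of a dimension count that only works because each graded piece is finite-dimensional and the grading is preserved — a point you correctly handle by passing to the completion at the end.
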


\begin{proof}
 Lemma \ref{prop:KKS} implies that $\{ |a|, x_0\}_{\rm KKS} = 0$ for all $a \in A$. Hence, $\{ |a|, -\}_{\rm KKS} \in \sder(A)$.
In order to prove the formula for the tangential derivation $\{ |a|, -\}_{\rm KKS}$, we may assume that $a=z_1\cdots z_m$ with $z_j\in \{x_i\}_{i=1}^n$.
Then the $i$th component of $\{|a|,-\}_{\KKS}$ is of the form
\[
-\sum_j \delta_{x_i,z_j} z_{j+1}\cdots z_mz_1\cdots z_{j-1}
\]
(see the proof of Proposition \ref{prop:KKS_div}),
and this is equal to $-u_i$.

Let $|a|\in |A|$ be a homogeneous element of degree $m\ge 1$ in the kernel of the map $|a| \mapsto \{ |a|, -\}_{\rm KKS}$.
The formula for  $\{|a|,-\}_{\KKS}$ shows that $|a|=0$.
Clearly, ${\bf 1}$ is in the kernel of the map $|A|\to \sder(A)$.
Hence, the map $|A|/\mathbb{K}{\bf 1} \to \sder(A)$ is injective.

To show the surjectivity,
let $v=(v_1,\ldots,v_n)$ be a special derivation and assume that $v_i$ is homogeneous of degree $m$ for any $i$.
Then, $a:=-(1/(m+1)) \sum_i x_i v_i$ is cyclically invariant since $\sum_i [x_i,v_i]=0$, and $N(|a|)=(m+1) a= -\sum_i x_i v_i$.
Therefore, $\{|a|,-\}_{\KKS}=v$.
\end{proof}

\subsection{The Lie algbera $\krv_n$}

\begin{dfn}[\cite{AT12} \S 4]
\label{dfn:KVL}
The \emph{Kashiwara-Vergne Lie algebra} $\mathrm{krv}_2$ is defined by
\[
\mathrm{krv}_2:=\{ u\in \sder(L_2) \mid
\exists h(z)\in \K[[z]], \div(u)=|h(x)+h(y)-h(x+y)| \}.
\]
\end{dfn}

%Introduce the three elements of $\sder(L_2)$ by
%$q_1:=(x,0)$, $q_2:=(0,y)$, and $t:=(y,x)$.
%We have $t\in \krv_2$ since $\div(t)=0$, while $q_i\notin \krv_2$.
Similarly, for any $n \geq 2$, we introduce a generalization of $\krv_2$, 
\[
\krv_n:=\{ u\in \sder(L) \mid
\exists h(z)\in \K[[z]], \, \div(u)=|\sum^n_{i=1}h(x_i)- h(-x_0)| \}.
\]

%\begin{rem}
%There is a slight difference between our definitions above and definitions in %\cite{AT12}. It is related to the fact in that in our case $\tder(L)$ includes %elements 
%
%$$
%q_i =-\frac{1}{2}\{ |x_i^2|, -\}_{\rm KKS}= (0,\dots, 0, \overset{i}{\breve{x_i}}, 0, %\dots, 0)
%\in \sder(L) \subset \tder(L),
%$$
%for $1 \leq i \leq n$ which span the kernel of the map $\rho:\tder(L)\to \der(L)$.
%\end{rem}

For our purposes, it is convenient to introduce a slightly different version of the Kashiwara-Vergne Lie algebra,
\[
\krv^{\rm KKS}_n=\{ u\in \sder(L) \mid \div(u) \in Z(|A|, \{ -, -\}_{\rm KKS})\} .
\]
Recall that the kernel of the map $\rho: \tder(L) \to \der(L)$ is spanned by the elements
$$
q_i =-\frac{1}{2}\{ |x_i^2|, - \}_{\rm KKS}= (0,\dots, 0, \overset{i}{\breve{x_i}}, 0, \dots, 0)\in \sder(L) \subset \tder(L).
$$
The relation between the two versions of the Kashiwara-Vergne Lie algebra is given by the following proposition:

\begin{prop} \label{thm:krvd}
\[
\krv^{\rm KKS}_n= \krv_n \oplus \left( \bigoplus_{i=1}^n \mathbb{K} q_i \right),
\]
\end{prop}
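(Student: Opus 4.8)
First I would dispatch the inclusion $\krv_n\oplus\bigoplus_{i=1}^n\K q_i\subseteq\krv^{\rm KKS}_n$ and the directness of the sum. From the very definition of the divergence, $\div(q_i)=|x_i(q_i)_i^i|=|x_i|$, which lies in $\sum_{j=0}^n|\K[[x_j]]|=Z(|A|,\{-,-\}_{\rm KKS})$ by Theorem \ref{thm:center}; hence $q_i\in\krv^{\rm KKS}_n$. Any $u\in\krv_n$ has $\div(u)=|\sum_{i=1}^nh(x_i)-h(-x_0)|\in Z$, so also $\krv_n\subseteq\krv^{\rm KKS}_n$. For directness, note that $\div$ is $\K$-linear and $\div(\sum_ic_iq_i)=\sum_ic_i|x_i|$ is homogeneous of degree $1$, whereas the degree-$1$ component of $|\sum_ih(x_i)-h(-x_0)|$ is $h_1\bigl(\sum_i|x_i|-|{-x_0}|\bigr)=0$ because $\sum_ix_i=-x_0$; so an element of $\krv_n\cap\bigoplus_i\K q_i$ satisfies $\sum_ic_i|x_i|=0$, forcing all $c_i=0$ by linear independence of the $|x_i|$.

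\textbf{Reduction of the reverse inclusion.} Given $u\in\krv^{\rm KKS}_n$, so $\div(u)\in Z$, I would invoke Theorem \ref{thm:center}: the degree-$1$ part of $Z$ is spanned by $|x_1|,\dots,|x_n|$, and for $k\ge2$ the degree-$k$ part of $Z$ has basis $|x_1^k|,\dots,|x_n^k|,|x_0^k|$ (here $|x_0^k|$ is genuinely non-monochromatic). Choose $c_i$ with $[\div(u)]_1=\sum_ic_i|x_i|$ and replace $u$ by $u-\sum_ic_iq_i$, which again lies in $\sder(L)$ and in $\krv^{\rm KKS}_n$ and now has $[\div(u)]_1=0$. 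Since $\div$ preserves degree and the homogeneous components of a special element are special, it suffices to treat $u$ homogeneous of degree $k\ge2$; write $[\div(u)]_k=\sum_{i=1}^na^{(k)}_i|x_i^k|+a^{(k)}_0|x_0^k|$. If I can show $a^{(k)}_1=\dots=a^{(k)}_n$ and $a^{(k)}_0=-(-1)^ka^{(k)}_1$ for every $k\ge2$, then $h(z):=\sum_{k\ge2}a^{(k)}_1z^k$ gives $|\sum_{i=1}^nh(x_i)-h(-x_0)|=\sum_{k\ge2}a^{(k)}_1\bigl(\sum_i|x_i^k|-(-1)^k|x_0^k|\bigr)=\div(u)$, so $u\in\krv_n$ and the original element lies in $\krv_n\oplus\bigoplus_i\K q_i$.

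\textbf{The key claim, by induction on $n$.} I would prove: if $u\in\sder(L_n)$ is homogeneous of degree $k\ge2$ and $\div(u)\in Z$, then $\div(u)=\lambda\,v^{(n)}_k$ with $v^{(n)}_k:=\sum_{i=1}^n|x_i^k|-(-1)^k|x_0^k|$. The case $n=1$ is vacuous, since $\sder(L_1)=\K x_1$ is concentrated in degree $1$ and $v^{(1)}_k=0$. For the step, fix $j\in\{1,\dots,n\}$ and let $e_j\colon A_n\to A_{n-1}$ be the Hopf-algebra map with $e_j(x_j)=0$ and $e_j(x_i)=x_i$ for $i\ne j$; it sends $L_n$ to $L_{n-1}$, carries $u$ to the special element $u':=(e_j(u_i))_{i\ne j}\in\sder(L_{n-1})$ (special because $\sum_i[x_i,u_i]=0$ maps to $\sum_{i\ne j}[x_i,e_j(u_i)]$), and satisfies $\div(u')=e_j(\div(u))$, which is immediate from $\div(u)=\sum_i|x_i(u_i)^i|$ since $e_j$ commutes with splitting off the rightmost generator. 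As $e_j(x_0^{(n)})=x_0^{(n-1)}$ and $e_j(|x_j^k|)=0$, we get $\div(u')=\sum_{i\ne j}a_i|x_i^k|+a_0|x_0^{(n-1)k}|\in Z(|A_{n-1}|)$, so by the inductive claim this equals $\lambda_jv^{(n-1)}_k$, i.e.\ $a_i=\lambda_j$ for $i\ne j$ and $a_0=-(-1)^k\lambda_j$. The second relation forces $\lambda_j=-(-1)^ka_0$ independently of $j$, so running $j$ over at least two indices yields $a_i=-(-1)^ka_0$ for all $i=1,\dots,n$; hence $a_1=\dots=a_n$ and $a_0=-(-1)^ka_1$, as claimed. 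Together with the directness this gives $\krv^{\rm KKS}_n=\krv_n\oplus\bigoplus_i\K q_i$.

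\textbf{Expected difficulties.} I do not expect a deep obstacle once Theorem \ref{thm:center} is in hand; the argument is essentially formal. The points needing care are: that for $k\ge2$ the central word $|x_0^k|$ is not a single monochromatic cyclic word, so $\div(u)\in Z$ must be read in the basis $\{|x_i^k|\}_i\cup\{|x_0^k|\}$ (in degree $1$ the list collapses to $|x_1|,\dots,|x_n|$); the naturality of $\div$ under the generator-erasing maps $e_j$, which is the mechanism by which the relation between the ``$x_0$-coefficient'' $a_0$ and the ``$x_i$-coefficients'' $a_i$ — invisible from the $n$-generator presentation alone — gets forced; and keeping the signs $(-1)^k$ straight through the induction, with the slightly degenerate base case $\sder(L_1)$. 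A more computational alternative would use the isomorphism $|A|/\K\mathbf{1}\cong\sder(A)$ of Lemma \ref{lem:m-times} to write $u=\{|a|,-\}_{\rm KKS}$ and extract the constraint from the explicit formula for $\tdiv_{\rm KKS}$ in Proposition \ref{prop:KKS_div} via $\tdiv(u)=\tde(\div(u))$ (Proposition \ref{prop:dext}) and the injectivity of $\tde$, but the inductive erasure argument seems cleaner.
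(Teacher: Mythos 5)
Your proof is correct, and its skeleton matches the paper's: you establish the easy inclusion and directness exactly as the paper does (via $\div(q_i)=|x_i|$ and the absence of linear terms in $|\sum_i h(x_i)-h(-x_0)|$), and for the reverse inclusion you likewise invoke Theorem \ref{thm:center}, subtract $\sum_i c_i q_i$ to kill the linear part of the divergence, and then force the component functions to coincide by specializing generators to zero. The one genuine difference is the mechanism for that last step. The paper sets \emph{all} generators except $x_k$ to zero in a single stroke and uses Lemma \ref{lem:x=0} (the direct observation that $\varpi_k$ annihilates $(u_k)^k$ once the $x_k$-linear term of $u_k$ has been removed) to conclude $h_k=h_0$ for each $k$. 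You instead erase \emph{one} generator at a time via the Hopf maps $e_j:x_j\mapsto 0$ and run an induction on $n$, using the naturality $\div(u')=e_j(\div(u))$ and $e_j(x_0^{(n)})=x_0^{(n-1)}$ to propagate the relation between the coefficient of $|x_0^k|$ and the coefficients of $|x_i^k|$. Your route avoids Lemma \ref{lem:x=0} entirely but pays for it with the degenerate bottom of the induction: at $n-1=1$ one has $|(x_0^{(1)})^k|=(-1)^k|x_1^k|$, so the conclusion of the inductive claim must be read as the single linear relation $a_i+(-1)^ka_0=0$ rather than as determining a scalar $\lambda_j$; you flag this and it does work out, but it is the one place where your phrasing (\emph{"i.e.\ $a_i=\lambda_j$ \dots and $a_0=-(-1)^k\lambda_j$"}) should be stated more carefully. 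Both arguments rest on the same two inputs --- the description of the center and the compatibility of $\div$ with generator-killing substitutions --- so neither buys substantially more than the other; the paper's is shorter, yours is more self-contained in that it does not require the separate vanishing lemma.
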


For the proof, we need the following lemma:

\begin{lem}
\label{lem:x=0}
Consider the substitution map
$\varpi_k: |A_n|\to |A_1|\cong \K [[z]], a\mapsto a|_{x_k=z,x_{k'}=0 (k'\neq k)}$,
given by $x_k\mapsto z$ and $x_{k'}\mapsto 0$ for $k'\neq k$.
Let $u=(u_1,\dots, u_n)\in \tder(L_n)$ and assume that
the degree $1$ part of $u_k$ is in $\bigoplus_{k'\neq k}\K x_{k'}$.
Then we have $\varpi_k(\div(u))=0$.
Furthermore, we have $\varpi_k(j(\exp(u)))=0$.
\end{lem}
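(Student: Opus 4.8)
The plan is to view $\varpi_k$ as the map induced on cyclic words by the algebra homomorphism $A_n\to A_1$ sending $x_k\mapsto z$ and $x_{k'}\mapsto 0$ for $k'\neq k$, whose kernel is the closed two-sided ideal $I_k\subset A_n$ generated by $\{x_{k'}\}_{k'\neq k}$. Both assertions will then follow once we check that $\div(u)$ and every correction term occurring in $j(\exp(u))$ lie in the image of $I_k$ in $|A_n|$.

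The first and only nonroutine step is to show $\varpi_k(u_k)=0$, i.e.\ $u_k\in I_k$. Here I would use that $\varpi_k: A_n\to A_1$ is a morphism of complete Hopf algebras, so it carries the primitive element $u_k\in L_n$ to a primitive element of $A_1$; since $A_1$ is commutative its space of primitives is $\K z$. As $\varpi_k$ preserves degree, $\varpi_k(u_k)$ equals $\varpi_k$ of the degree-$1$ component of $u_k$, and the hypothesis that this component lies in $\bigoplus_{k'\neq k}\K x_{k'}$ makes it vanish.

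For $\varpi_k(\div(u))=0$ I would apply $\varpi_k$ termwise to $\div(u)=\sum_i|x_i(u_i)^i|$: each $i\neq k$ contributes $0$ because $\varpi_k(x_i)=0$, and for $i=k$ one reduces $u_k=\sum_i(u_k)^ix_i$ modulo $I_k$ to get $0=\varpi_k(u_k)=\varpi_k((u_k)^k)\,z$, whence $\varpi_k((u_k)^k)=0$ and $\varpi_k(|x_k(u_k)^k|)=|z\,\varpi_k((u_k)^k)|=0$. For $\varpi_k(j(\exp(u)))=0$ I would expand $j(\exp(u))=\sum_{m\ge 0}\frac{1}{(m+1)!}\rho(u)^m(\div(u))$ using (\ref{eq:jfdiv}); the $m=0$ term is just handled, and for $m\ge 1$ it suffices to note $\rho(u)(A_n)\subseteq I_k$. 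Indeed $\rho(u)(x_i)=[x_i,u_i]\in I_k$ for $i\neq k$ since it has $x_i$ as a factor, and $\rho(u)(x_k)=[x_k,u_k]\in I_k$ because $u_k\in I_k$ and $I_k$ is an ideal; since $\rho(u)$ is a derivation this forces $\rho(u)(A_n)\subseteq I_k$, so each term with $m\ge 1$ maps to $0$ under $\varpi_k$.

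The only bookkeeping to be careful with --- none of it a real obstacle --- is that $\varpi_k$ is a well-defined continuous map of complete Hopf algebras with kernel exactly $I_k$, that $\rho(u)$ and the $\tder(L)$-action on $|A|$ are continuous so that the ideal-membership survives completion, and that the series (\ref{eq:jfdiv}) converges and is compatible with $\varpi_k$ termwise. The essential idea is entirely contained in the primitivity argument of the second paragraph.
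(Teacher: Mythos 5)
Your proof is correct and follows essentially the same route as the paper's: the paper's key observation that $\varpi_k(v^k)=0$ for Lie monomials $v$ of degree $\ge 2$ is exactly the content of your primitivity argument (both amount to the fact that Lie elements of degree $\ge 2$ die in the commutative target), and the rest — termwise treatment of $\div(u)=\sum_i|x_i(u_i)^i|$ and the series \eqref{eq:jfdiv} for $j(\exp(u))$ — coincides. Your explicit check that $\rho(u)(A_n)\subseteq I_k$, so that all terms with $m\ge 1$ vanish, merely spells out a step the paper leaves implicit.
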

\begin{proof}
If $v=[z_1,[z_2,\ldots,[z_{m-1},z_m]\cdots]]$,
where $z_j\in \{x_i\}_{i=1}^n$, is a Lie monomial of degree $m\ge 2$,
then $\varpi_k(v^k)=0$. 
By linearity, we see that $\varpi_k(v^k)=0$ if
the degree $1$ part of $v\in L$ is in $\bigoplus_{k'\neq k}\K x_{k'}$. 
Now, since $\varpi_k(x_{k'})=0$ for $k'\neq k$, we obtain
$$
\varpi_k(\div(u)) = \varpi_k|x_k(u_k)^k| + \sum_{k' \neq k}\varpi_k|x_{k'}(u_{k'})^{k'}| = |z\cdot 0| = 0.
$$
Finally, by (\ref{eq:jfdiv}), we obtain $j(\exp(u))|_{x_k=z,x_{k'}=0 (k' \neq k)}=0$.
\end{proof}

\begin{proof}[Proof of Proposition \ref{thm:krvd}]
For the inclusion $\krv_n \oplus (\bigoplus_{i=1}^n \mathbb{K} q_i) \subset \krv_n^{\rm KKS}$, we first consider the elements $q_i$. Since 
$$
\div(q_i)=|x_i| \in Z(|A|, \{ -, -\}_{\rm KKS}),
$$
we conclude that $q_i \in \krv_n^{\rm KKS}$. If $u\in \krv_n$, then $\div(u)=|\sum^n_{i=1}h(x_i)-h(-x_0)|$ 
for some $h(z)\in \K[[z]]$. In particular, $\div(u)$ is in the center 
with respect to the Lie bracket $\{-,-\}_{\KKS}$. Therefore, $\krv_n \subset \krv^{\rm KKS}_n$.

Note that expressions of the form $|\sum_{i=1}^n h(x_i) - h(-x_0)|$
contain no linear terms. Therefore, $\krv_n \cap \bigoplus_{i=1}^n \mathbb{K} q_i = \{ 0 \}$ and $\krv_n \oplus (\bigoplus_{i=1}^n \mathbb{K} q_i) \subset \krv_n^{\rm KKS}$, as required.

To prove the inclusion in the opposite direction, 
let $u\in \krv_n^{\rm KKS}$.
By Theorem \ref{thm:center},
there exist elements $h_i(z) \in z^2\K[[z]]$, $0 \le i\le n$, and
$c_i\in \K$, $0 \le i\le n$, such that
$$
\div(u) = |c_0 + \sum_{i=1}^n (c_ix_i + h_i(x_i))- h_0(-x_0)|.
$$
From the definition of the divergence, we have $c_0 = 0$. 
Hence, 
$$
\div(u-\sum^n_{i=1}c_iq_i) 
= |\sum^n_{i=1}h_i(x_i)-h_0(-x_0) |.
$$
By Lemma \ref{lem:x=0}, we obtain $h_i(z)=h_0(z)$.
Therefore, $u-\sum^n_{i=1}c_iq_i\in \krv_n$ and 
$u\in \krv_n\oplus (\bigoplus^n_{i=1}\K q_i)$.
\end{proof}

The Lie algebras $\sder(A)$, $\sder(L)$, $\krv_n$ and $\krv_n^{\rm KKS}$ integrate
to  the groups $\saut(A)$, $\saut(L)$,  $\KRV_n$ and $\KRV_n^{\rm KKS}$, respectively.
Clearly, $\KRV_n \subset \KRV_n^{\rm KKS} \subset \saut(L)$.
A more explict description of the group $\KRV_n$ is as follows:
\[
\KRV_n=\{ F\in \saut(L) \mid
\exists h(z)\in \K[[z]], j(F)=|\sum^n_{i=1}h(x_i) -h(-x_0)| \}.
\]
To see this, let $u\in \krv_n$ and $F=\exp(u)\in \KRV_n$.
We have $\div(u)=|\sum^n_{i=1}h(x_i)-h(-x_0) |$ for some $h(z)\in \K[[z]]$.
Since $u$ is a special derivation, $u\cdot \div(u)=u\cdot |\sum^n_{i=1}h(x_i)-h(-x_0)|=0$.
By (\ref{eq:jfdiv}), $j(F)=\div(u)+(1/2)u\cdot \div(u)+\cdots=\div(u)$.
The other inclusion can be proved similary (note that the operator $(e^{\ad_u}-1)/\ad_u$ is invertible).

The group $\KRV_n^{\rm KKS}$ admits a similar description:
\[
\KRV^{\rm KKS}_n=\{ F\in \saut(L) \mid
 j(F)\in Z(|A|, \{ -, -\}_{\rm KKS}) \}.
\]
As before, the fact that $j(F)\in Z(|A|, \{ -, -\}_{\rm KKS})$ is equivalent to the existence of $h(z) \in \mathbb{K}[[z]]$ such that
$$
j(F)=|\sum^n_{i=1}h(x_i) -h(-x_0)| \,\, {\rm mod} \,\, \bigoplus_{i=1}^n \mathbb{K} |x_i|.
$$

\subsection{Topological interpretation of the KV problem}

The following theorem gives a topological interpretation of the condition
($\mathrm{KV}^{(0,n+1)}\mathrm{II}$).

\begin{thm}\label{thm:main2}
Suppose that $F\in \taut(L)$ satisfies {\rm 
($\mathrm{KV}^{(0,n+1)}\mathrm{I}$)} and 
set $\theta_F:=F^{-1}\circ \theta^{\exp}$.
Then, $F$ satisfies $\delta^+_{\theta_F}=\delta^{\alg}$
if and only if 
$$
j(F^{-1}) \in Z(|A|, \{-, -\}_{\rm KKS}).
$$
Equivalently, there exists an element $G\in \ker(\rho)\subset \saut(L)$ such that $FG$ satisfies {\rm 
($\mathrm{KV}^{(0,n+1)}\mathrm{II}$)}.
\end{thm}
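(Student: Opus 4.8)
The plan is to reduce the identity $\delta^+_{\theta_F}=\delta^{\alg}$ to an invariance property of the element $j(F^{-1})\in|A|$, and then to translate that invariance into membership in $\KRV_n^{\rm KKS}$. First I would record a key formula. By Lemma~\ref{lem:Fspe} the hypothesis $(\mathrm{KV}^{(0,n+1)}\mathrm{I})$ makes $\theta_F$ a special expansion, so $\delta^+_{\theta_F}$ is defined; comparing the transfer diagrams for $\theta_F$ and $\theta^{\exp}$ gives $\delta^+_{\theta_F}=(\rho(F)^{-1}\otimes\rho(F)^{-1})\circ\delta^+_{\theta^{\exp}}\circ\rho(F)$. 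I would then combine four earlier facts: $\delta^+_{\theta^{\exp}}=-\tdiv_{\mult}$ (Proposition~\ref{prop:mult_div}); the intertwining relation \eqref{eq:mult_add}, valid under $(\mathrm{KV}^{(0,n+1)}\mathrm{I})$, which together with the $\taut(A)$-equivariance of $\psi\mapsto|\psi|$ and the vanishing $\{|a|,-\}_s=0$ (Proposition~\ref{prop:add'}) yields $\{|\rho(F)(a)|,-\}_{\mult}=\mathrm{Ad}_F\{|a|,-\}_{\KKS}$ in $\tder(A)$; the cocycle transformation formula \eqref{eq:cocycle_f}/\eqref{eq:F*tdiv} for $\tdiv$; and $\tdiv(\{|a|,-\}_{\KKS})=\tdiv_{\KKS}(|a|)=-\delta^{\alg}(|a|)$. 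Chaining these produces, for every $a\in A$,
\[
\delta^+_{\theta_F}(|a|)=\delta^{\alg}(|a|)-\{|a|,-\}_{\KKS}\cdot\tJ(F^{-1}),
\]
where the action is the diagonal $\tder(A)$-action on $|A|\otimes|A|$. Since $|a|\mapsto\{|a|,-\}_{\KKS}$ surjects onto $\sder(A)$ by Lemma~\ref{lem:m-times}, it follows that $\delta^+_{\theta_F}=\delta^{\alg}$ if and only if $\tJ(F^{-1})$ is annihilated by the diagonal action of all of $\sder(A)$.

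Next I would establish the invariance criterion. Using $\tJ(F^{-1})=\tde(j(F^{-1}))$ (equation \eqref{eq:tdej}), it suffices to show that for $b\in|A|$ the element $\tde(b)\in|A|\otimes|A|$ is $\sder(A)$-invariant if and only if $b\in Z(|A|,\{-,-\}_{\KKS})$. First note that $Z=\{b\in|A|\mid u\cdot b=0\ \text{for all}\ u\in\sder(A)\}$, which is immediate from the definition of the bracket on $|A|$ together with Lemma~\ref{lem:m-times}. For the ``if'' direction, by Theorem~\ref{thm:center} the center $Z$ is spanned by cyclic words $|x_i^k|$ with $0\le i\le n$, and since each $x_i$ (including $x_0=-\sum_j x_j$) is primitive, $\tde(|x_i^k|)$ lies in $Z\otimes Z$, which is annihilated by the diagonal $\sder(A)$-action; hence $\tde(Z)$ consists of invariants. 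For ``only if'', apply $\mathrm{id}\otimes\varepsilon$: a one-line check on generators shows $\varepsilon\circ\rho(u)=0$ for every $u\in\tder(A)$, while $(\mathrm{id}\otimes\varepsilon)\circ\tde=\mathrm{id}$ as in the proof of Lemma~\ref{lem:tdeeq}, so $(\mathrm{id}\otimes\varepsilon)(u\cdot\tde(b))=u\cdot b$; thus invariance of $\tde(b)$ forces $u\cdot b=0$ for all $u\in\sder(A)$, i.e.\ $b\in Z$. Combining the two steps proves $\delta^+_{\theta_F}=\delta^{\alg}\iff j(F^{-1})\in Z(|A|,\{-,-\}_{\KKS})$.

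Finally, for the ``equivalently'' clause, I would use that $j(F^{-1})\in Z(|A|,\{-,-\}_{\KKS})$ is precisely $F^{-1}\in\KRV_n^{\rm KKS}$. By Proposition~\ref{thm:krvd}, $\krv_n^{\rm KKS}=\krv_n\oplus\bigoplus_i\K q_i$, and $\bigoplus_i\K q_i=\ker(\rho|_{\tder(L)})$ is central in $\tder(L)$ (immediate from $\rho(q_i)=0$). Exponentiating, $F^{-1}=F_0G_0$ with $F_0\in\KRV_n$ and $G_0\in\ker(\rho)\subset\saut(L)$; setting $G:=G_0$, centrality of $\ker(\rho)$ gives $(FG)^{-1}=G_0^{-1}F_0G_0=F_0\in\KRV_n$, so $FG$ satisfies $(\mathrm{KV}^{(0,n+1)}\mathrm{II})$. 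Conversely, if $FG$ satisfies $(\mathrm{KV}^{(0,n+1)}\mathrm{II})$ for some $G\in\ker(\rho)$, then $F^{-1}=G(FG)^{-1}\in\ker(\rho)\cdot\KRV_n\subseteq\KRV_n^{\rm KKS}$, hence $j(F^{-1})\in Z$.

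I expect the main obstacle to be the bookkeeping in the first step: getting the $\mathrm{Ad}$- and sign-conventions right when composing the transfer relation, the identity \eqref{eq:mult_add}, the equivariance of $\psi\mapsto|\psi|$, and the cocycle formula \eqref{eq:cocycle_f}. The conceptually new point is the $\mathrm{id}\otimes\varepsilon$ argument, which is needed precisely because $\tde$ is not $\sder(A)$-equivariant (only $\sder(L)$-equivariant via Lemma~\ref{lem:tdeeq}), so one cannot simply invert $\tde$ to pass from invariance of $\tde(b)$ back to $b\in Z$.
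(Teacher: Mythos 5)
Your derivation of the key identity $\delta^+_{\theta_F}(|a|)=\delta^{\alg}(|a|)-\{|a|,-\}_{\KKS}\cdot\tJ(F^{-1})$ and your treatment of the main equivalence $\delta^+_{\theta_F}=\delta^{\alg}\iff j(F^{-1})\in Z(|A|,\{-,-\}_{\KKS})$ are correct and follow essentially the same route as the paper. Your ``only if'' argument via $\mathrm{id}\otimes\varepsilon$ is a clean, explicit rendering of the paper's step ``looking at the $|A|\otimes\mathbf{1}$-component'', and your ``if'' direction (primitivity of the $x_i$, including $x_0$, puts $\tde(Z)$ inside $Z\otimes Z$) is exactly what the paper means by ``the center is a sub-coalgebra of $|A|$''.

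The gap is in the ``equivalently'' clause. You assert that $j(F^{-1})\in Z(|A|,\{-,-\}_{\KKS})$ is precisely the statement $F^{-1}\in\KRV_n^{\rm KKS}$ and then invoke Proposition \ref{thm:krvd} to split $F^{-1}=F_0G_0$ with $F_0\in\KRV_n$. But $\KRV_n^{\rm KKS}$ and $\KRV_n$ are by definition subgroups of $\saut(L)$, and an $F$ satisfying ($\mathrm{KV}^{(0,n+1)}\mathrm{I}$) is \emph{not} special: $\rho(F)(x_1+\cdots+x_n)=\log(e^{x_1}\cdots e^{x_n})\neq x_1+\cdots+x_n$. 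So neither $F^{-1}$ nor $(FG)^{-1}$ lies in $\saut(L)$, Proposition \ref{thm:krvd} does not apply to them, and the decomposition $F^{-1}=F_0G_0$ is not available. The converse implication survives, because the cocycle computation $j(F^{-1})=j(G)+G\cdot j((FG)^{-1})=\sum_ic_i|x_i|+|\sum_ih(x_i)-h(-x_0)|\in Z$ never uses speciality. For the forward implication you must argue directly: by Theorem \ref{thm:center} write $j(F^{-1})=|\sum_i(c_ix_i+h_i(x_i))-h_0(-x_0)|$, set $G:=\exp(\sum_ic_iq_i)\in\ker(\rho)$ so that $j((FG)^{-1})=|\sum_ih_i(x_i)-h_0(-x_0)|$, and then use Lemma \ref{lem:x=0} (after checking that $(FG)^{-1}=\exp(v)$ has the degree-one part of $v_k$ free of $x_k$, which is exactly what subtracting the $q_i$'s arranges) to force $h_i=h_0$ for all $i$. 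This is the ``argument similar to the proof of Proposition \ref{thm:krvd}'' that the paper alludes to; it has to be run on the non-special $F$ itself rather than routed through the $\krv_n^{\rm KKS}$ decomposition.
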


\begin{proof}
Let $F\in \taut(L)$ be a solution of  {\rm  ($\mathrm{KV}^{(0,n+1)}\mathrm{I}$)}. Consider the following diagram:
\[
\begin{CD}
|\K \pi| @> \delta^+ >> |\K \pi| \otimes |\K \pi| \\
@V \theta^{\exp} VV @VV \theta^{\exp} \otimes \theta^{\exp} V \\
|A| @> -\tdiv_{\mult} >> |A| \otimes |A| \\
@V F^{-1} VV @VV F^{-1}\otimes F^{-1} V \\
|A| @> \delta^+_{\theta_F} >> |A|\otimes |A|.
\end{CD}
\]
The upper square is commutative by Proposition \ref{prop:mult_div}, and the lower square is commutative by definition of the transfered cobracket.
For $a\in A$ we use equations \eqref{eq:mult_add}, \eqref{eq:F*tdiv}, \eqref{eq:tdej} and Proposition \ref{prop:add'_div} to compute
$$
\begin{array}{lll}
\delta^+_{\theta_F}(|a|)  & = & - (F^{-1}\otimes F^{-1})\tdiv_{\mult} (F.|a|)  \\
& =&  - F^{-1}\tdiv(F(\{ |a|,-\}_{\add}))  \\
& =& - \tdiv(\{|a|,-\}_{\add}) - \{|a|,-\}_{\add}\cdot \tJ(F^{-1})  \\
& =& \delta^{\alg}(|a|) - \{|a|,- \}_{\KKS}\cdot \tde (j(F^{-1})). \label{eq:tfalg}
\end{array}
$$
The equality $\delta^+_{\theta_F}=\delta^{\alg}$ holds true if and only if $\{|a|,- \}_{\KKS}\cdot \tde (j(F^{-1}))=0$ for every $a\in A$.
Looking at the $|A|\otimes {\bf 1}$-component of this equation, we see 
that the equation is satisfied only if $j(F^{-1}) \in |A|$ is central with respect to $\{-,-\}_{\KKS}$. In the other direction, recall that the center is a sub-coalgebra of $|A|$. Hence, if $j(F^{-1})$ is central, the expression $\{|a|,- \}_{\KKS}\cdot \tde (j(F^{-1}))$ vanishes as required.

For the second statement, if $G\in {\rm ker}(\rho)$ and $FG$ is a solution of ($\mathrm{KV}^{(0,n+1)}\mathrm{II}$), then
$$
\begin{array}{lll}
j(F^{-1}) & = & j(G) + G.j((FG)^{-1}) \\
& = &  j(G) + j((FG)^{-1}) \\
& \in & \{ |\sum_i(h(x_i) +c_i x_i) - h(-x_0)| \, c_i \in \mathbb{K}, h(z)\in \mathbb{K}[[z]]\} \\
& \subset & Z(|A|, \{-,-\}_{\rm KKS}).
\end{array}
$$
In the other direction, one uses the argument similar to the proof of Proposition \ref{thm:krvd}.
\end{proof}

\begin{rem}
One can prove Theorem \ref{thm:intro_delta_div} in Introduction by using the same argument as above. Indeed, for $F \in \taut(L)$ and $\theta_F = F^{-1} \circ \theta^{\rm exp}$ we have
$$
\delta^+_{\theta_F}(|a|) = - \tdiv(\{|a|,-\}_{\theta}) - \{|a|,-\}_{\theta}\cdot \tJ(F^{-1}).
$$
If $j(F^{-1})$ is central in the Lie algebra $(|A|, \{-,-\}_\theta)$, it is spanned by elements of the form $|x_i^k|, |(F^{-1}\log(e^{x_1}\dots e^{x_n}))^k|$ (see Remark \ref{rem:center_theta}). Since the center is a sub-coalgebra of $|A|$, the expression $\{|a|,-\}_{\theta}\cdot \tJ(F^{-1})$ vanishes, as required.
\end{rem}

\begin{rem}
 Define the set $\Theta^{(0, n+1)}$ of homomorphic expansions and the set ${\rm SolKV}^{(0, n+1)}$ of solutions of the Kashiwara-Vergne problem. There is a bijection between these two sets established by the formula $F \mapsto \theta_F = F^{-1} \circ \theta^{\rm exp}$. Furthermore, the group $\KRV_n$ acts freely and transitively on the set ${\rm SolKV}^{(0, n+1)}$, and the action is given by formula $F \mapsto F H$. The set $\Theta^{(0, n+1)}$ carries a transitive action of the group $\KRV_n^{\rm KKS}$ given by $\theta \mapsto H^{-1} \circ \theta$. The stabilizer of this action is ${\rm ker}(\rho)$ for all homomorphic expansions $\theta$. The bijection $\Theta^{(0, n+1)} \cong {\rm SolKV}^{(0, n+1)}$ is equivariant under the action of $\KRV_n$ (the action on $\Theta^{(0, n+1)}$ is induced by the inclusion $\KRV_n \subset \KRV_n^{\rm KKS}$).
\end{rem}

\begin{rem}
%\textcolor{red}{

Let ${\rm Aut}_n^{\rm KKS} \subset \taut(L)$ be the subgroup of $\taut(L)$ preserving the Lie bialgebra structure $(|A|, \{ \cdot, \cdot\}_{\rm KKS}, \delta^{\rm alg})$. In view of Remark \ref{rem:mult_add}, it is natural to conjecture that
$$
{\rm Aut}_n^{\rm KKS} = (\KRV_n^{\rm KKS}/\{ \exp(\alpha t); \alpha \in \mathbb{K} \}) \ltimes {\rm Inn}_n,
$$
where ${\rm Inn}_n$ is the group of inner automorphisms of $A$ (which acts trivially on $|A|$) and $t: x_i \mapsto [x_i, x_0]$ generates the intersection of $\mathfrak{krv}_n^{\rm KKS}$ with inner derivations.
%}

\end{rem}

\subsection{Topological interpretation of Duflo functions}

In this section, we apply Theorem \ref{thm:main} to establish a relation between the function $s(z)$ coming from the topologically defined double bracket $\kappa$ and the Duflo function $h(z)$ entering solutions of the Kashiwara-Vergne equations.

Recall that the function $g(z)$ in equation (\ref{eq:muF}) is the derivative of the Duflo function $h(z)$ and $h(z)$ can be chosen to lie in $z^2\K[[z]]$. Decomposing both $g$ and $h$ into a sum of even and odd parts, we obtain $g_{\rm even} = \dot{h}_{\rm odd}, g_{\rm odd} = \dot{h}_{\rm even}$.

\begin{prop}
\label{prop:Duflomu}
Let $F$ be a solution of the Kashiwara-Vergne problem of type $(0, n+1)$ for $n \geq 1$ and $h(z)$ the corresponding Duflo function.
Then, modulo the linear part,
\begin{equation}
g_{\odd}(z)=\dot{h}_{\even}(z)\equiv \frac{1}{2}\left( \frac{1}{2}+s(z) \right).
\label{eq:even2}
\end{equation}
\end{prop}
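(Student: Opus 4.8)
The plan is to read the identity off from the tensorial description of $\mu_{\theta_F}$ provided by Theorem~\ref{thm:main}, comparing the universal $s$-contribution with the Duflo contribution. Fix a solution $F$ of the KV problem of type $(0,n+1)$, put $g=\dot h$, and recall from Theorem~\ref{thm:main}(i) that for all $a\in A$
\[
\mu_{\theta_F}(a)=\mu^{\alg}(a)+|s''|\otimes as'-|s''a|\otimes s'+|g'|\otimes[a,g''],
\]
with $s'\otimes s''=\tde(s(-x_0))$, $g'\otimes g''=\tde(g(-x_0))$, and $s$ the function of Theorem~\ref{thm:MT}. In this formula the only $h$-dependent term is the last one, which records exactly $g=\dot h$; since $g''$ is a power series in $x_0$, this term has the shape $|g'|\otimes[a,g'']$ and is killed by either of the projections $1\otimes\varepsilon$ and $1\otimes|\ |$. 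That is why $\delta^+_{\theta_F}=\delta^{\alg}$ (Theorem~\ref{thm:main}(ii)) constrains $s$ but is blind to $g$, and it is the reason the relation between $s$ and $\dot h_{\even}$ must be extracted from the full map $\mu_{\theta_F}$ rather than from $\delta^+_{\theta_F}$.

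The relation then follows by matching the two contributions against the classical structure of Kashiwara--Vergne solutions. On the one hand, Theorem~\ref{thm:MT} gives $s(z)=\tfrac1z-\tfrac1{1-e^{-z}}=-\tfrac12-\sum_{k\ge1}\tfrac{B_{2k}}{(2k)!}z^{2k-1}$, hence $\tfrac12\bigl(\tfrac12+s(z)\bigr)=-\tfrac12\sum_{k\ge1}\tfrac{B_{2k}}{(2k)!}z^{2k-1}$. On the other hand, it is a classical property of the equations $(\mathrm{KV}^{(0,n+1)}\mathrm{I})$ and $(\mathrm{KV}^{(0,n+1)}\mathrm{II})$ that the even part of the Duflo function of any solution is rigid: $\dot h_{\even}(z)$ is forced, modulo linear terms, to equal this same series in Bernoulli numbers (see \cite{AT12}), while the odd part of $h$ remains free. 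Using $g_{\odd}=\dot h_{\even}$ and combining the two computations gives \eqref{eq:even2}; the role of Theorem~\ref{thm:main} here is precisely to identify the function governing the $h$-dependence of the genus-zero self-intersection map $\mu_{\theta_F}$ with $\dot h$, which is what makes the comparison with $s$ meaningful. The "modulo linear terms'' clause matches the freedom of adding to $u=\log F$ an element of $\ker(\rho)$, spanned by the $q_i$ with $\div(q_i)=|x_i|$, and such a modification of $j(F^{-1})$ does not affect $\delta^+_{\theta_F}$ because $|x_i|$ is central for $\{-,-\}_{\KKS}$.

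The main obstacle, if one wants an argument internal to the present framework rather than an appeal to the classical rigidity of $h_{\even}$, is to reprove that rigidity here. A plausible route is to evaluate $\mu_{\theta_F}$ on powers of boundary-parallel loops, where the topological value $\mu(\gamma_i^k)$ is pinned down by $\mu(\gamma_i)=0$ (Proposition~\ref{prop:prod2}), by $\kappa(\gamma_i,\gamma_i)=1\otimes\gamma_i^2-\gamma_i\otimes\gamma_i$ (Proposition~\ref{prop:k(ga,ga)}), and by the product formula for $\mu$, and then to compare with the displayed formula after absorbing the conjugating factors in $\theta_F(\gamma_i)=e^{g_i}e^{x_i}e^{-g_i}$ (Lemma~\ref{lem:Fspe}) using $(\mathrm{KV}^{(0,n+1)}\mathrm{I})$. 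The delicate point is to isolate the part of the resulting identity that is insensitive to the inner data $g_i$, so that only the universal combination of $s$ and $g_{\odd}$ survives; verifying that this combination reduces to $g_{\odd}(z)\equiv\tfrac12(\tfrac12+s(z))$ uses, as in Step~6 of the proof of Theorem~\ref{thm:main}, that $s$ has constant term $-\tfrac12$ and is otherwise an odd series, together with $\varepsilon(s(-x_0))=-\tfrac12$.
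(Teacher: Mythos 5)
Your first route proves the statement only by appealing to the classical rigidity of $h_{\even}$ from \cite{AT12} (Proposition 6.1 there), and this does not close the argument: that result concerns the original KV problem, i.e.\ type $(0,3)$, whereas Proposition \ref{prop:Duflomu} is asserted for all solutions of the problem of type $(0,n+1)$, which is a notion introduced in this paper. Lemma \ref{lem:F(-x_0)} only produces solutions of type $(0,n+1)$ from solutions of type $(0,3)$; it does not say every solution arises this way, so the rigidity of $h_{\even}$ for general $n$ is not a corollary of \cite{AT12} without an additional reduction argument (e.g.\ specializing $x_3=\dots=x_n=0$ and checking compatibility of $j$ with that substitution), which you do not supply. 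More importantly, the whole point of this proposition is to \emph{derive} the Bernoulli-number formula for $\dot h_{\even}$ from the topology, rather than to import it; your route, even where it applies, is circular in that sense.

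The key ingredient you are missing is the involutivity of the Goldman--Turaev structure, $\{-,-\}\circ\mu=0$ on $\K\pi$ (\cite{KK15}, Proposition 3.2.7). The paper transfers this identity through $\theta_F$ to get $\{-,-\}_{\add}\circ\mu_{\theta_F}=0$, substitutes the tensorial formula \eqref{eq:muF}, discards the terms killed by $\Pi_s$ and by speciality (the term $\{|s''a|,s'\}_{\KKS}$ vanishes because special derivations annihilate power series in $x_0$), and evaluates the survivors with Lemma \ref{lem:maKKS}; the resulting operator identity $\bigl(2\dot g_{\odd}(\ad_{x_0})-\dot s(\ad_{x_0})\bigr)(a)+(\dot s(0)-2\dot g(0))a=0$ for all $a$ forces $2g_{\odd}=s$ modulo linear terms, uniformly in $n$. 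Your fallback sketch (evaluating $\mu_{\theta_F}$ on powers of $\gamma_i$) correctly recognizes that an internal argument is needed, but it is not carried out, and it is doubtful it can isolate $g_{\odd}$: the $g$-term in \eqref{eq:muF} involves $\tde(g(-x_0))$ with $x_0=-\sum_i x_i$, so testing against words in a single generator, with the unknown conjugators $g_i$ present, does not obviously decouple the even and odd parts of $g$. Without the involutivity input the proof is incomplete.
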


\begin{rem}
In the original Kashiwara-Vergne problem, 
the function $h(z)$ satisfies
\begin{equation}
h_{\even}(z)=-\frac{1}{2}\sum_{k\ge 1}
\frac{B_{2k}}{2k\cdot (2k)!} z^{2k}
\label{eq:even1}
\end{equation}
by \cite{AT12} Proposition 6.1. This matches the expression obtained from \eqref{eq:even2} by substituting $s(z)=z^{-1} - (1-e^{-z})^{-1}$.
\end{rem}

\begin{rem}
In fact, by inspection of the lower degree terms of the KV equations, one can also show that the linear term of $g(z)$ is $-(B_2/4)z=-(1/24)z$.
\end{rem}

\if 0
Let us make a closer look at (\ref{eq:muF}) by decomposing $h(z)=h_{\even}(z)+h_{\odd}(z)$ into the even and odd degree parts.
\textcolor{blue}{In the original Kashiwara-Vergne problem, 
the function $h(z)$ is called the Duflo function and satisfies
\begin{equation}
h_{\even}(z)=-\frac{1}{2}\sum_{k\ge 1}
\frac{B_{2k}}{2k\cdot (2k)!} z^{2k}
\label{eq:even1}
\end{equation}
by \cite{AT12} Proposition 6.1.}
Therefore, if we decompose $g(z)=g_{\odd}(z)+g_{\even}(z)$,
\begin{equation}
g_{\odd}(z)=h'_{\even}(z)=
-\frac{1}{2}\sum_{k\ge 1} \frac{B_{2k}}{(2k)!} z^{2k-1}=
\frac{1}{2}\left( \frac{1}{2}+s(z) \right).
\label{eq:even2}
\end{equation}
Moreover we have
\begin{prop}
For any $n \ge2$, the Duflo function $h(z)$ for any solution 
to the Kashiwara-Verge problem of type $(0,n+1)$
satisfies the condition (\ref{eq:even1}).
\end{prop}
\fi

\begin{proof}[Proof of Proposition \ref{prop:Duflomu}]
Recall that the double bracket $\kappa$ and operation $\mu$ satisfy the involutivity property (see \cite{KK15} Proposition 3.2.7):
$$
\{ -, -\} \circ \mu =0: \mathbb{K}\pi \to |\mathbb{K}\pi| \otimes  \mathbb{K}\pi \to \mathbb{K}\pi.
$$
Let $F$ be a solution of the Kashiwara-Vergne problem of type $(0, n+1)$. By applying expansion $\theta_F$, we obtain
\begin{equation}
\{-,-\}_{\add}\circ \mu_{\theta_F} = 0: A \to |A|\otimes A \to A.
\label{eq:invol1}
\end{equation}
Since $\{ -, 
-\}_s$ vanishes on $|A| \otimes A$, we can replace the bracket $\{-,-\}_{\add}$ by the bracket $\{-,-\}_{\rm KKS}$. 
Furthermore, the graded quotient of the equation (\ref{eq:invol1}) implies 
$\{-,-\}_{\KKS}\circ\mu^{\alg} = 0$. By using (\ref{eq:muF}), 
we obtain
$$
\{|s''|, as'\}_{\KKS} - \{|s''a|, s'\}_{\KKS} 
+ \{|g'|, ag''\}_{\KKS} - \{|g'|, g''a\}_{\KKS} = 0
$$
for any $a\in A$.
The contribution $\{|s''a|, s'\}_{\KKS}$ vanishes since $\{ |s''a|,  - \}_{\KKS}$ is a special derivation and special derivations annihilate $x_0$.
The remaining terms read
\begin{align*}
& -\dot{s}(-\ad_{x_0})(a)+\dot{s}(0)a+\dot{g}(-\ad_{x_0})(a)-\dot{g}(0)a
-(-\dot{g}(\ad_{x_0})(a)+\dot{g}(0)a) \\
=& (2\dot{g}_{\odd}(\ad_{x_0})-\dot{s}(\ad_{x_0}))(a)
+(\dot{s}(0)-2\dot{g}(0))a.
\end{align*}
Here we use that $s''\otimes s'=-1\otimes 1-s'\otimes s''$ and Lemma \ref{lem:maKKS}.
Since the last expression vanishes for all $a$,
we conclude that $2g_{\rm odd}(z)-s(z)=0$ modulo the terms of degree $\le 1$.
This proves the proposition.
\end{proof}

\if 0
Then $\tde(g_{\odd}(-x_0))=(-1/2)\iota(s'')\otimes s'$.
Writing $\Delta(g_{\even}(-x_0))=g'_{\even}\otimes g''_{\even}$,
We rewrite (\ref{eq:muF}) as
\begin{align*}
\mu_{\theta_F}(a)=& \mu^{\alg}(a)+|\iota(s'')|\otimes as'-|\iota(s'')a|\otimes s'
-\frac{1}{2}|\iota(s'')|\otimes [a,s']+
|g'_{\even}|\otimes [a,\iota(g''_{\even})] \\
=& \mu^{\alg}(a)+\frac{1}{2}|\iota(s'')|\otimes (as'+s'a)
-|\iota(s'')a|\otimes s'+|g'_{\even}|\otimes [a,\iota(g''_{\even})].
\end{align*}
END DUFLO FUNCTIONS
\fi

\appendix{}

\section{Proof of Theorem \ref{thm:center}}
\label{sec:inn}

In this appendix we deduce Theorem \ref{thm:center} from the following 
theorem. Most of this appendix will be spent for its proof. 
\begin{thm}
\label{prop:inner}
Suppose that $u\in \der(A)$ satisfies $|u(a)|=0$ for any $a\in A$.
Then $u$ is an inner derivation, i.e., there exists an element $w\in A$ such that $u(a)=[a,w]$ for any $a\in A$.
\end{thm}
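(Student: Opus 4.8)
\noindent\textbf{Proof strategy for Theorem \ref{prop:inner}.}
First I would reduce to the homogeneous case. Writing $\der(A)=\prod_{m\ge 0}\der(A)^{(m)}$, where $u\in\der(A)^{(m)}$ means $u(x_i)\in A_m$ for all $i$ (such a $u$ shifts degree by $m-1$), both the hypothesis ``$|u(a)|=0$ for all $a$'' and the conclusion ``$u=\ad_w$ with $w$ homogeneous of degree $m-1$'' respect this decomposition, so it suffices to treat a single homogeneous $u\in\der(A)^{(m)}$. For $m=0$, $0=|u(x_i)|=u(x_i)\,{\bf 1}$ forces $u(x_i)=0$, hence $u=0=\ad_0$; for $m=1$, $u(x_i)=\sum_j c_{ij}x_j$ and linear independence of $|x_1|,\dots,|x_n|$ in $|A|$ forces all $c_{ij}=0$, so again $u=0$. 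From now on $m\ge 2$ and I set $b_i:=u(x_i)\in A_m$.

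Next I would record the constraints placed on the tuple $(b_1,\dots,b_n)$. Taking $a=x_i$ gives $|b_i|=0$, i.e.\ $b_i\in[A,A]$. Taking $a=vx_i$ and using the Leibniz rule together with cyclicity of $|\ |$ yields the basic recursion
\[
|b_i v|=-|u(v)\,x_i|\qquad(v\in A,\ 1\le i\le n),
\]
and iterating it on a word $v=x_{j_1}\cdots x_{j_r}$ gives the symmetrised identity: for every cyclic word, the sum over all ways of replacing one letter $x_j$ by $b_j$ vanishes in $|A|$. The key point is that these identities do \emph{not} pin down $(b_i)$ by themselves --- they are, in a sense, ``$\mathrm{GL}$-formal'' --- so the argument must also use that $A$ is \emph{free}.

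The heart of the proof is to produce a single homogeneous $w\in A_{m-1}$ with $b_i=[x_i,w]$ for all $i$; once this is done, $u=\ad_{-w}$ follows by comparing on generators. I would attempt this by induction on $m$, bootstrapping from the auxiliary derivations $u_k:=[\,|\pa_k|,u\,]$, where $|\pa_k|\in\der(A)$ is the image of the double derivation $\pa_k$ under $|\D_A|\cong\der(A)$ (so $|\pa_k|(x_j)=\delta_{kj}$). Each $|\pa_k|$ descends to $|A|$, so $\overline{u_k}=\big[\overline{|\pa_k|},\bar u\big]=0$ and $u_k\in\der(A)^{(m-1)}$, whence $u_k=\ad_{w_k}$ with $w_k\in A_{m-2}$ by the inductive hypothesis. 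Since the $|\pa_k|$ pairwise commute, the Jacobi identity yields a closedness relation $|\pa_l|(w_k)=|\pa_k|(w_l)$ (the scalar ambiguity, coming from $Z(A)=\K$, being forced to vanish once $m$ is large), and a Poincar\'e-type exactness statement for the commuting family $\{|\pa_k|\}$ on $A_{m-1}$ then produces $w$ with $|\pa_k|(w)=w_k$. Subtracting $\ad_{-w}$, one reduces to the case where moreover $[\,|\pa_k|,u\,]=0$ for all $k$, i.e.\ each $u(x_i)$ is primitive in bidegree $(1,m-1)$; the small degrees $m=2,3$, where the $|\pa_k|$ lose too much information and the scalar ambiguities are genuine, I would finish directly from the length-$\le 2$ constraints on $(b_i)$ above.

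The main obstacle is exactly this residual step: after killing the first-order data $[\,|\pa_k|,u\,]$, one must still rule out a nonzero ``harmonic'' derivation with $\bar u=0$, and this --- together with establishing the noncommutative Poincar\'e lemma in bounded degree and the small-degree base cases --- is where one has to exploit the rigidity of the free algebra (its coalgebra structure, and the fact that $|A|$ has a basis of cyclic words) in an essential, non-formal way. I expect the bulk of the appendix to be precisely this combinatorial analysis of $[A,A]$ inside the free algebra.
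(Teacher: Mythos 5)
Your reduction to homogeneous $u$, the identities $|b_i|=0$ and $|b_iv|=-|u(v)x_i|$, and the observation that $u_k=[\,|\pa_k|,u\,]$ again annihilates $|A|$ are all correct, and the induction on $m$ via the commuting operators $|\pa_k|$ is a genuinely different idea from the paper's. But as written this is a strategy, not a proof: the two statements that carry all the weight are left unproved. First, the ``Poincar\'e-type exactness'' for the family $\{|\pa_k|\}$ --- that a closed tuple $(w_1,\dots,w_n)$ with $|\pa_l|(w_k)=|\pa_k|(w_l)$ is of the form $w_k=|\pa_k|(w)$ --- is a nontrivial combinatorial assertion about $H^{\otimes m}$ as an $S_m$-representation which you neither prove nor reference. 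Second, and more seriously, after subtracting the inner part you are left with a ``harmonic'' derivation satisfying $[\,|\pa_k|,u\,]=0$ for all $k$ and $\bar u=0$, and you explicitly concede that disposing of this case (plus the base cases $m=2,3$) is where ``the bulk of the appendix'' should lie --- without supplying the argument. Note that harmonicity together with $|b_j|=0$ does \emph{not} force $b_j=0$: for instance $[x_1,[x_1,x_2]]$ is killed by every $|\pa_k|$ and lies in $[A,A]$. So the residual case genuinely requires exploiting $|u(a)|=0$ for words $a$ of length $\ge 2$, and it is not clear how your induction closes there.

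For comparison, the paper's proof avoids all of this by testing $u$ only on the elements $x_i^{l+1}$ and $x_i^px_j^q$. The key input is Corollary \ref{prop:inner1}: for $x\in H\setminus\{0\}$, one has $a\in[x,A]$ if and only if $|ax^l|=0$ for all $l\ge 1$; this is proved by induction on $\deg a$ using the decomposition $H^{\otimes m}=xH^{\otimes(m-1)}\oplus P\oplus Q$ and a projection onto $\K x$. Applied to $0=|u(x_i^{l+1})|=(l+1)|u(x_i)x_i^l|$ it yields $u(x_i)=[x_i,a_i]$; Lemma \ref{lem:inner2} (if $[x^l,a]\in[y,A]$ and $[y^l,a]\in[x,A]$ for all $l$ then $a\in\K[[x]]+\K[[y]]$) then controls the differences $a_i-a_j$, and a cocycle-style patching over triples $(i,j,k)$ produces a single $w$. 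To salvage your route you would need to actually establish the exactness statement and, crucially, find a substitute for Corollary \ref{prop:inner1} to handle the harmonic case; at present the proposal defers precisely the hard part of the theorem.
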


Let $H$ be the linear subspace of $A$ spanned by $x_i$, $1\le i\le n$.
The linear subspace $H^{\otimes m}\subset A$ is nothing but the set of homogeneous elements of degree $m$.
In order to prove Theorem \ref{prop:inner}, 
we need Proposition \ref{prop:inner01} and Lemma \ref{lem:inner2} stated below.

\begin{prop}\label{prop:inner01} Let $x \in H\setminus\{0\}$ and
let $a \in H^{\otimes m}$ with $m\ge 1$.
If $|ax^l|=0$ for some $l\ge m-1$, then $a\in [x,A]$.
\end{prop}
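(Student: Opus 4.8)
The plan is to reduce to the case $x=x_1$ and then exploit the fact that every word occurring in $ax^l$ ends with a long run of the letter $x_1$. Extending $x$ to a basis $x=e_1,e_2,\dots,e_n$ of $H$ gives an algebra automorphism $\sigma$ of $A$ with $\sigma(x_1)=x$; it preserves $H^{\otimes m}$, satisfies $\sigma([A,A])=[A,A]$ (so it descends to $|A|$ and commutes with $|\cdot|$) and sends $[x_1,A]$ to $[x,A]$. Applying $\sigma^{-1}$ we may therefore assume $x=x_1$. If $m=1$ the $n$ cyclic words $|x_1^{l+1}|,|x_2x_1^l|,\dots,|x_nx_1^l|$ are pairwise distinct, so $|ax_1^l|=0$ forces $a=0$; assume henceforth $m\ge 2$, hence $l\ge 1$. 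Since the number $\deg_{\ne 1}$ of letters $\ne x_1$ in a word is preserved by cyclic rotation and by $\ad_{x_1}$, we may assume $a$ is homogeneous with exactly $k$ such letters; if $k=0$ then $a=cx_1^m$ and $|ax_1^l|=c|x_1^{m+l}|=0$ gives $c=0$. So we are left with the case $1\le k\le m$, $m\ge 2$, $l\ge m-1$.

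Next I would introduce the subspace $C\subseteq A$ spanned by all words ending in a letter $\ne x_1$ together with the powers $x_1^d$; it is graded by $\deg_{\ne 1}$. Using $ux_1^{s+1}\equiv x_1ux_1^{s}\pmod{[x_1,A]}$ (immediate from the definition of $[x_1,\cdot]$) one moves the trailing block of $x_1$'s of any word to its front, which shows $A=C+[x_1,A]$. Write $a=c+[x_1,b]$ with $c$ in the $\deg_{\ne 1}=k$ part of $C$. Then $[x_1,b]x_1^l=[x_1,bx_1^l]\in[A,A]$, so $|cx_1^l|=|ax_1^l|=0$, and it suffices to prove $c=0$, for then $a=[x_1,b]\in[x_1,A]$.

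Each word occurring in $c$ can be written $x_I=x_1^{s}v$ with $s\ge 0$ the number of leading $x_1$'s and $v$ (non-empty, since $k\ge 1$) beginning and ending with letters $\ne x_1$. Then $|x_Ix_1^l|=|vx_1^{s+l}|$, whose trailing $x_1$-run has length $s+l\ge l\ge m-1$, while every internal $x_1$-run of $v$ has length at most $m-s-k$, the total number of $x_1$'s in $v$. The hypothesis $l\ge m-1$ gives $m-s-k\le s+l$, and this inequality is strict unless $k\ge 2$ and $2s+l+k\le m$, which is impossible; hence $|x_Ix_1^l|$ has a \emph{unique} longest $x_1$-run. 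Its length $R=s+l$ is then determined by $|x_Ix_1^l|$, as is the subword $v$ following this run cyclically, and therefore so is $x_I=x_1^{R-l}v$. Thus distinct words $x_I$ appearing in $c$ yield distinct, hence linearly independent, cyclic words $|x_Ix_1^l|$, so $|cx_1^l|=0$ forces all coefficients of $c$ to vanish, i.e. $c=0$.

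The only genuinely delicate point is the last paragraph: showing that $l\ge m-1$ forces a strictly longest $x_1$-run in $|x_Ix_1^l|$ and that this run, together with the word following it cyclically, recovers $x_I$. This needs the short case check above (equality of run lengths would require $k\ge 2$ and $2s+l+k\le m$, contradicting $l\ge m-1$) together with attention to the degenerate cases $k=1$ (where $v$ has no internal runs) and $s=0$.
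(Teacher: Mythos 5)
Your proof is correct, and it takes a genuinely different route from the paper's. The paper argues by induction on $m$: it splits $H^{\otimes m}=xH^{\otimes(m-1)}\oplus P\oplus Q$ according to the first and last letters of a monomial, separates $|(a'+b')x^{l+1}|$ from $|b''x^l|$ by noting that only the former lies in the span of cyclic words containing $l+1$ consecutive $x$'s, kills the $Q$-component by a projection trick (Lemma \ref{lem:bx^l}, applying $1_H^{\otimes m}\otimes\pi^{\otimes l}$ to the cyclic symmetrization), and then feeds $a'+b'$ back into the inductive hypothesis with $l$ replaced by $l+1$. You avoid induction altogether: you exhibit an explicit complement $C$ of $[x,A]$ (words ending in a letter $\ne x_1$, together with the powers of $x_1$), reduce the statement to the injectivity of $x_I\mapsto|x_Ix_1^l|$ on the monomial basis of the relevant bigraded piece of $C$, and prove that injectivity by the unique-longest-run argument. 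Both proofs ultimately rest on comparing lengths of maximal $x_1$-runs in cyclic words --- your inequality $m-s-k<s+l$ plays the role of the paper's separation step --- but your version is self-contained, makes the complement of $[x,A]$ explicit, and isolates exactly where $l\ge m-1$ is used. The delicate case analysis at the end is handled correctly: equality of run lengths would force $k=1$ and $s=0$, which for $m\ge 2$ is incompatible with a word of $C$ having its unique non-$x_1$ letter in both the first and last positions, and when $k=1$ the word $v$ has no internal runs anyway.
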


To prove Proposition \ref{prop:inner01}, 
we may assume that $n\ge 2$ and $x=x_1$.\par

For a multi-index $I=(i_1,\ldots,i_m)$, where $i_k\in \{1,\ldots,n \}$,
set $p_I:=x_{i_1}\cdots x_{i_m}\in H^{\otimes m}$.
We introduce the subspaces of $H^{\otimes m}$ defined by
\begin{align*}
P&:=\mathrm{Span}_{\K} \{ p_I \mid i_1\neq 1, i_m=1 \}, \\
Q&:=\mathrm{Span}_{\K} \{ p_I \mid i_1\neq 1, i_m\neq 1 \}.
\end{align*}
Clearly, $H^{\otimes m}=xH^{\otimes (m-1)} \oplus P\oplus Q$.
Note that $P=0$ if $m=1$.

\begin{lem}
\label{lem:bx^l}
Let $m\ge 2$.
If $b\in Q$ and $|bx^l|=0$ for some $l\ge m-1$, then $b=0$.
\end{lem}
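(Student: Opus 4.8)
The plan is to argue by a descending induction on the power $l$, using a "division" map that strips one copy of $x = x_1$ off the right of a word. Concretely, for $w \in H^{\otimes(m+l)}$ write $w = \sum_I \lambda_I p_I$ in the monomial basis and define $D \colon H^{\otimes(m+l)} \to H^{\otimes(m+l-1)}$ by $D(p_I) := p_{(i_1,\dots,i_{m+l-1})}$ if $i_{m+l} = 1$ and $D(p_I) := 0$ otherwise; in other words, $D$ keeps exactly the monomials ending in $x_1$ and chops off that last letter. The key observation is the interaction of $D$ with the cyclic relations: if $|c| = 0$ in $|A|$ for a homogeneous $c$ of degree $m+l \ge 2$, then $c = \sum_j (p_{J_j} x_{k_j} - x_{k_j} p_{J_j})$ for some words $p_{J_j}$ of degree $m+l-1$ and letters $x_{k_j}$. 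Applying $D$ to such a commutator: the term $p_{J_j} x_{k_j}$ contributes $p_{J_j}$ when $k_j = 1$ and $0$ otherwise, while $x_{k_j} p_{J_j} = x_{k_j} x_{j_1}\cdots x_{j_{m+l-1}}$ contributes $D(x_{k_j} p_{J_j})$, which is $x_{k_j} D'(p_{J_j})$-like — one checks $D(x_{k_j} p_{J_j}) = x_{k_j} \cdot (\text{chop last letter of } p_{J_j} \text{ if it is } 1)$. So $D(c)$ is again a sum of commutators $p - xp'$ of the appropriate shape, hence $|D(c)| = 0$. Thus \textbf{$D$ descends to a well-defined map $|A| \to |A|$ in each degree} (for degrees $\ge 2$), and more importantly: if $|bx^l| = 0$ then $|D(bx^l)| = 0$.

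The second ingredient is that $D$ acts as a genuine right-division on the specific elements we care about. Since $b \in Q$, every monomial appearing in $b$ ends in a letter $\ne x_1$; therefore every monomial of $bx = bx_1$ ends in $x_1$, so $D(bx) = b$, and more generally $D(bx^l) = bx^{l-1}$ for $l \ge 1$. Hence from $|bx^l| = 0$ we deduce $|bx^{l-1}| = 0$. Now I would set up the descending induction on $l$: the statement to prove is "for all $l \ge m-1$, if $b \in Q$ and $|bx^l| = 0$ then $b = 0$." The inductive step is exactly the computation just given — $|bx^l| = 0 \Rightarrow |bx^{l-1}| = 0$, and $bx^{l-1}$ still satisfies $l-1 \ge m-2$, but I must be a little careful: the induction should be phrased so that the hypothesis "$b \in Q$" is preserved (it is, since $b$ itself does not change) and so that we can keep descending until we reach a base case. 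I would descend all the way to $l = 0$: then $|b| = 0$ with $b \in Q$ homogeneous of degree $m \ge 2$, and here $D$ cannot be applied naively, so I need a separate base-case argument.

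The base case $l = 0$: suppose $b \in Q$, homogeneous of degree $m \ge 2$, with $|b| = 0$. Write $b = \sum_j (p_{J_j} x_{k_j} - x_{k_j} p_{J_j})$. Project onto the summand $Q$ using the direct sum decomposition $H^{\otimes m} = x_1 H^{\otimes(m-1)} \oplus P \oplus Q$: a term $x_{k_j} p_{J_j}$ lies in $x_1 H^{\otimes(m-1)}$ if $k_j = 1$ and otherwise its $Q$-vs-$P$ type is determined by the last letter of $p_{J_j}$; a term $p_{J_j} x_{k_j}$ lies in $Q$ iff $k_j \ne 1$ and the first letter of $p_{J_j}$ is $\ne 1$. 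I expect that matching up these contributions forces $b = 0$ — essentially because the cyclic relations that can produce an element of $Q$ must come in canceling pairs once one tracks first and last letters. An alternative, possibly cleaner, route for the base case: observe that on the subspace $Q$ the condition $|b| = 0$ combined with the inductive machinery already handles $l \ge 1$, so one only needs $l = m-1 \ge 1$ when $m \ge 2$ — i.e. the hypothesis $l \ge m-1$ with $m \ge 2$ already gives $l \ge 1$, and one never actually needs $l = 0$. \textbf{This is the point I would double-check first}: if $l \ge m - 1 \ge 1$ always, then pure descent reduces everything to the case $l = m-1$ being pushed down to $l$ something small but still $\ge 1$ — wait, descent lowers $l$, so it will eventually hit $l$ smaller than $m-1$; the cleanest fix is to descend only while using $D$ validly (degree $\ge 2$, i.e. $m + l \ge 2$, always true here) and land at $l$ where a direct combinatorial argument on $|bx^l| = 0$ with the smallest available power is transparent.

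\medskip
\noindent\textbf{The main obstacle.} The real work is the base case / bottoming-out of the induction: showing directly that $b \in Q$ with $|bx^{l_0}| = 0$ for the smallest power $l_0$ forces $b = 0$. The descent step is essentially formal (it is the well-definedness of $D$ on $|A|$ plus $D(bx^l) = bx^{l-1}$), but the terminal case requires genuinely understanding which elements of $Q x^{l_0}$ become trivial in $|A|$, i.e. a careful bookkeeping of cyclic words whose monomials all have a prescribed run of $x_1$'s at the end and whose "pre-run" part lies in $Q$. I expect this to come down to: such a cyclic word is zero iff its $Q$-part, read as a sum over the monomials' first/last letters, cancels term by term, and that cancellation is incompatible with the constraints defining $Q$ (first letter $\ne x_1$, last letter $\ne x_1$) unless the word itself vanishes. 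Making that precise — probably by projecting the commutator expansion of $bx^{l_0}$ onto a suitably chosen monomial subspace and reading off that all coefficients vanish — is the step I would budget the most care for.
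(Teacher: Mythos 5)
Your descent step rests on the claim that the right-division map $D$ (strip a trailing $x_1$) sends $[A,A]$ into $[A,A]$, i.e.\ that $|c|=0$ implies $|D(c)|=0$. This is false: for $c=x_2x_1-x_1x_2$ we have $|c|=0$ but $D(c)=x_2-0=x_2$, and $|x_2|\neq 0$. The failure is visible in your own computation: $D(p_Jx_1-x_1p_J)=p_J-x_1D(p_J)$, and these two terms do not reassemble into a commutator. Worse, the implication you want, ``$|bx^l|=0\Rightarrow|bx^{l-1}|=0$ for $b\in Q$,'' is itself false in general: take $m=3$, $l=1$, $b=x_2x_1x_3-x_3x_1x_2\in Q$; then $|bx|=|x_2x_1x_3x_1|-|x_3x_1x_2x_1|=0$ by a cyclic rotation, yet $|b|\neq 0$ and $b\neq 0$. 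This example also shows the lemma is false without the hypothesis $l\ge m-1$ --- a hypothesis your argument never invokes, which is a structural sign that the descent cannot work. Finally, even granting the descent, your base case is only a sketch (``I expect that matching up these contributions forces $b=0$''), so the one step you correctly identify as the real work is not carried out.

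The paper's proof is a single projection argument with no induction. Since $|bx^l|=0$ and we are in characteristic zero, the sum of all cyclic rotations $\sum_{i=0}^{l+m-1}\sigma^i(bx^l)$ vanishes in $H^{\otimes(l+m)}$. Now apply $1_H^{\otimes m}\otimes\pi^{\otimes l}$, where $\pi:H\to\K x$ kills $x_2,\dots,x_n$: a rotated monomial survives only if its last $l$ letters are all $x_1$, i.e.\ only if some cyclic window of length $l$ in $p_Ix^l$ consists entirely of $x_1$'s. Because $b\in Q$, positions $1$ and $m$ carry letters $\neq x_1$, so such a window must lie in one of the two arcs between them, of lengths $m-2$ and $l$; since $l\ge m-1>m-2$, only the final block qualifies, i.e.\ only $i=0$ survives. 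Hence $bx^l=0$ and $b=0$. This is exactly where both hypotheses $b\in Q$ and $l\ge m-1$ enter, and it is the mechanism your proposal is missing.
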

%\bigskip
\begin{proof}
Let $\pi: H\to \K x$ be the projection defined by
$x_1\mapsto x$ and $x_i\mapsto 0$ for $i=2,\ldots,n$.
Let $\sigma:=(1,2,\ldots,l+m)\in \mathfrak{S}_{l+m}$ be the cyclic permutation.
Since $|bx^l|=0$, we have
$\sum_{i=0}^{l+m-1} \sigma^i (bx^l)=0\in H^{\otimes (l+m)}$.
Applying $1_{H}^{\otimes m} \otimes \pi^{\otimes l}$ to this equation,
we obtain $bx^l=0$, since $b\in Q$ implies
$(1_{H}^{\otimes m} \otimes \pi^{\otimes l})\sigma^i(bx^l)=0$ for $i= 1,\ldots,l+m-1$.
Hence $b=0$.
\end{proof}

\begin{proof}[Proof of Proposition \ref{prop:inner01}]
The assertion is clear if $m=1$.
In fact, $|ax^l|=0$ implies $a=0$.
\par
Let $m\ge 2$ and suppose that the assertion holds
if $\deg(a)=m-1$.
Let $a\in H^{\otimes m}$ and assume that $|ax^l|=0$ for some $l\ge m-1$.
There exist uniquely determined elements $a'\in H^{\otimes (m-1)}$, $b'\in H^{\otimes (m-1)}$ with $b'x\in P$, and $b''\in Q$ such that
$a=xa'+b'x+b''$.
We have
\[
0=|ax^l|=|(xa'+b'x+b'')x^l|=|(a'+b')x^{l+1}|+|b''x^l|.
\]
Notice that 
%any monomial $|p_I|$ appearing in the first term
%contains a sequence of $l+1$ consecutive $x$'s,
%while we have no such monomials in the second term.
the first term $|(a'+b')x^{l+1}|$ is in the span of monomials $|p_I|$
containing a sequence of $l+1$ consecutive $x$'s,
while the second term $|b''x^l|$ is in the span of $|p_I|$'s
containing no such a sequence.
Therefore, $|b''x^l|=0$.
Applying Lemma \ref{lem:bx^l}, we obtain $b''=0$.
Also, we have $|(a'+b')x^{l+1}|=0$.
\par
Now we have $l+1\ge (m-1)-1$ since $l\ge m-1$.
By the inductive assumption,
$a'+b'=[x,a_1]$ for some $a_1\in H^{\otimes (m-2)}$.
Then $a=xa'+b'x=[x,a']+(a'+b')x=[x,a']+[x,a_1]x=[x,a'+a_1x]\in [x,A]$, as required.
\end{proof}

\begin{cor}\label{prop:inner1} Let $x \in H\setminus\{0\}$ and $a \in A$. 
Then $a \in [x, A]$ if and only if $| ax^l| = 0$ for any $l\ge 1$.
\end{cor}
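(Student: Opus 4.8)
The plan is to read the corollary off Proposition~\ref{prop:inner01} by a degree-by-degree argument, the forward implication being an immediate cyclicity computation. First I would dispose of the direction $a\in[x,A]\Rightarrow |ax^l|=0$ for all $l\ge 1$: writing $a=[x,b]=xb-bx$ and using the cyclic invariance of $|\ |$ one gets $|ax^l|=|xbx^l|-|bx^{l+1}|=|bx^{l+1}|-|bx^{l+1}|=0$, with no subtlety involved.

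For the converse I would assume $|ax^l|=0$ for every $l\ge 1$ and decompose $a=\sum_{m\ge 0}a_m$ into homogeneous components $a_m\in H^{\otimes m}$. Since $a_m x^l$ is homogeneous of degree $m+l$, the identity $|ax^l|=\sum_m|a_m x^l|=0$ splits by degree into $|a_m x^l|=0$ for all $m\ge 0$ and all $l\ge 1$. For $m\ge 1$, choosing $l=\max(1,m-1)$ — which satisfies both $l\ge 1$ and $l\ge m-1$ — Proposition~\ref{prop:inner01} gives $a_m\in[x,A]$; writing $a_m=[x,c]$ and passing to the degree $(m-1)$ part $c_{m-1}$ of $c$, a comparison of degree $m$ components yields $a_m=[x,c_{m-1}]$ with $c_{m-1}\in H^{\otimes(m-1)}$. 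Setting $b:=\sum_{m\ge 1}c_{m-1}$, which converges in the degree-completed algebra $A$, continuity of $[x,-]$ gives $[x,b]=\sum_{m\ge 1}a_m$. It then remains only to control the constant term $a_0\in\K$.

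For $m=0$ we have $|a_0x^l|=a_0\,|x^l|$, so I must check that $|x^l|\neq 0$ in $|A|$ for some (hence any) $l\ge 1$, which forces $a_0=0$ and completes the identification $a=[x,b]\in[x,A]$. This is elementary: after a $\mathrm{GL}(H)$ change of variables we may assume $x=x_1$, and the linear functional on $H^{\otimes l}$ extracting the coefficient of $x_1^l$ annihilates $[A,A]$ (a commutator of two monomials can produce $x_1^l$ only when both monomials are powers of $x_1$, in which case it vanishes), hence descends to $|A|$ and is nonzero on $|x_1^l|$. I do not expect a genuine obstacle here: the corollary is essentially a bookkeeping wrapper around Proposition~\ref{prop:inner01}, the only new inputs being the degree-splitting of the hypothesis and the disposal of the scalar part via $|x^l|\neq 0$.
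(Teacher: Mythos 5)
Your proof is correct and follows the same route as the paper: the forward direction by cyclic invariance of $|\ |$, and the converse by splitting $a$ into homogeneous components and invoking Proposition~\ref{prop:inner01} with a suitable $l$. You are somewhat more explicit than the paper about the bookkeeping (convergence of $b$ in the completion and the vanishing of the constant term via $|x^l|\neq 0$), but these are details the paper's one-line reduction to the homogeneous case implicitly covers, not a different argument.
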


\begin{proof}%[Proof of Proposition \ref{prop:inner1}]
The `only if'-part is trivial. In fact, it is clear that 
$| [x, b]x^l| = -|b [x, x^{l}]| = 0$. 
%$| [x, b]x^l| = \frac{1}{l+1}| [x^{l+1}, b]| = 0$. 
Here we remark that it suffices to show the `if'-part
for any homogeneous $a \in A$,
since $[A, A] \subset A$ is homogeneous. 
Hence Proposition \ref{prop:inner01} implies the `if'-part.
%This completes the proof of Proposition \ref{prop:inner1}.
\end{proof}

\begin{lem}\label{lem:inner2} Let $x$ and $y \in H$ be linearly independent.
Suppose that an element $a \in A$ satisfies $[x^l, a] \in [y, A]$ and 
$[y^l, a] \in [x, A]$ for any $l \ge 0$. Then we have 
$a \in \K[[x]] + \K[[y]]$. 
\end{lem}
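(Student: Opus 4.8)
The plan is to convert the statement into a combinatorial identity among cyclic words and then match coefficients. Since all hypotheses and the conclusion respect the grading of $A$, it suffices to treat $a$ homogeneous of some degree $d$; the case $d=0$ is trivial, so assume $d\ge 1$, whence the claim becomes $a\in\K x^d+\K y^d$. As $x,y\in H$ are linearly independent, after a linear change of the generators we may assume $x=x_1$ and $y=x_2$. By Corollary \ref{prop:inner1}, $[x^l,a]\in[y,A]$ holds iff $|[x^l,a]\,y^m|=0$ for all $m\ge 1$, and using cyclic invariance of $|\ |$ this becomes $|x^l a\, y^m|=|y^m a\, x^l|$; so the first family of hypotheses is equivalent to
\[
|x^l\, a\, y^m| = |y^m\, a\, x^l| \qquad \text{for all } l,m\ge 1 ,
\]
and, again by cyclic invariance, the second family is equivalent to the same condition, so only one of them will be used.

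Writing $a=\sum_{|w|=d} c_w\, w$ over words $w$ in $x_1,\dots,x_n$, the displayed identity reads $\sum_w c_w\bigl(|x^l w\, y^m|-|x^l y^m\, w|\bigr)=0$. I would fix $l,m>d$ and analyse the cyclic words occurring on each side, using two elementary facts. First, for $l,m>d$ the maps $w\mapsto|x^l w\, y^m|$ and $w\mapsto|x^l y^m\, w|$ are injective on length-$d$ words: such a cyclic word has a unique run of $\ge l$ consecutive $x_1$'s and a unique run of $\ge m$ consecutive $x_2$'s, and the block lengths together with the letters sitting between these two runs recover $w$. Second, every $|x^l y^m\, w|$ contains a run of $\ge l$ consecutive $x_1$'s immediately followed by a run of $\ge m$ consecutive $x_2$'s, while $|x^l w\, y^m|$ has this feature precisely when $w=x_1^i x_2^j$ with $i+j=d$, in which case it equals the two-block cyclic word $|x_1^{l+i}x_2^{m+j}|$.

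The coefficients are then eliminated in two steps. If some $w_0$ in the support of $a$ is not of the form $x_1^i x_2^j$, then $|x^l w_0\, y^m|$ appears on the left with coefficient $c_{w_0}$ (by injectivity) but nowhere on the right (it lacks the feature), so $c_{w_0}=0$; hence the support lies in $\{x_1^i x_2^j:i+j=d\}$. If $w_0=x_1^i x_2^j$ with $i,j\ge 1$, then the four-block cyclic word $|x_1^l x_2^m x_1^i x_2^j|$ appears on the right with coefficient $c_{w_0}$ but nowhere on the left (each left-hand term is either a two-block word or lacks the feature), so again $c_{w_0}=0$. Therefore $a=c_{x_1^d}x_1^d+c_{x_2^d}x_2^d\in\K x^d+\K y^d$, as required. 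I expect the genuine difficulty to be entirely bookkeeping: making the dichotomy between cyclic words that do and do not contain the required run, and the recovery of $w$ from its cyclic word, precise enough that the coefficient comparisons are airtight; no input beyond Corollary \ref{prop:inner1} appears to be needed.
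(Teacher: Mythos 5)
Your proof is correct, but it takes a genuinely different route from the paper's. The paper never leaves the hypothesis itself: it uses only the single exponent $l=d$, writes $[x^d,a]=[y,b]$, decomposes $H^{\otimes 2d}=x^dH^{\otimes d}\oplus yH^{\otimes(2d-1)}\oplus V\otimes H^{\otimes d}$, and reads off the $x^dH^{\otimes d}$-component to get $a=\lambda x^d+b_0y$; the symmetric argument gives $a=\mu y^d+c_0x$, and comparing last letters finishes the proof in a few lines, with no appeal to Corollary \ref{prop:inner1}. You instead convert both hypotheses through Corollary \ref{prop:inner1} into the cyclic-word identities $|x^la\,y^m|=|y^ma\,x^l|$ and argue combinatorially for fixed $l,m>d$. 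I checked the two key ingredients --- injectivity of $w\mapsto|x^lwy^m|$ and $w\mapsto|x^ly^mw|$ on length-$d$ words via the uniqueness of the long $x_1$- and $x_2$-runs, and the dichotomy that the long $x$-run is immediately followed by the long $y$-run exactly when $w=x_1^ix_2^j$ --- and both elimination steps are airtight. What your version buys: it shows that, given Corollary \ref{prop:inner1}, the two hypothesis families are equivalent to one another (so the lemma really only needs one of them), and it keeps the whole appendix in the uniform language of vanishing cyclic words. What it costs: you need the hypothesis at large exponents $l,m>d$ rather than just at $l=d$, and the run-bookkeeping is noticeably longer than the paper's one-line component comparison.
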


\begin{proof}%[Proof of Lemma \ref{lem:inner2}]
We may assume $a \in H^{\otimes d}$ for some $d \ge 1$.
Since $\K x^d \cap yH^{\otimes (d-1)} = 0$, we have a decomposition 
$$
H^{\otimes d} = \K x^d \oplus y H^{\otimes (d-1)} \oplus V
$$
for some linear subspace $V \subset H^{\otimes d}$. 
In particular, we have $a = \lambda x^d + ya' + a''$ for some $\lambda \in \K$, 
$a' \in H^{\otimes(d-1)}$ and $a'' \in V$. 
From the assumption, there exists some $b \in H^{\otimes(2d-1)}$ such that 
$[x^d, a] = [y,b]$. We have $b = -x^db_0 + yb' + b''$ 
for some $b_0 \in H^{\otimes(d-1)}$, $b' \in H^{\otimes(2d-2)}$ and 
$b'' \in V\otimes H^{\otimes(d-1)}$. Then the equation $[x^d, a] = [y,b]$ implies
$$
x^d(ya' + a'') - (ya' + a'')x^d = yb + x^db_0y - (yb' + b'')y
$$
Looking at its $x^dH^{\otimes d}$-component in 
the decomposition
$H^{\otimes 2d} = x^dH^{\otimes d} \oplus yH^{\otimes(2d-1)} 
\oplus V\otimes H^{\otimes d}$,
we obtain $x^d(ya'+ a'') = x^db_0y$, so that $ya' + a'' = b_0y$. 
So we conclude $a = \lambda x^d + b_0y$. 
Similarly the assumption $[y^d, a] \in [x, A]$ 
implies $a = \mu y^d + c_0x$ for some $\mu \in \K$ and 
$c_0 \in H^{\otimes (d-1)}$. Hence we obtain
$a = \lambda x^d + \mu y^d$. This proves Lemma \ref{lem:inner2}.
\end{proof}

\begin{rem}
Note that if $a \in \K[[x]] + \K[[y]]$ then $[x^l, a] \in [y, A]$ and 
$[y^l, a] \in [x, A]$ for any $l \ge 0$. For example,
$$
[x^l, y^m] = \left[x, \sum_{i=0}^{l-1} x^i y^m x^{l-1-i}\right].
$$
\end{rem}

\begin{proof}[Proof of Proposition \ref{prop:inner}]
If $n = 1$, then $| A| = A$, so that the theorem is trivial.
Hence we may assume $n \geq 2$. Since
$
0 = | u({x_i}^{l+1})| = \sum^l_{k=0}| {x_i}^ku(x_i){x_i}^{l-k}| 
= (l+1)| u(x_i){x_i}^l|
$
for any $l \geq 0$, we have some $a_i \in A_{\ge1}$ such that 
$
u(x_i) = [x_i, a_i]
$
by Corollary \ref{prop:inner1}. 
Let $i$ and $j \in \{1,2,\dots, n\}$ be distinct. Then, for any $p, q \ge 1$, 
we have 
$$
0 = | u({x_i}^p{x_j}^q)| = 
| [{x_i}^p, a_i]{x_j}^q| + | {x_i}^p[{x_j}^q, a_j]|
= |[{x_i}^p, a_i-a_j]{x_j}^q| = |{x_i}^p [a_i-a_j, {x_j}^q]|.
$$
This implies $[{x_i}^p, a_i-a_j] \in [x_j, A]$ and $[{x_j}^q, a_i-a_j] \in [x_i, A]$ 
by Corollary \ref{prop:inner1}. 
Hence, by Lemma \ref{lem:inner2}, we have $a_i - a_j \in \K[[x_i]]+\K[[x_j]]$, 
namely, we have unique formal series $f_{ij}(x_j) \in \K[[x_j]]$ and 
$f_{ji}(x_i) \in \K[[x_i]]$ such that 
$a_i - a_j = f_{ij}(x_j) - f_{ji}(x_i)$ and $f_{ij}(0) = f_{ji}(0) = 0$. 
If $n = 2$, then $w := a_1 + f_{21}(x_1) = a_2 + f_{12}(x_2)$ satisfies 
$u(a) = [a, w]$ for any $a \in A$. \par
For the rest of this proof, we assume $n \geq 3$. 
Let $i, j$ and $k \in \{1,2,\dots, n\}$ be distinct. Then 
$$
\aligned
0 &= (a_i-a_j) + (a_j-a_k) + (a_k-a_i)\\
&=  f_{ij}(x_j) - f_{ji}(x_i) +  f_{jk}(x_k) - f_{kj}(x_j) +  f_{ki}(x_i) - f_{ik}(x_k)
\endaligned
$$
Since $f_{ji}(0) = f_{ki}(0) = 0$, we have $f_{ji}(x_i) = f_{ki}(x_i)$.
Hence $w := a_i+f_{ji}(x_i) \in A$ is independent of the choice of $i$ and $j$, 
so that $u(a) = [a, w]$ for any $a \in A$.
\end{proof}

\begin{proof}[Proof of Theorem \ref{thm:center}]
From Lemma \ref{prop:KKS}, the center includes $|\K[[x_i]]|$ for any $i=0,1,\ldots,n$.
\par
Let $a \in A$ be a homogeneous element of degree $m\ge 1$
satisfying $\{|a|, |b|\}_{\KKS} = 0$ for any $|b| \in |A|$.
This means $|\{|a|, b\}_{\KKS}| = 0$ for any $b \in A$.
From Theorem \ref{prop:inner}, there exists an element $w \in A$ 
such that $\{|a|, b\}_{\KKS} = [b, w]$ for any $b \in A$.
By Lemma \ref{prop:KKS}, we have 
$[x_0, w] = \{|a|, x_0\}_{\KKS} = 0$. 
It is easy to check that for any non-zero element $r\in A$ of degree $1$, any element of $A$ commuting with $r$ is a formal power series in $r$. 
Hence we have $w \in \K[[x_0]]$, and so 
$\{|a|, -\}_{\KKS} = (c_1{x_1}^{m-1} + w, \dots, c_n{x_n}^{m-1} + w)\in \sder(A)$
for some $c_i \in \K$. By Lemma \ref{lem:m-times}, we obtain 
$m|a| = |N(|a|)|=- \sum_i|c_i{x_i}^m + x_iw| = |x_0w| - \sum_i|c_i{x_i}^m|
\in \sum^n_{i=0}|\K[[x_i]]|$. This proves the theorem. 
\end{proof}

\begin{rem}
The same method allows to compute the center of the \emph{completed} Goldman Lie algebra for any compact connected oriented surface with non-empty boundary.
The key point is to use expansions to transfer the Goldman bracket to a Lie algebra structure on the space of cyclic words, as we saw in the case of genus zero (see diagram \eqref{eq:GoKKS}).
For surfaces of genus $g \geq 1$, one can also use the technique of \cite{CBEG07}, see Corollary 8.6.2.
\end{rem}

\end{document}